\newtheorem{theorem}{Theorem}[section]
\newtheorem{lemma}[theorem]{Lemma}
\newtheorem{proposition}[theorem]{Proposition}
\newtheorem{corollary}[theorem]{Corollary}
\theoremstyle{definition}
\newtheorem{definition}[theorem]{Definition}
\newtheorem{assumption}[theorem]{Assumption}
\newtheorem{example}[theorem]{Example}
\crefname{corollary}{Corollary}{Corollaries}
\crefname{assumption}{Assumption}{Assumptions}
\crefname{convention}{Convention}{Conventions}
\theoremstyle{remark}
\newtheorem{remark}[theorem]{Remark}
\newtheorem{convention}[theorem]{Convention}
\numberwithin{equation}{section}
\acrodef{LDP}{Large Deviation Principle}
\newcommand{\E}{\mathbb{E}}	% expectation
\renewcommand{\d}{\mathrm{d}}	% differential
\newcommand{\red}{\textcolor{red}}
\newcommand{\be}{\begin{equation}}
\newcommand{\ee}{\end{equation}}
\newcommand{\bea} {\begin{array}{rl}}
\newcommand{\eea} {\end{array}}
\newcommand{\bepa}{\left\{ \begin{array}{l}}
\newcommand{\eepa} {\end{array}\right.}
\newcommand\norm[1]{\lVert#1\rVert}
\newcommand{\R}{\mathbb{R}} % real numbers
\newcommand{\N}{\mathbb{N}}
\newcommand{\op}{\mathrm{op}}
\newcommand{\ds}{\displaystyle}
\newcommand{\calC}{\mathcal{C}}
\newcommand{\TT}{\mathsf{T}} %transpose
\newcommand{\1}{\bm 1}
\newcommand{\defeq}{\coloneqq}
\renewcommand{\hat}{\widehat}
\renewcommand{\bar}{\overline}
\newcommand{\bd}{{\bf d}}
\newcommand{\bw}{\bm{w}}
\newcommand{\ov}{\overline}
\newcommand{\bx}{\bm{x}}
\newcommand{\bxi}{\bm{\xi}}
\newcommand{\bzeta}{\bm{\zeta}}
\newcommand{\bX}{\bm{X}}
\newcommand{\bY}{\bm{Y}}
\newcommand{\cP}{\mathcal{P}}
\newcommand{\sL}{\mathcal{L}}
\newcommand{\eps}{\epsilon}
\newcommand{\tr}{\text{tr}}
\newcommand{\cA}{\mathcal{A}}
\newcommand{\cl}{\CL}
\newcommand{\ol}{\OL}
\newcommand{\dist}{\mathrm{dist}}
\newcommand{\dis}{\mathrm{disp}}
\newcommand{\lip}{\mathrm{Lip}}
\newcommand{\bv}{\bm{v}}
\newcommand{\bF}{\bm{F}}
\newcommand{\bG}{\bm{G}}
\newcommand{\bZ}{\bm{Z}}
\newcommand{\bA}{\bm{A}}
\newcommand{\cF}{\mathcal{F}}
\newcommand{\cG}{\mathcal{G}}
\newcommand{\irr}{\text{Irr}}
\newcommand{\cL}{\mathcal{L}}
\newcommand{\bbF}{\mathbb{F}}
\newcommand{\bu}{\bm{u}}
\newcommand{\wt}{\widetilde}
\newcommand{\lm}{\mathrm{LL}}
\newcommand{\by}{\bm{y}}
\newcommand{\mf}{\mathrm{MF}}
\newcommand{\bH}{\bm{H}}
\newcommand{\OL}{\mathrm{OL}}
\newcommand{\CL}{\mathrm{CL}}
\newcommand{\Dis}{\dist}
\newcommand{\scrF}{\mathscr{F}}
\title[Non-asymptotic approach to games with many players]{A non-asymptotic approach to stochastic differential games with many players under semi-monotonicity}
\author[M.\ Cirant]{Marco Cirant\textsuperscript{(1)}\!}
\address{\textsuperscript{(1)}Dipartimento di Matematica ``T.\ Levi-Civita'' \\ Università degli Studi di Padova \\ Via Trieste 63 \\ 35121 Padova, Italy.}
\author[J.\ Jackson]{Joe Jackson\textsuperscript{(2)}\!}
\address{\textsuperscript{(2)}Department of Mathematics \\ University of Chicago \\
5734 S.~University Avenue \\ Chicago, Illinois 60637, USA.}
\author[D.\ F.\ Redaelli]{Davide Francesco Redaelli\textsuperscript{(3)}}
\address{\textsuperscript{(3)}Dipartimento di Matematica \\ Università di Roma Tor Vergata \\ Via della Ricerca Scientifica 1\\ 00133 Roma, Italy.}
\email{cirant@math.unipd.it, jsjackson@uchicago.edu, redaelli@mat.uniroma2.it}
\begin{document}
	\begin{abstract}
        We consider stochastic differential games with a large number of players, with the aim of quantifying the gap between closed-loop, open-loop and distributed equilibria. We show that, under two different semi-monotonicity conditions, the equilibrium trajectories are close when the interactions between the players are weak. Our approach is non-asymptotic in nature, in the sense that it does not make use of any a priori identification of a limiting model, like in mean field game (MFG) theory. The main technical step is to derive bounds on solutions to systems of PDE/FBSDE characterizing the equilibria that are independent of the number of players. 
        
        When specialized to the mean field setting, our estimates yield quantitative convergence results for both open-loop and closed-loop equilibria without any use of the master equation. In fact, our main bounds hold for games in which interactions are much sparser than those of MFGs, and so we can also obtain some ``universality" results for MFGs, in which we show that games governed by dense enough networks converge to the usual MFG limit. Finally, we use our estimates to study a joint vanishing viscosity and large population limit in the setting of displacement monotone games without idiosyncratic noise.
        
%In addition to the application of these estimates to the quantification of proximity between different notions of equilibria in non-symmetric settings, we establish new results on the universality of MFG equilibria and on the joint vanishing viscosity and large population limit. The first set of results shows in particular that if initial conditions are i.i.d., then the MFG equilibria approximate well both the open-loop and the closed-loop ones.
	\end{abstract}
	
%	\subjclass[2020]{}
%	\keywords{}
	
	\maketitle

    \setcounter{tocdepth}{1}
  \tableofcontents

\section{Introduction}

This paper is concerned with $N$-player stochastic differential games, as well as the partial differential equations (PDEs) and forward-backward stochastic differential equations (FBSDEs) which describe their Nash equilibria. Our focus is on games with a  large but finite number of players, in which each player controls a private state process. Such games have received significant attention in recent years in the context of mean field game (MFG) theory, which seeks to rigorously approximate $N$-player games by more tractable limiting models. The availability of this mean field approximation allows us to obtain interesting information about games enjoying the properties of \emph{symmetry} (players are indistinguishable), \emph{mean field scaling} (the ``strength of interaction" between two distinct players is of order $1/N$) and \emph{many players} ($N \gg 1$).

One main goal of this paper is to develop a direct approach to studying $N$-player games with many players. A key technical step is to obtain bounds on the Nash system (which describes closed-loop equilibria) and the Pontryagin system (which describes open-loop equilibria) which are \textit{dimension-free}, in that they are independent of the number $N$ of players. A first result of this kind was obtained recently by the first and third authors in \cite{CR24b}, where monotonicity conditions were used to obtain dimension-free bounds on the Nash system for games which lack symmetry, but retain mean field scaling. Here we obtain similar estimates in a much more general setting, and we also explore some new applications of these dimension-free bounds.

More precisely, our main results are about games with \textit{monotone costs} and \textit{weak interactions} between players. The monotonicity conditions we impose are the same as in \cite{CR24b}. The weak interactions condition is harder to explain at this stage, but it represents a significant relaxation of the mean field scaling, and allows some players in the game to have interactions strengths which are much larger than $1/N$. This condition will be discussed in detail later in the introduction. 
%For games with monotone costs and weak interactions, we obtain dimension-free bounds on both the Nash system (which describes closed-loop equilibria) and the Pontryagin system (which describes open-loop equilibria) which are \textit{dimension-free}, in that they are independent of the number $N$ of players. 

Our main application of these bounds is an estimate on the difference between the open-loop, closed-loop, and distributed equilibria of the game. This estimate confirms the intuition that when interactions between players are weak, the information structure of the game is not important. Moreover, in a distributed equilibrium the players' private state process are necessarily independent. In this sense, the proximity of open-loop, closed-loop, and distributed equilibria encodes an approximate independence property which can be viewed as a \emph{non-asymptotic} analogue of propagation of chaos in mean field game theory. We also explore several other implications of our main dimension-free bounds, for example we use them to study the universality of the mean field game limit in the sense of \cite{DelarueESAIM, LackerSoret}.

In the remainder of the introduction, we introduce the games which we will study, explain what is meant by semi-monotonicity and weak interactions, and informally summarize our main results.

\subsection{Closed-loop, open-loop, and distributed formulations of the \texorpdfstring{$N$}{N}-player game}

The $N$-player game of interest is described by functions
\[
    L^i \colon \R^d \times \R^d \to \R\,, \quad F^i,G^i \colon (\R^d)^N \to \R\,, \quad  i = 1,\dots,N\,.
\]
In addition, we are given constants $\sigma > 0$ and $\sigma_0 \geq 0$, and a time horizon $T > 0$. We work on a fixed filtered probability space $\big(\Omega, \scrF, \bbF = (\scrF_t)_{0 \leq t \leq T} \big)$ which hosts independent $d$-dimensional Brownian motions $(W^i)_{i \in \N}$. We denote by $\bbF^0 = (\scrF_t^0)_{0 \leq t \leq T}$ the filtration generated by $W^0$. We are now going to describe three different ways in which the relevant stochastic differential game can be played.

\subsubsection{The closed-loop formulation}
In the closed-loop formulation of the game, player $i$ chooses a feedback function $\alpha^i(t,\bx) \colon [t_0,T] \times (\R^d)^N \to \R^d$, and the states $\bX = (X^1,\dots,X^N)$ evolve according to 
\begin{equation} \label{cldynamics}
    \d X_t^i = \alpha^i(t,\bX_t) \,\d t + \sqrt{2\sigma}\, \d W_t^i + \sqrt{2 \sigma_0}\, \d W_t^0\,, \quad t_0\leq t \leq T\,, \qquad X_{t_0}^i = \zeta^i_0\,, 
\end{equation}
where $t_0 \in [0,T]$ and each $\zeta^i_0$ is an $\scrF_{t_0}$-measurable, square-integrable random vector taking values in $\R^d$. For simplicity, we assume that each $\alpha^i$ is chosen from $\cA_{t_0}^{\CL}$, the set of measurable maps $\alpha(t,\bx) \colon [0,T] \times (\R^d)^N \to \R^d$, satisfying the growth condition $|\alpha(t,\bx)| \lesssim 1 + |\bx|$,
and we recall that for any $(t_0,\bzeta_0) = (t_0,\zeta_0^1,\dots,\zeta_0^N)$ as above and $\bm \alpha = (\alpha^1,\dots,\alpha^N) \in \cA_{t_0}^{\CL}$, \eqref{cldynamics} has a unique strong solution.
Player $i$ aims to minimize the cost functional 
\begin{equation*}
    J^i_{\CL}(t_0,\bzeta_0, \bm \alpha) \defeq \E\bigg[\int_{t_0}^T \!\Big(L^i\big(X_t^i, \alpha^i(t,\bX_t)\big) + F^i(\bX_t) \Big) \d t + G^i(\bX_T)  \bigg]\,,
\end{equation*}
where implicitly $\bX$ depends on $\bm \alpha$ through the dynamics \eqref{cldynamics}.
A closed-loop equilibrium (started from $(t_0,\bzeta_0)$) is a tuple $\bm \alpha = (\alpha^1,\dots,\alpha^N) \in (\cA_{t_0}^{\CL})^N$ such that for each $i = 1,\dots,N$, and each $\beta \in \cA_{t_0}^{\CL}$, 
\begin{equation*}
    J^i_{\cl}(t_0,\bzeta_0, \bm \alpha) \leq J^i_{\cl}\big(t_0,\bzeta_0, (\bm \alpha^{-i}, \beta) \big)\,, 
\end{equation*}
where $(\bm \alpha^{-i}, \beta) \defeq (\alpha^1,\dots,\alpha^{i-1},\beta,\alpha^{i+1},\dots,\alpha^N)$.
It is well-known that closed-loop equilibria are described by the \emph{Nash system}
\begin{equation} \label{nash} \tag{NS}
    \begin{dcases}  - \partial_t u^i - \sigma \sum_{1 \leq j \leq N} \Delta_{x^j} u^i - \sigma_0 \sum_{1 \leq j,k \leq N} \tr\big( D_{x^j x^k} u^i \big) + H^i(x^i, D_{x^i} u^i) 
    \\  
    \qquad + \sum_{\substack{1 \leq j \leq N \\ j \neq i}} D_{p} H^j(x^j, D_{x^j} u^j) \cdot D_{x^j} u^i = F^i(\bx)\,, \qquad (t,\bx) \in [0,T] \times (\R^d)^N
    \\[3pt]
    u^i(T,\bx) = G^i(\bx)\,, \qquad \bx \in (\R^d)^N, 
    \end{dcases} 
\end{equation}
where for each $i = 1,\dots,N$, the \emph{Hamiltonian} $H^i \colon \R^d \times \R^d \to \R$ is given by 
\begin{equation} \label{intro.defHi}
    H^i(x,p) \defeq \sup_{a \in \R^d} \bigl( - a \cdot p - L^i(x,a) \bigr).
\end{equation}
More precisely, if \eqref{nash} has a sufficiently regular solution $(u^1,\dots,u^N) \colon [0,T] \times (\R^d)^N \to \R^N$, then a verification argument shows that
\begin{align*}
    \alpha^i(t,\bx) = - D_pH^i\big(x^i, D_{x^i} u^i(t,\bx) \big)
\end{align*}
is a closed-loop Nash equilibrium (for any initial condition $(t_0,\bzeta_0)$). 

\subsubsection{The open-loop formulation}
In the open-loop formulation, player $i$ instead chooses a control $\alpha^i \in \cA_{t_0}^{\ol}$, where $\cA_{t_0}^{\ol}$ denotes the set of square-integrable, $\R^d$-valued processes $\alpha = (\alpha_t)_{t \in [t_0,T]}$, progressively measurable with respect to $\bbF$. In the open-loop case, the dynamics are given by 
\begin{equation} \label{oldynamics}
    \d X_t^i = \alpha^i_t \,\d t + \sqrt{2\sigma} \,\d W_t^i + \sqrt{2 \sigma_0} \,\d W_t^0\,, \quad t_0\leq t \leq T\,, \qquad X_{t_0}^i = \zeta^i_0\,, 
\end{equation}
with the $\zeta^i_0$'s as above, and the cost to player $i$ is
\begin{equation*}
    J^i_{\OL}(t_0,\bzeta_0, \bm \alpha) \defeq \E\bigg[\int_{t_0}^T \!\Big(L^i\big(X_t^i, \alpha_t^i \big) + F^i(\bX_t) \Big)\,\d t + G^i(\bX_T)  \bigg]\,.
\end{equation*}
where implicitly $\bX$ depends on $\bm \alpha$ through the dynamics \eqref{oldynamics}. An open-loop equilibrium (started from $(t_0,\bzeta_0)$) is a tuple $\bm \alpha = (\alpha^1,\dots,\alpha^N) \in (\cA_{t_0}^{\OL})^N$ such that for each $i = 1,\dots,N$, and each $\beta \in \cA_{t_0}^{\OL}$, 
\begin{equation*}
    J^i_{\ol}(t_0,\bzeta_0, \bm \alpha) \leq J^i_{\ol}\big(t_0,\bzeta_0, (\bm \alpha^{-i}, \beta) \big)\,.% \quad \text{where} \quad (\bm \alpha^{-i}, \beta)  (\alpha^1,\dots,\alpha^{i-1},\beta,\alpha^{i+1},\dots,\alpha^N)\,.
\end{equation*}
Open-loop Nash equilibria are described by the \emph{Pontryagin system}, a system of forward-backward stochastic differential equations of the form
\begin{equation} \label{pontryagin} \tag{PS}
    \begin{dcases}
        \d X_t^i = - D_p H^i(X_t^i, Y_t^i)\,\d t + \sqrt{2 \sigma}\, \d W_t^i + \sqrt{2 \sigma_0} \,\d W_t^0
        \\
        \d Y_t^i = \bigl( D_x H^i(X_t^i,Y_t^i) - D_{x^i} F^i(\bX_t) \bigr) \,\d t + \sum_{0 \leq j \leq N} Z_t^{i,j}\, \d W_t^j 
       \\[-3pt]
       X_{t_0}^i = \zeta_0^i, \quad Y_{T}^i = D_{x^i} G^i(\bX_T).
    \end{dcases}
\end{equation}
More precisely, if an open-loop equilibrium exists, then it must take the form
\[
\alpha_t^i = - D_p H^i(X_t^i, Y_t^i)
\]
for some solution $(\bX,\bY,\bZ)$ to \eqref{pontryagin}. Actually, open-loop Nash equilibria are also connected to a PDE system, because the system \eqref{pontryagin} is expected to admit a \emph{decoupling field} which is described by the system 
\begin{equation} \label{pontpde} \tag{PS\textsubscript{PDE}}
    \begin{dcases}
        - \partial_t v^i - \sigma \sum_{1 \leq j \leq N} \Delta_{x^j} v^i - \sigma_0 \sum_{1 \leq j,k \leq N} \tr\big( D_{x^jx^k} v^i \big) + D_{x} H^i(x^i, v^i)
        \\
        \qquad +  \sum_{1 \leq j \leq N}  D_{x^j} v^i D_pH^j(x^j, v^j) = D_{x^i} F^i, \qquad (t,\bx) \in [0,T] \times (\R^d)^N
     \\[3pt]
        v^i(T,\bx) = D_{x^i} G^i(\bx)\,, \qquad \bx \in (\R^d)^N\,.
    \end{dcases}
\end{equation}
More precisely, it is expected that the solution of \eqref{pontryagin} should take the form
\begin{equation} \label{pontdecoupling}
    Y_t^i = v^i(t,\bX_t)\,, \quad Z_t^{i,j} = \sqrt{2\sigma} D_{x^j} v^i(t,\bX_t)\,, \quad Z_t^{i,0} = \sqrt{2 \sigma^0} \sum_{1 \leq j \leq N} D_{x^j} v^i(t,\bX_t)\,,
\end{equation}
for all $i,j = 1,\dots,N$.

\subsubsection{The distributed formulation}
For technical reasons which are discussed in \Cref{rmk.distcommonnoise} below, we will only discuss the distributed formulation of the game under the condition $\sigma_0 = 0$. In the distributed formulation, player $i$ chooses a feedback function $\alpha^i(t,x) \colon [t_0,T] \times \R^d \to \R^d$, and the states $\bX = (X^1,\dots,X^N)$ evolve according to 
\begin{equation} \label{distdynamics}
    \d X_t^i = \alpha^i(t,X_t^i) \,\d t + \sqrt{2\sigma} \,\d W_t^i\,, \quad t_0\leq t \leq T\,, \qquad X_{t_0}^i = \zeta^i_0\,, 
\end{equation}
where the $\zeta_0^i$'s are as above and also \emph{independent}. For simplicity, we assume that each $\alpha^i$ is chosen from the set $\cA_{t_0}^{\dist}$, the set of measurable maps $\alpha(t,x) \colon [0,T] \times (\R^d)^N \to \R$, satisfying the growth condition $|\alpha(t,x)| \lesssim 1 + |x|$.
Player $i$ aims to minimize the cost functional 
\begin{equation*}
    J^i_{\dist}(t_0,\bzeta, \bm \alpha) \defeq \E\bigg[\int_{t_0}^T \!\Big(L^i\big(X_t^i, \alpha^i(t,X_t^i)\big) + F^i(\bX_t) \Big)\,\d t + G^i(\bX_T)  \bigg]\,,
\end{equation*}
where implicitly $\bX$ depends on $\bm \alpha$ through the dynamics \eqref{distdynamics}. A distributed equilibrium (started from $(t_0,\bzeta_0)$) is a tuple $\bm \alpha = (\alpha^1,\dots,\alpha^N) \in (\cA_{t_0}^{\dist})^N$ such that for each $i = 1,\dots,N$, and each $\beta \in \cA_{t_0}^{\dist}$, 
\begin{equation*}
    J^i_{\dist}(t_0,\bzeta_0, \bm \alpha) \leq J^i_{\dist}\big(t_0,\bzeta_0, (\bm \alpha^{-i}, \beta) \big)\,. 
\end{equation*}
By fixing $(\alpha^j)_{j \neq i}$ and considering the optimization problem faced by player $i$, one sees that distributed equilibria are described by the PDE system 
\begin{equation} \label{distpde}
    \begin{dcases}
        - \partial_t w^i - \sigma \Delta_x w^i + H^i(x,D_x w^i) = \int_{(\R^d)^{N-1}} F^i(\by^{-i},x) \prod_{j \neq i} m_t^{j}(\d y^j)
        \\
        \partial_t m^i - \sigma \Delta_x m^i - \text{div}_x\big( m^i D_p H(x,D_x w^i) \big) = 0
        \\[7pt]
        w^i(T,\bx) = \int_{(\R^d)^{N-1}} G^i(\by^{-i},x) \prod_{j \neq i} m_t^{j}(\d y^j)\,, \quad m_{t_0}^i = \cL(\zeta^i_0)\,,
    \end{dcases}
\end{equation}
for $(t,x) \in [t_0,T] \times \R^d$, where $(\by^{-i},x) \defeq (y^1,\dots,y^{i-1},x,y^{i+1},\dots,y^N)$ and $\cL(\zeta)$ denotes the law of the random variable $\zeta$. More precisely, given a solution $(w^i,m^i)_{i = 1,\dots,N}$ to \eqref{distpde}, the choice
\[
\alpha^i(t,x) = - D_p H^i(x, D_x w^i(t,x))
\]
gives a distributed equilibrium started from $(t_0,\bzeta_0)$. Alternatively, by applying the maximum principle to solve the optimization problem faced by player $i$, we obtain the FBSDE system 
\begin{equation} \label{distfbsde}
    \begin{dcases}
         \d X_t^i = - D_pH^i(X_t^i,Y_t^i) \,\d t + \sqrt{2\sigma} \,\d W_t^i
        \\  
        \d Y_t^i = - \bigl(D_x H^i(X_t^i,Y_t^i) + \E\bigl[ D_{x^i} F^i(\bX_t) \mid X_t^i \bigr]\bigr)\, \d t + \sum_{1 \leq j \leq N} Z_t^{i,j}\, \d W_t^j
        \\[-3pt]
        X_{t_0}^i = \zeta^i_0\,, \quad Y_T^i = \E\big[D_{x^i} G^i(\bX_T) \mid X_T^i \big]\,,
    \end{dcases}
\end{equation}
which is connected to \eqref{distpde} formally by $m_t^i = \cL(X_t^i)$ and $Y_t^i = D_x w^i(t,X_t^i)$.

\subsection{Semi-monotonicity and weak interactions}

We now describe our main structural conditions. As for the semi-monotonicity conditions, we will be considering the two regimes of \emph{displacement} and \emph{Lasry--Lions} semi-monotonicity, according to the terminology also adopted in \cite{CR24b}.

The ``second order" characterization of displacement semi-monotonicity reads as follows:
\begin{equation}  \tag{disp} \label{disp.intro}
     D_{xa} L^i \geq C_{\bm L} \begin{pmatrix} 0 & 0 \\ 0 & I_{d} \end{pmatrix}\,, \quad (D_{x^jx^i} F^i)_{i,j = 1,\dots,N}, \geq - C_{\bF,\dis} I_{dN}\,, \quad  (D_{x^jx^i} G^i)_{i,j = 1,\dots,N}  \geq - C_{\bG,\dis} I_{dN}\,,
\end{equation}
where $C_{\bF,\dis}$ and $C_{\bG,\dis}$ are small but dimension-free non-negative constants. 
Here we view $(D_{x^jx^i} F^i)_{i,j = 1,\dots,N}$ as an element of $(\R^{d\times d})^{N \times N} \simeq \R^{dN \times dN}$, and we use the usual partial order for matrices (even if they are non-symmetric). See \Cref{assump.disp} for a  ``first-order" characterization, and also for a more precise description of the dimension-free constants $C_{\bF,\dis}$ and $C_{\bF,\dis}$. It is worth noting that in fact condition~\eqref{cdispdef} in \Cref{assump.disp} allows us also to take advantage of small time horizons, in the sense that $C_{\bF,\dis}$ and $C_{\bG,\dis}$ can be large when $T$ is small. 

Lasry--Lions semi-monotonicity, meanwhile, means that 
\begin{align} \tag{\text{LL}} \label{LL.intro}
    D_{aa} L^i \geq C_{\bm L} I_{d}\,, \quad (D_{x^jx^i} F^i \1_{i \neq j})_{i,j = 1,\dots,N} \geq - C_{\bF,\text{LL}} I_{dN}\,, \quad (D_{x^jx^i} G^i \1_{i \neq j})_{i,j = 1,\dots,N}  \geq - C_{\bG,\text{LL}} I_{dN}\,, 
\end{align}
where $C_{\bF, \text{LL}}$ and $C_{\bG, \text{LL}}$ are small but dimension-free non-negative constants. We refer to Assumption \ref{assump.LL} for a more precise statement (see also condition~\eqref{smallness2} which replaces the simpler \eqref{smallness'} in this case).

As explained in \cite{CR24b}, these monotonicity conditions are natural adaptations of the corresponding conditions for mean field games. In particular, consider the mean field case, where
\begin{align} \label{mf.intro} \tag{\text{MF}}
    L^i = L\,, \quad F^i(\bx) = \cF(x^i,m_{\bx}^N)\,, \quad G^i(\bx) = \cG(x^i,m_{\bx}^N)\,, 
\end{align}
for some smooth enough $\cF, \cG \colon \R^d \times \cP_2(\R^d) \to \R$, and where
\[
m^N_{\bx} \defeq \frac{1}N \sum_{1 \leq i \leq N} \delta_{x^i}\,,
\]
is the usual empirical measure associated to $\bx \in (\R^d)^N$. Then the conditions on $F^i$ and $G^i$ in \eqref{disp.intro} are satisfied for large enough $N$ if $\cF$ and $\cG$ are smooth enough and displacement monotone, i.e. 
\begin{align*}
    \E\big[ \big( D_x\cF(X, \cL(X)) - D_x \cF(X', \cL(X')) \big) \cdot (X - X') \big] \geq 0
\end{align*}
for every pair of square-integrable random variables $X$ and $X'$, and likewise for $\cG$. Similarly, the conditions on $F$ and $G$ appearing in \eqref{LL.intro} are satisfied if $\cF$ and $\cG$ are smooth enough and Lasry--Lions monotone, i.e.\ satisfy 
\begin{align*}
    \int_{\R^d} \big( \cF(x,m) - \cF(x, m') \big) \,\d(m - m')(x) \geq 0
\end{align*}
for any pair of probability measures $m,m' \in \cP_2(\R^d)$.

\medskip

As indicated above, our main results require \textit{weak interactions} in addition to semi-monotone costs. The strength of the interaction between the players is going to be measured by 
the quantities
\begin{equation*} 
\begin{gathered}%\label{def.delta}
    \delta^i \coloneqq \Big\|\sum_{j \neq i} |D_{x^j} G^i|^2 \Big\|_{\infty} + \Big\| \sum_{j \neq i} |D_{x^j} F^i|^2 \Big\|_{\infty}\,, \quad \delta \coloneqq \max_{1 \leq i \leq N} \delta^i\,,
\\ % \label{def.kappa}
       \kappa^i \coloneqq \Big\|\sum_{j \neq i} |D_{x^jx^i} G^i|^2 \Big\|_{\infty} + \Big\| \sum_{j \neq i} |D_{x^jx^i} F^i|^2 \Big\|_{\infty}\,, \quad \kappa \coloneqq \max_{1 \leq i \leq N} \kappa^i\,,
\\ %\label{def.tildekappa}
       \tilde\kappa^i \coloneqq \Big\| \sum_{j \neq i} |D_{x^ix^j} G^j|^2 \Big\|_{\infty} + \Big\| \sum_{j \neq i} |D_{x^ix^j} F^j|^2 \Big\|_{\infty}\,, \quad  \tilde\kappa \coloneqq \max_{1 \leq i \leq N}  \tilde\kappa^i\,.
\end{gathered}
\end{equation*}
Roughly speaking, $\delta^i$ is small if, on average, player $i$'s costs depend very little on each of the other players. A simple example might be 
\begin{align} \label{simp.intro}
    F^i(\bx) = f_0 (x^i) + \sum_{1 \leq j \leq N} w_{ij} f(x^i,x^j)\,,
\end{align}
for some $w = (w_{ij})_{i,j = 1,\dots,N} \in \R^{N \times N}$ with $w_{ii} = 0$, and smooth $f_0 \colon \R^d \to \R$ and $f \colon \R^d \times \R^d$, with $f$ having bounded derivatives. Then 
\begin{align*}
    \delta^i \lesssim \sum_{j \neq i} |w_{ij}|^2, 
\end{align*}
which is small e.g.\ if $\sum_{j \neq i} |w_{ij}| = O(1)$ and $\max_{i \neq j} |w_{ij}|$ is small. The intuition for $\kappa^i$ is similar. On the other hand, $\tilde{\kappa}^i$ is small if on average player $i$ influences the other players' costs very little. In the mean field case, $\delta^i$, $\kappa^i$ and $\tilde{\kappa}^i$ will all be of order $1/N$. In fact, this is also true in the ``mean-field-like'' case treated in \cite{CR24b}, where the derivatives of $F^i$ and $G^i$ (up to third order) scale in $N$ like in the mean field case.

Our main results in the displacement monotone framework will require the condition
\begin{align} \label{weak.intro} \tag{weak\textsubscript{1}}
   \delta \sum_{1 \leq i \leq N} \delta^i < \eps, 
\end{align}
where again $\eps$ indicates a small but dimension-free constant, and our results in the Lasry--Lions monotone case will in addition require 
\begin{align} \label{weak2.intro} \tag{weak\textsubscript{2}}
 \kappa \tilde{\kappa} < \eps. 
\end{align}
Note that these conditions allow much stronger interactions between players than in the mean field case; for example, in the simple case \eqref{simp.intro}, 
\begin{equation*}
    \delta \sum_{1 \leq i \leq N} \delta^i \lesssim \Big(\max_{1 \leq i \leq N} \sum_{j \neq i} |w_{ij}|^2\Big) \cdot \tr\big( ww^\TT \big), \qquad \kappa \tilde\kappa \lesssim \max_{1 \leq i \leq N} \sum_{j \neq i} |w_{ij}|^2 \cdot \max_{1 \leq j \leq N} \sum_{i \neq j} |w_{ij}|^2 ,
\end{equation*}
or, more in general, one can consider the following ``network-based" analogue of \eqref{mf.intro}:
\begin{equation} \label{graph.intro} \tag{MF\textsubscript{net}}
    L^i = L\,, \quad F^i(\bx) = \cF\Bigl(x^i, \sum_{1 \leq j \leq N} w_{ij} \delta_{x^j} \Bigr)\,, \quad G^i(\bx) = \cG\Bigl(x^i,\sum_{1 \leq j \leq N} w_{ij} \delta_{x^j}\Bigr)\,, 
\end{equation}
where $w = (w_{ij})_{i,j = 1,\dots,N}$ is an interaction matrix satisfying 
\begin{equation*}
  w_{ij} \geq 0\,, \quad w_{ii} = 0\,, \quad  \sum_{1 \leq j \leq N} w_{ij} = 1,\, \quad \text{for all}\ i,j = 1,\dots,N\,. 
\end{equation*}
In this case, the conditions of \cite{CR24b} will not hold unless each $w_{ij}$ is of order $1/N$, while \eqref{weak.intro} and \eqref{weak2.intro} will both hold for large enough $N$ if 
\begin{equation} \label{degreedivergenceintro}
     % \max_{1 \leq j \leq N} \sum_{1 \leq i \leq N} w_{ij} = O(1)\,, \quad  
     \Big(\max_{1 \leq i \leq N} \sum_{j \neq i} |w_{ij}|^2\Big) \cdot \tr\big( ww^\TT \big) \xrightarrow{N \to \infty} 0\,, 
\end{equation}
which one can check is much weaker condition than requiring each $w_{ij}$ to be of order $1/N$, see Remark \ref{remark.wtodeg} below for further discussion.

\subsection{Informal statement of main results} We now discuss our main results and their implications.
 
\subsubsection{Dimension-free bounds}
First, because our main results are \emph{non-asymptotic} (we work with a single fixed $N$, and do not take $N \to \infty$), we have to clarify the meaning of \emph{dimension-free}. This is made precise in \Cref{conv.dimfree,conv.dimfreell} below, where we state precisely what a dimension-free constant can depend on. Roughly speaking, a constant $C$ is dimension-free if it depends on $L^i$, $F^i$ and $G^i$ through certain natural quantities, but not directly on $N$. This choice of terminology is reasonable because in most asymptotic regimes of interest, such quantities are bounded uniformly in $N$.

At first glance, it is not obvious what sort of quantities associated to the Pontryagin system \eqref{pontpde} or the Nash system \eqref{nash} are useful or feasible to estimate in a dimension-free manner. It turns out that for the applications we have in mind, the key quantity for \eqref{pontpde} is the Lipschitz constant (in space) of the solution $\bv = (v^1,\dots,v^N)$, or equivalently the operator norm of the matrix $(D_{x^j} v^i)_{i,j = 1,\dots,N} \in \R^{dN \times dN}$. For the Nash system \eqref{nash}, the corresponding key quantity is the Lipschitz constant of the vector field $(D_{x^i} u^i)_{i=1,\dots,N}$, or equivalently the operator norm of the matrix $A = (D_{x^jx^i} u^i)_{i,j = 1,\dots,N} \in \R^{dN \times dN}$. One way to understand the importance of this quantity is to note that under our standing regularity assumptions, a dimension-free bound on $\big|(D_{x^jx^i} u^i)_{i,j = 1,\dots,N}\big|_{\op}$ implies that the equilibrium feedback controls 
\begin{equation*}
    \alpha^{*,i}(t,\bx) = - D_p H^i\big(x^i, D_{x^i} u^i(t,\bx) \big)
\end{equation*}
satisfy a dimension-free Lipschitz bound, which in turn implies that the equilbrium trajectories are stable (in an appropriate $L^2$ sense) in their initial conditions. The intuition for the importance of $\big|(D_{x^j} v^i)_{i,j = 1,\dots,N} \big|_{\op}$ is similar. Much of the paper is devoted to efficiently estimating these two quantities.

For the Pontryagin system \eqref{pontpde}, we show in \Cref{prop.olboundsdisp,prop.olboundslip} that if either \eqref{disp.intro} holds or \eqref{LL.intro}--\eqref{weak2.intro} hold, then we can indeed obtain a bound of the form 
\begin{equation} \label{vlip.intro}
\big| (D_{x^j} v^i)_{i,j = 1,\dots,N}  \big|_{\op} \leq C\,,
\end{equation}
with $C$ a dimension-free constant. For the Nash system, we do not obtain an $L^{\infty}$ bound like \eqref{vlip.intro}, but we do get a bound in an $L^2$-sense along optimal trajectories. More precisely, we show in \Cref{thm.maindisp,thm.mainll} that if either \eqref{disp.intro}--\eqref{weak.intro} or \eqref{LL.intro}--\eqref{weak.intro}--\eqref{weak2.intro} hold, then we have a bound like
\begin{align} \label{l2bound.intro}
    \sup_{t_0,\bx_0} \E\bigg[ \int_{t_0}^T \big| \big(D_{x^jx^i}u^i(t,\bX_t^{t_0,\bx_0}) \big)_{i,j = 1,\dots,N}  \big|_{\op}^2 \bigg] \leq C\,, 
\end{align}
where $C$ is a dimension-free constant and $\bX^{t_0,\bx_0}$ is the closed-loop Nash equilibrium started from $(t_0,\bx_0) \in [0,T] \times (\R^d)^N$. The bound \eqref{l2bound.intro} has a number of interesting consequences, one of which is a dimension-free bound like
\begin{equation*} %\label{propofsmallness.intro}
      \sum_{j \neq i} |D_{x^j} u^i|^2 \leq C \biggl( \Big\| \sum_{j \neq i} |D_{x^j} G^i|^2 \Big\|_{\infty} + \Big\| \sum_{j \neq i} |D_{x^j} F^i|^2 \Big\|_{\infty} \biggr) = C \delta^i 
\end{equation*}
for each $i$ and some dimension-free constant $C$, which confirms that in some sense weak interactions are preserved by the Nash system.  

\subsubsection{The gap between open-loop, closed-loop, and distributed equilibria}
Fix for concreteness some arbitrary initial condition $(t_0,\bm \zeta_0)$ for the game, and denote by $\bX = (X^1,\dots,X^N)$ and $\wt{\bX} = (\wt{X}^1,\dots,\wt{X}^N)$ the closed-loop and open-loop equilibria, respectively, for the game started from $(t_0,\bm\zeta_0)$. Using \eqref{vlip.intro} and \eqref{l2bound.intro}, we show in \Cref{thm.olcldisp,thm.olclLL} that if either \eqref{disp.intro}--\eqref{weak.intro} hold or \eqref{LL.intro}--\eqref{weak.intro}--\eqref{weak2.intro} hold, then we have a bound of the form 
\begin{align} \label{olcl.intro}
    \E\biggl[\, \sup_{t_0 \leq t \leq T} \frac{1}{N} \sum_{1 \leq i \leq N} |X_t^i - \wt{X}_t^i|^2 \biggr] \leq \frac{C}{N}\, \delta  \sum_{1 \leq i \leq N} \delta^i\,,
\end{align}
with $C$ again dimension-free. In the mean-field case, where $\delta^i$ and $\kappa^i$ are of order $1/N$, the bound \eqref{olcl.intro} implies that the average $L^2$ distance between the closed-loop and open-loop equilibrium state processes is $O(1/N)$. 

Now suppose that $\sigma_0 = 0$, and denote by $\ov{\bX} = (\ov{X}^1,\dots,\ov{X}^N)$ the distributed equilibrim started from $(t_0,\bm\zeta_0)$. We show in \Cref{thm.oldistdisp,thm.oldistLL} that if either \eqref{disp.intro}--\eqref{weak.intro} hold or \eqref{LL.intro}--\eqref{weak.intro}--\eqref{weak2.intro} hold, then we have
\begin{align} \label{oldist.intro}
  \E\biggl[\, \sup_{t_0 \leq t \leq T} \frac{1}{N} \sum_{1 \leq i \leq N} |\wt{X}_t^i - \ov{X}_t^i|^2 \biggr] \leq \frac{C}{N} \sum_{1 \leq i \leq N} \kappa^i, 
\end{align}
with $C$ a dimension-free constant. In the mean-field case, this implies that the average $L^2$ distance between the open-loop and distributed equilibrium state processes is $O(1/\sqrt{N})$. 

The idea that different formulations of the game should be nearly equivalent when players interact weakly has appeared before; e.g., the following is observed in \cite[Remark 5.7]{CarmonaLectures}.
\begin{displayquote}
\emph{``From a mathematical standpoint, open loop equilibriums are more
 tractable than closed loop equilibriums because players need not consider how their opponents would react to deviations from the equilibrium path. With this in mind, one should
 expect that when the impact of players on their opponents’ costs/rewards is small, open
 loop and closed loop equilibriums should be the same."}
\end{displayquote}
Our results quantify this intuition for the first time, and open up the possibility of using open-loop or distributed games as tractable approximations of closed-loop games, with rigorous bounds on the error.

\subsubsection{The application to universality for mean field games}
In the mean field case, it is typically much easier to justify the convergence of the open-loop or distributed $N$-player games than the closed-loop versions. In particular, quantitative convergence for closed-loop equilibria is typically obtained via the master equation, an infinite-dimensional partial differential equation whose solution is a map $ U \colon [0,T] \times \R^d \times \cP_2(\R^d) \to \R$. The argument in \cite[Chapter~6]{CDLL} shows that in order to obtain quantitative convergence for closed-loop Nash equilibria, it suffices to construct a smooth enough solution to the master equation. This can be accomplished if $T$ is sufficiently small \cite{chassagneux2014probabilistic,CCPjems,Mayorga}, the costs are Lasry--Lions monotone \cite{CDLL,cardelar2,chassagneux2014probabilistic}, or the costs are displacement monotone \cite{gangbo,gangbomeszaros}. See also \cite{MouZhang} for a result under ``anti-monotonicity conditions" and \cite{meszarosbansil} for an interest recent work which explains, among other things, how to obtain the well-posedness results in \cite{MouZhang} in a simpler way.

On the other hand, for open-loop equilibria the strategy of ``forward-backward propagation of chaos", initiated in \cite{LauriereTangpi}, provides a much simpler proof of quantitative convergence by comparing the $N$-player Pontryagin system to its mean-field counterpart via a synchronous coupling argument. This strategy was shown to be particularly efficient in the displacement monotone setting in \cite{JacksonTangpi}, and has also been employed to study the convergence of open-loop equilibria for graphon games in \cite{BWZ}. 

In Section \ref{sec.universality}, we explain how to execute the forward-backward propagation of chaos argument for both Lasry--Lions and displacement monotone mean field games, which yields quantitative convergence results for open-loop Nash equilibira. In the Lasry--Lions monotone case, this requires the bound \eqref{vlip.intro}, while in the displacement monotone case it is a straightforward adaptation of the argument in \cite{JacksonTangpi}. We then use the bound \eqref{olcl.intro} to transfer this to a quantitative convergence result for closed-loop equilibria, as well. This leads to convergence results similar to those obtained in \cite{CDLL}, without using the master equation, and under somewhat weaker regularity assumptions on the costs (see Example \ref{ex:mfcase}).

In fact, we can go further, in that we can also establish the validity of the mean-field approximation for the network-based model \eqref{graph.intro} under certain technical conditions. In particular, given a sequence of matrices ${w}^N = (w^N_{ij})_{i,j = 1,\dots,N}$ and cost functions $L$, $\cF$, and $\cG$, we demonstrate in \Cref{thm.universality} quantitative convergence of the $N$-player (open-loop and closed-loop) equilibria provided that the following conditions are fulfilled: either \eqref{disp.intro} or \eqref{LL.intro} holds uniformly in $N$, \eqref{degreedivergenceintro} holds, and
\[
\limsup_{N \to \infty} \max_{1 \leq j \leq N} \sum_{1 \leq i \leq N} w^N_{ij} < \infty\,.
\]
Note that this last requirement can be considered as a regularity condition on the sequence of (weighted and directed) graphs having $w^N$, $N \in \N$, as adjacency matrices (cf.~\Cref{remark.wtodeg}). We refer to \Cref{thm.universality} as a \textit{universality} result, because it confirms that the usual mean field limit holds provided that the networks are dense enough. Unfortunately, the assumption that \eqref{disp.intro}, or \eqref{LL.intro}, holds uniformly in $N$ does not follow from the displacement, or Lasry--Lions, monotonicity of $\cF$ and $\cG$, and instead requires some compatibility between the networks ${w}^N$ and the costs $\cF$ and $\cG$. We discuss in \Cref{subsec.univexamples} some examples where these conditions is satisfied. The easiest to describe is the case of $L$ jointly convex (strictly with respect to $a$) and
\begin{align*}
    D_{xx}\cF \geq \norm{ |D_{mx} \cF|_{\op} }_{\infty} I_{d}, \quad D_{mx}\cG \geq \norm{ |D_{xm} \cG|_{\op} }_{\infty} I_{d};
\end{align*}
it turns out that this ``strong displacement monotonicity condition" is enough to guarantee uniform in $N$ displacement monotonicity for any sequence $w^N$ of bi-stochastic matrices.

\subsection{Proof strategy} We now briefly outline the proofs of our main results.

\subsubsection{The bound \texorpdfstring{\eqref{vlip.intro}}{(1.10)}}
In both the displacement monotone and Lasry--Lions monotone regimes, our first step is to establish the bound \eqref{vlip.intro}. In the displacement monotone case, this follows from a dimension-free $L^2$ stability estimate for the Pontryagin FBSDE system \eqref{pontryagin}, which is proved via a well-known technique for studying monotone FBSDEs; see the proof of \Cref{prop.olboundsdisp} for details.
In the Lasry-Lions monotone case, the bound \eqref{vlip.intro} is much more subtle, because it is hard to see the condition \eqref{LL.intro} at the level of the PDE \eqref{pontpde} or the FBSDE \eqref{pontryagin}. Our approach is to fix a unit vector $\bm{\xi}_0 = (\xi_0^1,\dots,\xi_0^N)$, and then study the dynamics of the process
\begin{equation*} %\label{xi.intro}
   \sum_{\substack{1 \leq i, j \leq N \\ j \neq i}} (\xi_t^i)^\TT D_{x^j} v^i(t, {\bX}_t) \xi_t^j,
\end{equation*}
where $\bm{\xi}_t$ is a well-chosen $(\R^d)^N$-valued process with $\bm{\xi}_{t_0} = \bm{\xi}_0$, and ${\bX}$ is an open-loop equilibria. This is partially inspired by a similar computation done at the mean field level in \cite{gangbomeszaros}. The motivation is that the dynamics of this process include a useful coercive term which can ultimately be used to establish the Lipschitz bound \eqref{vlip.intro}, and a bad term which is formally small if \eqref{weak2.intro} holds. However, we can only rigorously show that the bad term is small if we already know the desired Lipschitz bound \eqref{vlip.intro}. To circumvent this difficulty, we start by \textit{assuming} a bound of the form 
\begin{equation*} %\label{vlip.M}
    \sup_{[T_0,T] \times (\R^d)^N} \big|(D_{x^j}v^i)_{i,j = 1,\dots,N} \big|_{\op} \leq M
\end{equation*}
for some $T_0 \in [0,T)$, and $M > 0$, and then follow the strategy outlined above to prove that there is a dimension-free constant $C$ such that the desired Lipschitz bound \eqref{vlip.intro} holds, provided that 
\begin{equation*}
    e^{CM} \sqrt{\kappa \tilde \kappa} < \frac{1}{C}.
\end{equation*}
In other words, we prove that there is a dimension-free constant $C$ such that the implication
\begin{equation*}
     \begin{dcases}
     \sup_{[T_0,T] \times (\R^d)^N} \big|(D_{x^j}v^i)_{i,j = 1,\dots,N} \big|_{\op} \leq M
     \\
     e^{CM} \sqrt{\kappa \tilde \kappa} < \frac{1}{C}
     \end{dcases}
 \quad \implies \quad \sup_{[T_0,T] \times (\R^d)^N} \big|(D_{x^j}v^i)_{i,j = 1,\dots,N} \big|_{\op} \leq C
\end{equation*}
holds for each $T_0 \in [0,T)$. It turns out that this is enough to obtain the bound \eqref{vlip.intro}, provided that \eqref{weak2.intro} holds for some small enough (but dimension-free) constant $\eps$.

\subsubsection{The bound \texorpdfstring{\eqref{l2bound.intro}}{(1.11)}}
The main technical innovation of the paper is a general strategy for inferring the bound \eqref{l2bound.intro} from the bound \eqref{vlip.intro}, by taking advantage of the weak interaction condition \eqref{weak.intro} to view $D_{x^i} u^i$ as a small perturbation of $v^i$. In fact, in Section \ref{sec.nash}, we do not use the monotonicity conditions at all, we only use regularity of the data, the bound \eqref{vlip.intro}, the condition \eqref{weak.intro}, and the non-degeneracy of the idiosyncratic noise. Our strategy in that section is to start by \textit{assuming} that 
\begin{equation} \label{l2bound.intro2}
    \sup_{T_0 \leq t_0 \leq T,\,\bx_0 \in (\R^d)^N} \E\bigg[ \int_{t_0}^T \Bigl| \big(D_{x^jx^i}u^i(t,\bX_t^{t_0,\bx_0}) \big)_{i,j = 1,\dots,N}  \Bigr|_{\op}^2 \bigg] \leq M 
\end{equation}
for some constant $M > 0$ and some time $T_0 \in (0,T)$, and with $\bX^{t_0,\bx_0}$ denoting the closed-loop equilibrium trajectory started from $(t_0,\bx_0)$; then prove a sequence of technical estimates under this additional assumption. Ultimately, these bounds allow us to conclude that $(D_{x^1} u^1,\dots,D_{x^N}u^N)$ satisfies the same PDE \eqref{pontpde} as $(v^1,\dots,v^N)$ on $[T_0, T] \times (\R^d)^N$, up to error terms which are small thanks to \eqref{weak.intro}, at least in an appropriate $L^2$-sense along equilibrium trajectories, and up to an $M$-dependent constant. We then study the dynamics of the process 
\begin{align*}
    \sum_{1 \leq i \leq N} \bigl|D_{x^i} u^i(t,\bX_t^{t_0,\bx_0}) - v^i(t,\bX^{t_0,\bx_0}_t)\bigr|^2\,; 
\end{align*}
by using the previously obtained bounds on the ``error terms", the bound \eqref{vlip.intro} on the Lipschitz constant of $\bv$, and the non-degenerate idiosyncratic noise, we arrive at a bound of the form
\begin{align*}
    \E\bigg[\int_{t_0}^T \sum_{1 \leq i,j \leq N} \bigl|D_{x^jx^i} u^i(t,\bX_t^{t_0,\bx_0}) - D_{x^j} v^i(t,\bX^{t_0,\bx_0}_t)\bigr|^2 \,\d t \bigg] \leq C e^{CM} {\delta} \sum_{1 \leq i \leq N} \delta^i\,, 
\end{align*}
for some dimension-free constant $C$ and
for each $t_0 \in [T_0, T]$ and $\bx_0 \in (\R^d)^N$. 
The term inside the integral is the (square of the) Frobenius norm of the matrix $\bigl(D_{x^jx^i} u^i - D_{x^j} v^i\bigr)_{i,j = 1,\dots,N}$, which bounds from above the operator norm. Thus by the triangle inequality, we are able to conclude a bound of the form
\begin{equation*}
   \E\bigg[\int_{t_0}^T \Bigl| \big(D_{x^jx^i} u^i(t,\bX_t^{t_0,\bx_0}) \big)_{i,j = 1,\dots,N} \Bigr|^2 \,\d t \bigg] %\leq 2 \E\bigg[\int_{t_0}^T \big| \big(D_{x^j} v^i(t,\bX_t^{t_0,\bx_0}) \big)_{i,j = 1,\dots,N} \big|^2 dt \bigg]
  % \\
  % &\qquad + 2\E\bigg[\int_{t_0}^T \sum_{i,j = 1}^N |D_{x^jx^i} u^i(t,\bX_t^{t_0,\bx_0}) - D_{x^j} v^i(t,\bX^{t_0,\bx_0}_t)|^2 \bigg]
  % \\
   \leq C + C e^{CM} {\delta} \sum_{1 \leq i \leq N} \delta^i\,, 
\end{equation*}
and taking a supremum over $t_0 \in [T_0,T]$ and $\bx_0 \in (\R^d)^N$ we find an implication of the form 
\begin{equation} \label{implication.intro}
\mbox{\eqref{l2bound.intro2} holds with constant $M$ \quad $\implies$ \quad \eqref{l2bound.intro2} holds with constant $\ds C + C e^{CM} {\delta} \sum_{1 \leq i \leq N} \delta^i$}\,,
\end{equation}
with $C$ a dimension-free constant. Finally, we show that if \eqref{weak.intro} holds with $\eps$ a small enough (but dimension-free) constant, then this implication is enough to conclude that \eqref{l2bound.intro} holds.

\subsubsection{The bound \eqref{olcl.intro}}
The bound between open-loop and closed-loop equilibria in \eqref{olcl.intro} follows in a relatively straightforward manner from the discussed bounds on \eqref{pontpde} and \eqref{nash}. In particular, as mentioned before, the bound \eqref{vlip.intro} guarantees a stability property for the Pontryagin FBSDE \eqref{pontryagin}, and the bound \eqref{l2bound.intro} is enough to guarantee that that closed-loop equilibria almost solve \eqref{pontryagin} in an appropriate sense. The bound between open-loop and distributed equilibria is similar, but uses ideas from \cite{JacksonLacker} to show that the distributed equilibria almost solve \eqref{pontryagin}.

\subsubsection{Convergence and universality for MFG}
Finally, the applications to convergence and universality in mean field game theory all follow the same general strategy: we first prove a convergence/universality result for open-loop equilibria by following the forward-backward propagation of chaos strategy, and then transfer this result to closed-loop equilibria via the bound \eqref{olcl.intro}.

\subsection{Comparison to the literature} In this subsection we give an overview of the relevant literature.

\subsubsection{Dimension-free bounds for the Nash system}
As far as the dimension-free bounds on the Nash system, the main precursors to this work are the recent papers \cite{CR24b,CR24} of the first and third authors. In particular \cite{CR24b} demonstrated that displacement and Lasry--Lions monotonicity could lead to dimension-free bounds on the Nash system in the mean field (and mean-field-like) setting, in addition to regularity for the limiting master equation. The monotonicity conditions used there are essentially the same as in the present paper, but the strength of interactions which is allowed between players is very different. In particular, instead of \eqref{weak.intro} or \eqref{weak2.intro}, \cite{CR24b} requires the much stronger ``mean-field-like" assumption, which in particular implies that $\delta \sum_{1 \leq i \leq N} \delta^i = O(1/N)$ and $\kappa \tilde{\kappa} = O(1/N^2)$. This mean-field-like condition is used extensively in the proofs, and as far as we can tell the arguments in \cite{CR24b} do not go through if the mean-field-like condition is replaced with \eqref{weak.intro} or \eqref{weak2.intro}. Thus the bounds in \cite{CR24b} apply in the mean field case \eqref{mf.intro}, but do not apply in the graph-based setting \eqref{graph.intro} unless each connection $w_{ij}$ is of order $1/N$. In addition, \cite{CR24b} is restricted to the quadratic case $H^i(x,p) = \frac{1}{2} |p|^2$ for each $i$, because of the reliance on Lemma~4.1 therein. On the other hand, the estimates obtained there are much stronger than \eqref{l2bound.intro}, so the results are not strictly comparable to the present paper. 

Philosophically, our proof of \eqref{l2bound.intro} draws heavily from \cite{CR24b}, in that both arguments start by \textit{assuming} a certain bound on the key matrix $(D_{x^jx^i}u^i)_{i,j=1,,,.N}$, and then make a series of estimates under this additional assumptions which ultimately lead to an implication like \eqref{implication.intro}. However, the techniques used to perform these estimates are distinct, the main difference being that in the present paper we use the Pontryagin system \eqref{pontpde} as an auxiliary tool to investigate the more complicated Nash system \eqref{nash}. This is an effective strategy only because the open-loop and closed-loop Nash equilibria are expected to be close, and so the proximity of the open-loop and closed-loop formulation is not only a consequence of our main bounds, but also an inspiration for their proof. 

The use of \eqref{pontpde} as a tool to study \eqref{nash} seems to be new, and we are optimistic that it could have other applications in mean field game theory and related topics.

\subsubsection{Comparison between different formulations}
The comparison between open-loop, closed-loop, and distributed equilibria seems to be new, although somewhat similar results were obtained for high-dimensional stochastic control problem in \cite{JacksonLacker}. In particular, the bound \eqref{oldist.intro} could be viewed as a ``competitive" analogue of \cite[Theorem~4.9]{JacksonLacker}. We note that for stochastic control problems, open-loop and closed-loop formulations are typically equivalent, so there is no analogue of the first bound \eqref{olcl.intro} in \cite{JacksonLacker}. More generally, the role of the information available to each player in a stochastic control problem or game has been studied recently in, e.g., \cite{basaryuksel, sanjari,pradhanyuksel}, though the focus in this stream of literature is typically on existence or characterization of equilibria/optimizers, rather than on quantitative estimates on the gap between different formulations.

\subsubsection{Universality and convergence for MFGs}
Regarding the universality of the mean field game limit, the only results we are aware of are \cite{DelarueESAIM,LackerSoret}, both of which are restricted to a linear-quadratic setting. The convergence problem in mean field game theory (in the classical setting \eqref{mf.intro} rather than the network-based analogue \eqref{graph.intro}) has received much more attention and, as discussed already, the first quantitative convergence results for closed-loop Nash equilibria came in \cite{CDLL}, where the authors show that a smooth solution to the master equation can be used as a tool to prove quantitative convergence. As mentioned above, our argument sidesteps the master equation, by first obtaining quantitative convergence for open-loop equilibria via forward-backward propagation of chaos and then transferring this to closed-loop equilibria via \eqref{olcl.intro}. Thus in the mean field case, our dimension-free bounds on the Nash system can somehow replace the bound on $D_{mm} U$, the second Wasserstein derivative of the solution $U$ to the master equation, which is shown to be the key quantity to treat convergence in \cite{CDLL}. Note that here we work in a sort of lower regularity regime, and we do not have access to bounds that would lead in the $N\to\infty$ limit to the existence of $D_{mm} U$ (at the same time, we do not need second order differentiability of the costs in the $m$ variable).

If we take a sequence of networks $w_{ij}^N$ which converge to a \emph{graphon}, then our results connect to the budding literature on stochastic graphon games \cite{Carmona2,BWZ,CainesHuang, DaiPraA, LackerSoretLabel}. To understand why we always find the usual mean field model, not a graphon game in the large $N$ limit, note that our convergence results are obtained for i.i.d.\ initial conditions, and \cite[Proposition~3.3]{LackerSoretLabel} explains that in this case the graphon game collapses to a mean field game; thus our universality results in \Cref{sec.universality} can be viewed as convergence results for graphon games for the special case of i.i.d.\ initial conditions (for a possible extension to more general graphon games, see \Cref{remark.gengraphons}). This also helps to explain why in the case \eqref{graph.intro} monotonicity of $\cF$ and $\cG$ is not enough to guarantee the conditions \eqref{disp.intro} or \eqref{LL.intro}; indeed, in the graphon limit the correct monotonicity condition (see~\cite[Proposition~3.5]{LackerSoretLabel}) involves an interaction between the graphon $W$ and the cost functions $\cF$ and $\cG$, so it makes sense that for finite $N$ we also need a joint condition on the network and the costs.

\subsubsection{Joint large $N$ and vanishing viscosity limit}
In most of our arguments, we rely heavily on the non-degenerate idiosyncratic noise, i.e.\ the fact that $\sigma > 0$. However, we are careful to track the dependence on $\sigma$ in all of our main non-asymptotic bounds, i.e.\ we specify how every estimate depends both on $\sigma$ and on $N$. Our interest in tracking the dependence on $\sigma$ comes from mean field game theory. It turns out that the master equation (which is the expected limit of the $N$-player Nash system in an appropriate sense) is known to be well-posed even when $\sigma = 0$, provided that appropriate monotonicity conditions are met. In the displacement monotone case, \cite{BMMdet} obtains classical solutions without idiosyncratic noise, while in the Lasry--Lions monotone case, a notion of weak solutions is proposed in \cite{cardsougmonotone}. On the other hand, we are not aware of any results on the well-posedness of the Nash system \eqref{nash} when $\sigma = 0$, and it is not clear whether monotonicity alone is sufficient to obtain well-posedness in this case. 
Thus, it is difficult to make sense of the (closed-loop) convergence problem for MFGs with $\sigma = 0$, because even if the limiting master equation is well-posed, we do not know anything about existence or uniqueness of Nash equilibira for the finite-player game.

One way around this technical issue is to choose a sequence $\sigma_N \downarrow 0$, and argue that if $\sigma_N$ decreases slowly enough, then finite-player games with idiosyncratic noise of intensity $\sigma_N$ converge to the MFG with zero idiosyncratic noise.
We use our non-asymptotic bounds to carry out such a program in the displacement monotone case in \Cref{sec.jointlimit}, where we show that if $\sigma_N$ decreases to $0$ slower than $1/\sqrt{N}$, then in fact the closed-loop equilibria converge. We note that obtaining an analogous result in the Lasry--Lions monotone seems to be much more subtle, and we leave this question to future work.

\subsection{Further prospects:} Perhaps the most interesting question left open by this paper is whether the ``weak interaction" condition \eqref{weak.intro} (which seems to be much more stringent than \eqref{weak2.intro} in most examples, though they are not strictly comparable) is sharp. In particular, when applied to network-based case \eqref{graph.intro} with $w_{ij}$ the adjacency matrix of a regular graph (where every vertex has the same number of neighbors), the condition \eqref{weak.intro} boils down to $\deg^{-1} = o(N^{-1/2})$, where $\text{deg}$ denotes the common degree of the vertices. Thus, for regular graphs our universality results require that the degree grows faster than $\sqrt{N}$. Meanwhile, the results of \cite{LackerSoret} suggest that universality should hold as long as $\deg^{-1} = o(1)$. So, while the condition \eqref{weak.intro} is much less stringent than mean-field-like scaling, it is possible that one can further weaken these conditions. 

In fact, we can even point to a place in the proof of \eqref{l2bound.intro} where we may be losing a factor of $\sqrt{N}$, namely when we bound the operator norm of the matrix $\bigl(D_{x^jx^i}u^i - D_{x^j} v^i\bigr)_{i,j =1 ,\dots,N}$ by its Frobenius norm in the proof of \eqref{implication.intro}. However, the bound on the Frobenius comes very naturally from the noise, upon expanding $\sum_{1 \leq i \leq N} |v^i - D_{x^i} u^i|^2$ along an equilibrium trajectory, and we do not how to leverage the noise to obtain a bound directly on the operator norm (in an $L^2$ sense along optimal trajectories).

In addition to this main open question, several other technical improvements are possible. For example, in the displacement monotone case it should be possible to consider non-separable Hamiltonians of the form $H^i(p^i,\bx)$ in place of $H^i(p^i,x^i) - F(\bx)$, and in the Lasry--Lions monotone case it should be possible to allow a non-constant idiosyncratic noise. We do not pursue these extensions in order to keep the presentation homogeneous.

\subsection{Organization of the paper}
In \Cref{sec.formulation}, we discuss our notation and main assumptions, and state precisely the main results of the paper. \Cref{sec.pontryagin} contains our analysis of the Pontryagin systems \eqref{pontpde} and \eqref{pontryagin}, in particular the proof of the bound \eqref{vlip.intro} in both the displacement and Lasry--Lions semi-monotone cases. \Cref{sec.nash} contains the proofs of the bound \eqref{l2bound.intro} under both monotonicity conditions. \Cref{sec.olcl} contains the comparison between different formulations of the game, and in particular the proof of the bound \eqref{olcl.intro}. \Cref{sec.universality,sec.jointlimit} contain the main applications to mean field games, in particular \Cref{sec.universality} is concerned with universality results for the graph-based model \eqref{graph.intro}, and \Cref{sec.jointlimit} contains our analysis of the joint large $N$ and vanishing viscosity limit in the displacement monotone case.

\subsection*{Acknowledgements} M.C.\ and D.F.R.\ are partially supported by the Gruppo Nazionale per l’Analisi Matematica, la Probabilit\`a e le loro Applicazioni (GNAMPA) of the Istituto Nazionale di Alta Matematica (INdAM), and by the project funded by the EuropeanUnion–NextGenerationEU under the National Recovery and Resilience Plan (NRRP), Mission 4 Component 2 Investment 1.1 -- Call PRIN 2022 No.\ 104 of February 2, 2022 of Italian Ministry of University and Research; Project 2022W58BJ5 (subject area: PE -- Physical Sciences and Engineering) ``PDEs and optimal control methods in mean field games, population dynamics and multi-agent models''. M.C.\ partially supported by the King Abdullah University of Science and Technology (KAUST) project CRG2021--4674 “Mean-Field Games: models, theory, and computational aspects”. J.J.\ is supported by the NSF under Grant No.\ DMS2302703.

\section{Set-up and statement of main results} \label{sec.formulation}

\subsection{Notation}
We start by explaining the main notations we will be using throughout the paper.

\subsubsection{Vectors and matrices}
We work extensively with the space $(\R^d)^N$. We write
\[
\bx = (x^1,\dots,x^N) \in (\R^d)^N\,,
\]
and if necessary we can expand each coordinate as
\[
x^i = (x^i_1,\dots,x^i_d) \in \R^d\,.
\]
We use $|\cdot|$ for the usual Euclidean ($\ell^2$) length on both $\R^d$ and $(\R^d)^N$, so that we can write
\[
|\bx|^2 = \sum_{1 \leq i \leq N} |x^i|^2\,.
\]
Other $\ell^p$ norms will be denoted by $|\cdot|_p$ (we will only use $|\cdot|_1$, in \Cref{sec.universality}). The transpose of $\bx$ will be denoted by $\bx^\TT$, and the same notation will be used for matrices as well.

At times, we will work with matrices in $(\R^{d \times d})^{N \times N}$, which we identify with $\R^{Nd \times Nd}$ in a natural way. In particular, given $\bm B = (B^{ij})_{i,j = 1,\dots,N} \in (\R^{d \times d})^{N \times N}$ (that is, each $B^{ij} \in \R^{d \times d}$) we will identify $\bm B$ with the element which is written in blocks as 
\begin{equation*}
    \begin{pmatrix} 
    B^{11} & \dots  & B^{1N}\\
    \vdots & \ddots & \vdots\\
    B^{N1} & \dots  & B^{NN} 
    \end{pmatrix} \in (\R^{d\times d})^{N \times N} \simeq \R^{Nd \times Nd} \,.
\end{equation*}
The $d \times d$ identity matrix will be denoted by $I_d$, and accordingly $I_{Nd}$ will be the identity in $(\R^{d\times d})^{N \times N} \simeq \R^{Nd \times Nd}$. 
The operator norm of a matrix will be denoted by $|\cdot|_{\op}$, and the Frobenius norm will be denoted by $|\cdot|_{\mathrm{Fr}}$; note that we can write such norms of a matrix $\bm B \in (\R^{d \times d})^{N \times N}$ as
\begin{equation*}
    |\bm B|_{\op}^2 = \sup_{\substack{\bxi \in (\R^d)^N \\ |\bxi|^2 = 1}} \sum_{1 \leq i \leq N} \,\biggl| \sum_{1 \leq j \leq N} B^{ij} \xi^j \biggr|^2\,, \quad |{\bm B}|_{\mathrm{Fr}}^2 = \sum_{1 \leq i,j \leq N} |B^{ij}|_{\mathrm{Fr}}^2 = \sum_{1 \leq i,j \leq N} \sum_{1 \leq p,q \leq d} |(B^{i,j})_{p,q}|^2\,, 
\end{equation*}
where $B^{ij} \xi^j$ indicates the usual multiplication of a matrix and a vector. We recall the standard fact that 
\begin{equation*}
    | \bm B |_{\op} \leq \| \bm B \|_{\mathrm{Fr}} \leq \sqrt{Nd}\,| \bm B |_{\op}
\end{equation*}
for $\bm B \in (\R^{d \times d})^{N \times N}$, although we will use only the first of these inequalities in the present paper.

\subsubsection{Space and time derivatives}
Given a function $\phi \colon (\R^d)^N \to \R$, we denote by $D_i \phi = D_{x^i} \phi$ the gradient in the variable $x^i \in \R^d$, so
\[
D_{i} \phi = \bigl(D_{i_1}\phi ,\dots,D_{i_d} \phi\bigr) = \bigl(D_{x^i_1}\phi ,\dots,D_{x^i_d} \phi\bigr)
\]
takes values in $\R^d$.
Likewise,
\[
D_{ji}\phi=D_{x^j x^i} \phi = (D_{j_r i_q} \phi)_{q,r,=1,\dots,d}
\]
will be the $d \times d$ matrix of second derivatives (with respect to $x^j$ and $x^i$). Note that since this is not a symmetric matrix, we are making a choice here, which we feel is natural because with this definition $D_{ji} \phi$ is the Jacobian matrix in the variable $x^j$ of the $\R^d$-valued function $D_{i} \phi$. We use the same convention when differentiating the Nash system \eqref{nash} or the Pontryagin system \eqref{pontpde}, i.e.\ $D_{kj} u^i$ and $D_{j} v^i$ are $\R^{d \times d}$-valued functions, which write in components as
\begin{equation*}
    (D_{kj} u^i)_{qr} = D_{k_r j_q} u^i\,, \quad (D_j v^i)_{q,r} = D_{j_r} v^{i_q}\,, \qquad q,r = 1,\dots,d\,.
\end{equation*}
The same convention applies also to mixed derivatives of the Lagrangians $L^i$ or the Hamiltonians $H^i$, e.g.\ $D_{xp} H$ takes values in $\R^{d \times d}$, and can be written as
\begin{align*}
    (D_{xp} H^i)_{qr} = D_{x_r p_q} H^i\,, \quad q,r = 1,\dots,d\,.
\end{align*}
We also write $\Delta_i = \Delta_{x^i}$ for the Laplacian in the variable $x^i$, i.e.\ 
\[
\Delta_{i} \phi = \sum_{1 \leq q \leq d} D_{i_q i_q} \phi\,.
\]
Finally, if $\phi$ is (also) a function of ``time'' $t \in [0,T]$ we will denote its (partial) derivative with respect to $t$ by $\partial_t \phi$.

\subsubsection{Measures and derivatives}
We denote by $\cP(\R^d)$ the space of Borel probability measurs on $\R^d$, and by $\cP_2(\R^d)$ the Wasserstein space of Borel probability measures with finite second moment, equipped with the usual $2$-Wasserstein distance $\bd_2$. In \Cref{sec.universality}, we will work with functions $\cG \colon \R^d \times \cP_2(\R^d) \to \R$, and we will employ the calculus on the space of measures which is commonly used in mean field game theory; see, e.g., \cite{CDLL,cardelar} for details. In particular, we will write $D_m \cG$ for the ``Wasserstein" (or ``Lions", or ``intrinsic'') derivative, which is a map $D_m \cG \colon \R^d_x \times \cP_2(\R^d) \times \R^d_y \to \R^d$, and satisfies $D_m \cG = D_y \frac{\delta \cG}{\delta m}$, where $\frac{\delta \cG}{\delta m}$ is the ``linear'' (or ``flat'') derivative of $\cG$.

\subsubsection{Recurrent norms of (second) derivatives}\label{subs:not}
We will often need to indicate upper bounds on (the derivatives of) vector of functions $\bm\phi = (\phi^i)_{i=1,\dots,N} \colon \R^d \to \R^d$ which hold uniformly over $i$. To do this, we will use notation like
    \begin{equation*}% \label{def.dxxl}
        \|D_{xx} \bm\phi\|_{\infty} = \max_{1 \leq i \leq N} \| |D_{xx} \phi^i |_{\op} \|_{\infty}\,,%, \quad  \|D_{xa} L\|_{\infty} = \max_{i = 1,\dots,N} \| |D_{xa} L^i |_{\op} \|_{\infty}
    \end{equation*}
    where $\|\cdot\|_\infty$ denotes the usual supremum ($L^\infty$) norm.
    As suggested above, we will use the operator norm when measuring the size of second derivatives, unless otherwise noted. For $\bm\phi \colon \R^d \times \R^d \to \R^d$, $\bm\phi = \bm\phi(x,y)$, we will also use notation like
    \begin{equation*}% \label{def.d2l}
        \|D^2 \bm\phi \|_{\infty} = \|D_{xx} \bm\phi\|_{\infty} + \|D_{xy} \bm \phi \|_{\infty} + \|D_{yy} \bm \phi \|_{\infty}
    \end{equation*}
     to describe a bound on the second-order derivatives of $\phi^i$ which is uniform over $i$.

\subsubsection{Key matrix-valued functions}
We note that in \Cref{sec.pontryagin}, we will work extensively with the $(\R^{d \times d})^{N \times N}$-valued function
\[
D\bv \coloneqq (D_j v^i)_{i,j = 1,\dots,N}\,.
\]
We note that after making the identification of $(\R^{d \times d})^{N \times N}$ with $\R^{Nd \times Nd}$, $D \bv(t,\cdot)$ is nothing but the Jacobian matrix of the map 
\begin{equation*}
     \R^{Nd} \simeq  (\R^d)^N  \ni \bx \mapsto \bv(t,\bx) = \bigl(v^1(t,\bx),\dots,v^N(t,\bx)\bigr) \in (\R^d)^N \simeq \R^{Nd}\,,
\end{equation*}
and so in particular the Lipschitz constant of $\bv$ is equal to the supremum of the operator norm of $D\bv$, i.e.\
\begin{equation*}
    \sup_{\substack{t \in [0,T],\, \bx,\by \in (\R^d)^N \\ \bx \neq \by}} \frac{|\bv(t,\bx) - \bv(t,\by)|}{|\bx - \by|} = \sup_{t \in [0,T],\, \bx \in (\R^d)^N} | D\bv(t,\bx) |_{\op}\,. 
\end{equation*}
Similarly, in \Cref{sec.nash}, we will work extensively with the $(\R^{d \times d})^{N \times N}$-valued function
\begin{equation} \label{def.aij}
    \bA \defeq \bigl( D_{ji} u^i \bigr)_{i,j=1,\dots,N}\,,
\end{equation}
which, after making the identification of $(\R^{d \times d})^{N \times N}$ with $\R^{Nd \times Nd}$, is nothing but the Jacobian matrix of the map 
\begin{equation*}
    \R^{Nd} \simeq  (\R^d)^N  \ni \bx \mapsto \bigl(D_{1}u^1(t,\bx),\dots,D_{N} u^N(t,\bx)\bigr) \in (\R^d)^N \simeq \R^{Nd}\,.
\end{equation*}
These are completely elementary facts, but we state them explicitly here to clarify our choice of notation when working with elements of $(\R^{d \times d})^{N \times N}$.

\subsubsection{Further recurring notation} 
It will often be important to distinguish between diagonal and off-diagonal terms, for example of the form of $\bA$ defined in \eqref{def.aij}. Given a set $S$ (typically $S = \N$), we will denote by $\1_{i=j}$ the indicator function of the set $\{(k,h) \in S :\, k=h\}$ computed at $(i,j)$, i.e.\ $\1_{i=j} \in \{0,1\}$ and $\1_{i=j} = 1$ if and only if $i=j$. Similarly, $\1_{i \neq j}$ will be the indicator function of the complement, i.e.\ $\1_{i \neq j} = 1 - \1_{i = j}$.

Also, we will often be working with differences of values of the same function at different points, so we find it convenient to adopt the following compact notation: given a function $\phi$,
\[
\phi\bigr|_y^x \defeq \phi(x) - \phi(y)\,.
\]

\subsection{Admissible solutions and corresponding equilibria}\label{sec:admiss}

We now define a notion of \emph{admissible solutions} to the PDE systems which we study. Throughout the paper, we will always work with such admissible solutions, even if we do not specify that every time.

\subsubsection{Closed-loop equilibria}
We say that $\bu = (u^1,\dots,u^N)$ is an \textit{admissible solution to the Nash system \eqref{nash}} if $u^i \in C^{1,2}([0,T] \times (\R^d)^N)$ for each $i$, \eqref{nash} is satisfied in a classical sense, $D_{j} u^i \in C^{1,2}([0,T] \times (\R^d)^N ; \R^d)$ for each $i,j = 1,\dots,N$, and we have the estimates 
\begin{align*}
    |u^i(t,\bx)| \leq C(1 + |\bx|^2)\,, \quad |D_{kj} u^i(t,\bx)| \leq C\,, 
\end{align*}
for some $C > 0$, for each $i,j,k = 1,\dots,N$, and each $(t,\bx) \in [0,T] \times (\R^d)^N$.

Given an admissible solution $\bu$ to the Nash system, and an initial condition $(t_0,\bzeta_0)$ for the game, we denote by $\bX^{\cl,t_0,\bzeta_0}$ the \emph{corresponding closed-loop equilibrium trajectory}, which is the (unique) solution to the SDE system on $[t_0,T]$
\begin{equation} \label{def.xt0x0}
  \d X_t^{\cl, t_0,\bzeta_0,i} = - D_p H\big(X_t^{\cl, t_0,\bzeta_0,i}, D_{i} u^i(t,\bX_t^{\cl, t_0,\bzeta_0}) \big) \,\d t + \sqrt{2 \sigma}\, \d W_t^i + \sqrt{2\sigma_0} \,\d W_t^i
\end{equation}
with initial condition $ X_{t_0}^{\cl,t_0,\bzeta_0,i} = \zeta^i_0$ and $i=1,\dots,N$.
When $(t_0,\bzeta_0)$ can be understood from context, we will often write $\bX^{\cl} = \bX^{\cl, t_0,\bzeta_0}$ for simplicity. We note that for much of the paper, we will work with deterministic $\bzeta_0 = \bx_0 \in (\R^d)^N$. 

\subsubsection{Open-loop equilibria}
We say that $\bv = (v^1,\dots,v^N)$ is an \textit{admissible solution to \eqref{pontpde}} if $v^i \in C^{1,2}([0,T] \times (\R^d)^N ; \R^d)$ for each $i$, \eqref{pontpde} is satisfied in a classical sense, $D_{j} v^i \in C^{1,2}([0,T] \times (\R^d)^N ; \R^{d \times d})$ for each $i,j = 1,\dots,N$, and we have the estimates 
\begin{equation*}
    |v^i(t,\bx)| \leq C(1 + |\bx|), \quad |D_{j} v^i| \leq C, \quad |D_{kj} v^i| \leq C, 
\end{equation*}
for some constant $C > 0$, for each $i,j,k = 1,\dots,N$, and for each $(t,\bx) \in [0,T] \times (\R^d)^N$.

Given an admissible solution $\bv$ to \eqref{pontpde}, we denote by $\bX^{\ol,t_0,\bzeta_0}$ the \emph{corresponding open-loop equilibrium trajectory}, which is the (unique) solution to the SDE system on $[t_0,T]$
\begin{align} \label{ol.trajectory} %\label{def.xt0x0ol} 
  \d X_t^{\ol, t_0,\bzeta_0,i} = - D_p H\big(X_t^{\ol, t_0,\bzeta_0,i}, v^i(t,\bX_t^{\ol, t_0,\bzeta_0}) \big) \,\d t + \sqrt{2 \sigma} \,\d W_t^i + \sqrt{2\sigma_0} \,\d W_t^i
\end{align}
with initial condition $ X_{t_0}^{\ol,t_0,\bzeta_0,i} = \zeta^i$ and $i=1,\dots,N$.
When $(t_0,\bzeta_0)$ can be understood from context, we will often write $\bX^{\ol} = \bX^{\ol, t_0,\bzeta_0}$ for simplicity. We note that if $\bv$ is an admissible solution and $(t_0,\bzeta_0)$ is fixed, then by It\^o's formula, the processes $\bX = \bX^{\ol,t_0,\bzeta_0}$, $\bY = (Y^i)_{i = 1,\dots,N}$, and $\bZ = (Z^{i,j})_{i = 1,\dots,N, j = 0,\dots,N}$ defined by \eqref{pontdecoupling} satisfy the FBSDE \eqref{pontryagin}.

\begin{remark} \label{rmk.olnecessary}
    We note that in the absence of additional convexity conditions, the Pontryagin system \eqref{pontryagin} is a necessary, rather than a sufficient condition for equilibria (see, e.g., \cite{CarmonaLectures}). In particular, it is not immediate that 
    \begin{align} \label{olrmk}
      \alpha_t^i =  - D_p H\big(X_t^{\ol, t_0,\bzeta_0,i}, v^i(t,\bX_t^{\ol, t_0,\bzeta_0}) \big)
    \end{align}
    defines an open-loop Nash equilibrium. However, under Assumptions \ref{assump.disp} and \ref{assump.LL}, the FBSDE \eqref{pontryagin} is known to have a unique solution from any initial condition (see, e.g., \cite{delarue}), and so if any open-loop equilibrium exists it must coincide with \eqref{olrmk}. Thus we can conclude that \eqref{ol.trajectory} describes the unique open-loop equilibrium trajectory, provided that any open-loop equilibrium exists. We ignore this subtlety and just call $\bX^{\ol,t_0,\bzeta_0}$ the corresponding open-loop equilibrium, but the reader should keep in mind that to ensure that $\bX^{\ol,t_0,\bzeta_0}$ is an open-loop equilibrium we must also assume the existence of an equilibrium.
\end{remark}

\subsubsection{Distributed equilibria}
Finally, we say that $(\bw,\bm{m}) = (w^1,\dots,w^N, m^1,\dots,m^N)$ is an \emph{admissible solution to \eqref{distpde}} if each $w^i \in C^{1,2}([0,T] \times \R^d)$, the spatial derivatives $Dw^i \in C^{1,2}([0,T] \times \R^d; \R^d)$, each $m^i = (m^i_t)_{0 \leq t \leq T} \in C([0,T] ; \cP_2(\R^d))$, the equation for $w^i$ is satisfied in a classical sense, the equation for $w^i$ is satisfied in a weak sense, and the estimates
\begin{align*}
    |w^i(t,x)| \leq C(1 + |x|^2)\,, \quad |D w^i| \leq C(1 + |x|)
\end{align*}
hold for some constant $C > 0$, each $i = 1,\dots,N$ and each $(t,x) \in [0,T] \times \R^d$.

Given initial conditions $(t_0,\bzeta_0)$, and an admissible solution $(\bw, \bm{m})$ to \eqref{pontpde}, we denote by $\bX^{\dist,t_0,\bzeta_0}$ the \textit{corresponding open-loop equilibrium trajectory}, which is the (unique) solution to the SDE system on $[t_0,T]$
\begin{equation*} %\label{def.xt0x0dist}
  \d X_t^{\dist, t_0,\bzeta_0,i} = - D_p H\big(X_t^{\dist, t_0,\bzeta_0,i}, w^i(t,X_t^{\dist, t_0,\bzeta_0}) \big) \,\d t + \sqrt{2 \sigma} \,\d W_t^i + \sqrt{2\sigma_0} \,\d W_t^i
\end{equation*}
with initial condition $X_{t_0}^{\dist,t_0,\bzeta_0,i} = \xi_0^i$ and $i = 1,\dots,N$.
When $(t_0,\bzeta_0)$ can be understood from context, we will often write $\bX^{\dist} = \bX^{\dist, t_0,\bzeta_0}$ for simplicity. We note that if $(\bw, \bm{m})$ is an admissible solution to \eqref{distpde}, and we set $\bX = \bX^{\dist,t_0,\bzeta_0}$, $\bY = (Y^1,\dots,Y^N)$ with $Y^i = Dw^i(t,X_t^i)$, and $\bZ = (Z^1,\dots,Z^N)$ with $Z_t^i = \sqrt{2\sigma} D w^i(t,X_t^i)$, then It\^o's formula shows that $(\bX,\bY,\bZ)$ satisfies \eqref{distfbsde}, and also $m_t^i = \cL(X_t^i)$.

\begin{remark} \label{rmk.admissible}
%\marginpar{\raggedright\tiny Maybe one could ask why we also need to define admissible solutions for distributed equilibria, since we are not say anything about that in this remark.}
    The main reason for the regularity and growth conditions which we put on admissible solutions to the Nash system \eqref{nash} and the Pontryagin system \eqref{pontpde} is that we are going to differentiate these equations and then use It\^o's formula to expand their derivatives along an equilibrium trajectory. Thus we assume that the first (spatial) derivatives of the solutions are smooth enough to apply It\^o's formula, while the bounds on the derivatives of $u^i$ and $v^i$ ensure that the local martingale terms appearing in all of these computations are true martingales. For example, we have
    \[
        \d\, \frac{1}{2} \bigl|D_j u^i(t,\bX_t^{\cl, t_0,\bx_0})\bigr|^2 = \alpha_t \,\d t + \d M_t\,,  
    \]
    where $\alpha$ is some adapted process and
    the local martingale $M$ is of the form
    \[
        \d M_t = D_j u^i \cdot \biggl(\sqrt{2\sigma} \sum_{1 \leq k \leq N} D_{kj} u^i \,\d W_t^k + \sqrt{2\sigma_0} \Bigl( \sum_{1 \leq k \leq N} D_{kj} u^i \Bigr) \d W_t^0 \biggr)\,,
    \]
    with (the derivatives of) $u^i$ evaluated at $(t,\bX_t^{\cl, t_0,\bx_0})$.
    If $(u^1,...,u^N)$ is an admissible solution, then 
    \[
        |D_j u^i(t,\bx)| \leq C(1 + |\bx|)\,, \quad |D_{kj} u^i(t,\bx)| \leq C\,, 
    \]
    and also $\bX^{\cl,t_0,\bx_0}$ has a bounded initial condition and satisfies an SDE with a drift which has linear growth. Thus we can conclude that, e.g.,
    \[
        \E\biggl[\, \int_{t_0}^T \bigl|D_j u^i(t,\bX_t^{\cl,t_0,\bx_0})\bigr|^2 \bigl|D_{kj} u^i(t,\bX_t^{\cl,t_0,\bx_0})\bigr|^2 \,\d t \biggr] \leq C \E\biggl[\, \int_{t_0}^T \bigl|\bX_t^{\cl, t_0,\bx_0}\bigr|^2 \,\d t \biggr] < \infty\,, 
    \]
    which is enough to conclude that $\E\big[\langle M \rangle_T \big] < \infty$, so that $M$ is a true martingale. In the body of the text, similar local martingale terms will appear repeatedly, and we will not justify each time the fact that they are true martingales, since this always follows easily from our assumptions on admissible solutions.
\end{remark}

\subsection{Main assumptions and results}
We next introduce our main assumptions and state our main dimension-free estimates for the Nash system \eqref{nash}. Recall the definition \eqref{def.aij} of the matrix $\bA$ (associated to an admissible solution $\bu$) which will play a key role throughout the paper.

\subsubsection{The displacement monotone setting}
Our main assumption takes into account the semi-monotonicity of $\bm{F} = (F^i)_{i = 1,\dots,N}$ and $\bm{G} = (G^i)_{i = 1,\dots,N}$, as well as the convexity of the Lagrangians $L^i$, and the length of the time horizon.

\begin{assumption}[Displacement semi-monotonicity and regularity] \label{assump.disp}
The functions $L^i$ (and $H^i$), $F^i$, and $G^i$ are each $C^2$, with bounded derivatives of order $2$ (but not necessarily of order $1$). Moreover, there are non-negative constants $C_{\bF, \dis}$, $C_{\bG,\dis}$, $C_{\bF, \op}$, $C_{\bG, \op}$, and $C_{\bm L} > 0$ such that the following conditions hold:
\begin{itemize}[leftmargin=\parindent]
\item we have
\begin{equation} \label{strongconvL}
    D_x L^i\Bigr|^{(x,a)}_{(\ov{x},\ov{a})} \cdot (x - \ov{x}) + D_a L^i\Bigr|^{(x,a)}_{(\ov{x},\ov{a})} \cdot (a - \ov{a}) \geq C_{\bm L} |a - \ov{a}|^2\,, 
\end{equation}
for each $x,\ov{x},a,\ov{a} \in \R^d$ and each $i = 1,\dots,N$;
\item $\bF = (F^1,\dots,F^N)$ is \emph{displacement $C_{\bF,\dis}$-semi-monotone}, and $\bG = (G^1,\dots,G^N)$ is \emph{displacement $C_{\bG, \dis}$-semi-monotone}, i.e.\ we have
\begin{equation}
    \label{FCsemimon}
    \sum_{1 \leq i \leq N} D_{i} F^i\Bigr|^{\bx}_{\ov{\bx}} \cdot (x^i - \ov{x}^i) \geq - C_{\bF,\dis} |\bx - \ov{\bx}|^2\,,
\end{equation}
for each $\bx, \ov{\bx} \in (\R^d)^N$, and likewise for $\bG$ -- note that \eqref{FCsemimon} is equivalent to requiring, for each $\bx \in (\R^d)^N$,
\begin{equation}\label{dmonpoint}
    \bigl( D^2_{ji} F^i(\bx) \bigr)_{i,j=1,\dots,N} \geq - C_{\bF,\dis} I_{Nd}\,;
\end{equation}
\item we have the \emph{displacement monotonicity condition}
\begin{equation} \label{smallness'}
         C_{\bG,\dis} + \frac{T}2 C_{\bF,\dis} < \frac{C_{\bm L}}{T}\,.
\end{equation}
\end{itemize}
\end{assumption}

Given this last condition, we will often work with the displacement monotonicity constant
\begin{equation} \label{cdispdef}
    C_{\dis} \coloneqq C_{\bm L} - \frac{T^2}{2} C_{\bF,\dis} - T C_{\bG,\dis} > 0\,.
\end{equation}
In addition, two other quantities will also play a role in our estimates, namely the Lipschitz constants of the vector fields $(D_iF^i)_{i = 1,\dots,N}$ and $(D_i G^i)_{i = 1,\dots,N}$, which we will denote by $C_{D\bF,\lip}$ and $C_{D\bG,\lip}$, respectively; i.e.\ we have
    \begin{equation} \label{CDLipdef}
        \sum_{1 \leq i \leq N} \bigl|D_i F^i(\bx) - D_i F^i(\ov{\bx})\bigr|^2 \leq C_{D\bF,\lip}^2 |\bx - \ov{\bx}|^2
   \end{equation}
    for each $\bx, \ov{\bx} \in (\R^d)^N$, and likewise for $(D_i G^i)_{i = 1,\dots,N}$. Note that we could more precisely denote such constants by $C_{\mathrm{diag}(D\bF),\lip}$ and $C_{\mathrm{diag}(D\bG),\lip}$, but we opted for a simpler notation. Also note that $C_{D\bF,\lip}$ is (the supremum of) the operator norm of $(D_{ji} F^i)_{i,j=1,\dots,N}$, and similarly for $\bG$.
    
\begin{remark} 
Note that as $L^i$ and $H^i$ are linked by \eqref{intro.defHi}, meaning that $H^i(x,\cdot)$ is the Legendre transform of $L^i(x,\cdot)$, then the more natural convexity condition \eqref{strongconvL} is equivalent to
\begin{equation} \label{strongconvLforH}
    - D_x H^i\Bigr|^{(x,p)}_{(\ov{x},\ov{p})} \cdot (x - \ov{x}) + D_p H^i\Bigr|^{(x,p)}_{(\ov{x},\ov{p})} \cdot (p - \ov{p}) 
    \geq C_{\bm L} \Bigl|D_pH^i\Bigr|^{(x,p)}_{(x,\ov{p})} \Bigr|^2\,.
\end{equation}
\end{remark}

\begin{convention} \label{conv.dimfree}
    When \Cref{assump.disp} is in force, we will say that a constant is \emph{dimension-free} if it depends only on the quantities 
    \begin{align} \label{dimfree}
        C_{\bm L}, \ \ C_{\dis}, \ \ C_{D\bF, \lip}, \ \ C_{D\bG,\lip}, \ \ \|D^2 \bm H\|_{\infty}, \ \ d, \ \text{ and } \ T,
    \end{align} 
    but \underline{not} on $N$ or on $\sigma$ (for the notation $\|\cdot\|_\infty$ we always refer to Section \ref{subs:not}). We use this terminology because in most asymptotic regimes of interest (in which the number $N$ of players goes to infinity), the quantities in \eqref{dimfree} can be bounded independently of $N$.
\end{convention}

\begin{theorem} \label{thm.maindisp}
    Let \Cref{assump.disp} hold. There is a dimension-free constant $C$ such that the following holds: if
    \begin{equation} \label{smallness}
       \delta \sum_{1 \leq i \leq N} \delta^i \leq \frac{\sigma^2}{C}\,,
   \end{equation}
    then
    \begin{equation*} %\label{mainopbounddisp}
        \sup_{t_0 \in [0, T],\, \bx_0 \in (\R^d)^N} \E\bigg[ \int_{t_0}^T \big| \bA(t,\bX_t^{\CL,t_0,\bx_0}) \big|_{\op}^2 \,\d t \bigg] \lesssim 1
    \end{equation*}
    and
    \begin{equation} \label{mainl2bounddisp}
      \Bigl\|\,  \sum_{j \neq i} |D_{j} u^i|^2 \Bigr\|_{\infty}  \lesssim \delta^i  \qquad \forall\,i = 1,\dots,N \,,
    \end{equation}
    with dimension-free implied constants.
\end{theorem}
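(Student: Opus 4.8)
The proof rests on two pillars. The first is the a priori Lipschitz bound \eqref{vlip.intro} on the decoupling field of the Pontryagin system, namely $\sup_{[0,T]\times(\R^d)^N}|D\bv|_{\op}\leq C_0$ with $C_0$ dimension-free, which under \Cref{assump.disp} is exactly the content of \Cref{prop.olboundsdisp} and may be invoked freely. The second, and the heart of the matter, is a mechanism to transfer this bound from the Pontryagin system to the matrix $\bA=(D_{ji}u^i)_{i,j}$ of the Nash system, exploiting \eqref{smallness} to view $D_iu^i$ as a small perturbation of $v^i$. The second displayed estimate, \eqref{mainl2bounddisp}, is then extracted as a corollary of the bound on $\bA$.

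For the transfer, the plan is to set $\Theta(T_0)\defeq\sup_{t_0\in[T_0,T],\,\bx_0}\E\big[\int_{t_0}^T|\bA(t,\bX^{\CL,t_0,\bx_0}_t)|_{\op}^2\,\d t\big]$ and to prove the self-improving implication
$$\Theta(T_0)\leq M\quad\Longrightarrow\quad\Theta(T_0)\leq C_1+\frac{C_1}{\sigma^2}\,e^{C_1 M}\,\delta\sum_{1\leq i\leq N}\delta^i$$
for every $T_0\in[0,T)$ and $M>0$, with $C_1$ dimension-free (this is the precise form of \eqref{implication.intro}, keeping track of $\sigma$). Granting this, one chooses $M=2C_1$: under \eqref{smallness} with the constant there taken large enough (but dimension-free), the right-hand side is strictly below $2C_1$, so the set $\{T_0:\Theta(T_0)\leq 2C_1\}$ is open in $[0,T]$; it is also closed, by continuity of $\Theta$ in $T_0$ (which follows from admissibility and standard stability for the SDE \eqref{def.xt0x0}), and it contains $T$ since $\Theta(T)=0$. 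Hence it is all of $[0,T]$, and feeding $T_0=0$, $M=2C_1$ back into the implication gives $\Theta(0)\lesssim 1$, which is the first assertion.

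The implication is proved under the standing hypothesis $\Theta(T_0)\leq M$. Differentiating \eqref{nash} for $u^i$ in the variable $x^i$ and using symmetry of the second derivatives of the scalar $u^i$, one checks that $\wt v^i\defeq D_iu^i$ satisfies the same PDE \eqref{pontpde} as $v^i$ up to an error term of schematic form $\mathcal E^i=\sum_{k\neq i}\big(D_{pp}H^k(x^k,D_ku^k)\,D_{ik}u^k\big)D_ku^i$; since $\sum_k|D_{ik}u^k|^2\lesssim|\bA|_{\op}^2$ and the off-diagonal gradients $\sum_{k\neq i}|D_ku^i|^2$ are themselves controlled under the standing hypothesis by a first expansion-along-trajectories argument, $\mathcal E^i$ is small in an $L^2$-along-trajectories sense, with constants depending on $M$ and on $\delta\sum_i\delta^i$. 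One then expands $\Phi_t\defeq\sum_{1\leq i\leq N}\big|D_iu^i(t,\bX^{\CL}_t)-v^i(t,\bX^{\CL}_t)\big|^2$ by It\^o along the closed-loop trajectory: the idiosyncratic noise produces the coercive term $2\sigma\,\E\big[\int_{t_0}^T\sum_{i,j}|D_{ji}u^i-D_jv^i|^2\,\d t\big]$ (this is where $\sigma>0$ is essential and is the origin of the $\sigma^{-2}$); the drift mismatch $D_pH^i(x^i,D_iu^i)-D_pH^i(x^i,v^i)$ and the error terms $\mathcal E^i$ are absorbed using the Lipschitz bound \eqref{vlip.intro} on $\bv$, Young's inequality, and Gr\"onwall (yielding the factor $e^{C_1 M}$); and the terminal contribution vanishes because $D_iu^i(T,\cdot)=D_iG^i=v^i(T,\cdot)$. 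This gives $\E\big[\int_{t_0}^T\sum_{i,j}|D_{ji}u^i-D_jv^i|^2\,\d t\big]\lesssim\sigma^{-2}e^{C_1 M}\delta\sum_i\delta^i$; since $|\bA|_{\op}\leq|\bA-D\bv|_{\mathrm{Fr}}+|D\bv|_{\op}$ and $|D\bv|_{\op}\leq C_0$, bounding the Frobenius norm of $\bA-D\bv$ by the sum just controlled and taking suprema over $t_0\in[T_0,T]$, $\bx_0$ completes the implication. Finally, \eqref{mainl2bounddisp} is obtained by differentiating \eqref{nash} for $u^i$ in the directions $x^j$, $j\neq i$, and expanding $\sum_{j\neq i}|D_ju^i|^2$ along the closed-loop trajectory issued from the point at which the bound is sought: the terminal value is $\sum_{j\neq i}|D_jG^i|^2\leq\delta^i$, the running source is $\lesssim\delta^i$ plus coupling terms now controlled by the just-established bound on $\bA$ together with \eqref{smallness}, and Gr\"onwall closes the estimate.

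The main obstacle is the circular structure of the argument: the error bound on $\mathcal E^i$, hence the whole estimate on $\Phi$, presupposes exactly the kind of control on $\bA$ that the theorem asserts, which forces the ``assume $M$, improve to something $M$-dependent'' scheme together with the continuation in $T_0$ — and this only closes because the improved constant $C_1+C_1\sigma^{-2}e^{C_1 M}\delta\sum_i\delta^i$ beats $M$ once \eqref{smallness} holds, with precisely the power $\sigma^2$ matching the coercivity constant of the idiosyncratic noise. A secondary subtlety, already flagged in the introduction, is that passing from the operator norm of $\bA-D\bv$ to its Frobenius norm is lossy (potentially a factor $\sqrt{N}$), so it is important that the noise expansion directly delivers the full sum $\sum_{i,j}|D_{ji}u^i-D_jv^i|^2$ rather than merely an operator-norm bound.
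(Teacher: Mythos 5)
Your proposal follows essentially the same route as the paper: the dimension-free Lipschitz bound on $\bv$ from \Cref{prop.olboundsdisp}, the self-improving implication obtained by expanding $\sum_i|D_iu^i-v^i|^2$ along closed-loop trajectories (using the idiosyncratic noise for coercivity, the Lipschitz bound on $\bv$, Frobenius dominating operator norm, and producing the $\sigma^{-2}$), a bootstrap in $T_0$, and finally extracting \eqref{mainl2bounddisp} from the first-order lemma. The one place you gloss over a genuine subtlety is the continuation argument: you close the bootstrap by asserting that $\Theta(T_0)\defeq\sup_{t_0\in[T_0,T],\,\bx_0}\E[\int_{t_0}^T|\bA|^2_{\op}]$ is continuous in $T_0$, attributing this to ``admissibility and standard stability.'' In fact continuity of $\Theta$ (as a supremum over \emph{all} $\bx_0\in(\R^d)^N$) requires more than stability of the SDE in its initial condition — one must combine the pointwise admissibility bound on $\bA$ (to control $\int_{t_0-\eps}^{t_0}$) with the Markov/flow property of the closed-loop trajectory, which gives $\E[\int_{t_0}^T|\bA(t,\bX_t^{t_0-\eps,\bx_0})|^2]=\E[\phi(t_0,\bX^{t_0-\eps,\bx_0}_{t_0})]\leq\sup_{\by}\phi(t_0,\by)$ and hence $\Theta(T_0-\eps)\leq\Theta(T_0)+C_N\eps$. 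The paper avoids abstractly proving continuity by embedding precisely this conditioning identity (see \eqref{conditioning}) into an infimum-plus-contradiction argument; that is what the continuation step actually hinges on, and you should make it explicit rather than delegating it to ``stability.'' Aside from that bookkeeping, your description of the error term $\mathcal E^i$, the first-order lemma, and the final deduction all match the paper.
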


The estimates we develop along the way will also allow us to establish a bound on the difference between open-loop, closed-loop, and distributed Nash equilibria. %We recall that $\bX^{t_0,\bx_0}$ denotes the closed-loop equilibrium started from $(t_0,\bx_0)$, while $\wt{\bX}^{t_0,\bx_0}$ denotes the open-loop equilibrium started from $(t_0,\bx_0)$.

\begin{theorem} \label{thm.olcldisp}
    Let \Cref{assump.disp} hold. Then there is a dimension-free constant $C$ such that if \eqref{smallness} holds, then for any $t_0 \in [0,T)$ and any square-integrable,  $(\R^d)^N$-valued, $\scrF_{t_0}$-measurable random vector $\bzeta_0$, we have
    \[
     \E\Bigl[\, \sup_{t \in [t_0,T]} \bigl|\bX_t^{\OL,t_0,\bzeta_0} - {\bX_t}^{\CL,t_0,\bzeta_0}\bigr|^2 \Bigr] \lesssim \sigma^{-1} \delta \sum_{1 \leq i \leq N} \delta^i\,,
    \]
with dimension-free implied constant.
\end{theorem}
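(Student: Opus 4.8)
The point is that the closed-loop dynamics \eqref{def.xt0x0} and the open-loop dynamics \eqref{ol.trajectory} are driven by the \emph{same} Brownian motions and started from the same $\bzeta_0$, so the difference process $\bm\Xi_t \defeq \bX_t^{\CL,t_0,\bzeta_0} - \bX_t^{\OL,t_0,\bzeta_0}$ has no martingale part: it is absolutely continuous in $t$, with $\bm\Xi_{t_0}=0$ and
\[
\frac{\d}{\d t}\Xi_t^i = -\Bigl[D_pH^i\bigl(X_t^{\CL,i}, D_{i}u^i(t,\bX_t^{\CL})\bigr) - D_pH^i\bigl(X_t^{\OL,i}, v^i(t,\bX_t^{\OL})\bigr)\Bigr]\,.
\]
First I would insert the intermediate point $D_pH^i\bigl(X_t^{\CL,i}, v^i(t,\bX_t^{\CL})\bigr)$ and split the bracket in two. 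The first piece, $D_pH^i(X_t^{\CL,i},D_iu^i(t,\bX_t^{\CL}))-D_pH^i(X_t^{\CL,i},v^i(t,\bX_t^{\CL}))$, is bounded by $\|D^2\bm H\|_\infty\,|D_iu^i(t,\bX_t^{\CL})-v^i(t,\bX_t^{\CL})|$; the second, $D_pH^i(X_t^{\CL,i},v^i(t,\bX_t^{\CL}))-D_pH^i(X_t^{\OL,i},v^i(t,\bX_t^{\OL}))$, is bounded by $\|D^2\bm H\|_\infty\bigl(|\Xi_t^i|+|v^i(t,\bX_t^{\CL})-v^i(t,\bX_t^{\OL})|\bigr)$, using again that $H^i$ has bounded second derivatives.

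Next I would run the elementary energy estimate. By Cauchy--Schwarz together with the Lipschitz bound \eqref{vlip.intro} (i.e.\ \Cref{prop.olboundsdisp}), which absorbs $\sum_i|v^i(t,\bX_t^{\CL})-v^i(t,\bX_t^{\OL})|^2\le C^2|\bm\Xi_t|^2$, one obtains a pointwise-in-$t$ differential inequality
\[
\frac{\d}{\d t}|\bm\Xi_t|^2 \leq C|\bm\Xi_t|^2 + C\sum_{1\leq i\leq N}\bigl|D_{i}u^i(t,\bX_t^{\CL}) - v^i(t,\bX_t^{\CL})\bigr|^2\,,
\]
with $C$ dimension-free and, crucially, independent of $\sigma$. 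Since $\bm\Xi_{t_0}=0$, Grönwall's inequality --- here just an ODE comparison, with no stochastic integral to control --- gives pathwise
\[
\sup_{t\in[t_0,T]}|\bm\Xi_t|^2 \leq C\int_{t_0}^T\sum_{1\leq i\leq N}\bigl|D_{i}u^i(t,\bX_t^{\CL}) - v^i(t,\bX_t^{\CL})\bigr|^2\,\d t\,.
\]
Taking expectations (and, for a general $\scrF_{t_0}$-measurable $\bzeta_0$, first conditioning on $\scrF_{t_0}$ and applying the version of the bound below with deterministic $\bx_0$) reduces the theorem to an $L^2$-bound, along the closed-loop equilibrium trajectory, on the gap between the closed-loop gradient $D_iu^i$ and the Pontryagin decoupling field $v^i$.

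That $L^2$-bound is exactly an estimate produced in the course of proving \Cref{thm.maindisp} in \Cref{sec.nash}: the vector field $(D_1u^1,\dots,D_Nu^N)$ solves the Pontryagin PDE system \eqref{pontpde} up to error terms that are small thanks to \eqref{smallness}, and comparing it to $\bv$ along $\bX^{\CL}$ via an energy estimate exploiting the non-degenerate idiosyncratic noise yields
\[
\sup_{t_0,\,\bx_0}\E\biggl[\int_{t_0}^T\sum_{1\leq i\leq N}\bigl|D_{i}u^i(t,\bX_t^{\CL,t_0,\bx_0}) - v^i(t,\bX_t^{\CL,t_0,\bx_0})\bigr|^2\,\d t\biggr] \lesssim \sigma^{-1}\,\delta\sum_{1\leq i\leq N}\delta^i\,.
\]
Substituting this into the previous display gives the claim, the factor $\sigma^{-1}$ being inherited from this estimate.

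\textbf{Main obstacle.} The reduction above is routine; all the substance is in the $L^2$-closeness of $D_iu^i$ and $v^i$ along optimal trajectories, where the displacement monotonicity, the weak-interaction hypothesis \eqref{smallness}, and the noise are all used. The delicate points there are (i) showing that the error terms by which $(D_iu^i)_i$ fails to solve \eqref{pontpde} are genuinely of size $\delta\sum_i\delta^i$, which requires the first-order propagation bound \eqref{mainl2bounddisp} together with the $L^2$-control of $\bA$ from \Cref{thm.maindisp}, and (ii) tracking the exact power of $\sigma$ through the energy estimate, since both the threshold \eqref{smallness} and the final bound are $\sigma$-sensitive. Once these are in hand --- and they are established in \Cref{sec.nash} --- \Cref{thm.olcldisp} follows from the short Grönwall argument sketched here.
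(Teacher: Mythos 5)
Your proposal is correct and follows essentially the same route as the paper's proof: split the drift of $\bX^{\CL}$ by inserting $D_pH^i(X_t^{\CL,i},v^i(t,\bX_t^{\CL}))$, absorb the $v^i$-differences via the Lipschitz bound of \Cref{prop.olboundsdisp}, and run Gronwall on $|\bX^{\CL}-\bX^{\OL}|^2$ (which indeed has no martingale part, as you note). The only presentational difference is that the paper's key input, \Cref{lem.clol}, actually provides a pointwise bound $\sum_i\bigl|(D_iu^i-v^i)(t,\bx)\bigr|^2\lesssim \sigma^{-1}\delta\sum_i\delta^i$ uniformly in $(t,\bx)$, so the paper bounds the error term $|\bm E_t|^2$ pointwise before Gronwall; you invoke only the weaker $L^2$-in-time consequence of that same lemma. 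Both use the identical ingredients and give the same conclusion, so the argument is structurally the same.
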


Note that, along the way to the proof of the previous theorem, we quantify the distance between solutions $u^i$ to the Nash system and solutions $v^i$ to the system of PDE associated to open-loop equilibria. This is stated in Lemma \ref{lem.clol} below.

\begin{theorem} \label{thm.oldistdisp}
    Let \Cref{assump.disp} hold, and assume also that $\sigma_0 = 0$. Then there is a dimension-free constant $C$ with the following property. Suppose that \eqref{smallness} holds, $t_0 \in [0,T)$ and $\bzeta_0 = (\zeta_0^1,...,\zeta_0^N)$ is a square-integrable, $(\R^d)^N$-valued, $\scrF_{t_0}$-measurable random vector with independent components. Then we have
    \[
       \E\Bigl[\, \sup_{t \in [t_0,T]} \bigl|\bX_t^{\OL,t_0,\bzeta_0} - {\bX_t}^{\Dis,t_0,\bzeta_0}\bigr|^2 \Bigr] \lesssim \bigl(1 + \max_{1 \leq i \leq N} C_P^i \bigr)\sum_{1 \leq i \leq N} \kappa^i\,,
    \]
    with dimension-free implied constant, where $C_P^i$ indicates the Poincar\'e constant of the measure $\cL(\zeta_0^i)$.
\end{theorem}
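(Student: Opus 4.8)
The plan is to compare $\bX^{\OL}\defeq\bX^{\OL,t_0,\bzeta_0}$ and $\bX^{\Dis}\defeq\bX^{\Dis,t_0,\bzeta_0}$ through the Pontryagin FBSDE \eqref{pontryagin}, in the same way as the open-loop/closed-loop comparison but substituting for the bound \eqref{l2bound.intro} an argument in the spirit of \cite{JacksonLacker}. The starting point is that, under \Cref{assump.disp}, the triple $(\bX^{\OL},\bY^{\OL},\bZ^{\OL})$ with $Y^{\OL,i}_t=v^i(t,\bX^{\OL}_t)$ (and $\bZ^{\OL}$ given by \eqref{pontdecoupling}) solves \eqref{pontryagin} \emph{exactly}, whereas $\bX^{\Dis}$ only \emph{almost} does: since $\bzeta_0$ has independent components and $\sigma_0=0$, the coordinates $X^{\Dis,1},\dots,X^{\Dis,N}$ are independent, and It\^o's formula (applied as in the derivation of \eqref{distfbsde}) shows that $(\bX^{\Dis},\bY^{\Dis},\bZ^{\Dis})$ with $Y^{\Dis,i}_t=D_x w^i(t,X^{\Dis,i}_t)$ satisfies \eqref{pontryagin} except that $D_{x^i}F^i(\bX^{\Dis}_t)$ and $D_{x^i}G^i(\bX^{\Dis}_T)$ are replaced by their conditional expectations given $X^{\Dis,i}_t$, resp.\ $X^{\Dis,i}_T$; equivalently, $\bX^{\Dis}$ solves \eqref{pontryagin} with the two extra source terms
\[
  e^i_t \defeq D_{x^i}F^i(\bX^{\Dis}_t) - \E\bigl[D_{x^i}F^i(\bX^{\Dis}_t)\bigm| X^{\Dis,i}_t\bigr], \qquad \eta^i \defeq D_{x^i}G^i(\bX^{\Dis}_T) - \E\bigl[D_{x^i}G^i(\bX^{\Dis}_T)\bigm| X^{\Dis,i}_T\bigr]
\]
added to the backward equation and the terminal condition, respectively.

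The next step is to run the dimension-free $L^2$ stability estimate for \eqref{pontryagin} proved in the course of establishing \Cref{prop.olboundsdisp}. Setting $\Delta X^i\defeq X^{\OL,i}-X^{\Dis,i}$, $\Delta Y^i\defeq Y^{\OL,i}-Y^{\Dis,i}$, one expands $\sum_i \Delta X^i_t\cdot \Delta Y^i_t$ by It\^o; crucially, because $\sigma_0=0$ the two forward systems are driven by the same Brownian motions, so $\Delta X^i$ has finite variation and no quadratic-covariation term appears. Taking expectations and using the displacement semi-monotonicity \eqref{FCsemimon} of $\bF$ and $\bG$ together with the convexity \eqref{strongconvLforH} of the Hamiltonians — exactly as in \Cref{prop.olboundsdisp}, the room left by \eqref{smallness'} allowing the two extra source terms to be absorbed by Young's inequality — one obtains
\[
  \sup_{t\in[t_0,T]}\E\bigl[|\Delta\bX_t|^2\bigr] + \E\int_{t_0}^T |\Delta\bY_t|^2\,\d t \;\lesssim\; \sum_{1\leq i\leq N}\Bigl(\E\int_{t_0}^T |e^i_t|^2\,\d t + \E|\eta^i|^2\Bigr)
\]
with dimension-free constant. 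Since $\d(\Delta X^i_t) = -\bigl(D_pH^i(X^{\OL,i}_t,Y^{\OL,i}_t) - D_pH^i(X^{\Dis,i}_t,Y^{\Dis,i}_t)\bigr)\d t$ vanishes at $t=t_0$ and has integrand bounded by $C(|\Delta X^i_t|+|\Delta Y^i_t|)$ (boundedness of $D^2H^i$), a Gr\"onwall step upgrades the left-hand side to $\E\bigl[\sup_{t\in[t_0,T]}|\Delta\bX_t|^2\bigr]$.

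It then remains to estimate the errors. Conditionally on $X^{\Dis,i}_t$, the coordinates $(X^{\Dis,j}_t)_{j\neq i}$ are independent with laws $m^j_t\defeq\cL(X^{\Dis,j}_t)$, and $e^i_t$ is precisely the fluctuation of the map $D_{x^i}F^i(\,\cdot\,,X^{\Dis,i}_t)$ about its mean under $\bigotimes_{j\neq i}m^j_t$; tensorization of the Poincar\'e inequality (the Poincar\'e constant of a product measure being the largest of those of the factors) gives
\[
  \E\bigl[|e^i_t|^2 \bigm| X^{\Dis,i}_t\bigr] \leq \Bigl(\max_{j\neq i} C_P(m^j_t)\Bigr)\,\E\Bigl[\,\sum_{j\neq i}\bigl|D_{x^jx^i}F^i(\bX^{\Dis}_t)\bigr|^2 \Bigm| X^{\Dis,i}_t\Bigr],
\]
and analogously for $\eta^i$ with $G^i$. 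Taking expectations, summing over $i$ and recalling the definition of $\kappa^i$, I would arrive at $\sum_i\bigl(\E\int_{t_0}^T|e^i_t|^2\,\d t + \E|\eta^i|^2\bigr)\lesssim \bigl(\sup_{t\in[t_0,T]}\max_{1\leq j\leq N} C_P(m^j_t)\bigr)\sum_{1\leq i\leq N}\kappa^i$, which, combined with the previous step, yields the claim \emph{provided} one can show $\sup_{t\in[t_0,T]}C_P(m^i_t) \lesssim 1+C_P^i$ with a dimension-free constant (note $C_P^i=C_P(m^i_{t_0})$).

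This last input is where I expect the main difficulty to lie. The drift of $X^{\Dis,i}$ is $b^i(t,x)=-D_pH^i(x,D_xw^i(t,x))$, so one needs $\|D^2w^i\|_\infty$ bounded dimension-free, in particular independently of $\sigma$. The semiconcavity (upper) bound is standard for the scalar HJB equation in \eqref{distpde}, whose data $\bar F^i,\bar G^i$ are averages of $F^i,G^i$ and hence carry dimension-free bounds on their second derivatives; the semiconvexity (lower) bound I would obtain from the joint convexity of $L^i$ forced by \eqref{strongconvL}, via a comparison-of-controls argument in which perturbing the initial condition only translates the controlled state deterministically. Given a dimension-free Lipschitz bound on $b^i$, a synchronous coupling makes the stochastic flow of $X^{\Dis,i}$ Lipschitz in its initial value with dimension-free constant, and subadditivity of the Poincar\'e constant under convolution then absorbs the Gaussian contribution of the idiosyncratic noise to give $\sup_t C_P(m^i_t)\lesssim 1+C_P^i$. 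Keeping every constant in this Poincar\'e-evolution step genuinely free of both $\sigma$ and $N$ is the delicate part; the remainder is the stability estimate of \Cref{prop.olboundsdisp} together with the conditioning device of \cite{JacksonLacker}, with only routine bookkeeping.
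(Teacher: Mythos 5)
Your proposal is correct and follows the paper's route step by step: view $\bX^{\Dis}$ as an approximate solution of the Pontryagin FBSDE with conditional-expectation error terms, apply the FBSDE stability estimate driven by the dimension-free Lipschitz bound on $\bv$ from \Cref{prop.olboundsdisp} (the paper formalizes this as \Cref{lem.lipschitzstability}), and control the errors via a Poincar\'e inequality for the marginal laws of $\bX^{\Dis}$ (\Cref{lem.poincaredisp}), which in turn hinges on a dimension-free Lipschitz bound on the distributed feedback drift $-D_pH^i(\cdot, D_xw^i)$, hence on $\|D^2 w^i\|_\infty$.

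The one tactical divergence is that last Hessian bound. You propose to obtain $\|D^2 w^i\|_\infty\lesssim 1$ by separate semiconcavity and semiconvexity estimates via a comparison-of-controls argument, whereas the paper's \Cref{lem.wibound} runs the monotone-FBSDE stability calculation for the single-player Pontryagin system, exactly as in \Cref{prop.olboundsdisp}. Both feed on identical inputs---joint convexity of $L$ with the slack $C_\dis>0$ from \eqref{cdispdef}, and bounded second derivatives of $L$ and of the averaged $F^i$, $G^i$---and give the same two-sided bound; the FBSDE route produces it in one computation and is what the paper writes out. Two small imprecisions in your sketch worth tightening: \emph{(i)} ``perturbing the initial condition only translates the controlled state deterministically'' describes the semiconcavity half (same control for both initial points); the semiconvexity half uses the midpoint of the two optimal controls and genuinely needs the slack $C_\dis>0$ to absorb the induced $\int\E\bigl[|\Delta\bX|^2\bigr]$ error, which is precisely the mechanism of \Cref{prop.olboundsdisp}; \emph{(ii)} the Poincar\'e propagation is not a literal subadditivity under convolution but a push-forward estimate for the stochastic flow acting on $\cL(\zeta_0^i)\otimes(\text{Wiener measure})$, using the Cameron--Martin gradient and the total-variance decomposition---this is what \cite[Lemma~4.13]{JacksonLacker}, cited in \Cref{lem.poincaredisp}, packages.
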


\begin{remark} \label{rmk.poincaredirac} 
    We say that a measure $\mu \in \cP_2(\R^d)$ satisfies a Poincar\'e inequality with constant $C$ if 
    \begin{align} \label{poincdef}
      \text{Var}_{\mu}(g) \coloneqq  \int_{\R^d} g^2 d \mu - \Big( \int_{\R^d} g d\mu \Big)^2 \leq C \int_{\R^d} |Dg|^2 d\mu 
    \end{align}
    for all $C^1$ functions $g$ with at most quadratic quadratic growth, and the Poincar\'e constant of $\mu$ means the smallest constant $C$ which verifies \eqref{poincdef}.
    Dirac masses have zero Poincar\'e constant, so if $\bzeta_0 = \bx_0$ is deterministic, then $C_{P}^i = 0$. Thus \Cref{thm.oldistdisp} shows that under \Cref{assump.disp}, there is a dimension-free constant $C$ such that if \eqref{smallness} holds, then
    \begin{align*}
         \sup_{t_0 \in [0,T],\, \bx_0 \in (\R^d)^N} \E\Bigl[\, \sup_{t \in [t_0,T]} \bigl|\bX_t^{\OL,t_0,\bx_0} - {\bX_t}^{\Dis,t_0,\bx_0}\bigr|^2 \Bigr] \leq C \sum_{1 \leq i \leq N} \kappa^i.
    \end{align*}
\end{remark}

\subsubsection{The Lasry--Lions semi-monotone setting}
In the Lasry--Lions semi-monotone setting, the following takes the place of \Cref{assump.disp}.

\begin{assumption}[Lasry-Lions semi-monotonicity and regularity] \label{assump.LL}
The functions $H^i, F^i$, $G^i$ are each $C^2$, with bounded derivatives of order 2 (but not necessarily of order 1). Moreover, there are non-negative constants $C_{\bF, \lm}$, $C_{\bG,\lm}$, $C_{\bF, \lip}$, $C_{\bG, \lip}$, $C_{\bH}$, and $\lambda_{\bH} > 0$ such that the following conditions hold:
\begin{itemize}[leftmargin=\parindent]
\item the Hamiltonians satisfy
\[
    |D_x H^i(x,p)| \leq C_{\bH} ( 1 + |p| )\,, \qquad
    D^2_{pp} H^i(x,p) \ge \lambda_{\bH} I_d\,, 
\]
for each $(x,p) \in \R^d \times \R^d$, and each $i = 1,\dots,N$;
\item $\bF = (F^1,\dots,F^N)$ is \emph{Lasry--Lions $C_{\bF,\lm}$-semi-monotone}, and $\bG = (G^1,\dots,G^N)$ is \emph{Lasry--Lions $C_{\bG,\lm}$-semi-monotone}, i.e.\ we have
\begin{equation}\label{llmonpoint}
    \bigl( D^2_{ji} F^i(\bx) \1_{i \neq j} \bigr)_{i,j=1,\dots,N} \geq - C_{\bF} I_{Nd}\,,
\end{equation}
for each $\bx \in (\R^d)^N$, and likewise for $\bG$;
\item we have
\begin{equation*}
    \max_{1 \leq i \leq N} \| D_{i} F^i\|_{\infty} \leq C_{\bF,\lip}\,, \quad \max_{1 \leq i \leq N} \| D_{i} G^i\|_{\infty} \leq C_{\bG,\lip}\,.
\end{equation*}
\end{itemize}
\end{assumption}

\begin{remark} 
In \Cref{assump.LL} we are missing a necessary counterpart of \eqref{smallness'} in the Lasry--Lions semi-monotone setting, that is, on the relation between $C_{\bF,\lip}, C_{\bG,\lip}$ and $T$. That is introduced below (see~\eqref{smallness2}) as an hypothesis of \Cref{thm.mainll} (which is the counterpart of \Cref{thm.maindisp}), and not directly among the assumptions above since, unlike \eqref{smallness'}, it has a cumbersome explicit expression in terms of constants related to the data, so we preferred to state it in terms of some more implicit dimension-free constants.
\end{remark}

Also in this case it will be important that the vector fields $(D_iF^i)_{i=1,\dots,N}$ and $(D_iG^i)_{i=1,\dots,N}$ be Lipschitz-continuous, and we will still be denoting their respective Lipschitz constants by $C_{D\bF,\lip}$ and $C_{D\bG,\lip}$ (recall~\eqref{CDLipdef}).

\begin{convention} \label{conv.dimfreell}
    When \Cref{assump.LL} is force, we will say that a constant $C$ is \emph{dimension-free} if it depends only on the quantities 
    \begin{equation*} %\label{dimfreell}
       C_{\bH}, \ \ \lambda_{\bH}, \ \ C_{\bF,\lm}, \ \ C_{\bG, \lm}, \ \ C_{\bF, \lip}, \ \ C_{\bG,\lip}, \ \ C_{D\bF, \lip}, \ \ C_{D\bG,\lip}, \ \ \|D^2 \bH\|_{\infty}, \ \ d, \ \text{ and } \ T,
    \end{equation*} 
    but \underline{not} on $N$ or $\sigma$.
\end{convention}

\begin{theorem} \label{thm.mainll}
    Let \Cref{assump.LL} hold. There is a dimension-free constant $C$ such that the following holds: if
    \begin{gather} \label{smallness1}
        \delta \sum_{1 \leq i \leq N} \delta^i \leq e^{-e^{C(1+\sigma^{-1})}}\,,
        \\ \label{smallness1'}
        \sqrt{\kappa\tilde\kappa} \leq e^{-e^{C(1+\sigma^{-1})}}\,,
        \intertext{and}
        \label{smallness2}
        C_{\bG,\lm} + T C_{\bF,\lm} \leq \frac{1}{T}\,e^{-C(1+\sigma^{-1})}\,,
        \end{gather}
then
    \begin{equation*} %\label{mainopboundll}
        \sup_{t_0 \in [0, T],\, \bx_0 \in (\R^d)^N} \E\bigg[ \int_{t_0}^T \big| \bA(t,\bX_t^{\CL,t_0,\bx_0}) \big|_{\op}^2 \,\d t \bigg] \leq  e^{C(1+\sigma^{-1})}
    \end{equation*}
    and
    \begin{equation*} %\label{mainl2boundll}
      \Bigl\|\,  \sum_{j \neq i} |D_{j} u^i|^2 \Bigr\|_{\infty}  \leq  e^{e^{C(1+\sigma^{-1})}} \delta^i \,,
    \end{equation*}
    for all $i = 1,\dots,N$.
    \end{theorem}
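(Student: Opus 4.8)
The plan is to assemble \Cref{thm.mainll} from two essentially independent pieces, exactly mirroring the route used for \Cref{thm.maindisp}: first, the dimension-free Lipschitz bound \eqref{vlip.intro} for the decoupling field $\bv$ of the Pontryagin system \eqref{pontpde}, which is the content of \Cref{prop.olboundslip} and holds under \eqref{LL.intro}, \eqref{smallness1'} and \eqref{smallness2}; second, a perturbative transfer of this bound to the matrix $\bA = (D_{ji}u^i)_{i,j=1,\dots,N}$ attached to the Nash system \eqref{nash}, which is the subject of \Cref{sec.nash} and uses only regularity of the data, the bound \eqref{vlip.intro}, the weak-interaction condition \eqref{smallness1}, and the non-degeneracy $\sigma>0$. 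Once these are in place, the first displayed estimate is precisely the $L^2$ bound \eqref{l2bound.intro}, and the second (the off-diagonal $L^\infty$ estimate) is extracted from it by a separate parabolic argument for the off-diagonal first derivatives $D_j u^i$, $j\neq i$.

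The delicate ingredient is the Lasry--Lions Lipschitz bound on $\bv$, because the monotonicity \eqref{llmonpoint} is invisible at the level of \eqref{pontpde} or of the FBSDE \eqref{pontryagin} once one has passed to the Hamiltonian side. Following the idea behind \Cref{prop.olboundslip}, I would fix a unit vector $\bm{\xi}_0=(\xi_0^1,\dots,\xi_0^N)\in(\R^d)^N$, take an open-loop equilibrium trajectory $\bX$, choose a carefully engineered $(\R^d)^N$-valued process $\bm{\xi}_t$ with $\bm{\xi}_{t_0}=\bm{\xi}_0$ (roughly, tracking the linearized state dynamics so that the diagonal part of the Hessian is transported away), and apply It\^o's formula to the scalar process $t\mapsto \sum_{i\neq j}(\xi_t^i)^{\TT}D_j v^i(t,\bX_t)\xi_t^j$. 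The strong convexity $D_{pp}H^i\geq\lambda_{\bH}I_d$ together with the sign condition \eqref{llmonpoint} supplies a nonnegative coercive term, while the ``bad'' terms are quadratic in the off-diagonal Hessians of $F^i$ and $G^i$ and hence controlled by $\sqrt{\kappa\tilde\kappa}$ --- but only after one feeds in an a priori bound $\sup_{[T_0,T]\times(\R^d)^N}|D\bv|_{\op}\leq M$ to close the nonlinear terms. This forces a bootstrap in the initial time: assuming such an $M$-bound on $[T_0,T]$, the computation returns $|D\bv|_{\op}\leq C$ on $[T_0,T]$ provided $e^{CM}\sqrt{\kappa\tilde\kappa}<1/C$ and $C_{\bG,\lm}+TC_{\bF,\lm}$ is small; since the bound holds near $t=T$ with $M$ controlled by the data, a continuity/connectedness argument in $T_0$ promotes it to all of $[0,T]$ as soon as \eqref{smallness1'} and \eqref{smallness2} hold with a small enough dimension-free $\eps$. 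The exponential amplification $e^{CM}$, together with the stochastic estimates that exploit $\sigma>0$, is what produces the $(1+\sigma^{-1})$ exponents appearing both in the hypotheses and in the conclusion.

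For the transfer to the Nash system I would run the argument of \Cref{sec.nash}, which does not use \eqref{LL.intro} at all and again proceeds by a bootstrap, this time on the quantity
\[
\sup_{T_0\leq t_0\leq T,\ \bx_0\in(\R^d)^N}\E\Bigl[\int_{t_0}^T|\bA(t,\bX_t^{\CL,t_0,\bx_0})|_{\op}^2\,\d t\Bigr]\leq M.
\]
Under this assumption one differentiates \eqref{nash} in $x^i$ and compares with the $i$-th equation of \eqref{pontpde} to show that $(D_1u^1,\dots,D_Nu^N)$ satisfies \eqref{pontpde} up to source terms that are small in $\delta$ in the $L^2$ sense along $\bX^{\CL}$ (with an $M$-dependent constant), the smallness coming from \eqref{smallness1} and from the crude off-diagonal control of $D_j u^i$. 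Expanding $t\mapsto\sum_i|D_iu^i(t,\bX_t^{\CL})-v^i(t,\bX_t^{\CL})|^2$ by It\^o and taking expectations, the idiosyncratic noise produces the Frobenius-norm term $\E\int\sum_{i,j}|D_{ji}u^i-D_j v^i|^2\,\d t$, and combining the error bounds, the Lipschitz bound \eqref{vlip.intro}, and Gronwall yields $\E\int_{t_0}^T|\bA-D\bv|_{\mathrm{Fr}}^2\,\d t\leq Ce^{CM}\delta\sum_i\delta^i$. Since $|\cdot|_{\op}\leq|\cdot|_{\mathrm{Fr}}$ and $|D\bv|_{\op}\leq C$, the triangle inequality gives the implication ``the display above with constant $M$ $\implies$ the display above with constant $C+Ce^{CM}\delta\sum_i\delta^i$'', and when \eqref{smallness1} makes $\delta\sum_i\delta^i$ super-exponentially small in $\sigma^{-1}$, a final continuity argument in $T_0$ closes the bootstrap and delivers the first displayed bound with constant $e^{C(1+\sigma^{-1})}$. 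Finally, differentiating \eqref{nash} in $x^j$ for $j\neq i$ yields a linear parabolic equation for $D_j u^i$ with terminal datum $D_j G^i$, source $D_j F^i$, and transport coefficients built from the $D_pH^k$ and the entries of $\bA$; summing over $j\neq i$ and reading the evolution of $\sum_{j\neq i}|D_j u^i|^2$ along $\bX^{\CL}$ through a stochastic-representation/maximum-principle argument, the explicit data contribute at most $\|\sum_{j\neq i}|D_jG^i|^2\|_\infty+\|\sum_{j\neq i}|D_jF^i|^2\|_\infty=\delta^i$ while the transport terms are absorbed using the $L^2$-integrability of $|\bA|_{\op}$ just established, which exponentiates $e^{C(1+\sigma^{-1})}$ into the claimed $e^{e^{C(1+\sigma^{-1})}}\delta^i$.

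The main obstacle is the Lasry--Lions Lipschitz bound on $\bv$: the monotonicity genuinely disappears after Legendre-transforming and only reappears through the carefully chosen auxiliary process $\bm{\xi}_t$, and the fact that the harmful terms cannot be estimated without first assuming a bound forces the bootstrap, which in turn is responsible for the doubly-exponential smallness thresholds \eqref{smallness1}--\eqref{smallness1'} and for the $e^{C(1+\sigma^{-1})}$ dependence in the conclusion. By contrast, Step~2 is the main conceptual novelty but, granted \eqref{vlip.intro}, is a fairly systematic perturbation-plus-bootstrap argument exploiting $\sigma>0$, and the final parabolic estimate for the off-diagonal derivatives is routine.
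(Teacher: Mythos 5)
Your proposal reproduces the paper's argument faithfully: the Lipschitz bound on $\bv$ via the bootstrap with the auxiliary process $\bm\xi_t$ (\Cref{prop.olmonbounds}, \Cref{prop.olboundslip}), the transfer to the Nash system via the perturbation-plus-bootstrap in $T_0$ with the Frobenius-to-operator-norm comparison (\Cref{lem.llclol}, \Cref{prop.improvementll}, and the Markov/conditioning trick to close the continuity argument), and the off-diagonal $L^\infty$ estimate from the evolution of $\sum_{j\neq i}|D_j u^i|^2$ along closed-loop trajectories (\Cref{lem.gradbound}), whose $e^{C\sqrt M}$ prefactor with $M=e^{C(1+\sigma^{-1})}$ yields the doubly exponential constant. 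This matches the paper's route in every essential step.
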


\begin{theorem} \label{thm.olclLL}
    Let \Cref{assump.LL} hold. There is a dimension-free constant $C$ such that the following holds: if \eqref{smallness1}, \eqref{smallness1'} and \eqref{smallness2} hold, then for any $t_0 \in [0,T)$ and any square-integrable,  $(\R^d)^N$-valued, $\scrF_{t_0}$-measurable random vector $\bzeta_0$, we have 
    \[
    \E\Bigl[\, \sup_{t \in [t_0,T]} \bigl|\bX_t^{\OL,t_0,\bzeta_0} - {\bX}^{\CL,t_0,\bzeta_0}_t\bigr|^2 \Bigr] \leq e^{e^{C(1+\sigma^{-1})}} \delta \sum_{1 \leq i \leq N} \delta^i.
    \]
\end{theorem}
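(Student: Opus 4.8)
The plan is to obtain Theorem~\ref{thm.olclLL} as a consequence of the dimension-free estimates already available under \eqref{smallness1}--\eqref{smallness1'}--\eqref{smallness2}: the $L^2$ bound on $\bA$ along closed-loop trajectories and the bound on $\bigl\|\sum_{j\neq i}|D_j u^i|^2\bigr\|_\infty$ from Theorem~\ref{thm.mainll}, the Lipschitz bound \eqref{vlip.intro} on the decoupling field $\bv$ of \eqref{pontpde} from Proposition~\ref{prop.olboundslip}, and the quantitative comparison of $(D_1 u^1,\dots,D_N u^N)$ with $\bv$ from Lemma~\ref{lem.clol}. Given these ingredients, what remains is a Gronwall estimate for the difference of the two equilibrium trajectories, and the argument is essentially the same as for Theorem~\ref{thm.olcldisp}, with the Lasry--Lions estimates replacing the displacement ones (and with the resulting nested-exponential $\sigma$-dependence in place of the polynomial one).

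Fix $(t_0,\bzeta_0)$ and abbreviate $\bX^{\CL}=\bX^{\CL,t_0,\bzeta_0}$, $\bX^{\OL}=\bX^{\OL,t_0,\bzeta_0}$, $\Delta_t\defeq\bX^{\CL}_t-\bX^{\OL}_t$. Differentiating \eqref{nash} in $x^i$ and applying It\^o's formula shows that $Y^i_t\defeq D_i u^i(t,\bX^{\CL}_t)$ satisfies the backward part of \eqref{pontryagin} up to error terms; combining this with Theorem~\ref{thm.mainll}, the bound \eqref{vlip.intro}, and the weak interaction hypotheses, Lemma~\ref{lem.clol} furnishes a dimension-free estimate of the form
\[
\E\bigg[\int_{t_0}^T \sum_{1\leq i\leq N}\bigl|D_i u^i(t,\bX^{\CL}_t)-v^i(t,\bX^{\CL}_t)\bigr|^2\,\d t\bigg]\;\leq\; e^{e^{C(1+\sigma^{-1})}}\,\delta\sum_{1\leq i\leq N}\delta^i .
\]
(This is precisely the comparison of $u^i$ with $v^i$ alluded to after the statement of Theorem~\ref{thm.olcldisp}.)

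To pass to the trajectories, observe that $\bX^{\CL}$ (see~\eqref{def.xt0x0}) and $\bX^{\OL}$ (see~\eqref{ol.trajectory}) are driven by the same Brownian motions and started from the same $\bzeta_0$, so the stochastic integrals cancel in $\Delta$ and each $\Delta^i$ is absolutely continuous in $t$ with
\[
\dot\Delta^i_t=-\Bigl(D_pH^i\bigl(X^{\CL,i}_t,D_iu^i(t,\bX^{\CL}_t)\bigr)-D_pH^i\bigl(X^{\OL,i}_t,v^i(t,\bX^{\OL}_t)\bigr)\Bigr).
\]
Inserting $\pm D_pH^i\bigl(X^{\CL,i}_t,v^i(t,\bX^{\CL}_t)\bigr)$, using that each $D_pH^i$ is $\|D^2\bH\|_\infty$-Lipschitz, and using \eqref{vlip.intro} in the form $\sum_i|v^i(t,\bX^{\CL}_t)-v^i(t,\bX^{\OL}_t)|^2\leq C^2|\Delta_t|^2$ (i.e.\ $\bv(t,\cdot)$ is globally Lipschitz with a dimension-free constant), one obtains a pathwise differential inequality $\tfrac{\d}{\d t}|\Delta_t|^2\leq c_1|\Delta_t|^2+c_2\sum_i|D_iu^i(t,\bX^{\CL}_t)-v^i(t,\bX^{\CL}_t)|^2$ with $c_1,c_2$ dimension-free. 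Since $\Delta_{t_0}=0$, Gronwall's lemma gives $\sup_{t\in[t_0,T]}|\Delta_t|^2\leq c_2 e^{c_1 T}\int_{t_0}^T\sum_i|D_iu^i(s,\bX^{\CL}_s)-v^i(s,\bX^{\CL}_s)|^2\,\d s$; taking expectations and plugging in the estimate of the previous paragraph yields the claim.

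The genuine difficulty is entirely front-loaded into the ingredients --- Theorem~\ref{thm.mainll}, Proposition~\ref{prop.olboundslip}, and Lemma~\ref{lem.clol} --- and what is left is routine, modulo three points of care. First, one should exploit the cancellation of both the idiosyncratic and the common noise in $\Delta$, which turns the comparison into a pathwise, bounded-variation estimate and avoids any martingale/Burkholder input. Second, it is essential to invoke \eqref{vlip.intro} as a bound on the operator norm of the full $Nd\times Nd$ Jacobian $(D_j v^i)_{i,j=1,\dots,N}$ --- equivalently, a dimension-free global Lipschitz bound on $\bv(t,\cdot)$ --- rather than the weaker per-block admissibility bound $|D_j v^i|\leq C$, whose constant need not be dimension-free. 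Third, one must keep track of the nested-exponential dependence on $\sigma$: the factor $e^{e^{C(1+\sigma^{-1})}}$ enters only through Theorem~\ref{thm.mainll} and Lemma~\ref{lem.clol}, whereas the Gronwall constants $c_1,c_2$ are dimension-free, so the final bound retains the stated form.
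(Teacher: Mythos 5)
Your argument follows the same route as the paper's proof: combine the Nash-system bounds from Theorem~\ref{thm.mainll} with the Lipschitz bound on $\bv$ from Corollary~\ref{prop.olboundslip}, use the comparison between $(D_iu^i)_i$ and $\bv$ to control the error $E^i_t = D_pH^i(X^{\CL,i}_t,D_iu^i)-D_pH^i(X^{\CL,i}_t,v^i)$, and close with a Gronwall argument on $|\Delta_t|^2$ after exploiting the cancellation of both noises. The one issue is that you repeatedly cite Lemma~\ref{lem.clol} for the $D_iu^i$-vs-$v^i$ comparison, but that lemma is stated under the displacement assumptions (it requires the hypotheses of Proposition~\ref{prop.olboundsdisp}); in the Lasry--Lions setting the correct reference is Lemma~\ref{lem.llclol}, which is exactly what produces the $e^{e^{C(1+\sigma^{-1})}}$ factor you track. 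Since your proof clearly uses the Lasry--Lions-flavoured bound, this is a citation slip rather than a substantive gap; otherwise the argument is sound and in fact makes the pathwise nature of the final Gronwall step more explicit than the paper's brief sketch.
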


\begin{theorem} \label{thm.oldistLL}
    Let \Cref{assump.LL} hold, and assume also that $\sigma_0 = 0$. There is a dimension-free constant $C$ with the following property. If \eqref{smallness1'} and \eqref{smallness2} hold, $t_0 \in [0,T)$ and $\bzeta_0 = (\zeta_0^1,\dots,\zeta_0^N)$ is a square-integrable, $(\R^d)^N$-valued, $\scrF_{t_0}$-measurable random vector with independent components, then
   \begin{align*}
       \E\Bigl[\, \sup_{t \in [t_0,T]} \bigl|\bX_t^{\OL,t_0,\bzeta_0} - {\bX_t}^{\Dis,t_0,\bzeta_0}\bigr|^2 \Bigr] \leq \bigl(1 + \max_{1 \leq i \leq N} C_P^i \bigr) e^{e^{C(1+\sigma^{-1})}}\! \sum_{1 \leq i \leq N} \kappa^i\,,
    \end{align*}
    where $C_P^i$ denotes the Poincar\'e constant of the measure $\cL(\zeta_0^i)$.
\end{theorem}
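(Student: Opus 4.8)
The plan is to realize the distributed equilibrium as an approximate solution of the Pontryagin system \eqref{pontryagin}, whose discrepancy is small precisely because (when $\sigma_0 = 0$) the distributed trajectories are mutually independent while each cost depends only weakly on the other players, and then to close the estimate using the $L^2$-stability of \eqref{pontryagin}. Concretely: fix $(t_0,\bzeta_0)$ as in the statement, write $\bar\bX := \bX^{\Dis,t_0,\bzeta_0}$, let $(\bw,\bm m)$ be the associated admissible solution of \eqref{distpde}, and set $\bar Y_t^i := D_x w^i(t,\bar X_t^i)$ and $\bar Z_t^{i,j} := \sqrt{2\sigma}\,\1_{i=j}\,D_{xx}w^i(t,\bar X_t^i)$. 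Differentiating the HJB equation in \eqref{distpde} in $x$ and applying It\^o's formula along $\bar X^i$ (whose drift is exactly $-D_pH^i(\cdot,D_xw^i)$) — just as in the passage from \eqref{pontpde} to \eqref{pontryagin} through \eqref{pontdecoupling} — one sees that $(\bar\bX,\bar\bY,\bar\bZ)$ solves \eqref{pontryagin} with $\sigma_0=0$, except that the sources $D_{x^i}F^i(\bar\bX_t)$ and $D_{x^i}G^i(\bar\bX_T)$ are replaced by the conditional expectations $\E[D_{x^i}F^i(\bar\bX_t)\mid\bar X_t^i]$ and $\E[D_{x^i}G^i(\bar\bX_T)\mid\bar X_T^i]$; here one uses $m_t^k=\cL(\bar X_t^k)$ together with the mutual independence of $\bar X_t^1,\dots,\bar X_t^N$, which holds because $\sigma_0=0$, the $\zeta_0^i$ are independent, and the distributed dynamics are decoupled. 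Thus $(\bar\bX,\bar\bY,\bar\bZ)$ solves \eqref{pontryagin} up to the error terms
\[
 e_t^i := D_{x^i}F^i(\bar\bX_t)-\E\big[D_{x^i}F^i(\bar\bX_t)\mid\bar X_t^i\big], \qquad \varepsilon^i := D_{x^i}G^i(\bar\bX_T)-\E\big[D_{x^i}G^i(\bar\bX_T)\mid\bar X_T^i\big].
\]

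Next I would invoke stability of the Pontryagin system. Under \eqref{smallness1'}--\eqref{smallness2}, \Cref{prop.olboundslip} yields the dimension-free Lipschitz bound \eqref{vlip.intro} for the decoupling field $\bv$ of \eqref{pontryagin}; as in the open-loop/closed-loop comparison and following the ideas of \cite{JacksonLacker}, this Lipschitz bound converts the stability of the forward-backward system into a forward Gronwall estimate (bound $\bar\bY_t-\bv(t,\bar\bX_t)$ via the backward equation, then Gronwall the forward equations), so that comparing the exact solution $\bX^{\OL,t_0,\bzeta_0}$ with the approximate solution $\bar\bX$ from the previous step gives
\[
 \E\Big[\sup_{t\in[t_0,T]}\big|\bX_t^{\OL,t_0,\bzeta_0}-\bar X_t\big|^2\Big] \;\le\; e^{e^{C(1+\sigma^{-1})}}\sum_{1\le i\le N}\Big(\E[|\varepsilon^i|^2]+\E\!\int_{t_0}^T|e_t^i|^2\,\d t\Big),
\]
with $C$ dimension-free; the integrability needed for all terms and martingales follows from admissibility of $(\bw,\bm m)$ as in \Cref{rmk.admissible}.

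It then remains to estimate the errors, using the Poincar\'e inequality together with independence. Conditionally on $\bar X_t^i$, the family $(\bar X_t^j)_{j\ne i}$ is independent with marginals $m_t^j$, so the tensorized (Efron--Stein) Poincar\'e inequality applied to $(x^j)_{j\ne i}\mapsto D_{x^i}F^i\big((x^j)_{j\ne i},\bar X_t^i\big)$ gives
\[
 \E[|e_t^i|^2]\;\lesssim\;\Big(\max_{1\le j\le N}C_P(m_t^j)\Big)\sum_{j\ne i}\E\big[|D_{x^jx^i}F^i(\bar\bX_t)|^2\big]\;\lesssim\;\Big(\max_{1\le j\le N}C_P(m_t^j)\Big)\,\kappa^i,
\]
and likewise $\E[|\varepsilon^i|^2]\lesssim(\max_jC_P(m_T^j))\,\kappa^i$. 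Finally I would propagate the Poincar\'e inequality from $t_0$ to $t$: each $\bar X^j$ solves an SDE with additive noise $\sqrt{2\sigma}\,\d W^j$ and drift $x\mapsto-D_pH^j(x,D_xw^j(t,x))$, which is Lipschitz in $x$ with a dimension-free constant thanks to an interior estimate $\|D_{xx}\bw\|_\infty\le C$ for the HJB equations in \eqref{distpde} (valid under \Cref{assump.LL}); hence $m_t^j=\cL(\bar X_t^j)$ is the pushforward, under a Lipschitz map, of $\cL(\zeta_0^j)$ and the Wiener measure of $W^j$ on $C([t_0,T])$, and since the Poincar\'e constant is multiplied by the squared Lipschitz constant under Lipschitz pushforwards while the Wiener measure has a dimension-free Poincar\'e constant, we get $C_P(m_t^j)\le C(1+C_P^j)$ for all $t\in[t_0,T]$, with $C$ dimension-free. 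Combining the three displays yields the claimed bound.

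The main obstacle is the dimension-free Lipschitz regularity of the distributed decoupling field $D_xw^j$ (equivalently the bound $\|D_{xx}\bw\|_\infty\le C$), which is needed both for the Poincar\'e propagation and for the stability estimate above, together with a careful accounting of its dependence on $\sigma$ so that it is absorbed into the double exponential. In the Lasry--Lions regime this two-sided Hessian bound cannot be read off directly from \eqref{distpde} or \eqref{distfbsde} and must be obtained by a computation of the same flavour as the one used to establish \eqref{vlip.intro} for the Pontryagin system (and uses the smallness condition \eqref{smallness2}); a secondary, more routine point is checking that $\bar\bX$ and the auxiliary processes $\bar\bY,\bar\bZ$ have enough integrability to justify the It\^o expansion and the FBSDE comparison, which follows from the admissibility hypotheses as in \Cref{rmk.admissible}.
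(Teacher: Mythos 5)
Your overall strategy matches the paper's: represent the distributed equilibrium as an approximate solution of the Pontryagin FBSDE \eqref{disterrorfbsde} with source and terminal errors given by centered conditional expectations, bound those errors in $L^2$ via a Poincar\'e inequality together with the weak-interaction quantity $\kappa^i$, and close with the $L^2$-stability of the Pontryagin system, which rests on the Lipschitz bound on $\bv$ from \Cref{prop.olboundslip}. This is exactly the structure of the paper's proofs of \Cref{thm.oldistdisp,thm.oldistLL}, so the skeleton is right.

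The gap you flag, a dimension-free Hessian bound $\|D_{xx}\bw\|_\infty\le C$, is genuine for the Poincar\'e-propagation route you propose, but it is avoidable, and the paper avoids it. Your pushforward argument requires the drift $x\mapsto -D_pH^j(x,D_xw^j(t,x))$ to be Lipschitz so that the map from $(\zeta_0^j,W^j)$ to $\bar X^j_t$ is Lipschitz, hence the Hessian bound; no such bound is supplied by the paper's assumptions or lemmas. The paper instead propagates the Poincar\'e inequality via a result of Cattiaux--Guillin, in the form of \cite[Lemma 4.13]{JacksonLacker} (see \Cref{lem.poincaredisp,lem.poincareLL}), which for an additive-noise SDE requires only a \emph{bounded} drift; the constant then grows like $C_{P,0}^j + Ce^{C\lip(w^j)}$, so only a bound on $\|Dw^j\|_\infty$ is needed. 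In the Lasry--Lions case that first-order bound, $\|Dw^j\|_\infty\lesssim\sigma^{-1}$, is the standard vanishing-viscosity gradient estimate for the scalar HJB equation in \eqref{distpde}, recorded as \Cref{lem.wibound.LL}. Separately, your remark that the Hessian bound on $\bw$ is ``needed for the stability estimate'' as well is not right: \Cref{lem.lipschitzstability} uses only the Lipschitz constant of the Pontryagin decoupling field $\bv$ and admissibility of $(\bar\bX,\bar\bY,\bar\bZ)$; the double exponential $e^{e^{C(1+\sigma^{-1})}}$ comes from inserting the single-exponential bound on $|D\bv|_{\op}$ from \Cref{prop.olboundslip} into the $e^{Tc(1+\sqrt{M}\,)}$ factor of \eqref{stabest.DX}, with no second-order information on $\bw$ involved.
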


\section{Bounds on the Pontryagin system} \label{sec.pontryagin}

In the following estimates on $\bv$, we will carefully keep track of the constants (among those listed in \Cref{conv.dimfree} or \Cref{conv.dimfreell}) on which our dimension-free a priori bounds actually depend, to highlight the role each constant plays (or does not play) in different kind of bounds. We will be using the wording ``$C$ depends only on (non-negative real coefficients) $c_1,\dots,c_k$''; by this we implicitly mean that $C$ is a locally bounded function of $c_1,\dots,c_k$ and it is increasing in each $c_i$ (when the others are fixed). A solution $\bv$ is assumed to exist and be admissible in the sense described in Section \ref{sec:admiss}.

\subsection{The displacement semi-monotone case}

We now use Assumption \ref{assump.disp} to derive a bound on the Lipschitz constant of the solution $\bv = (v^i)_{i = 1,\dots,N}$ to \eqref{pontpde}. 

\begin{proposition} \label{prop.olboundsdisp}
    Let Assumption \ref{assump.disp} hold. 
	Then there is a constant $C_0$ (depending only on $\norm{D_{xx}\bH}_\infty$, $\norm{D_{px}\bH}_\infty$, $C_{\bG,\dis}$, $C_{D\bG,\lip}$, $C_{\bF,\dis}$, $C_{D\bF,\lip}$, $C_\dis$, and $T$) such that
    \begin{equation*}
        |\bm v(t,\bx) - \bm v(t,\bar{\bx})| \leq \bigl( C_{D\bG,\lip} + \sqrt{T}C_0 \bigr) |\bx - \ov{\bx}| \qquad \forall \ t \in [0,T],\ \bx,\bar{\bx} \in (\R^d)^N\,,
    \end{equation*}
    or, equivalently,
    \begin{equation} \label{eq.dvopbound.dis}
    \norm{|D\bv|_\op}_{L^\infty([0,T]\times(\R^d)^N)} \leq C_{D\bG,\lip} + \sqrt{T}C_0\,.
    \end{equation}
\end{proposition}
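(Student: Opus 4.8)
The plan is to establish a dimension-free $L^2$-stability estimate for the Pontryagin FBSDE \eqref{pontryagin} via the classical monotonicity technique for forward-backward systems, and then to promote it to the pointwise Lipschitz bound by reading $\bv(t_0,\bx)-\bv(t_0,\ov\bx)$ off the backward equation; the reformulation \eqref{eq.dvopbound.dis} is then immediate, since the Lipschitz constant of $\bv(t,\cdot)$ equals $\sup_{\bx}|D\bv(t,\bx)|_{\op}$. I would fix $t_0\in[0,T]$ and two deterministic points $\bx,\ov\bx\in(\R^d)^N$, and let $(\bX,\bY,\bZ)$ and $(\ov\bX,\ov\bY,\ov\bZ)$ be the solutions of \eqref{pontryagin} associated through \eqref{pontdecoupling} and It\^o's formula to the admissible $\bv$ with initial data $\bx$ and $\ov\bx$; write $\Delta\bX=\bX-\ov\bX$, $\Delta\bY=\bY-\ov\bY$. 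The crucial structural point is that the controls do not enter the diffusion coefficients, so $\Delta\bX$ has no martingale part, and It\^o's formula applied to $\langle\Delta\bX_t,\Delta\bY_t\rangle$ produces no bracket term and only a genuine martingale (genuine thanks to the growth bounds built into admissibility, cf.\ \Cref{rmk.admissible}). Using $Y^i_{t_0}=v^i(t_0,\bx)$ and $Y^i_T=D_iG^i(\bX_T)$ and taking expectations gives
\[
\langle\bx-\ov\bx,\,\bv(t_0,\bx)-\bv(t_0,\ov\bx)\rangle = \E\bigl[\langle\Delta\bX_T,\Delta\bY_T\rangle\bigr] - \E\!\int_{t_0}^T\!\Big(\textstyle\sum_i\bigl(D_xH^i|^{(X^i,Y^i)}_{(\ov X^i,\ov Y^i)}\!\cdot\Delta X^i - D_pH^i|^{(X^i,Y^i)}_{(\ov X^i,\ov Y^i)}\!\cdot\Delta Y^i\bigr) - \sum_i D_iF^i|^{\bX}_{\ov\bX}\!\cdot\Delta X^i\Big)\,\d t .
\]

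Next I would bound the right-hand side from below. By displacement $C_{\bG,\dis}$-semi-monotonicity of $\bG$ (condition \eqref{FCsemimon}/\eqref{dmonpoint} for $\bG$), $\langle\Delta\bX_T,\Delta\bY_T\rangle=\sum_i D_iG^i|^{\bX_T}_{\ov\bX_T}\cdot\Delta X^i_T\ge-C_{\bG,\dis}|\Delta\bX_T|^2$; the convexity inequality \eqref{strongconvLforH} (equivalent to \eqref{strongconvL}) gives precisely $\sum_i\bigl(D_xH^i|^{(X^i,Y^i)}_{(\ov X^i,\ov Y^i)}\!\cdot\Delta X^i-D_pH^i|^{(X^i,Y^i)}_{(\ov X^i,\ov Y^i)}\!\cdot\Delta Y^i\bigr)\le-C_{\bm L}\sum_i\bigl|D_pH^i|^{(X^i,Y^i)}_{(X^i,\ov Y^i)}\bigr|^2$; and by displacement $C_{\bF,\dis}$-semi-monotonicity of $\bF$, $\sum_i D_iF^i|^{\bX}_{\ov\bX}\cdot\Delta X^i\ge-C_{\bF,\dis}|\Delta\bX_t|^2$. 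Inserting these yields the coercive inequality
\[
C_{\bm L}\,\E\!\int_{t_0}^T\!\textstyle\sum_i\bigl|D_pH^i|^{(X^i_t,Y^i_t)}_{(X^i_t,\ov Y^i_t)}\bigr|^2\,\d t \;\le\; \langle\bx-\ov\bx,\,\bv(t_0,\bx)-\bv(t_0,\ov\bx)\rangle + C_{\bF,\dis}\,\E\!\int_{t_0}^T\!|\Delta\bX_t|^2\,\d t + C_{\bG,\dis}\,\E|\Delta\bX_T|^2 .
\]
From the forward equation, the drift difference of $X^i$ splits as $D_pH^i|^{(X^i,Y^i)}_{(X^i,\ov Y^i)}+D_pH^i|^{(X^i,\ov Y^i)}_{(\ov X^i,\ov Y^i)}$, the first term being the coercive quantity above and the second bounded by $\|D_{px}\bH\|_\infty|\Delta X^i|$; a Gr\"onwall argument then bounds $\E\sup_{[t_0,T]}|\Delta\bX_t|^2$, $\E|\Delta\bX_T|^2$ and $\E\int_{t_0}^T|\Delta\bX_t|^2\,\d t$ in terms of $|\bx-\ov\bx|^2$ and the coercive integral, with the feature that $C_{\bF,\dis}$ and $C_{\bG,\dis}$ enter only through the combination $\tfrac{T^2}{2}C_{\bF,\dis}+TC_{\bG,\dis}$. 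Substituting back and using the positivity of $C_\dis=C_{\bm L}-\tfrac{T^2}{2}C_{\bF,\dis}-TC_{\bG,\dis}$ from \eqref{cdispdef}---together with the a priori finite Lipschitz constant of the admissible $\bv$ to seed a continuation in the time variable---I obtain a dimension-free estimate $\E\sup_{[t_0,T]}|\Delta\bX_t|^2+\E\int_{t_0}^T\sum_i\bigl|D_pH^i|^{(X^i_t,Y^i_t)}_{(X^i_t,\ov Y^i_t)}\bigr|^2\,\d t\lesssim|\bx-\ov\bx|^2$, and in fact $\E|\Delta\bX_T|^2\le(1+O(\sqrt T))|\bx-\ov\bx|^2$ because for short horizons the forward flow is a near-isometry.

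To obtain the Lipschitz bound in the stated form I would use the backward equation of \eqref{pontryagin}: since $\bx,\ov\bx$ are deterministic, $\bv(t_0,\bx)-\bv(t_0,\ov\bx)=\Delta\bY_{t_0}=\E\bigl[(D_iG^i|^{\bX_T}_{\ov\bX_T})_i+\int_{t_0}^T(D_xH^i|^{(X^i,Y^i)}_{(\ov X^i,\ov Y^i)}-D_iF^i|^{\bX}_{\ov\bX})_i\,\d t\bigr]$. Taking the $\ell^2$-in-$i$ norm and pushing it inside the expectation by Jensen, the terminal contribution is at most $C_{D\bG,\lip}\,(\E|\Delta\bX_T|^2)^{1/2}$ by the definition \eqref{CDLipdef} of $C_{D\bG,\lip}$, while the running contribution is at most $\sqrt T$ (Cauchy--Schwarz in time) times the square root of $\E\int_{t_0}^T\sum_i\bigl|D_xH^i|^{(X^i,Y^i)}_{(\ov X^i,\ov Y^i)}-D_iF^i|^{\bX}_{\ov\bX}\bigr|^2\,\d t$, which is controlled by $\|D_{xx}\bH\|_\infty$, $\|D_{px}\bH\|_\infty$, $C_{D\bF,\lip}$ and the $L^2$ estimates of the previous paragraph (using in addition a BSDE-type Gr\"onwall to absorb the $\Delta\bY$-dependence of the backward drift). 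Combining with $\E|\Delta\bX_T|^2\le(1+O(\sqrt T))|\bx-\ov\bx|^2$ and collecting all terms gives $|\bv(t_0,\bx)-\bv(t_0,\ov\bx)|\le(C_{D\bG,\lip}+\sqrt T\,C_0)|\bx-\ov\bx|$, with $C_0$ depending only on the quantities listed in the statement.

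The step I expect to be the main obstacle is closing the $L^2$ estimate: one must organize the Gr\"onwall bounds on the forward equation so that the effective coercivity is genuinely governed by $C_\dis$ of \eqref{cdispdef} rather than spoiled by a multiplicative factor coming from the $x$-dependence of the Hamiltonians, and one must deal with the mild circularity arising because the terms $\Delta\bY_t=\bv(t,\bX_t)-\bv(t,\ov\bX_t)$ entering the estimates already involve the Lipschitz constant one is bounding---which is precisely what forces the argument to be run as a continuation in time, seeded by the (possibly non-dimension-free) Lipschitz bound supplied by admissibility.
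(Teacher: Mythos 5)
Your proposal is correct and follows essentially the same route as the paper: apply It\^o's formula to $\Delta\bX_t\cdot\Delta\bY_t$, use the convexity of $L^i$ and the displacement semi-monotonicity of $\bF,\bG$ to extract a coercive term, close the forward estimate with Gr\"onwall, and then bound $|\Delta\bY_{t_0}|$ via the backward equation with the constants organized so that $C_{D\bG,\lip}$ sits outside the $\sqrt T$-factor. The only gratuitous complication is your proposed continuation in time seeded by the admissibility bound: no circularity actually arises, because the $\Delta\bY$-Gr\"onwall uses only the Lipschitz constants of $D_x\bH$ and $(D_iF^i)_i$ (not of $\bv$), and $|\Delta\bY_{t_0}|=|\bv(t_0,\bx)-\bv(t_0,\ov\bx)|$ is identified only after the FBSDE stability estimate has been closed.
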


\begin{proof}
    We start by fixing $t_0 \in [0,T)$, $\bx_0, \ov{\bx}_0 \in (\R^d)^N$. Let $(\bX, \bY, \bZ)$ denote the solution of \eqref{pontryagin} with initial condition $\bX_{t_0} = \bx_0$, and $(\ov{\bX}, \ov{\bY}, \ov{\bZ})$ denote the solution with $\ov{\bX}_{t_0} = \ov{\bx}_0$. Also let $\alpha_t^i \defeq - D_p H^i(X_t^i, Y_t^i)$, and likewise for $\ov{\alpha}_t^i$. Now set $\Delta \bX \defeq \bX - \ov{\bX}$, and likewise for $\Delta \bY$, $\Delta \bm\alpha_t$, and so forth; keep in mind that $\Delta\bY_{t_0} = |\bm v(t_0,\bx_0) - \bm v(t_0,\bar{\bx}_0)|$.
    
    \smallskip
    
    \noindent$\bullet$ \emph{Step 1: evolution of $|\Delta\bY|^2$.}
    First, use the dynamics for $\bY$ and $\ov{\bY}$ to find that 
    \begin{align*}
        \d \,|\Delta \bY_t |^2 = 2 \bigg( \sum_{i} \Delta Y_t^i \cdot D_x H^i\Bigr|^{(X_t^i, Y_t^i)}_{(\ov{X}_t^i, \ov{Y}_t^i)} - \sum_{i} \Delta Y_t^i \cdot D_i F^i\Bigr|^{\bX_t}_{\ov{\bX}_t} \bigg) \d t + \d S_t\,, 
    \end{align*}
    where $S$ is a sub-martingale whose form does not matter (see Remark \ref{rmk.admissible}). Integrate from $t$ to $T$, take expectations, and apply Young's inequality to get 
    \begin{equation} \label{vlip.step1.eq1}
    \begin{split}
       & \E\big[ |\Delta \bY_t|^2 \big]
        \\
        & \leq \E\bigg[\, |\Delta \bY_T |^2 + \int_{t}^T \sum_{i} \,\Bigl| D_x H^i\Bigr|^{(X^i, Y^i)}_{(\ov{X}^i, \ov{Y}^i)} \Bigr|^2 
 + C_{D\bF,\lip} \int_t^T |\Delta\bX|^2 + 2 \int_t^T |\Delta \bY|^2 \bigg]
	\\
	& \leq \E\bigg[\, C_{D\bG,\lip}^2\,|\Delta \bX_T |^2 + \bigl( 2\norm{D_{xx}\bH}_\infty^2 + C_{D\bF,\lip}^2 \bigr) \int_t^T |\Delta\bX|^2 + 2\bigl( 1 + \norm{D_{px}\bH}_\infty^2 \bigr) \int_t^T |\Delta \bY|^2 \bigg]\,;
	\end{split}
	\end{equation}
	then Gronwall's inequality gives
	\begin{equation} \label{vlip.proof.EDY}
	 |\Delta \bY_{t_0}|^2 \leq \E\bigg[\, C_{D\bG,\lip}^2\,|\Delta \bX_T |^2 + \bigl( 2\norm{D_{xx}\bH}_\infty^2 + C_{D\bF,\lip}^2 \bigr) \int_{t_0}^T |\Delta\bX|^2 \bigg] e^{2T(1+\norm{D_{px}\bH}_\infty^2)} \,.
	\end{equation}
    
    \smallskip
    
     \noindent$\bullet$ \emph{Step 2: evolution of $|\Delta\bX|^2$.}
    Note that, for any $t \in [t_0,T]$,
    \[
        |\Delta \bX_t - \Delta\bx_0|^2 = \sum_{i=1}^N\, \biggl|\, \int_{t_0}^t \Delta \alpha_t^i \,\d t \,\biggr|^2 \leq (t-t_0) \int_{t_0}^T |\Delta \bm\alpha_t|^2 \d t\,,
    \]
    hence, by Young's inequality, for any $\delta > 0$ there is a constant $C_\delta$ such that 
    \begin{align} \label{xalphabound}
    |\Delta \bX_t|^2 \leq (1+\delta)(t-t_0) \int_{t_0}^T |\Delta \bm\alpha_t|^2 \d t + C_\delta |\bx_0|^2\,.
    \end{align}
  On the other hand, by It\^o's formula and \eqref{strongconvLforH}, we have
    \begin{equation*}
    \begin{split}
        \d\bigl( \Delta X_t^i \cdot \Delta Y_t^i \bigr) &= \Bigl( \Delta Y_t^i \cdot \Delta \alpha^i_t
        + \Delta X_t^i \cdot D_x H^i\Bigr|^{(X_t^i, Y_t^i)}_{(\ov{X}_t^i, \ov{Y}_t^i)} - \Delta X_t^i \cdot D_i F^i\Bigr|^{\bX_t}_{\ov{\bX}_t} \,\Bigr) \d t + \d M_t
        \\[3pt]
        &\leq -C_L| \Delta\alpha^i_t |^2 \,\d t - \Delta X_t^i \cdot D_i F^i\Bigr|^{\bX_t}_{\ov{\bX}_t} \, \d t + \d M_t \,,
    \end{split}
    \end{equation*}
with $M$ being a martingale whose form is unimportant. Integrating from $0$ to $T$, taking expectations to dispense with the martingale terms, and summing over $i=1,\dots,N$, we get 
   \[
        C_{\bm L} \E\bigg[ \int_{t_0}^T |\Delta \bm\alpha_t|^2 \,\d t \bigg] \leq \sum_{i} \Delta x_0^i \cdot \Delta Y_{t_0}^i + \E\bigg[ C_{\bF,\dis} \int_{t_0}^T |\Delta \bX_t|^2 dt + C_{\bG, \dis} |\Delta \bX_T|^2 \bigg]\,;
    \]
    therefore, combining this with the bound~\eqref{xalphabound}, we deduce that 
   \begin{equation*} %\label{vlip.proof.Da}
   \begin{multlined}[b][.9\displaywidth]
        \biggl( C_{\bm L} - (1+\delta) T\Bigl(C_{\bG,\dis} + \frac{T}2 C_{\bF,\dis}\Bigr)\biggr)\, \E\bigg[ \int_{t_0}^T |\Delta \bm\alpha_t|^2\, \d t \bigg] 
        \\[3pt]
        \leq \sum_{i} \Delta x_0^i \cdot \Delta Y_{t_0}^i + C_\delta \bigl( C_{\bG,\dis} + TC_{\bF,\dis} \bigr) |\Delta\bx_0|^2\,.
        \end{multlined}
    \end{equation*}
    As $C_\dis > 0$ (recall \eqref{cdispdef}), there is $\delta_0 > 0$ (which depends only on $C_\dis$, $C_{\bG,\dis}$, $C_{\bF,\dis}$, and $T$) such that 
    \[
    C_{\bm L} - (1+\delta_0) T\Bigl(C_{\bG,\dis} + \frac{T}2 C_{\bF,\dis}\Bigr) \geq \frac{C_\dis}2 > 0\,,
    \]
   so, using now this estimate, as well as the Cauchy-Schwarz inequality, in \eqref{xalphabound}, we obtain
   \begin{equation} \label{vlip.dis.proof.estX1}
   \sup_{t \in [t_0,T]} \E\bigl[|\Delta\bX_t-\Delta\bx_0|^2\bigr] \leq T C_{\dis,T}\, |\Delta\bx_0|^2 + \frac{2T}{C_\dis}\, |\Delta\bx_0|\,|\Delta\bY_{t_0}| \,,
   \end{equation}
   where $C_{\dis,T} \defeq 2C_{\delta_0}(C_{\bG,\dis} + TC_{\bF,\dis})/{C_\dis}$.   
     \smallskip
    
    \noindent$\bullet$ \emph{Step 3: closing the estimates.}
	Combining the bounds \eqref{vlip.proof.EDY} and \eqref{vlip.dis.proof.estX1} we find that
   \[
   |\Delta\bY_{t_0}|^2 \leq \bigl(  C_{D\bG,\lip}(2+T\tilde C)|\Delta\bx_0|^2 + T \tilde C'  |\Delta\bx_0|\,|\Delta\bY_{t_0}| \bigr)e^{T(1+\norm{D_{px}\bH}_\infty^2)} \,,
   \]
   with $\tilde C$ and $\tilde C'$ depending only on $\norm{D_{xx}\bH}_\infty$, $C_{\bG,\dis}$, $C_{D\bG,\lip}$, $C_{\bF,\dis}$, $C_{D\bF,\lip}$, $C_\dis$ and $T$. By Young's inequality we deduce that $|\Delta\bY_{t_0}|^2 \leq \tilde C''|\Delta\bx_0|^2$, with $\tilde C''$ depending only on the parameters listed above and $\norm{D_{px}\bH}_\infty$; recalling the definition of $\Delta\bY_{t_0}$ as well as the arbitrariness of $t_0$ and $\bx_0$, this provides the bound $\norm{|D\bm v|_\op}_\infty^2 \leq \tilde C''$.
   
  Now, the procedure to get to \eqref{eq.dvopbound.dis} (which is sharper as $T \to 0$) is easy, so we only sketch it briefly: compute $\d |\Delta \bX_t|^2$ with It\^{o}'s formula, integrate, take expectations, and estimate with Young's inequality to have
   \[
  \sup_{t \in [t_0,T]} \E \bigl[| \Delta \bX_t|^2 \bigr] \leq |\bx_0|^2 + T\bigl(\norm{D_{px}\bH}_\infty + \norm{D_{pp}\bH}_\infty^2 \bigr) \sup_{t \in [t_0,T]} \E \bigl[ |\Delta\bX_t|^2 \bigr] + T\tilde C''\,;
   \]
   then use \eqref{vlip.dis.proof.estX1}, recalling the bound $|\Delta \bY_{t_0}|^2 \leq \tilde C'' |\Delta \bx_0|^2$, in the right-hand side above, and finally plug the resulting estimate for $\sup_{t \in [t_0,T]} \E \bigl[| \Delta \bX_t|^2 \bigr]$ back into \eqref{vlip.step1.eq1}.
\end{proof}

\subsection{The Lasry--Lions semi-monotone case}

Let us denote, in this subsection, $v^{i,j} \coloneqq D_j v^i$. We recall that $v^{i,j}$ takes values in $\R^{d \times d}$, with $(v^{i,j})_{q,r} = v^{i_q, j_r} = D_{j_r} v^{i_q}$ for $q,r = 1,\dots,d$, and $D \bv = (v^{i,j})_{i,j = 1,\dots,N}$, which we view as a function taking values in $(\R^{d \times d})^{N \times N}$, is the Jacobian matrix of the vector field $\bv = (v^{i})_{i = 1,\dots,N} \colon (\R^d)^N \to (\R^d)^N$.

We will now derive a series of estimates under the additional assumption that for some $T_0 \in [0,T)$, we have a bound of the form 
\begin{equation} \label{dvansatz}
    \norm{ |D \bv|_{\text{op}}}_{L^{\infty}([T_0,T] \times (\R^d)^N)} \leq M
\end{equation}
for some $M < \infty$. This is equivalent to assuming that the vector field $\bv$ is $M$-Lipschitz in space on $[T_0,T] \times (\R^d)^N$. It will be useful to record here the (matrix-valued) equation for $v^{i,j}$ in $[0,T] \times (\R^d)^N$ obtained by differentating \eqref{pontpde}:
\begin{equation} \label{eq.vij}
\begin{cases}
	 \ds - \partial_t v^{i,j} - L^{\bv} v^{i,j} + \sum_{1 \leq k \leq N} v^{i,k} D_{pp}H^k(x^k,v^k) v^{k,j} 
    \\[3pt]
    \quad {} + v^{i,j} D_{xp} H^j(x^j,v^j) + D_{px} H^i(x^i,v^i) v^{i,j} + D_{xx} H^i(x^i,v^i) \1_{i = j} = D_{ji} F^i
    \\[8pt]
    v^{i,j}(T,\bx) = D_{ji} G^i(\bx) \,,
\end{cases}
    \vspace{3pt}
\end{equation}
where $L^{\bv}$ indicates the differential operator 
\[
    L^{\bv} \defeq \sigma \sum_{\substack{1\leq k \leq N}} \Delta_{k}\, + \sigma^0 \sum_{\substack{1 \leq k,l \leq N}} \tr\, D_{kl} \,- \sum_{\substack{1 \leq k \leq N}} D_p H(x^k,v^k) \cdot D_{k}\,, 
\]
which is applied entry-wise to the function $v^{i,j}$.

\begin{lemma} \label{lem.gradvbound}
Let \Cref{assump.LL} hold. Suppose also that \eqref{dvansatz} holds for some $T_0 \in [0,T)$ and $M > 0$. Then, there is a constant $C_1$ (depending only on $\norm{D_{px} \bH}_\infty$, $\norm{D_{pp} \bH}_\infty$, and $T$) such that, for each $i = 1,\dots, N$, we have 
\begin{equation} \label{est.vij.LL}
\Bigl\|\, \sum_{j \neq i} |D_j v^i|^2\Bigr\|_{L^{\infty}([T_0,T] \times (\R^d)^N)} \leq  C_1e^{C_1M} \kappa^i\,.
\end{equation}
\end{lemma}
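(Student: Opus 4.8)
The plan is to bound the scalar function $\Phi^i(t,\bx)\defeq\sum_{j\neq i}|v^{i,j}(t,\bx)|^2$, where throughout this proof $|\cdot|$ denotes the Frobenius norm (which dominates the operator norm, so a bound on $\Phi^i$ delivers \eqref{est.vij.LL}). I would test the already-derived matrix equation \eqref{eq.vij} against $2v^{i,j}$ and sum over $j\neq i$; using for scalars $f$ the identity $L^{\bv}(f^2)=2fL^{\bv}f+2\sigma\sum_k|D_kf|^2+2\sigma_0|\sum_kD_kf|^2$ (applied entrywise, the last two nonnegative terms discarded) one gets the pointwise differential inequality
\[
-\partial_t\Phi^i-L^{\bv}\Phi^i\;\leq\;\bigl|2\!\sum_{j\neq i}\langle v^{i,j},\Theta^{i,j}\rangle\bigr|+\Phi^i+\sum_{j\neq i}|D_{ji}F^i|^2,\qquad \Theta^{i,j}\defeq\sum_{k}v^{i,k}D_{pp}H^kv^{k,j}+v^{i,j}D_{xp}H^j+D_{px}H^iv^{i,j},
\]
together with the terminal datum $\Phi^i(T,\cdot)=\sum_{j\neq i}|D_{ji}G^i|^2\leq\kappa^i$. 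Using $|\tr(BQ)|\leq|B|_\op\tr Q$ for positive semidefinite $Q$ (with $Q=(v^{i,j})^\TT v^{i,j}$ or $v^{i,j}(v^{i,j})^\TT$), the $v^{i,j}D_{xp}H^j$ and $D_{px}H^iv^{i,j}$ parts of $\Theta^{i,j}$ contribute at most $2\norm{D_{px}\bH}_\infty\Phi^i$, and $\sum_{j\neq i}|D_{ji}F^i|^2\leq\kappa^i$ by definition of $\kappa^i$.

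The only delicate step is to bound the $D_{pp}H$-coupling $\Sigma\defeq\sum_{j\neq i}\langle v^{i,j},\sum_kv^{i,k}D_{pp}H^kv^{k,j}\rangle$ by $C\norm{D_{pp}\bH}_\infty M\,\Phi^i$ with \emph{no} factor of $N$ (a term-by-term estimate loses a power of $N$). By cyclicity of the trace, $\Sigma=\sum_k\tr(v^{i,k}D_{pp}H^kT^k)$ with $T^k\defeq\sum_{j\neq i}v^{k,j}(v^{i,j})^\TT$. For $k=i$, the matrix $T^i$ is positive semidefinite with $\tr T^i=\Phi^i$, so $|\tr(v^{i,i}D_{pp}H^iT^i)|\leq|v^{i,i}|_\op\norm{D_{pp}\bH}_\infty\Phi^i\leq M\norm{D_{pp}\bH}_\infty\Phi^i$. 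For $k\neq i$, Cauchy--Schwarz gives $|\sum_{k\neq i}\tr(v^{i,k}D_{pp}H^kT^k)|\leq\norm{D_{pp}\bH}_\infty\bigl(\sum_{k\neq i}|v^{i,k}|^2\bigr)^{1/2}\bigl(\sum_k|T^k|^2\bigr)^{1/2}$, and the key observation is that the vertical stack of the $d\times d$ matrices $T^1,\dots,T^N$ equals $\widehat V\widetilde W$, where $\widehat V$ is the matrix obtained from $D\bv$ by deleting its $i$-th column-block and $\widetilde W$ is the vertical stack of the matrices $(v^{i,j})^\TT$, $j\neq i$. Hence $\sum_k|T^k|^2=|\widehat V\widetilde W|^2\leq|\widehat V|_\op^2\,|\widetilde W|^2\leq M^2\Phi^i$, since deleting columns cannot increase the operator norm (so $|\widehat V|_\op\leq|D\bv|_\op\leq M$) and $|\widetilde W|^2=\Phi^i$. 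Combining, $|\Sigma|\leq2M\norm{D_{pp}\bH}_\infty\Phi^i$, and the differential inequality above becomes $-\partial_t\Phi^i-L^{\bv}\Phi^i\leq a\Phi^i+\kappa^i$ with $a\defeq4M\norm{D_{pp}\bH}_\infty+4\norm{D_{px}\bH}_\infty+1$.

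To conclude I would, in the probabilistic spirit of the rest of the paper, apply It\^o's formula to $\Phi^i(t,\bX_t^{\OL,t_0,\bx_0})$ — whose drift along the open-loop equilibrium flow is $(\partial_t+L^{\bv})\Phi^i$ — take expectations to kill the martingale, and use $\E[\Phi^i(t,\bX_t^{\OL,t_0,\bx_0})]\leq\sup_\bx\Phi^i(t,\bx)$ to obtain, for $g(s)\defeq\sup\{\Phi^i(t_0,\bx_0):0\le T-t_0\leq s,\ \bx_0\in(\R^d)^N\}$, the integral inequality $g(s)\leq(1+T)\kappa^i+a\int_0^sg(r)\,\d r$; Gronwall then yields $\norm{\Phi^i}_{L^\infty([T_0,T]\times(\R^d)^N)}=g(T-T_0)\leq(1+T)e^{aT}\kappa^i\leq C_1e^{C_1M}\kappa^i$ with $C_1$ depending only on $\norm{D_{px}\bH}_\infty$, $\norm{D_{pp}\bH}_\infty$ and $T$. (Since $\sigma>0$ makes $-\partial_t-L^{\bv}$ uniformly parabolic, one could instead simply compare $\Phi^i$ with the spatially constant supersolution $e^{a(T-t)}\kappa^i+a^{-1}(e^{a(T-t)}-1)\kappa^i$.) The hard part is entirely the estimate on $\Sigma$: the factorization $\sum_k|T^k|^2=|\widehat V\widetilde W|^2$ together with the submultiplicativity $|AB|\leq|A|_\op|B|$ of the Frobenius norm is exactly what makes the final bound independent of $N$.
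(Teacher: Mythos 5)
Your argument is essentially the paper's own proof: you expand (the pointwise analogue of) $\d\sum_{j\neq i}|Y^{i,j}_t|^2$ along the open-loop flow, and the crucial $N$-free bound on the $D_{pp}H$ coupling is obtained, exactly as in the paper, by cyclicity of the trace, splitting off the $k=i$ diagonal term, and a Cauchy--Schwarz-plus-factorization estimate that reduces to $|D\bv|_{\op}\le M$. Your presentation of the factorization as $\sum_k|T^k|^2=|\widehat V\widetilde W|_{\mathrm{Fr}}^2\le|\widehat V|_\op^2|\widetilde W|_{\mathrm{Fr}}^2$ is a transparent rewriting of the paper's bound on $\bigl|\sum_{j\neq i}Y^{k,j}(Y^{i,j})^\TT\bigr|$, and the concluding Gronwall step is the same.
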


\begin{proof}
Fix $(t_0,\bx_0) \in [T_0, T] \times (\R^d)^N$, set $\bX = \bX^{\OL, t_0,\bx_0}$, and define, for $i,j,k = 1,\dots,N$,
\begin{equation} \label{processesdef}
\begin{gathered} 
     Y_t^i \defeq v^i(t,\bX_t)\,, \quad Y_t^{i,j} \defeq v^{i,j}(t,\bX_t)\,, \\ Z_t^{i,j,k} \defeq \sqrt{2 \sigma} D_{k} v^{i,j}(t,\bX_t)\,, \quad Z_t^{i,j,0} \defeq \sqrt{2 \sigma_0} \sum_{k = 1}^N D_k v^{i,j}(t,\bX_t)\,.
\end{gathered}
\end{equation}
Then using \eqref{eq.vij} and It\^o's formula, we find that for $i \neq j$, 
\begin{align} \label{yijdynamics}
    \d Y_t^{i,j} &=  \Big(\,\sum_{k=1}^N Y_t^{i,k} D_{pp} H^k_t Y_t^{k,j}  + Y_t^{i,j} D_{xp} H^j_t + D_{px} H^i_t Y^{i,j}_t  - D_{ji}F^i(\bX_t)\Big) \d t 
    + \sum_{k = 0}^N Z_t^{i,j,k} \d W_t^k \,,
\end{align}
where we used the notation $D_{pp}H^i_t \defeq D_{pp}H^i(X^i_t,Y^i_t)$ and likewise for $D_{px} H^i_t$.
As a consequence, 
\begin{align}  \label{dyijcomp}
    \d \Bigl(\,\sum_{j \neq i} |Y_t^{i,j}|^2 \Bigr) &= \biggl( 2 \sum_{k,\,j \neq i} \tr \bigl( Y_t^{i,k} D_{pp} H^k_t Y_t^{k,j} (Y_t^{i,j})^\TT \bigr) + 2 \sum_{j \neq i} \tr\bigl( Y_t^{i,j} (D_{xp} H^j_t + D_{px} H^i_t) (Y_t^{i,j})^\TT \bigr) 
     \notag \\ 
    &\qquad -2 \sum_{j \neq i} \tr\bigl(  D_{ji} F^i(\bX_t)(Y_t^{i,j})^\TT \bigr) + \sum_{j \neq i} \sum_{k = 0}^N |Z_t^{i,j,k}|^2 \biggr) \d t + \d M_t\,, 
\end{align}
with $M$ being a martingale. The first term in this expression can be bounded in terms of the constant $M$ appearing in \eqref{dvansatz}: indeed, we use the Cauchy--Schwarz inequality $\tr(AB) \leq |A||B|$ to get
\begin{align} \label{cyclic}
    \sum_{k,\,j \neq i} \tr \bigl( (Y_t^{i,j})^\TT Y_t^{i,k} D_{pp} H^k_t Y_t^{k,j} \bigr)
    &=  \sum_{k,\,j \neq i} \tr \bigl( Y_t^{i,k} D_{pp} H^k_t Y_t^{k,j} (Y_t^{i,j})^\TT \bigr) 
   \notag \\
    &\leq \norm{D_{pp} \bH}_\infty \biggl( \, \sum_{k \neq i} |Y_t^{i,k}| \Bigl| \sum_{j \neq i} Y_t^{k,j} (Y_t^{i,j})^\TT \Bigr|  + |Y_t^{i,i}| \sum_{j \neq i} |Y_t^{i,j}|^2 \biggr)
    \notag \\
    &\leq 2\norm{D_{pp} \bH}_\infty M \sum_{j \neq i} |Y_t^{i,j}|^2\,.
\end{align}
 Now, coming back to \eqref{dyijcomp}, we integrate from $s \geq t_0$ to $T$ and take expectations, and then use the above estimate, to find that 
\[
\begin{multlined}[.95\displaywidth]
    \E\bigg[\, \sum_{j \neq i} |Y_{t}^{i,j}|^2 + \int_{t}^T \sum_{j \neq i} \sum_{k = 0}^N |Z^{i,j,k}|^2 \bigg] 
     \\
    %&\qquad \leq \E\bigg[ \sum_{j \neq i} |D_{ij} G^i(t,\bX_t)|^2 - \int_{t_0}^T  2 \sum_{k,j \neq i} \tr \Big( (Y_t^{i,j})^T Y_t^{i,k} D_{pp} H^k(X_t^k, Y_t^{k}) Y_t^{k,j} \Big) + 2 \sum_{j \neq i} \tr\Big( (Y_t^{i,j})^T D_{xp} H^j(X_t^j, Y_t^{j}) Y_t^{i,j} \Big)-2 \sum_{j \neq i} \tr\Big( (Y_t^{i,j})^T D_{x^ix^j} F^i(t,\bX_t) \Big) dt \bigg] \\
    \leq (1 \vee T)\kappa^i + \bigl(1+ 4 \|D_{pp} \bH\|_{\infty} M + 4\|D_{xp} \bH\|_{\infty} \bigr) \E\biggl[\,\int_{t}^T \sum_{j \neq i} |Y^{i,j}|^2 \biggr] \,;
\end{multlined}
\]
Gronwall's lemma yields
\[
\E\bigg[\, \sum_{j \neq i} |Y_{t_0}^{i,j}|^2 \bigg] \leq (1\vee T)e^{T(1+4 \|D_{pp} \bH\|_{\infty} M + 4\|D_{xp} \bH\|_{\infty})} \kappa^i
\]
and we conclude by the arbitrariness of $t_0$ and $\bx_0$, recalling the definition of $Y^{i,j}_{t_0}$.
\end{proof}

\begin{lemma} \label{lem.gradvbound2}
Let Assumption \ref{assump.LL} hold. Suppose also that \eqref{dvansatz} holds for some $T_0 \in [0,T)$ and $M > 0$. Then, there is a constant $C_2$ (depending only on $\norm{D_{px} \bH}_\infty$, $\norm{D_{pp} \bH}_\infty$, and $T$) such that, for each $j = 1,\dots, N$, we have 
\begin{equation} \label{est.vij.LL2}
\Bigl\|\, \sum_{j \neq i} |D_i v^j|^2\Bigr\|_{L^{\infty}([T_0,T] \times (\R^d)^N)} \leq C_2 e^{C_2M} \tilde \kappa^i\,.
\end{equation}
\end{lemma}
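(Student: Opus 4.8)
The plan is to repeat the proof of \Cref{lem.gradvbound} with the roles of the two lower indices of $v^{i,j}=D_jv^i$ interchanged, so I will only indicate the modifications. Fix $(t_0,\bx_0)\in[T_0,T]\times(\R^d)^N$, set $\bX=\bX^{\OL,t_0,\bx_0}$, and introduce, for $j\ne i$, the processes $Y_t^{j,i}\defeq v^{j,i}(t,\bX_t)=D_iv^j(t,\bX_t)$ together with the $Z$-processes as in \eqref{processesdef}. Exchanging $i$ and $j$ in \eqref{eq.vij}, the matrix-valued function $v^{j,i}$ solves a backward equation whose running source is $D_{ij}F^j$, whose terminal datum is $D_{ij}G^j$, and whose second-order Hamiltonian coefficients are those of \eqref{eq.vij} with $i$ and $j$ swapped; the term $D_{xx}H^j\1_{j=i}$ does not survive since we only use the equation for $j\ne i$. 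It\^o's formula then produces a dynamics for $Y_t^{j,i}$ of exactly the same shape as \eqref{yijdynamics}, and hence an expansion of $\d\bigl(\sum_{j\ne i}|Y_t^{j,i}|^2\bigr)$ identical in form to \eqref{dyijcomp}, with $\sum_{k,\,j\ne i}\tr\bigl(Y_t^{j,k}D_{pp}H_t^kY_t^{k,i}(Y_t^{j,i})^\TT\bigr)$, $\sum_{j\ne i}\tr\bigl(Y_t^{j,i}(D_{xp}H_t^i+D_{px}H_t^j)(Y_t^{j,i})^\TT\bigr)$, $\sum_{j\ne i}\tr\bigl(D_{ij}F^j(\bX_t)(Y_t^{j,i})^\TT\bigr)$ and $\sum_{j\ne i}\sum_{k=0}^N|Z_t^{j,i,k}|^2$ in place of the corresponding sums there.

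The only delicate step, as in \Cref{lem.gradvbound}, is to control the quadratic Hamiltonian term by the constant $M$ of \eqref{dvansatz}. I would split $\sum_k$ into $k=i$ and $k\ne i$. For $k=i$, one has $\sum_{j\ne i}\tr\bigl(Y^{j,i}D_{pp}H^i_tY^{i,i}(Y^{j,i})^\TT\bigr)=\tr\bigl(\bigl(\sum_{j\ne i}(Y^{j,i})^\TT Y^{j,i}\bigr)D_{pp}H^i_tY^{i,i}\bigr)$, which is at most $M\|D_{pp}\bH\|_\infty\sum_{j\ne i}|Y_t^{j,i}|^2$ because $|Y^{i,i}|_{\op}=|D_iv^i|_{\op}\le M$ by \eqref{dvansatz}. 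For $k\ne i$, write $c\defeq(Y_t^{k,i})_{k\ne i}$ for the block-column of index $i$ of $D\bv(t,\bX_t)$ with block-row $i$ deleted, $\tilde{\mathcal B}\defeq(Y_t^{j,k})_{j,k\ne i}$ for the principal submatrix of $D\bv(t,\bX_t)$ obtained by deleting block-row and block-column $i$, and $\tilde{\mathcal D}\defeq\diag\bigl(D_{pp}H_t^k\bigr)_{k\ne i}$; then $\sum_{k\ne i}Y^{j,k}D_{pp}H^k_tY^{k,i}$ is the $j$-th block of $\tilde{\mathcal B}\tilde{\mathcal D}c$, so $\sum_{j\ne i}\tr\bigl((\tilde{\mathcal B}\tilde{\mathcal D}c)_j(Y^{j,i})^\TT\bigr)=\tr\bigl(c^\TT\tilde{\mathcal B}\tilde{\mathcal D}c\bigr)$. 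Since $|c|_{\op},|\tilde{\mathcal B}|_{\op}\le M$ by \eqref{dvansatz}, $|\tilde{\mathcal D}|_{\op}\le\|D_{pp}\bH\|_\infty$, and $|c|_{\mathrm{Fr}}^2=\sum_{j\ne i}|Y_t^{j,i}|^2$, the inequalities $|\tr(AB)|\le|A|_{\mathrm{Fr}}|B|_{\mathrm{Fr}}$ and $|AB|_{\mathrm{Fr}}\le|A|_{\op}|B|_{\mathrm{Fr}}$ give $|\tr(c^\TT\tilde{\mathcal B}\tilde{\mathcal D}c)|\le M\|D_{pp}\bH\|_\infty\sum_{j\ne i}|Y_t^{j,i}|^2$. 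Hence the quadratic term is bounded by $2M\|D_{pp}\bH\|_\infty\sum_{j\ne i}|Y_t^{j,i}|^2$, mirroring \eqref{cyclic}. This is where the structural symmetry is used: transposition exchanges block-rows and block-columns of the Jacobian $D\bv$ while preserving $|\cdot|_{\op}$, so the block-column $(D_iv^j)_{j\ne i}$ we are estimating plays here exactly the role that the block-row $(D_jv^i)_{j\ne i}$ played in \Cref{lem.gradvbound}, and in particular no factor of $N$ is incurred.

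The remaining terms require no new idea: $\tr\bigl(Y_t^{j,i}(D_{xp}H_t^i+D_{px}H_t^j)(Y_t^{j,i})^\TT\bigr)\le 2\|D_{px}\bH\|_\infty|Y_t^{j,i}|^2$ for each $j\ne i$ (recall $D_{xp}H$ and $D_{px}H$ are transposes, hence have the same operator norm, which is bounded by $\|D_{px}\bH\|_\infty$); the source term obeys $-2\tr\bigl(D_{ij}F^j(\bX_t)(Y_t^{j,i})^\TT\bigr)\le|D_{ij}F^j(\bX_t)|^2+|Y_t^{j,i}|^2$, and $\sum_{j\ne i}|D_{ij}F^j|^2=\sum_{j\ne i}|D_{x^ix^j}F^j|^2$ is precisely one of the two summands defining $\tilde\kappa^i$; and the terminal value gives $\sum_{j\ne i}|Y_T^{j,i}|^2=\sum_{j\ne i}|D_{x^ix^j}G^j(\bX_T)|^2\le\tilde\kappa^i$. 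Integrating the expansion of $\d\bigl(\sum_{j\ne i}|Y_t^{j,i}|^2\bigr)$ from $t$ to $T$, taking expectations (the $Z$-term being nonnegative and landing on the left-hand side), applying Gronwall's lemma on $[t_0,T]$, and then using the arbitrariness of $(t_0,\bx_0)$ and the definition of $Y_{t_0}^{j,i}$ yields \eqref{est.vij.LL2} with a constant $C_2$ depending only on $\|D_{px}\bH\|_\infty$, $\|D_{pp}\bH\|_\infty$ and $T$. I do not foresee any genuine obstacle: the argument is entirely parallel to that of \Cref{lem.gradvbound}, the only point that really needs checking being the dimension-free bound on the quadratic Hamiltonian term described above.
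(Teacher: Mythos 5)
Your proof is correct and follows essentially the same route as the paper's: mimic \Cref{lem.gradvbound} with the two lower indices of $v^{i,j}$ interchanged, the only nontrivial step being the dimension-free bound on the quadratic Hamiltonian term. Your block-matrix computation $\tr(c^\TT\tilde{\mathcal B}\tilde{\mathcal D}c)\le M\|D_{pp}\bH\|_\infty|c|_{\mathrm{Fr}}^2$ for the $k\neq i$ part and $\tr\bigl(\bigl(\sum_j(Y^{j,i})^\TT Y^{j,i}\bigr)D_{pp}H^i Y^{i,i}\bigr)\le M\|D_{pp}\bH\|_\infty\sum_j|Y^{j,i}|^2$ for $k=i$ is the same estimate the paper obtains via the cyclic-invariance-plus-Cauchy--Schwarz display in its proof (the transposed analogue of \eqref{cyclic}), just organized slightly differently.
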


\begin{proof}
It is sufficient to mimic the proof of \Cref{lem.gradvbound}, with the only major difference being that this time we have, using also the invariance of the trace under cyclic permutations,
\begin{equation*}
\begin{split} 
    \sum_{k,\,j\neq i} \tr \bigl( (Y_t^{j,i})^\TT Y_t^{j,k} D_{pp} H^k_t Y_t^{k,i} \bigr)
    &\leq \norm{D_{pp} H}_\infty \biggl( \, \sum_{k \neq i} |Y_t^{k,i}| \Bigl| \sum_{j \neq i} (Y_t^{j,k})^\TT Y_t^{j,i} \Bigr|  + |Y_t^{i,i}| \sum_{j \neq i} |Y_t^{j,i}|^2 \biggr)
    \\
    &\leq 2\norm{D_{pp} H}_\infty M \sum_{j\neq i} |Y_t^{j,i}|^2
\end{split}
\end{equation*}
in place of \eqref{cyclic}.
\end{proof}

\begin{lemma} \label{prop.oldiagbounds}
    Let Assumption \ref{assump.LL} hold. Suppose also that \eqref{dvansatz} holds for some $T_0 \in [0,T)$ and $M > 0$. Then, there is a constant $C_3$ (depending only on $\norm{D^2 \bH}_\infty$, $C_{\bH}$, $C_{\bm F,\lip}$, $C_{\bm G,\lip}$, $C_{D\bm F,\lip}$, $C_{D\bm G,\lip}$, and $T$) such that
    \begin{equation}\label{diviabsolute}
 \norm{D_i v^i}_{L^\infty([T_0,T]\times(\R^d)^N)} \leq C_{D\bm G,\lip} + TC_3\bigl(1 + \sigma^{-1} + e^{C_3M}\sqrt{\kappa^i \tilde\kappa^i}\,\bigr)\,.
\end{equation}
\end{lemma}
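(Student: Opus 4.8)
The plan is to isolate the diagonal equation for $v^{i,i} = D_i v^i$ from the system \eqref{eq.vij} and treat the off-diagonal couplings as controlled source terms, using \Cref{lem.gradvbound} and \Cref{lem.gradvbound2} to quantify them. Concretely, I would fix $(t_0,\bx_0) \in [T_0,T] \times (\R^d)^N$, set $\bX = \bX^{\OL,t_0,\bx_0}$, and introduce the processes $Y_t^{i,j} = v^{i,j}(t,\bX_t)$ and $Z_t^{i,j,k}$ exactly as in \eqref{processesdef}. Taking $j = i$ in the dynamics \eqref{yijdynamics} gives an SDE for $Y_t^{i,i}$ whose drift contains: (a) the ``self-interaction'' quadratic term $Y_t^{i,i} D_{pp}H_t^i Y_t^{i,i}$ together with the first-order Hamiltonian terms $Y_t^{i,i} D_{xp}H_t^i + D_{px}H_t^i Y_t^{i,i}$ and the inhomogeneous piece $D_{xx}H_t^i$; (b) the genuinely off-diagonal sum $\sum_{k \neq i} Y_t^{i,k} D_{pp}H_t^k Y_t^{k,i}$; and (c) the forcing $-D_{ii}F^i(\bX_t)$, with terminal value $Y_T^{i,i} = D_{ii}G^i(\bX_T)$.

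The key estimate is on term (b): by Cauchy--Schwarz, $\bigl|\sum_{k \neq i} Y_t^{i,k} D_{pp}H_t^k Y_t^{k,i}\bigr| \leq \norm{D_{pp}\bH}_\infty \bigl(\sum_{k \neq i}|Y_t^{i,k}|^2\bigr)^{1/2}\bigl(\sum_{k \neq i}|Y_t^{k,i}|^2\bigr)^{1/2}$, and then \Cref{lem.gradvbound} and \Cref{lem.gradvbound2} bound the two factors by $\sqrt{C_1 e^{C_1 M}\kappa^i}$ and $\sqrt{C_2 e^{C_2 M}\tilde\kappa^i}$ respectively, so term (b) is pointwise of size $\lesssim e^{C M}\sqrt{\kappa^i\tilde\kappa^i}$ along the trajectory. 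For the remaining terms I would compute $\d|Y_t^{i,i}|^2$ by It\^o, integrate from $s$ to $T$, take expectations to kill the martingale, and absorb the first-order terms from (a) into a Gronwall constant depending only on $\norm{D^2\bH}_\infty$ and $T$; the $Z$-terms and the idiosyncratic noise enter here, producing the $\sigma^{-1}$ through an estimate on $\E[\int |Z_t^{i,i,k}|^2\,dt]$ of the type already used implicitly (the noise is what lets one recover an $L^\infty$ bound on $D_i v^i$ from an $L^2$-in-time bound along optimal trajectories, as the $Z$ processes are $\sqrt{2\sigma}$ times the spatial gradients — dividing by $\sigma$ yields the $\sigma^{-1}$ dependence). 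Using $\norm{D_i G^i}_\infty \leq C_{\bm G,\lip}$ and $\norm{D_{ii}F^i}_\infty,\norm{D_{ii}G^i}_\infty \lesssim C_{D\bm F,\lip}, C_{D\bm G,\lip}$ (bounded second derivatives), and the linear-in-$p$ growth $|D_x H^i| \leq C_{\bH}(1+|p|)$ to handle the $D_{xx}H^i$ contribution, Gronwall then delivers a bound of the form $C_{D\bm G,\lip} + T C_3(1 + \sigma^{-1} + e^{C_3 M}\sqrt{\kappa^i\tilde\kappa^i})$; since $t_0,\bx_0$ were arbitrary this is the claimed $L^\infty$ bound \eqref{diviabsolute}.

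The main obstacle is keeping the $\sigma^{-1}$ dependence sharp while extracting an honest $L^\infty$ bound on $D_i v^i$ rather than merely an $L^2$-in-time bound along equilibrium trajectories: one must exploit non-degeneracy of the idiosyncratic noise carefully, in the same spirit as the treatment of the Nash system later in the paper, to upgrade from the trajectory-averaged quantity $\E[\int_{t_0}^T \sum_k |Z_t^{i,i,k}|^2\,dt]$ back to a genuine pointwise Lipschitz bound on $v^i$, and to verify that the localization (the fact that the estimates are only valid on $[T_0,T]$, where \eqref{dvansatz} is assumed) propagates cleanly. A secondary technical point is confirming that the martingale term in the $\d|Y_t^{i,i}|^2$ expansion is a true martingale, which follows from admissibility of $\bv$ as in \Cref{rmk.admissible}.
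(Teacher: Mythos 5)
Your identification of the dangerous cross term $\sum_{k\neq i}Y^{i,k}D_{pp}H^k_t Y^{k,i}$ and its control by Cauchy--Schwarz combined with \Cref{lem.gradvbound} and \Cref{lem.gradvbound2} is exactly right, and matches what the paper does. But there is a genuine gap in the route you propose for the rest.

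You propose computing $\d|Y_t^{i,i}|^2$ and closing with Gronwall. The problem is the drift of $Y^{i,i}$ contains the \emph{quadratic} term $Y^{i,i}D_{pp}H^i_t Y^{i,i}$; pairing it with $Y^{i,i}$ produces a \emph{cubic} expression $\langle Y^{i,i}, Y^{i,i}D_{pp}H^i_tY^{i,i}\rangle \sim |Y^{i,i}|^3$, which cannot be absorbed by a Gronwall constant depending only on $\norm{D^2\bH}_\infty$ and $T$. Bounding it via the ansatz \eqref{dvansatz} as $\lesssim M|Y^{i,i}|^2$ produces either a Gronwall factor $e^{CMT}$ or, if one instead uses the energy estimate \eqref{Yikbound}, a factor $M\sigma^{-1}$. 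Either way the $M$-dependence contaminates the $(1+\sigma^{-1})$ term in the final bound, giving something strictly weaker than the claimed $C_{D\bm G,\lip} + TC_3(1+\sigma^{-1}+e^{C_3M}\sqrt{\kappa^i\tilde\kappa^i})$. This is not cosmetic: for the bootstrap in \Cref{prop.olboundslip} to close, the $M$-dependence must multiply only $\sqrt{\kappa\tilde\kappa}$, not the $(1+\sigma^{-1})$ contribution. The paper avoids the cubic term by integrating the \emph{linear} equation for $Y^{i,i}$ directly (not $|Y^{i,i}|^2$), so that $Y^{i,i}D_{pp}H^i_tY^{i,i}$ stays quadratic and is bounded by $\norm{D_{pp}\bH}_\infty\E[\int|Y^{i,i}|^2]$, with this last quantity controlled by $\sigma^{-1}$ via \eqref{Yikbound}. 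This also explains the clean prefactor $T$ rather than $e^{CT}$.

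You also misattribute the source of $\sigma^{-1}$. It does not come from an estimate on $\E[\int|Z^{i,i,k}|^2\,\d t]$ (which are $\sqrt{2\sigma}$ times \emph{second} spatial derivatives of $v^i$ and appear on the left side of the energy inequality, not as a bound). Rather it comes from the energy estimate on $|Y^i_t|^2 = |v^i(t,\bX_t)|^2$: the It\^o correction of $|Y^i|^2$ is $2\sigma\sum_k|Y^{i,k}|^2 + 2\sigma_0|\sum_kY^{i,k}|^2$ with $Y^{i,k} = D_kv^i$ \emph{first} derivatives, and dividing by $2\sigma$ in \eqref{Yikbound} yields $\E[\int\sum_k|D_kv^i|^2]\lesssim\sigma^{-1}$, hence $\E[\int|Y^{i,i}|^2]\lesssim\sigma^{-1}$. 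Finally, no ``upgrade from $L^2$ along trajectories to $L^\infty$'' is needed: since $Y^{i,i}_{t_0}=D_iv^i(t_0,\bx_0)$ is deterministic and $(t_0,\bx_0)$ is arbitrary, the pointwise bound follows at once.
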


\begin{proof}
Fix $(t_0,\bx_0) \in [T_0,T] \times (\R^d)^N$, and define $\bX, Y^i, Y^{i,j}, Z^{i,j,k}$ as in the proof of \Cref{lem.gradvbound}; see \eqref{processesdef}. Using It\^o's formula we have 
\[ %\label{eq.yii}
    \d Y_t^{i} = \bigl(D_{x} H^i_t - D_{i} F^i( \bX_t) \bigr) \d t + \sqrt{2 \sigma} \sum_{k} Y_t^{i,k} \d W_t^k + \sqrt{2 \sigma_0} \sum_{k} Y_t^{i,k} \d W_t^0\,,
    \]
as well as, also using \eqref{eq.vij},
\begin{equation} \label{eq.yii}
    \d Y_t^{i,i} = \Bigl(\,\sum_{k} Y_t^{i,k} D_{pp} H^k_t Y_t^{k,i} + Y_t^{i,i} D_{xp} H^i_t + D_{px} H^i_t Y_t^{i,i} + D_{xx} H^i_t - D_{ii}F^i(\bX_t) \Bigr)\d t + \d M_t\,,
\end{equation}
where $D_{pp}H^i_t \defeq D_{pp}H^i(X^i_t,Y^i_t)$ (and likewise for $D_{px} H^i_t$ and $D_{xx} H^i_t$) and $\d M_t = \sum_{k=0}^N Z^{i,i,k}_t \d W^k_t$. By computing $\d | Y_t^i|^2$, we easily find that 
\begin{align} \label{est.yi2.LL}
   &\E\bigg[\, |Y_t^i|^2 + 2\sigma \int_t^T \sum_{k} |Y_s^{i,k}|^2 \,\d s \bigg]
    \notag \\
   &\hspace{110pt} \leq \E\bigg[ |Y^{i}_T|^2 + 2 \int_t^T |Y_s^i| \big( |D_x H^i_s| + |D_{i} F^i(\bX_s)| \big) \,\d s \bigg]
    \notag \\
    &\hspace{110pt} \leq C_{\bm G,\lip}^2 + T(C_{\bm F,\lip}^2 + C^2_{\bH}) + 2 \E\bigg[ \int_t^T |Y_s^i|^2\, \d s  \bigg] \,. 
\end{align}
By Gronwall's inequality,
\[
\sup_{t \in [t_0,T]} \E\bigl[ |Y^i_t|^2 \bigr] \leq \bigl(C_{\bm G,\lip}^2 + T(C_{\bm F,\lip}^2 + C^2_{\bH})\bigr)e^{2T}\,;
\]
then, plugging this estimate back into \eqref{est.yi2.LL} with $t = t_0$, we obtain
\begin{equation} \label{Yikbound}
    |Y^i_{t_0}|^2 + 2\sigma \E\bigg[\,\int_{t_0}^T \sum_{k = 1}^N |Y_t^{i,k}|^2 \,\d t \bigg] \leq C_{\bm G,\lip}^2 + TC_{\bH, \bm F, \bm G, T}\,,
\end{equation}
with $C_{\bH, \bm F, \bm G, T}$ depending only on the constants appearing in the right-hand side of~\eqref{est.yi2.LL}.
Next, we note that
\[
\begin{split}
|Y_{t_0}^{i,i}| &\leq C_{D\bm G,\lip} + T(C_{D\bm F,\lip} + \norm{D_{xx} \bH}_\infty + \norm{D_{px} \bH}_\infty^2) + (1+\norm{D_{pp} \bH}_\infty) \E\bigg[\,\int_{t_0}^T| Y^{i,i} |^2 \bigg]
\\
&\quad + \norm{D_{pp} \bH}_\infty \E\bigg[\int_{t_0}^T \sum_{k\neq i} |Y^{k,i} Y^{i,k}| \bigg] \Big)\,,
\end{split}
\]
hence, using \eqref{est.vij.LL}, \eqref{est.vij.LL2} and \eqref{Yikbound}, we obtain \eqref{diviabsolute} by the arbitrariness of $t_0$ and $\bx_0$.
\end{proof}

The following lemma shows that $D_i v^i$ is ``almost'' symmetric, as a $d \times d$ matrix. This rather subtle point will be used in the following steps, and is expected since, for large $N$, $v^i \approx D_i u^i$ (see \Cref{lem.llclol} below).

\begin{lemma} \label{lem.asymmetry} 
    Let Assumption \ref{assump.LL} hold. Suppose also that \eqref{dvansatz} holds for some $T_0 \in (0,T)$ and $M > 0$. Then, there is a constant $C_4$ (depending only on $\norm{D_{pp} \bH}_\infty$, $\norm{D_{px} \bH}_\infty$, and $T$) such that, for each $i = 1,\dots,N$, we have 
    \begin{equation} \label{disc.yii.LL}
        \| D_{i} v^i - (D_{i} v^i)^\TT \|_{L^{\infty}([T_0,T] \times (\R^d)^N)} \leq TC_4e^{C_4M} \sqrt{\kappa^i\tilde\kappa^i}\,.
       \end{equation}
\end{lemma}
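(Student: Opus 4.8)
The plan is to study the evolution of the antisymmetric part $A_t^i \defeq Y_t^{i,i} - (Y_t^{i,i})^\TT$ along an open-loop equilibrium trajectory, where $Y_t^{i,i} = D_i v^i(t,\bX_t)$ as in the previous lemmas. Starting from the SDE \eqref{eq.yii} for $Y_t^{i,i}$, I would subtract the transposed equation to obtain an SDE for $A_t^i$. The crucial observation is that the ``diagonal'' driving terms are symmetric and hence cancel: $D_{xx} H^i_t$ is symmetric (it is a Hessian), $D_{ii} F^i(\bX_t)$ is symmetric, and the pair $Y_t^{i,i} D_{xp} H^i_t + D_{px} H^i_t Y_t^{i,i}$ transposes to $(D_{xp}H^i_t)^\TT Y_t^{i,i} + Y_t^{i,i}(D_{px}H^i_t)^\TT$; since $(D_{xp}H^i_t)^\TT = D_{px}H^i_t$, this term is itself symmetric and drops out of the equation for $A_t^i$. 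Similarly, the terminal datum $Y_T^{i,i} = D_{ii} G^i(\bX_T)$ is symmetric, so $A_T^i = 0$. What remains is precisely the ``off-diagonal coupling'' term:
\[
    \d A_t^i = \Bigl(\, \sum_{k \neq i} \bigl( Y_t^{i,k} D_{pp} H^k_t Y_t^{k,i} - (Y_t^{k,i})^\TT D_{pp} H^k_t (Y_t^{i,k})^\TT \bigr) \Bigr) \d t + \d \til M_t\,,
\]
(the $k = i$ contribution $Y_t^{i,i} D_{pp}H^i_t Y_t^{i,i}$ is again symmetric and cancels), with $\til M$ a martingale.

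**Key steps.** I would then run the same energy/Gronwall scheme used in Lemmas~\ref{lem.gradvbound}--\ref{prop.oldiagbounds}: compute $\d |A_t^i|^2$, integrate from $t$ to $T$, take expectations, and use $A_T^i = 0$. The drift term is bounded using Cauchy--Schwarz and $\|D_{pp}\bH\|_\infty$:
\[
    \Bigl| \sum_{k \neq i} Y_t^{i,k} D_{pp} H^k_t Y_t^{k,i} \Bigr| \leq \|D_{pp}\bH\|_\infty \Bigl( \sum_{k \neq i} |Y_t^{i,k}|^2 \Bigr)^{1/2} \Bigl( \sum_{k \neq i} |Y_t^{k,i}|^2 \Bigr)^{1/2}\,,
\]
and the two factors on the right are controlled pointwise by $\sqrt{C_1 e^{C_1 M} \kappa^i}$ and $\sqrt{C_2 e^{C_2 M} \tilde\kappa^i}$ via \eqref{est.vij.LL} and \eqref{est.vij.LL2} respectively (these already hold on all of $[T_0,T]\times(\R^d)^N$ under the ansatz \eqref{dvansatz}, so they apply at every time $s$ along the trajectory, not just at $t_0$). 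Thus $\E\bigl[\int_t^T |A_s^i| \,\mathrm{d} s\bigr]$ times a bounded factor controls everything; more precisely one gets
\[
    \E\bigl[|A_t^i|^2\bigr] \leq C \sqrt{\kappa^i \tilde\kappa^i}\, e^{CM} \E\biggl[ \int_t^T |A_s^i| \,\mathrm{d} s \biggr] \,,
\]
and since $|A_s^i|$ is itself bounded by $2M$ pointwise (from \eqref{dvansatz}), or alternatively by the triangle inequality from the diagonal bound \eqref{diviabsolute}, one can absorb and close to obtain $\E[|A_{t_0}^i|^2] \leq T^2 C_4^2 e^{2C_4 M} \kappa^i \tilde\kappa^i$. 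Taking the supremum over $(t_0,\bx_0) \in [T_0,T]\times(\R^d)^N$ and recalling $A_{t_0}^i = D_i v^i(t_0,\bx_0) - (D_i v^i(t_0,\bx_0))^\TT$ gives \eqref{disc.yii.LL}.

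**Main obstacle.** The arithmetic is routine; the only real subtlety is organizing the cancellation of the symmetric terms cleanly — in particular checking that $Y_t^{i,i} D_{xp}H^i_t + D_{px}H^i_t Y_t^{i,i}$ is symmetric as a $d\times d$ matrix, which relies on $D_{px}H^i = (D_{xp}H^i)^\TT$ but does \emph{not} require $Y_t^{i,i}$ to be symmetric (that is exactly what we are trying to prove, so one must be careful not to use it). A secondary bookkeeping point is that the $k=i$ term $Y_t^{i,i} D_{pp}H^i_t Y_t^{i,i}$ in \eqref{eq.yii}, while not of the form $B B^\TT$, \emph{is} symmetric because $D_{pp}H^i_t$ is symmetric and $(Y^{i,i} D_{pp}H^i Y^{i,i})^\TT = (Y^{i,i})^\TT D_{pp}H^i (Y^{i,i})^\TT$ — wait, this is \emph{not} equal to $Y^{i,i} D_{pp}H^i Y^{i,i}$ unless $Y^{i,i}$ is symmetric. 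So in fact this term must be \emph{retained}: writing $Y^{i,i} = S + \tfrac12 A^i$ with $S$ symmetric, the contribution to $\d A_t^i$ is $Y^{i,i}D_{pp}H^i Y^{i,i} - (Y^{i,i})^\TT D_{pp}H^i (Y^{i,i})^\TT$, which is $O(M)\cdot|A_t^i|$ after using \eqref{dvansatz}; this simply contributes an extra $CM \E[\int_t^T |A_s^i|\,\mathrm{d}s]$ to the Gronwall estimate and is harmless. Keeping track of precisely which terms are genuinely symmetric and which only contribute lower-order multiples of $|A^i|$ is where care is needed, but no new idea beyond the ones already deployed in this section is required.
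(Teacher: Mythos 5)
Your overall strategy matches the paper exactly: take the transpose of \eqref{eq.yii}, subtract to get a (closed) SDE for $A^i_t \defeq Y_t^{i,i}-(Y_t^{i,i})^\TT$ with zero terminal condition, and run a Gronwall argument on $\E[|A^i_t|^2]$ along the open-loop trajectory, feeding in \Cref{lem.gradvbound,lem.gradvbound2} for the off-diagonal driving term. The self-correction you carry out for the $k=i$ term $Y^{i,i}D_{pp}H^i Y^{i,i}$ shows you understand the mechanism. There are, however, two points worth flagging.

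First, you make a genuine algebraic error in the other ``symmetric-looking'' piece. You assert that $Y^{i,i}D_{xp}H^i + D_{px}H^iY^{i,i}$ is symmetric as a $d\times d$ matrix without needing $Y^{i,i}$ symmetric, citing $(D_{xp}H^i)^\TT = D_{px}H^i$. But when you write out the transpose you omit the transposes on $Y^{i,i}$: correctly, $\bigl(Y^{i,i}D_{xp}H^i + D_{px}H^iY^{i,i}\bigr)^\TT = D_{px}H^i (Y^{i,i})^\TT + (Y^{i,i})^\TT D_{xp}H^i$, which equals the original only if $Y^{i,i}$ is symmetric --- precisely what you may not assume. (Test case: if $D_{xp}H^i = I_d$ the term is $2Y^{i,i}$, symmetric iff $Y^{i,i}$ is.) This is exactly the same trap you caught yourself in for the $D_{pp}$ diagonal term, so it must be handled the same way: the $D_{xp}$ contribution to $\d A^i_t$ is $A^i_t D_{xp}H^i + D_{px}H^i A^i_t$, which is $O(\norm{D_{px}\bH}_\infty |A^i_t|)$ and simply adds another $C\E\bigl[\int_t^T |A^i_s|^2\,\d s\bigr]$ to the Gronwall estimate. (This is the $\Sigma(\Delta Y^{i,i}D_{xp}H^i)$ term kept by the paper in \eqref{asymmetrycomp}.)

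Second, a closing-step remark: after bounding $|\Delta E^i_s| \lesssim e^{CM}\sqrt{\kappa^i\tilde\kappa^i}$, do \emph{not} plug the crude pointwise bound $|A^i_s|\le 2M$ into $\E\bigl[\int_t^T |A^i_s|\,\d s\bigr]$ --- that only yields $|A^i_{t_0}|\lesssim (\kappa^i\tilde\kappa^i)^{1/4}$, strictly weaker than \eqref{disc.yii.LL}. Instead apply Young's inequality to $|A^i_s||\Delta E^i_s|\lesssim |A^i_s|^2 + e^{CM}\kappa^i\tilde\kappa^i$, then Gronwall; combined with the extra quadratic-in-$|A^i_s|$ terms from the $D_{pp}$ diagonal and $D_{xp}$ contributions, this closes to give the stated bound. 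With those two corrections, your argument is the paper's.
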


\begin{proof}
Let $t_0$, $\bx_0$, $\bX$, $Y^i$, $Y^{i,j}$, $Z^{i,j,k}$ be as in the proof of \Cref{prop.oldiagbounds}. Moreover, set $\wt{Y}^{i,j} = (Y^{j,i})^\TT$, and take the transpose of the dynamics~\eqref{eq.yii} for $Y^{i,i}$ to find that
\[
\d \wt Y_t^{i,i} = \Bigl(\,\sum_{k} \wt Y_t^{i,k} D_{pp} H^k_t \wt Y_t^{k,i} + \wt Y_t^{i,i} D_{xp} H^i_t + D_{px} H^i_t \wt Y_t^{i,i} + D_{xx} H^i_t - D_{ii}F^i(\bX_t) \Bigr)\d t + \d \wt{M}_t\,,
\]
with $\wt M_t = \sum_{k=0}^N \wt Z^{i,i,k}_t \d W_t^k$, with obvious notation. Given matrices $A,B$ in $\R^{d\times d}$, we use the notations $\Delta A \defeq A - A^\TT$ and $\Sigma A \defeq A + A^\TT$, as well as the standard one $A : B \defeq \tr(A^\TT B)$, to use It\^o's formula and write 
    \begin{equation} \label{asymmetrycomp}
    \begin{multlined}[b][.85\displaywidth]
        \E\bigg[\, |\Delta Y_t^{i,i}|^2 + \int_t^T \sum_{k = 0}^N |\Delta Z_s^{i,i,k}|^2 \,\d s \bigg] 
        \\
        = -2 \E\bigg[\int_t^T \Delta Y_s^{i,i} : \Bigl( \Delta\bigl(Y_s^{i,i} D_{pp} H^i_s Y_s^{i,i} \bigr) + \Sigma \bigl( \Delta Y_t^{i,i} D_{xp} H^i_t \bigr) + \Delta E_s^i \Bigr)\d s \bigg]\,, 
     \end{multlined}
     \end{equation}
    where 
\[
        E_s^i \defeq \sum_{k \neq i} Y_s^{i,k} D_{pp} H^k(X_t^i, Y_t^k) Y_s^{k,i}\,.
\]
It is easy to estimate
\[
\bigl|\Delta\bigl(Y_s^{i,i} D_{pp} H^i_s Y_s^{i,i} \bigr) + \Sigma \bigl( \Delta Y_t^{i,i} D_{xp} H^i(X_t^i,Y_t^i) \bigr)\bigr| \leq 2\bigl(\norm{D_{pp} \bH}_\infty M + \norm{D_{px} \bH}_\infty \bigr) |\Delta Y^{i,i}_s|
\]
and, using \eqref{est.vij.LL} and \eqref{est.vij.LL2},
\[
|\Delta E^i_s|^2 \leq 4|E^i_s|^2 \leq 4\norm{D_{pp} \bH}_\infty^2 C_1C_2e^{(C_1+C_2)M}\kappa^i\tilde\kappa^i\,.
\]
Plugging these estimates into \eqref{asymmetrycomp} and applying Young's inequality, we deduce that 
    \begin{equation*} %\label{asymmetrycomp2}
      \E\bigl[ |\Delta Y_s^{i,i}|^2 \bigr] \leq C_{\bH,M,T}\biggl(T \kappa^i\tilde\kappa^i + \int_t^T \E\bigl[ |\Delta Y^{i,i}|^2 \bigr]\biggr)\,,
    \end{equation*}
    where $C_{\bH,M,T}$ depends only on $\norm{D_{pp} \bH}_\infty$, $\norm{D_{px} \bH}_\infty$, $T$ and $M$ (exponentially). We can now conclude by invoking Gronwall's lemma and recalling the definition of $\Delta Y^{i,i}$ and the arbitrariness of $t_0$ and $\bx_0$.
\end{proof}

%\begin{corollary} \label{cor.oldiagbounds} \marginpar{\raggedright\tiny This Corollary is not used. Do we want to comment on why we're including it?}
%    Let Assumption \ref{assump.LL} hold. Suppose also that \eqref{dvansatz} holds for some $T_0 \in [0,T)$ and $M > 0$. Then, there is a constant $C_5$ (depending only on $\norm{D^2 \bH}_\infty$, $C_{D\bm F,\lip}$, and $T$) such that, for each $i = 1,\dots,N$ and any $(t,\bx) \in [T_0,T]\times(\R^d)^N$,
%\begin{equation*}%\label{diviabove}
%D_i v^i(t,\bx) \leq C_{D\bm G,\lip} + TC_5\bigl(1+e^{C_5M}{\kappa^i\tilde\kappa^i}\bigr)\,.
%\end{equation*}
%\end{corollary}

%\begin{proof}
%Recall the dynamics~\eqref{eq.yii} of $Y^{i,i}$ and note that by the strong convexity of $H^i$ and the bound \eqref{disc.yii.LL} we have, for any unit vector $\xi \in \R^d$,
%\[
%\xi^\TT Y^{i,i}_t D_{pp} H^i_t Y^{i,i}_t \xi \geq \lambda_{\bH} |Y^{i,i}_t|^2 - \norm{D_{pp}\bH}_\infty C_4(M)\sqrt{\kappa^i\tilde\kappa^i}|Y^{i,i}_t|\,.
%\]
%Then we have, by the weighted Young's inequality and estimates \eqref{est.vij.LL} and \eqref{est.vij.LL2},
%\[
%\xi^\TT Y^{i,i}_{t_0} \xi + \lambda_{\bH} \E\Bigl[ \, \int_{t_0}^T  |Y_t^{i,i}|^2 \,\d t \Bigr] \leq C_{D\bm G,\lip} + TC_{\bH,\bm F}\bigl(1+C_4(M)^2{\kappa^i\tilde\kappa^i}\bigr) + \frac{\lambda_ {\bH}}{2}\, \E\Bigl[ \, \int_{t_0}^T  |Y_t^{i,i}|^2 \,\d t \Bigr]\,,
%\]
%where $C_{\bH,\bm F}$ depends only on $\norm{D^2 \bH}_\infty$, $C_{D\bm F,\lip}$ and $T$, so here we do not need to use \eqref{Yikbound} to close the estimate.
%\end{proof}

\begin{proposition} \label{prop.olmonbounds}
    Let Assumption \ref{assump.LL} hold. Suppose also that \eqref{dvansatz} holds for some $T_0 \in [0,T)$ and $M > 0$. There are constants $K_6$ and $C_6$ (depending only on $\norm{D^2 \bH}_\infty$, $C_{\bH}$, $C_{\bF,\lip}$, $C_{\bG,\lip}$, $C_{D\bm F,\lip}$, $C_{D\bm G,\lip}$, and $T$), and $C_6'$ (depending also on $\lambda_{\bH}^{-1}$) such that, if
     \begin{equation} \label{smallnessll2}
C_{\bG,\lm} + TC_{\bF,\lm} + C_4e^{C_4M} \sqrt{\kappa\tilde\kappa} \leq \frac{\lambda_{\bH}}{2T}\,e^{-TK_6(1+\sigma^{-1}+C_3e^{C_3M}\sqrt{\kappa\tilde\kappa}\,)}\,,
 \end{equation}
    then
    \begin{equation} \label{dvop.bound.LL}
        \norm{|D\bm v|_\op}_{L^\infty([T_0,T]\times(\R^d)^N)} \leq C_{D\bG,\lip} + \sqrt T\, C_6'\bigl(1+\sigma^{-1} + e^{C_6M} \sqrt{\kappa\tilde\kappa}\, \bigr) e^{T C_6'(1+\sigma^{-1}+C_3e^{C_3M}\sqrt{\kappa\tilde\kappa}\,)}\,.
    \end{equation}
    The constants $C_i$, $i = 1,\dots,4$, are those previously introduced (see~\Cref{lem.gradvbound,lem.gradvbound2,prop.oldiagbounds,lem.asymmetry}).
\end{proposition}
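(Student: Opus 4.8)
The plan is to estimate $\|D\bv\|_{\op}$ on $[T_0,T]\times(\R^d)^N$ by a Lyapunov-type computation along open-loop equilibrium trajectories, in the spirit of the mean-field argument in \cite{gangbomeszaros}, treating the diagonal blocks $D_iv^i$ and the off-diagonal structure separately. For the diagonal part one simply invokes \Cref{prop.oldiagbounds}, i.e.\ the essentially $M$-independent bound \eqref{diviabsolute}; all the work is in the off-diagonal contribution. Fix $(t_0,\bx_0)\in[T_0,T]\times(\R^d)^N$ and a unit vector $\bxi_0\in(\R^d)^N$, set $\bX=\bX^{\OL,t_0,\bx_0}$, and introduce the auxiliary $(\R^d)^N$-valued process $\bxi=(\xi^i)$ solving the linear ODE $\dot\xi_t^i=-D_{xp}H^i(X_t^i,v^i(t,\bX_t))\,\xi_t^i$ with $\bxi_{t_0}=\bxi_0$; one checks $|\bxi_t|\le e^{T\|D^2\bH\|_\infty}$.

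I would then track the process $\Psi_t:=\bxi_t^\TT D\bv(t,\bX_t)\bxi_t=\sum_i(\xi_t^i)^\TT\eta_t^i$, where $\eta_t^i:=\sum_j v^{i,j}(t,\bX_t)\xi_t^j$. Differentiating the matrix equation \eqref{eq.vij} along $\bX$ and applying It\^o's formula, the ODE for $\bxi$ being designed exactly to cancel the first-order Hamiltonian terms $D_{xp}H^j$, $D_{px}H^i$ and their transposes, one arrives at
\begin{equation*}
  \d\Psi_t=\Big(\sum_k(\mu_t^k)^\TT D_{pp}H^k(X_t^k,v^k)\,\eta_t^k+\sum_i(\xi_t^i)^\TT D_{xx}H^i(X_t^i,v^i)\,\xi_t^i-\bxi_t^\TT\big(D_{ji}F^i(\bX_t)\big)_{i,j}\bxi_t\Big)\d t+\d M_t,
\end{equation*}
where $\mu_t^k:=\sum_i\big(v^{i,k}(t,\bX_t)\big)^\TT\xi_t^i$ and $M$ is a true martingale (by admissibility of $\bv$). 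Integrating from $t_0$ to $T$, taking expectations, and recalling $\Psi_{t_0}=\bxi_0^\TT D\bv(t_0,\bx_0)\bxi_0$ and $\Psi_T=\bxi_T^\TT\big(D_{ji}G^i(\bX_T)\big)_{i,j}\bxi_T$, the problem reduces to estimating the three integral terms.

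The crux is the coercivity of the Hamiltonian term: since $D_{pp}H^k\ge\lambda_{\bH}I_d$ and, by the almost-symmetry of the diagonal blocks $v^{k,k}$ from \Cref{lem.asymmetry} together with the \emph{pointwise} off-diagonal bounds \eqref{est.vij.LL}--\eqref{est.vij.LL2}, the vector $\mu^k$ differs from $\eta^k$ only by terms controlled (up to dimension-free constants) by $e^{CM}\sqrt{\kappa\tilde\kappa}$, one gets $\sum_k(\mu^k)^\TT D_{pp}H^k\eta^k\ge\lambda_{\bH}\sum_k|\eta^k|^2-(\text{error})$ with the error small under \eqref{smallnessll2}. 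As in the proof of \eqref{diviabsolute}, it is essential to use the pointwise estimates of \Cref{lem.gradvbound,lem.gradvbound2} (which involve $\kappa^i,\tilde\kappa^i$ rather than sums) so that no dimension-dependent quantity enters. The deficit in this term due to the diagonal blocks is bounded, via \eqref{diviabsolute}, by $C(1+\sigma^{-1}+C_3e^{C_3M}\sqrt{\kappa\tilde\kappa})\,\E\int_{t_0}^T\sum_k|\eta_t^k|^2\d t$, which upon Gr\"onwall produces the exponential rate in \eqref{dvop.bound.LL} and the $\sigma^{-1}$ appearing there; the off-diagonal parts of the $F$- and $G$-quadratic forms are bounded from below by $-C_{\bF,\lm}$ and $-C_{\bG,\lm}$ using the Lasry--Lions semimonotonicity \eqref{llmonpoint}, the diagonal parts by $\|D^2\bF\|_\infty$ and $C_{D\bG,\lip}$, and the $D_{xx}H$ term by $\|D^2\bH\|_\infty|\bxi_t|^2$. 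Assembling these estimates, taking the supremum over $\bx_0$ and $\bxi_0$, and using \eqref{smallnessll2} — calibrated so that the coercive gain $\lambda_{\bH}$ dominates the semimonotonicity deficits $C_{\bF,\lm},C_{\bG,\lm}$ and the antisymmetry error after Gr\"onwall amplification — yields a closed bound for $\sup_{\bx}\big|\bxi^\TT D\bv(t,\bx)\bxi\big|$; combining it with the diagonal bound \eqref{diviabsolute} and with \Cref{lem.asymmetry} and \eqref{est.vij.LL}--\eqref{est.vij.LL2} (which quantify how far $D\bv$ is from being symmetric) gives \eqref{dvop.bound.LL}.

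The step I expect to be the main obstacle is making the heuristic $\mu^k\approx\eta^k$ quantitative in a way that is at once dimension-free and free of spurious dependence on the a priori constant $M$: one must pair the off-diagonal and diagonal-antisymmetry corrections so that only the product $\sqrt{\kappa\tilde\kappa}$ (and not unstructured sums) together with the harmless prefactors $e^{CM}$ survive, and it is exactly this bookkeeping — together with the need for the coercive gain to beat these errors after Gr\"onwall amplification — that dictates the precise form of the smallness threshold \eqref{smallnessll2} and of the constants $K_6$, $C_6$, $C_6'$. The remaining steps are a backward Gr\"onwall inequality and routine Young-type manipulations.
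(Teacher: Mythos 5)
Your computation of $\d\Psi_t$ with the simpler transport $\dot\xi^i_t=-D_{xp}H^i(X_t^i,v^i)\xi^i_t$ is algebraically correct: the $D_{xp}H$ and $D_{px}H$ first-order terms cancel and one is left with $\sum_k(\mu_t^k)^\TT D_{pp}H^k_t\eta_t^k$, where $\eta^k=\sum_j v^{k,j}\xi^j$ and $\mu^k=\sum_i(v^{i,k})^\TT\xi^i$. The gap is in the claim that $\mu^k\approx\eta^k$ with an error of order $e^{CM}\sqrt{\kappa\tilde\kappa}$. Writing
\[
\mu^k-\eta^k \;=\; \bigl((v^{k,k})^\TT - v^{k,k}\bigr)\xi^k \;+\; \sum_{i\neq k}\bigl((v^{i,k})^\TT - v^{k,i}\bigr)\xi^i\,,
\]
only the first (diagonal) term is controlled by \Cref{lem.asymmetry}. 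The second term is the anti-symmetric part of the \emph{off-diagonal} blocks of $D\bv$, which \Cref{lem.gradvbound,lem.gradvbound2,lem.asymmetry} say nothing about: the two matrices $(v^{i,k})^\TT=(D_kv^i)^\TT$ and $v^{k,i}=D_iv^k$ are derivatives of different functions and are not close in any sense that is proved in the paper. The only generic bound is $\sum_k|\mu^k-\eta^k|^2\leq|D\bv-D\bv^\TT|_\op^2|\bxi|^2\leq 4M^2|\bxi|^2$, which re-introduces $M$ at leading order and destroys the bootstrap; the Frobenius-norm route via $\kappa^i,\tilde\kappa^i$ picks up a factor of $N$. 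Either way, your coercivity estimate for $\sum_k(\mu^k)^\TT D_{pp}H^k\eta^k$ does not close.

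The paper avoids this by two linked modifications. First, the auxiliary process carries an extra term: $\d\xi^i_t = -\bigl(D_{pp}H^i\sum_k Y^{i,k}_t\xi^k_t + D_{xp}H^i\xi^i_t\bigr)\d t$, i.e.\ $\bxi$ is the full linearization of the forward flow, not just the $D_{xp}H$ part. Second, the object being propagated is the \emph{off-diagonal} bilinear form $\sum_{j\neq i}(\xi^i)^\TT Y^{i,j}\xi^j$, not $\bxi^\TT D\bv\bxi$. With these choices the quadratic $D_{pp}H$ contribution reorganizes (this is exactly the $S^1+S^2+S^3$ cancellation in the paper) into $-\sum_i\bigl(\sum_{j\neq i}Y^{i,j}\xi^j\bigr)^\TT D_{pp}H^i\bigl(\sum_{j\neq i}Y^{i,j}\xi^j\bigr)$, which is genuinely coercive because the \emph{same} vector appears on both sides, plus two error terms: one proportional to $Y^{i,i}-(Y^{i,i})^\TT$ (handled by \Cref{lem.asymmetry}) and one of the form $\sum_i(\xi^i)^\TT\bigl(\sum_{j\neq i}Y^{i,j}D_{pp}H^jY^{j,i}\bigr)\xi^i$ (handled by Cauchy--Schwarz via \Cref{lem.gradvbound,lem.gradvbound2}, giving precisely $\sqrt{\kappa^i\tilde\kappa^i}$). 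In short, the error structure $\sqrt{\kappa\tilde\kappa}$ appears only because the coercivity term was arranged to involve only the off-diagonal row-sums; once you compute the full quadratic form with the simpler transport, the off-diagonal asymmetry of $D\bv$ enters and is not controllable.
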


\begin{proof}
    Fix $(t_0,\bm x_0) \in [T_0,T] \times (\R^d)^N$ and $\bm \xi_0 \in (\R^d)^N$. Let $\bX \defeq \bX^{t_0,\bx_0}$, and define $Y_t^i$, $Y_t^{i,j}$, $Z_t^{i,j,k}$ as in \eqref{processesdef}. Let $\bxi$ be the $(\R^d)^N$-valued process with dynamics
    \begin{equation} \label{xidynamics}
    \begin{cases}
       \d \xi_t^i = -\Bigl( D_{pp} H^i(X_t^i,Y_t^i) \ds\sum_{1 \leq k \leq N} Y_t^{i,k} \xi_t^k + D_{xp} H^i(X_t^i,Y_t^i) \xi_t^i \Bigr) \d t \\
        \xi^i_{t_0} = \xi^i_0 \,.
        \end{cases}
        \end{equation}
        In the following, in order to ease the notation, we are going to omit the arguments of various functions, like we did in the previous proofs, by using obvious notation like $D_{pp} H^i_t = D_{pp} H^i(X_t^i,Y_t^i)$, $D_{ij} F_t^i = D_{ij} F^i(\bX_t)$, and so forth. 
        
        \smallskip
        
    \noindent$\bullet$ \emph{Step 1: evolution of $\sum_i |\sum_j Y_t^{i,j} \xi_t^j|^2$.} 
    Recall the dynamics \eqref{yijdynamics} and \eqref{eq.yii} and notice that, for each fixed $i$, we have, after a few cancellations,  
    \begin{equation*}
        \d \sum_{j} Y_t^{i,j} \xi_t^j = \sum_{j} \bigl( D_{px} H_t^i Y_t^{i,j} - D_{ij} F_t^i \bigr) \xi_t^j \,\d t + D_{xx} H^i_t \xi^i_t \,\d t + \d M_t \,,
    \end{equation*}
    with $M$ being a martingale.
    It follows that 
    \begin{equation*}
    \begin{split}
        \d \sum_{i} \Big| \sum_{j} Y_t^{i,j} \xi_t^j \Big|^2 &= 2\sum_{i} \Big( \Bigl(\,\sum_{j} Y_t^{i,j} \xi_t^j\Bigr)^\TT D_{px} H_t^i \Bigl(\,\sum_{j} Y_t^{i,j} \xi_t^j \Bigr) \d t \\
        &\quad + 2 \sum_{i} \Big(\sum_{j} Y_t^{i,j} \xi_t^j\Big)^\TT D_{xx}H^i_t \xi^i_t \,\d t
        \\
        &\quad - 2 \sum_{i} \Big(\sum_{j} Y_t^{i,j} \xi_t^j\Big)^\TT \sum_{j} D_{ji} F_t^i \xi_t^j \,\d t  + \d M'_t, 
        \end{split}
    \end{equation*}
    with $M'$ being a martingale. Integrating and taking expectations, and then using Young's inequality, we find that 
    \begin{equation*} %\label{est.s2.ll}
    \begin{split}
        \E\Big[\sum_{i} \Big| \sum_{j} Y_t^{i,j} \xi_t^j \Big|^2 \Big] &\leq  \E\Big[\sum_{i} \Big| \sum_{j} Y_T^{i,j} \xi_T^j \Big|^2 \Big] + \bigl( C_{D\bm F,\lip}^2 + \norm{D_{xx} \bH}_\infty^2 \bigr) \E\bigg[ \int_t^T |\bm\xi|^2 \bigg]
        \\
        &\quad + 2\norm{D_{px} \bH}_\infty \E\bigg[ \int_{t}^T \sum_{i} \Big| \sum_{j} Y^{i,j} \xi^j \Big|^2 \bigg]\,,
        \end{split}
    \end{equation*}
    hence Gronwall's inequality gives 
       \begin{equation} \label{fromxi2toyxi}
   \sum_{i} \Big| \sum_{j} Y_{t_0}^{i,j} \xi_0^j \Big|^2 \leq \biggl( C_{D\bm G,\lip}^2 \E\bigl[ |\bm\xi_T|^2 \bigr] + \bigl( C_{D\bm F,\lip}^2 + \norm{D_{xx} \bH}_\infty^2 \bigr) \E\bigg[ \int_{t_0}^T |\bm\xi|^2 \bigg] \biggr) e^{2T\norm{D_{px} \bH}_\infty}\,.
        \end{equation}
 
     \smallskip
    
    \noindent$\bullet$ \emph{Step 2: evolution of $|\bm \xi|^2$.} From \eqref{xidynamics} we see that 
    \[
    |\xi^i_t| \leq |\xi^i_0| + \norm{D_{pp} \bH}_\infty \int_{t_0}^t \,\Bigl|\,\sum_{j \neq i} Y^{i,j}\xi^j \Bigr| + \bigl( \norm{D_{px}\bH}_\infty + \norm{D_{pp} \bH}_\infty \norm{Y^{i,i}}_\infty\bigr) \int_{t_0}^t |\xi^i|\,,
    \]
    so Gronwall's inequality, along with \Cref{prop.oldiagbounds}, yields
    \[
    |\xi^i_t| \leq \biggl( |\xi^i_0| + \norm{D_{pp} \bH}_\infty \int_{t_0}^t \Bigl|\,\sum_{j \neq i} Y^{i,j}\xi^j \Bigr| \,\biggr) e^{\frac12TC\eta}\,,
    \]
    with 
    \begin{equation*} %\label{prooflunga.v.etadef}
    \eta \defeq 1+\sigma^{-1}+C_3 e^{C_3M}\sqrt{\kappa\tilde\kappa}
    \end{equation*}
    and $C$ depending only on $\norm{D^2 \bH}_\infty$, $C_{\bH}$, $C_{\bF,\lip}$, $C_{\bG,\lip}$, $C_{D\bm F,\lip}$, $C_{D\bm G,\lip}$ and $T$. Taking now the squares, using H\"{o}lder's inequality, and summing over $i$, we obtain
    \begin{equation} \label{finest.step2}
    |\bxi_t|^2 \leq 2\biggl( |\bxi_0|^2 + \norm{D_{pp} \bH}_\infty^2 \,T \int_{t_0}^t \sum_i \, \Bigl|\,\sum_{j \neq i} Y^{i,j}\xi^j \Bigr|^2 \,\biggr) e^{TC\eta}\,.
    \end{equation}
   
    \smallskip
    
    \noindent$\bullet$ \emph{Step 3: evolution of ${\sum_{i \neq j} (\xi_t^i)^T Y_t^{i,j} \xi_t^j}$.} Combining \eqref{yijdynamics} and \eqref{xidynamics}, we find that
%\begin{align*}
% d \Big( \sum_{\substack{1 \leq i, j \leq N \\ j \neq i} } &(\xi_t^i)^T Y_t^{i,j} \xi^j_t \Big) =
% - \sum_{\substack{1 \leq i, j \leq N \\ j \neq i}} (\xi^i_t)^T Y_t^{i,j} \Big( \sum_{k = 1}^N D_{pp} H^j_t Y_t^{j,k} \xi_t^k + D_{px} H^j_t \xi_t^j \Big) dt
%\\ &+ \sum_{\substack{1 \leq i, j \leq N \\ j \neq i}} (\xi_t^i)^T \Big(  \sum_{k = 1}^N Y_t^{i,k} D_{pp}H^k_t Y_t^{k,j} + Y_t^{i,j} D_{px} H^j_t + \big(D_{px} H^i_t \big)^T Y_t^{i,j} - D_{x^ix^j} F^i_t \Big) \xi^j_t dt
%\\
%&- \sum_{\substack{1 \leq i, j \leq N \\ j \neq i}} \Big( \sum_{k = 1}^N (\xi_t^k)^T (Y_t^{i,k})^T D_{pp} H^i_t + (\xi_t^i)^T \big( D_{px} H^i_t\big)^T\Big) Y_t^{i,j} \xi_t^j dt + dM_t, 
%\end{align*}
%Notice that the terms involving $D_{px} H^i$ or $D_{px} H^j$ cancel. This brings us to 
    \begin{equation} \label{dxiyijxij}
        \d \biggl(\, \sum_{\substack{1 \leq i, j \leq N \\ j \neq i} } (\xi_t^i)^\TT Y_t^{i,j} \xi^j_t \biggr) = \biggl( S_t^1 + S_t^2 + S_t^3 - \sum_{\substack{1 \leq i, j \leq N \\ j \neq i}} (\xi^i_t)^\TT D_{ji} F^i_t \xi_t^j \biggr) \d t + \d M_t\,, 
    \end{equation}
    with 
    \begin{align*}
        S_t^1 &= - \sum_{\substack{i, j \\ j \neq i}} (\xi^i_t)^\TT Y_t^{i,j}  \sum_{k} D_{pp} H^j_t Y_t^{j,k} \xi_t^k \,, \\
        S_t^2 &= \sum_{\substack{i,j \\ j \neq i}} (\xi_t^i)^\TT  \sum_{k} Y_t^{i,k} D_{pp}H^k_t Y_t^{k,j}  \xi^j_t \,, 
        \\
        S_t^3 &= - \sum_{\substack{i,j \\ j \neq i}} \sum_{k} (\xi_t^k)^\TT (Y_t^{i,k})^\TT D_{pp} H^i_t Y_t^{i,j} \xi_t^j \,,
    \end{align*}
and with $M_t$ being a martingale. 
    Now notice that
    \begin{align*}
        S_t^1 &= - \sum_{i,j,k} (\xi_t^{i,j})^\TT Y_t^{i,j} D_{pp} H^j Y_t^{j,k} \xi_t^k + \sum_{i,k} (\xi_t^i)^\TT Y_t^{i,i} D_{pp} H^i Y_t^{i,k} \xi_t^k\,, 
        \\
        S_t^2 &= \sum_{i,j,k} (\xi_t^{i,j})^\TT Y_t^{i,j} D_{pp} H^j Y_t^{j,k} \xi_t^k - \sum_{i,k} (\xi_t^i)^\TT Y_t^{i,k} D_{pp} H^k_t Y_t^{k,i} \xi_t^i\,, 
        \\
        S_t^3 &= - \sum_{i} \Bigl(\, \sum_{j \neq i} Y_t^{i,j} \xi^j \Bigr)^\TT D_{pp} H^i_t \Bigl(\, \sum_{j \neq i} Y_t^{i,j} \xi^j \Bigr) - \sum_{\substack{i,j \\ j \neq i}} (\xi_t^i)^\TT (Y_t^{i,i})^\TT D_{pp}H_t^i Y_t^{i,j} \xi_t^j  \,,
    \end{align*}
    so the first term in the expression for $S_t^1$ cancels with the first term for $S_t^2$; then, combining the remaining terms we find that
    \begin{equation*}
    \begin{split}
        S_t^1 + S_t^2 + S_t^3 &=  - \sum_{i} \Bigl(\, \sum_{j \neq i} Y_t^{i,j} \xi^j \Bigr)^\TT D_{pp} H^i_t \Bigl( \,\sum_{j \neq i} Y_t^{i,j} \xi^j \Bigr) + \sum_{\substack{i,j \\ j \neq i}} (\xi_t^i)^\TT \Bigl( \bigl(Y_t^{i,i} - (Y_t^{i,i})^\TT\bigr) D_{pp} H_t^i Y_t^{i,j} \Bigr) \xi_t^j
        \\
        &\quad - \sum_{i} (\xi_t^i)^\TT  \Bigl(\, \sum_{j \neq i} Y_t^{i,j} D_{pp}H_t^j Y_t^{j,i} \Bigr) \xi_t^i\,.
        \end{split}
    \end{equation*}
    We now plug this into~\eqref{dxiyijxij}, we integrate from $t_0$ to $T$ and take expectations, and then we use the semi-monotonicity of $\bm F$ and $\bm G$, the strong convexity of $H^i$, to deduce that
    \[
    \begin{split}
    \lambda_{\bH} \E\bigg[ \int_{t_0}^T \sum_{i}\, \Bigl|\, \sum_{j \neq i} Y^{i,j} \xi^j \Bigr|^2\bigg]
        &\leq \sum_{\substack{i, j \\ j \neq i}} (\xi_{0}^i)^\TT Y_{t_0}^{i,j} \xi_0^j + C_{\bG,\lm} \E\big[|\bm\xi_T|^2 \big] + C_{\bF,\lm} \E\bigg[ \int_{t_0}^T |\bm\xi|^2 \bigg]
         \\[-2pt]
         &\quad+ \norm{D_{pp} \bH}_\infty \Bigl(\sup_i\, \bigl\lVert Y^{i,i} - (Y^{i,i})^\TT\bigr\rVert_\infty\Bigr) \E \biggl[\, \int_{t_0}^T\! |\bm\xi| \Bigl(\, \sum_i\, \Bigl\lvert\, \sum_{j \neq i} Y^{i,j} \xi^j \Bigr\rvert^2 \Bigr)^{\frac12} \biggr]
         \\
         &\quad+ \norm{D_{pp}\bH}_\infty \sup_i \Bigl(\, \Bigl\lVert \sum_{j \neq i} |Y^{i,j}|^2 \Bigr\rVert_\infty\,\Bigl\lVert \sum_{j \neq i} |Y^{j,i}|^2 \Bigr\rVert_\infty \,\Bigr)^{\frac12} \,\E \biggl[\, \int_{t_0}^T |\bm\xi|^2 \biggr]\,,
    \end{split}
    \]
where we also used the Cauchy-Schwarz inequality to obtain the last two terms, and the $\infty$-norms of terms involving $Y^{i,j}$ are understood as norms in $L^\infty([t_0,T] \times \Omega)$.
%    \begin{align*}
%        \E&\Big[ \sum_{\substack{1 \leq i, j \leq N \\ j \neq i}} (\xi_T^i)^j Y_T^{i,j} \xi_T^j \Big] - \sum_{\substack{1 \leq i, j \leq N \\ j \neq i}} (\xi_{0}^i)^T Y_{t_0}^{i,j} \xi_0^j
 %       \\
%        &=
%        \E\bigg[ \int_{t_0}^T \Big( - \sum_{i = 1}^N \big( \sum_{j \neq i} Y_t^{i,j} \xi^j \big)^T D_{pp} H^i_t \big( \sum_{j \neq i} Y_t^{i,j} \xi^j \big) + \sum_{\substack{1 \leq i, j \leq N \\ j \neq i}} (\xi_t^i)^T \Big( (Y_t^{i,i} - (Y_t^{i,i})^T) D_{pp} H_t^i Y_t^{i,j} \Big) \xi_t^j
%        \\
%        &\qquad - \sum_{i = 1}^N  (\xi_t^i)^T  \Big( \sum_{j \neq i} Y_t^{i,j} D_{pp}H_t^j Y_t^{j,i} \Big) \xi_t^i \Big) dt \bigg] + C_{\bF} \E\bigg[ \int_{t_0}^T \sum_{i = 1}^N |\xi_t^i|^2 \bigg] .
%    \end{align*}
Using Young's inequality to deal with the penultimate term, as well as \Cref{lem.gradvbound,lem.asymmetry}, we obtain
	\begin{align} \label{finest.step3}
     \lambda_{\bH} \E\bigg[ \int_{t_0}^T \sum_{i}\, \Bigl|\, \sum_{j \neq i} Y^{i,j} \xi^j \Bigr|^2\bigg]
        &\leq \sum_{\substack{i, j \\ j \neq i}} (\xi_{0}^i)^\TT Y_{t_0}^{i,j} \xi_0^j + C_{\bG,\lm} \E\big[|\bm\xi_T|^2 \big] + C_{\bF,\lm} \E\bigg[ \int_{t_0}^T |\bm\xi|^2 \bigg]
        \notag \\[-4pt]
         &\quad+ \norm{D_{pp} \bH}_\infty \Bigl(\, \frac T2\, C_4e^{C_4M} +  \sqrt{C_1C_2}\,e^{\frac12(C_1+C_2)M} \,\Bigr) \sqrt{\kappa\tilde\kappa} \,\,\E \biggl[\, \int_{t_0}^T |\bm\xi|^2 \biggr]
         \notag \\
         &\quad+ \frac T2\, \norm{D_{pp} \bH}_\infty C_4e^{C_4M} \sqrt{\kappa\tilde\kappa}  \,\,\E \biggl[\, \int_{t_0}^T \sum_i\, \Bigl\lvert\, \sum_{j \neq i} Y^{i,j} \xi^j \Bigr\rvert^2 \biggr]\,.
         \end{align}

\smallskip

\noindent$\bullet$ \emph{Step 4: closing the estimates}. Combining estimates~\eqref{finest.step2} and \eqref{finest.step3} obtained in steps~$2$ and~$3$ respectively, we have
	\[
	\begin{split}
     \lambda_{\bH} \E\bigg[ \int_{t_0}^T \!\sum_{i}\, \Bigl|\, \sum_{j \neq i} Y^{i,j} \xi^j \Bigr|^2\bigg] 
     &\leq  \sum_{\substack{i, j \\ j \neq i}} (\xi_{0}^i)^\TT Y_{t_0}^{i,j} \xi_0^j + \bigl(1+C_{\lm,T} +\wt C(M) \sqrt{\kappa\tilde\kappa} \,\bigr) C'' e^{TC''\eta} |\bxi_0|^2
     \\[-5pt]
     &\quad
     + T\bigl( C_{\lm,T} 
     + C_4e^{C_4M} \sqrt{\kappa\tilde\kappa}\,\bigr) e^{TC''\eta}  \E \biggl[\, \int_{t_0}^T\! \sum_i\, \Bigl\lvert\, \sum_{j \neq i} Y^{i,j} \xi^j \Bigr\rvert^2 \biggr]\,,
     \end{split}
	\]
	where
	\[
	C_{\lm,T} \defeq C_{\bG,\lm} + TC_{\bF,\lm}\,,
	\qquad
	 \wt C(M) \defeq TC_4e^{C_4M} +  \sqrt{C_1C_2}\,e^{\frac12(C_1+C_2)M}\,,
	\]
	and $C''$ depends only on $\norm{D_{pp}\bH}_\infty$, $T$ and $C$. Therefore, if the assumptions in the statement hold with $K_6 = C''$, we obtain
	\begin{equation} \label{step4.est1}
	\E\bigg[ \int_{t_0}^T \sum_{i}\, \Bigl|\, \sum_{j \neq i} Y^{i,j} \xi^j \Bigr|^2\bigg]
	\leq \frac{2}{\lambda_{\bH}} \biggl(\, \sum_{\substack{i, j \\ j \neq i}} (\xi_{0}^i)^\TT Y_{t_0}^{i,j} \xi_0^j + \bigl(1+C_{\lm,T} +\wt C(M) \sqrt{\kappa\tilde\kappa} \,\bigr) |\bxi_0|^2 e^{C''\eta} \biggr)\,.
	\end{equation}
	Plugging \eqref{step4.est1} back into \eqref{finest.step2}, we obtain
	\begin{equation} \label{step4.est2}
	\sup_{t \in [t_0,T]} \E \bigl[ |\bxi_t|^2 \bigr] \leq \mathcal{C}_1 |\bxi_0|^2 + \mathcal{C}_2 \sum_{\substack{i, j \\ j \neq i}} (\xi_{0}^i)^\TT Y_{t_0}^{i,j} \xi_0^j \,,
	\end{equation}
	with 
	\[
	\mathcal{C}_1 \defeq  2e^{TC'\eta} + \mathcal{C}_2\bigl(1+C_{\lm,T} +\wt C(M) \sqrt{\kappa\tilde\kappa} \,\bigr) C'' e^{TC''\eta}\,,
	\qquad
	\mathcal{C}_2 \defeq  \frac{4T\norm{D_{pp}\bH}_\infty}{\lambda_{\bH}}\,e^{TC'\eta}\,.
	\]
	Note now that using \Cref{prop.oldiagbounds} we have
	\begin{equation} \label{step4.est4}
	 \sum_{\substack{i, j \\ j \neq i}} (\xi_{0}^i)^\TT Y_{t_0}^{i,j} \xi_0^j \leq  \sum_{i, j} (\xi_{0}^i)^\TT Y_{t_0}^{i,j} \xi_0^j + \hat C\eta |\bxi_0|^2\,,
	\end{equation}
	where $\hat C$ depends only on $\norm{D^2 \bH}_\infty$, $C_{\bH}$, $C_{\bF,\lip}$, $C_{\bG,\lip}$, $C_{D\bm F,\lip}$, $C_{D\bm G,\lip}$ and $T$, so combining \eqref{fromxi2toyxi}, \eqref{step4.est2} and \eqref{step4.est4} we get
	\begin{equation*} %\label{step4.est3'}
	\sum_{i} \Big| \sum_{j} Y_{t_0}^{i,j} \xi_0^j \Big|^2 \leq \mathcal{C}_3 \biggl( \bigl( \mathcal{C}_1 + \mathcal{C}_2 \hat C\eta \bigr) |\bxi_0|^2 + \mathcal{C}_2 \sum_{\substack{i, j \\ j \neq i}} (\xi_{0}^i)^\TT Y_{t_0}^{i,j} \xi_0^j \biggr)\,,
	\end{equation*}
	with
	\begin{equation} \label{step4.defC3}
	\mathcal{C}_3 \defeq \Bigl( C_{D\bm G,\lip}^2 + T\bigl( C_{D\bm F,\lip}^2 + \norm{D_{xx} \bH}_\infty^2 \bigr) \Bigr) e^{2T\norm{D_{xp}\bH}_\infty}\,;
	\end{equation}
	therefore, exploiting a weighted Young's inequality, we obtain
	\begin{equation*} %\label{step4.est5}
	 \sum_{i} \Big| \sum_{j} Y_{t_0}^{i,j} \xi_0^j \Big|^2 \leq \mathcal{C}_3 \bigl( 2\mathcal{C}_1 + \mathcal{C}_2 (1+2\hat C\eta) \bigr) |\bxi_0|^2\,,
	\end{equation*}
	By the arbitrariness of $t_0$, $\bx_0$ and $\bxi_0$, this proves that
	\begin{equation} \label{step4.dvop}
	\norm{|D\bm v|_\op}_{L^\infty([T_0,T]\times(\R^d)^N)}^2 \leq \mathcal{C}_3 \bigl( 2\mathcal{C}_1 + \mathcal{C}_2 (1+2\hat C\eta) \bigr)\,.
	\end{equation}
	Note that at this point condition~\eqref{smallnessll2} implies that there is $\bar C$, depending only on $\norm{D^2 \bH}_\infty$, $C_{\bH}$, $C_{\bF,\lip}$, $C_{\bG,\lip}$, $C_{D\bm F,\lip}$, $C_{D\bm G,\lip}$ and $T$, such that
	\begin{equation} \label{step4.dvop.estbad}
	\mathcal{C}_3 \bigl( 2\mathcal{C}_1 + \mathcal{C}_2 (1+2\hat C\eta) \bigr) \leq \bar C\hat \eta e^{T\bar C \eta}\,,
	\end{equation}
	with
	\[
	\hat\eta \defeq 1+\sigma^{-1}+\bigl(\sqrt{C_1C_2}\,e^{\frac12(C_1+C_2)M} + C_3e^{C_3M} \bigr)\sqrt{\kappa\tilde\kappa} \,;
	\]
	the estimate \eqref{dvop.bound.LL} (which is sharper for $T \to 0$) can now be obtained easily:
	from \eqref{xidynamics}, by It\^{o}'s formula, Young's inequality and \Cref{prop.oldiagbounds}, we have
	\[
	\sup_{t \in [t_0,T]} \E \bigl[ |\bxi_t|^2 \bigr] \leq |\bxi_0|^2 + T\hat C' \bigl( \norm{|D\bm v|_\op}_{L^\infty([T_0,T]\times(\R^d)^N)} + \eta \bigr) \sup_{t \in [t_0,T]} \E\bigl[|\bxi_t|^2\bigr]\,,	
	\]
	with $\hat C'$ depending only on $\norm{D^2 \bH}_\infty$, $C_{\bH}$, $C_{\bF,\lip}$, $C_{\bG,\lip}$, $C_{D\bm F,\lip}$, $C_{D\bm G,\lip}$ and $T$, so use \eqref{step4.est2}, \eqref{step4.dvop}, \eqref{step4.dvop.estbad}, and a weighted Young's inequality, in the right-hand side above to get 
	\[
	\sup_{t \in [t_0,T]} \E \bigl[ |\bxi_t|^2 \bigr] \leq \bigl( 1 + T\hat C'' \hat \eta^2 e^{T\hat C'' \eta} \bigr) |\bxi_0|^2 + \frac1{2\mathcal{C}_3} \sum_{i} \Big| \sum_{j} Y_{t_0}^{i,j} \xi_0^j \Big|^2\,,
	\]
	where $\mathcal C_3$ is the constant defined in \eqref{step4.defC3},
	and finally plug this into \eqref{fromxi2toyxi} to find that
	\[
	\sum_{i} \Big| \sum_{j} Y_{t_0}^{i,j} \xi_0^j \Big|^2 \leq \mathcal C_3 \bigl( 1 + T\hat C'' \hat \eta^2 e^{T\hat C'' \eta} \bigr) |\bxi_0|^2 + \frac12 \sum_{i} \Big| \sum_{j} Y_{t_0}^{i,j} \xi_0^j \Big|^2\,,
	\]
	which leads to the desired conclusion.
	\end{proof}

\begin{corollary} \label{prop.olboundslip}
   Let Assumption \ref{assump.LL} hold. Let
   \begin{equation} \label{LL.defM0}
   \bar M \defeq C_{D\bG,\lip} + 2\sqrt T\, C_6'\bigl(1+\sigma^{-1} \bigr) e^{2T C_6'(1+\sigma^{-1})}\,,
   \end{equation}
   where $C_6'$ is the constant introduced in \Cref{prop.olmonbounds}. Suppose that, for some $M > \bar M$, \eqref{smallnessll2} holds, as well as
   \begin{equation} \label{LL.smallness2'}
   \sqrt{\kappa\tilde\kappa} \leq e^{-C_6M}\,.
   \end{equation}
   Then
   \[
   \norm{|D\bm v|_\op}_{L^\infty([0,T]\times(\R^d)^N)} \leq \bar M\,.
   \]
\end{corollary}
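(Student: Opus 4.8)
\emph{The plan is a continuation (``bootstrap'') argument built on Proposition~\ref{prop.olmonbounds}.} Fix $M>\bar M$ for which \eqref{smallnessll2} and \eqref{LL.smallness2'} hold, and set $\phi(s)\defeq\norm{|D\bv|_\op}_{L^\infty([s,T]\times(\R^d)^N)}$, which is non-increasing in $s\in[0,T]$; the goal is to show $\phi(0)\le\bar M$. Since the terminal condition in \eqref{eq.vij} gives $D\bv(T,\cdot)=(D_{ji}G^i)_{i,j}$, we have $\phi(T)=C_{D\bG,\lip}\le\bar M<M$. Let $\mathcal S\defeq\{s\in[0,T]:\phi(s)\le M\}$; it is non-empty (it contains $T$), it is upward closed (if $s\in\mathcal S$ then every $s'\in[s,T]$ lies in $\mathcal S$, as $\phi$ is non-increasing), and, because $D\bv$ is continuous (recall $\bv$ is admissible), it is closed, so $\mathcal S=[T_*,T]$ for some $T_*\in[0,T]$.

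\emph{The improvement step.} I claim that in fact $\phi(s)\le\bar M$ for every $s\in\mathcal S$. Indeed, for such $s<T$ the ansatz \eqref{dvansatz} holds on $[s,T]$ with this same constant $M$; since \eqref{smallnessll2} is assumed, Proposition~\ref{prop.olmonbounds} applies on $[s,T]$ and yields $\phi(s)\le$ (the right-hand side of \eqref{dvop.bound.LL}). Now invoke \eqref{LL.smallness2'}: it forces $e^{C_6M}\sqrt{\kappa\tilde\kappa}\le1$, so the prefactor in \eqref{dvop.bound.LL} is at most $2\sqrt T\,C_6'(1+\sigma^{-1})$, and likewise $C_3e^{C_3M}\sqrt{\kappa\tilde\kappa}\le1$ (as $C_3$ and $C_6$ are dimension-free constants depending on the same data, we may assume $C_3\le C_6$), so the exponent in \eqref{dvop.bound.LL} is at most $2TC_6'(1+\sigma^{-1})$. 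Hence the right-hand side of \eqref{dvop.bound.LL} is bounded by $C_{D\bG,\lip}+2\sqrt T\,C_6'(1+\sigma^{-1})e^{2TC_6'(1+\sigma^{-1})}=\bar M$ — this is precisely the point of the definition \eqref{LL.defM0}. Thus $\phi(s)\le\bar M$ on $\mathcal S=[T_*,T]$.

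\emph{Closing the loop.} It remains to show $T_*=0$. If $T_*>0$, then by the improvement step $\phi(T_*)\le\bar M<M$; since $D\bv$ is continuous and bounded, $\phi$ still does not exceed $M$ on a left-neighborhood of $T_*$, so $\mathcal S$ contains points strictly below $T_*$, contradicting $T_*=\inf\mathcal S$. (This is the routine ``open'' half of the continuation method; to make it fully rigorous one invokes interior parabolic estimates for the linear system \eqref{eq.vij} to obtain continuity of $s\mapsto\phi(s)$.) Therefore $T_*=0$, i.e.\ $0\in\mathcal S$, and the improvement step applied with $s=0$ gives $\phi(0)\le\bar M$, which is the assertion.

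\emph{Main obstacle.} The substantive work is already done in Proposition~\ref{prop.olmonbounds}; here the only points needing care are (i) the algebraic verification that the quantitative smallness \eqref{LL.smallness2'} is strong enough to collapse the cumbersome bound \eqref{dvop.bound.LL} to the clean value $\bar M$ — i.e.\ that the smallness of $\kappa\tilde\kappa$ absorbs every $M$-dependent factor appearing in \eqref{dvop.bound.LL}, which is exactly how $\bar M$ in \eqref{LL.defM0} was reverse-engineered — and (ii) the standard continuity-method bookkeeping of the last paragraph, which relies on the continuity of $D\bv$ guaranteed by admissibility of $\bv$.
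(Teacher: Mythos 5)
Your proof is correct and follows essentially the same bootstrap/continuation argument as the paper's, differing only in the cosmetic choice of threshold used to define the stopping set ($M$ in yours, $\bar M$ in the paper's $T_0^*$), with the improvement step via Proposition~\ref{prop.olmonbounds} and the continuity of $D\bv$ (from admissibility) closing the loop in the same way. You do spell out the algebraic verification that \eqref{LL.smallness2'} collapses the right-hand side of \eqref{dvop.bound.LL} to $\bar M$ — a computation the paper leaves implicit — including the (reasonable) convention $C_3\le C_6$, which is exactly how the paper's constants are meant to be read.
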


\begin{proof} 
Define
\begin{equation*}
    T_0^* \defeq \inf \Bigl\{ T_0 \in [0,T] :\ \norm{|D\bm v|_\op}_{L^\infty([T_0,T]\times(\R^d)^N)} \leq \bar M \Bigr\};
\end{equation*}
note that $T_0^*$ is well-defined as a real number, and our goal is to show that $T_0^* = 0$. Suppose that this is not the case (hence $T^*_0 \in (0,T]$); as $D\bm v$ is uniformly continuous, we can find $\wt T_0 \in (0,T_0^*)$ with the property that $\norm{|D\bm v|_\op}_{L^\infty([\wt T_0,T] \times (\R^d)^N)} \leq M$,
so we can apply \Cref{prop.olmonbounds} and use condition~\eqref{LL.smallness2'} to deduce that actually $\norm{|D\bm v|_\op}_{L^\infty([\wt T_0,T] \times (\R^d)^N)} \leq \bar M$, thus contradicting the minimality of $T_0^*$.
\end{proof}

\begin{remark}
Clearly, it is possible to replace condition~\eqref{LL.smallness2'} with $\sqrt{\kappa\tilde\kappa} \lesssim e^{-C_6M}$, with implied constant independent of $M$, $N$, and $\sigma$. In that case, the thesis of \Cref{prop.olboundslip} holds with some different constant $C_6''$ in place of $C_6'$ in the definition~\eqref{LL.defM0} of $\bar M$.
\end{remark}

\section{From the Pontryagin system to the Nash system} \label{sec.nash}

Our strategy to prepare for the proof of \Cref{thm.maindisp,thm.mainll} is to use now the control we have on $\bv$ from \Cref{prop.olboundsdisp} (in the displacement semi-monotone case) or \Cref{prop.olboundslip} (in the Lasry--Lions semi-monotone case) to infer some bounds on solutions $\bu$ to the Nash system~\eqref{nash}, under some additional assumptions on the data. Solutions $\bv$ and $\bu$ are assumed to exist, and to be admissible in the sense described in \Cref{sec:admiss}.

In a spirit similar to that of the previous section, we will be \textit{assuming} that for some $T_0 \in [0,T)$, we have a bound of the form 
\begin{align} \label{aprioriopbound}
        \sup_{ t_0 \in [T_0, T],\, \bx_0 \in (\R^d)^N} \E\bigg[ \int_{t_0}^T \big| \bA(t,\bX_t^{t_0,\bx_0}) \big|_{\op}^2 \,\d t \bigg] \leq M\,,
\end{align}
for some $M > 0$, where $\bA$ is the matrix defined in \eqref{def.aij} and $\bX^{t_0,\bx_0} = \bX^{\CL,t_0,\bx_0}$ solves \eqref{def.xt0x0}.

In analogy with \eqref{dvansatz}, it will be useful here to write $u^{i,j} \defeq D_{j} u^i$ (which takes values in $\R^d$) and record here that by differentiating \eqref{nash} we find the equations
    \begin{equation} \label{nashfirstder}
    \begin{multlined}[b][.85\displaywidth]
     - \partial_t u^{i,j} - L^{\bm{u}} u^{i,j}
    + \sum_{k \neq i} (A^{k,j})^\TT D_{pp} H^k(x^k,u^{k,k})  u^{i,k} 
        \\[-5pt]
        +  D_{px} H^j(x^j,u^{j,j}) u^{i,j} \1_{i \neq j}  + D_x H^i(x^i, u^{i,i}) \1_{i = j} 
    = D_{j} F^i\,,
    \end{multlined}
\end{equation}
with terminal condition $u^{i,j}(T,\cdot) = D_{j} G^i$, and where $L^{\bu}$ indicates the differential operator 
\begin{align*}
  L^{\bu} \defeq \sigma \sum_{\substack{1 \leq k \leq N}} \Delta_{k} - \sigma_0 \sum_{\substack{1 \leq k,l \leq N}} \tr  D_{kl} + \sum_{\substack{1 \leq k \leq N}}  D_p H^k(x^k, u^{k,k}) \cdot D_k\,, 
\end{align*}
which is applied component-wise to $u^{i,j}$.

We start with an estimate which bounds the derivatives $D_j u^i$ ($j \neq i$) in a pointwise manner. It works equally well in the displacement and Lasry--Lions semi-monotone settings, so we state it here, while we will then separate the (different) proofs for the two regimes, as we did for the study of $\bv$ in the previous sections.

\begin{lemma} \label{lem.gradbound}
    Let either Assumption~\ref{assump.disp} or Assumption~\ref{assump.LL} hold. Suppose also that \eqref{aprioriopbound} holds for some $T_0 \in [0,T)$ and $M > 0$. Then, there is a constant $C_7$ (depending only on $\norm{D_{pp} \bH}_\infty$, $\norm{D_{px}\bH}_\infty$, and $T$) such that, for each $i = 1,\dots,N$, 
\[
 \sum_{j \neq i} \,\bigl|D_j u^i(t_0,\bx_0)\bigr|^2 + \sigma \, \E\bigg[\int_{t_0}^T \sum_{j \neq i} \sum_{k} \big|D_{jk} u^i(t,\bX_t^{t_0,\bx_0}) \big|^2 \,\d t \bigg] 
    \leq \bigl(1+ C_7 \sqrt{TM} e^{C_7\sqrt{M}}\bigr) \delta^i\,.
\]
for all $t_0 \in [T_0, T]$, $\bx_0 \in (\R^d)^N$.
\end{lemma}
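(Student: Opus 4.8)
\emph{Overall strategy.} The plan is to run, along the \emph{closed-loop} equilibrium, an argument parallel to the proof of \Cref{lem.gradvbound}: study the off-diagonal gradients $D_j u^i$, $j\neq i$, of a fixed component $u^i$ along the trajectory $\bX \defeq \bX^{\CL,t_0,\bx_0}$ solving \eqref{def.xt0x0}, and use the a priori bound \eqref{aprioriopbound} on $\bA$ to control the ``coupling'' terms, much as \eqref{dvansatz} was used in the Pontryagin case. We may assume $\delta^i>0$: if $\delta^i=0$ then $D_j G^i\equiv 0$ and $D_j F^i\equiv 0$ for every $j\neq i$, so $(D_j u^i)_{j\neq i}$ solves the linear homogeneous parabolic system obtained by differentiating \eqref{nash}, with bounded coefficients (by admissibility of $\bu$) and zero terminal data, hence vanishes identically and the claim is trivial.

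\emph{Setup and the BSDE.} Fix $(t_0,\bx_0)$ with $t_0\in[T_0,T]$ and, for the fixed index $i$, introduce (in analogy with \eqref{processesdef}) the processes $u^{i,j} \defeq D_j u^i$, $Y_t^{i,j} \defeq u^{i,j}(t,\bX_t)$, $Z_t^{i,j,k} \defeq \sqrt{2\sigma}\,D_k u^{i,j}(t,\bX_t)$ for $k=1,\dots,N$, $Z_t^{i,j,0} \defeq \sqrt{2\sigma_0}\sum_k D_k u^{i,j}(t,\bX_t)$, and write $\bm A_t\defeq\bA(t,\bX_t)$, with $(k,j)$-block $A_t^{k,j}$. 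Differentiating the Nash system as in \eqref{nashfirstder} and applying It\^o's formula, for $j\neq i$ the process $Y^{i,j}$ solves a linear BSDE $\d Y_t^{i,j} = (\,\cdots\,)\,\d t + \sum_{k=0}^N Z_t^{i,j,k}\,\d W_t^k$ whose drift consists of a coupling term $\sum_{k\neq i}(A_t^{k,j})^\TT D_{pp}H^k(X_t^k,Y_t^{k,k})Y_t^{i,k}$, a bounded zeroth-order term in $Y_t^{i,j}$ involving $D_{px}H^j(X_t^j,Y_t^{j,j})$, and the forcing $-D_j F^i(\bX_t)$, with terminal value $Y_T^{i,j}=D_j G^i(\bX_T)$. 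As in \Cref{rmk.admissible}, all local martingales appearing below are true martingales.

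\emph{The basic estimate.} I would then compute $\d\big(\sum_{j\neq i}|Y_t^{i,j}|^2\big)$, integrate from $t_0$ to $T$ and take expectations, keeping the quadratic-variation term $\E\big[\int_{t_0}^T\sum_{j\neq i}\sum_{k=0}^N|Z_t^{i,j,k}|^2\,\d t\big]$ on the left (it contains $2\sigma\,\E\big[\int_{t_0}^T\sum_{j\neq i}\sum_k|D_{jk}u^i(t,\bX_t)|^2\,\d t\big]$ plus a nonnegative common-noise piece, which we discard). On the right, the terminal and forcing terms are bounded, after a Young inequality and by the very definition of $\delta^i$, by $\|\sum_{j\neq i}|D_j G^i|^2\|_\infty + T\|\sum_{j\neq i}|D_j F^i|^2\|_\infty \le (1\vee T)\delta^i$; the zeroth-order term by $2\|D_{px}\bH\|_\infty\sum_{j\neq i}|Y_t^{i,j}|^2$; and for the coupling term one uses that the $Nd\times Nd$ matrix with $(j,k)$-block $(A^{k,j})^\TT$ is exactly $\bm A_t^\TT$, of operator norm $|\bm A_t|_{\op}$, to get $\big|\sum_{j\neq i}(Y_t^{i,j})^\TT\sum_{k\neq i}(A_t^{k,j})^\TT D_{pp}H^k Y_t^{i,k}\big| \le \|D_{pp}\bH\|_\infty|\bm A_t|_{\op}\sum_{j\neq i}|Y_t^{i,j}|^2$. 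Writing $\Phi(s,\bx) \defeq \sum_{j\neq i}|D_j u^i(s,\bx)|^2$, this produces, for every admissible $(t_0,\bx_0)$ with $t_0\in[T_0,T]$,
\[
\Phi(t_0,\bx_0) + 2\sigma\,\E\Big[\int_{t_0}^T\sum_{j\neq i}\sum_k|D_{jk}u^i(t,\bX_t)|^2\,\d t\Big] \;\le\; (1\vee T)\delta^i + \E\Big[\int_{t_0}^T\!\big(c_0 + c_1|\bm A_t|_{\op}\big)\,\Phi(t,\bX_t)\,\d t\Big],
\]
with $c_0,c_1$ depending only on $\|D_{px}\bH\|_\infty$ and $\|D_{pp}\bH\|_\infty$.

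\emph{Closing the estimate --- the main obstacle.} This is where the argument genuinely departs from that of \Cref{lem.gradvbound}, and I expect it to be the crux: there $\bA$ was controlled \emph{pointwise} by $M$, so a plain Gronwall gave a bound with $e^{CM}$, whereas here \eqref{aprioriopbound} controls $\bA$ only in the averaged sense $\sup_{t_0,\bx_0}\E[\int_{t_0}^T|\bm A_t|_{\op}^2\,\d t]\le M$, and the term $c_1\E[\int|\bm A_t|_{\op}\Phi(t,\bX_t)\,\d t]$ cannot be absorbed pathwise. The idea is to pass to the expected occupation $\rho(s)\defeq\E[\Phi(s,\bX_s^{t_0,\bx_0})]$, which is finite for each $s$ by the admissibility growth bounds on $\bu$ together with the moment estimates on $\bX^{\CL}$; applying the displayed inequality at the random point $(s,\bX_s^{t_0,\bx_0})$ and using the flow/Markov property of \eqref{def.xt0x0} with the tower property yields a self-contained inequality for $\rho$ in terms of $\int_s^T\rho$ and of $\E[\int_s^T|\bm A_t|_{\op}\Phi(t,\bX_t)\,\d t]$. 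A Cauchy--Schwarz inequality against \eqref{aprioriopbound} then bounds this last quantity by $\sqrt{TM}$ times a supremum-in-time $L^2$-norm of $\Phi$ along the equilibrium flow (controlled by standard martingale inequalities, e.g.\ Burkholder--Davis--Gundy, using that $\Phi(T,\bX_T)\le\delta^i$ is bounded) --- and this is precisely the step that converts the $L^2$-average $M$ into the $\sqrt{M}$ of the conclusion; a backward Gronwall-type argument, legitimate because all the expectations involved are finite, then gives the bound $\big(1+C_7\sqrt{TM}\,e^{C_7\sqrt{M}}\big)\delta^i$, and since the quadratic-variation term was carried along throughout, one simultaneously obtains the stated $\sigma$-weighted integral bound. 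Note that neither monotonicity nor the nondegeneracy $\sigma>0$ is used, which is why the argument applies verbatim under \Cref{assump.disp} and under \Cref{assump.LL}.
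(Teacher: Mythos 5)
Your overall strategy matches the paper's: expand $\sum_{j\neq i}|D_j u^i(t,\bX_t)|^2$ along the closed-loop equilibrium, recognize the coupling term as an operator-norm contraction against $\bA_t$ (your identification of $\bm A_t^\TT$ and the resulting bound $\|D_{pp}\bH\|_\infty|\bm A_t|_{\op}\sum_{j\neq i}|Y_t^{i,j}|^2$ are exactly right), and trade the averaged bound \eqref{aprioriopbound} for a $\sqrt{M}$ via Cauchy--Schwarz. The setup, the BSDE for $Y^{i,j}$, the observation that monotonicity is unused, and the remark that the $Z$-terms yield the $\sigma$-weighted integral all agree with the paper.

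The closing step is where you deviate, and I think it stalls. After Cauchy--Schwarz you are left with (some version of) the second-moment quantity $\E[\int_t^T\Phi_s^2\,\d s]$, and you propose to control it via Burkholder--Davis--Gundy using only the terminal bound $\Phi_T\le\delta^i$. I do not see how BDG applies: the drift of $\Phi_s$ contains the very term $|\bA_s|_{\op}\Phi_s$ you are trying to control, and the martingale increments of $\Phi_s$ involve $\sum_k Y_s^{i,j}\cdot Z_s^{i,j,k}$, i.e.\ precisely the $Z$-terms being estimated -- so any BDG-based estimate is circular -- while the admissibility bound $\Phi\lesssim N$ is dimension-dependent and useless. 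The paper's device is simpler and deterministic: bound $\Phi_s=\Phi(s,\bX_s)\le\|\Phi(s,\cdot)\|_\infty$ \emph{before} Cauchy--Schwarz, so that only $\E[\int_t^T|\bA_s|_{\op}\,\d s]\le\sqrt{(T-t)M}$ is needed; this yields, after taking a supremum over $\bx_0$, the scalar inequality $\|\Phi(t,\cdot)\|_\infty\le(1\vee T)\delta^i + C_{\bH,T}(1+\sqrt M)\sqrt{T-t}\,\sup_{s\in[t,T]}\|\Phi(s,\cdot)\|_\infty$, which is closed by absorption on short windows $[T-\tau,T]$, $[T-2\tau,T-\tau]$, etc., with $\tau$ chosen so the prefactor is below $1/2$. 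Your ``backward Gronwall-type argument'' may be gesturing at this iteration, but the quantity you should iterate on is $\|\Phi(s,\cdot)\|_\infty$ rather than $\rho(s)=\E[\Phi_s]$, and the BDG detour is neither needed nor available. (Also, the preliminary reduction to $\delta^i>0$ adds nothing: the final estimate already forces $D_ju^i\equiv 0$ for $j\neq i$ when $\delta^i=0$, and the uniqueness claim for the homogeneous first-differentiated system is not as immediate as you suggest.)
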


\begin{proof}
Fix $(t_0,\bx_0) \in [T_0, T] \times (\R^d)^N$ and define, for $i,j,k = 1,\dots,N$,
\begin{equation} \label{processesdef2}
\begin{gathered}
    \bX \defeq \bX^{t_0,\bx_0}\,, \qquad Y_t^{i,j} \defeq u^{i,j}(t,\bX_t)\,,
    \\
     Z_t^{i,j,k} \defeq \sqrt{2 \sigma} D_{k} u^{i,j}(t,\bX_t)\,, \qquad Z_t^{i,j,0} \defeq \sqrt{2 \sigma_0} \sum_{k = 1}^N D_k u^{i,j}(t,\bX_t)\,.
\end{gathered}
\end{equation}
In order to ease the notation, we will write $D_{pp} H^i_t \defeq D_{pp} H^i(X^i_t,Y^{i,i}_t)$ and likewise for $D_{px} H^i_t$, $A^{i,j}_t$ and $D_j F^i_t$.
Using \eqref{nashfirstder} and It\^o's formula, we find that, for $i \neq j$, 
\[
    \d Y_t^{i,j} = \Big(\,\sum_{k \neq i} (A^{k,j}_t)^\TT D_{pp}H^k_t Y^{i,k}_t  + D_{px} H^j_t Y^{i,j}_t - D_{j} F^i_t\, \Big)\d t  + \sum_{k = 0}^N Z_t^{i,j,k} \,\d W_t^k\,.
\]
As a consequence, 
\[
\begin{split}
    \d \, \sum_{j \neq i} |Y_t^{i,j}|^2 &= \bigg( 2 \sum_{\substack{k,j \\ k \neq i \neq j}} (Y_t^{i,j})^\TT (A^{k,j})^\TT D_{pp}H^k_t  Y_t^{i,k} + 2 \sum_{j \neq i} (Y^{i,j}_t)^\TT D_{px} H^j_t  Y_t^{i,j} -2 \sum_{j \neq i} (Y_t^{i,j})^\TT D_{j} F^i_t 
    \\
    &\qquad + \sum_{j \neq i} \sum_{k = 0}^N |Z_t^{i,j,k}|^2 \bigg) \d t + \d M_t\,, 
\end{split}
\]
with $M$ being a martingale. Integrate from $t$ to $T$ and take expectations, to find that 
\begin{equation} \label{smalltimebound}
\begin{split}
    &\E\bigg[ \,\sum_{j \neq i} |Y_{t}^{i,j}|^2 + \int_{t}^T \sum_{j \neq i} \sum_{k = 0}^N |Z_s^{i,j,k}|^2 \,\d s \bigg] - \biggl\|\, \sum_{j \neq i} |Y^{i,j}_T|^2\biggr\|_{\infty}
     \\
    &\qquad\qquad\leq \E\bigg[\int_{t}^T \Bigl( \bigl(2\|D_{pp} \bH\|_{\infty} |\bA_s|_{\op} + 2\|D_{px} \bH\|_{\infty} + 1\bigr) \sum_{j \neq i} |Y_s^{i,j}|^2 + \sum_{j \neq i} |D_{j} F^i_s|^2 \Bigr)\d s \bigg]
     \\
      &\qquad\qquad\leq T\delta^i + C_{\bH,T}(1+\sqrt{M}\,)\sqrt{T-t} \sup_{s \in [t,T]} \E \biggl[\, \sum_{j \neq i} |Y_s^{i,j}|^2 \biggr] \,,
\end{split}
\end{equation}
where $C_{\bH,T}$ depends only on $\norm{D_{pp} \bH}_\infty$, $\norm{D_{px}\bH}_\infty$ and $T$. For $T-t \leq \tau$ with $\tau$ small enough with respect to $C_{\bH,T}$ and $M$, we deduce that
\[
\sup_{s \in [t,T]} \E \biggl[\, \sum_{j \neq i} |Y_s^{i,j}|^2 \biggr] \leq 2\,\biggl\|\, \sum_{j \neq i} |Y^{i,j}_T|^2\biggr\|_{\infty} \!+ 2T\delta^i \leq 2(1+ T)\delta^i\,,
\]
so we can interate this argument at most $\lceil TC_{\bH,T}(1+\sqrt M\,) \rceil$ times, on intervals of the form $[t_0 \vee (T-k\tau), T-(k-1)\tau]$, we obtain
\[
\sup_{s \in [t_0,T]} \E \biggl[\, \sum_{j \neq i} |Y_s^{i,j}|^2 \biggr] \leq e^{C_{\bH,T}'(1+\sqrt M\,)} \delta^i\,,
\]
with $C_{\bH,T}$ depending only on $\norm{D_{pp} \bH}_\infty$, $\norm{D_{px}\bH}_\infty$ and $T$.
Now plug this back in to \eqref{smalltimebound} to complete the proof. 
\end{proof}

\subsection{The displacement semi-monotone case}

The next lemma shows how to use the bounds above to show that if the operator norm of $\bA$ is controlled, then under some conditions the open-loop and closed-loop systems are close. 

\begin{lemma} \label{lem.clol}
   Let the assumptions of \Cref{prop.olboundsdisp} hold. Suppose also that \eqref{aprioriopbound} holds for some $T_0 \in [0,T)$ and $M > 0$. Then, there is a constant $C_8$ (depending only on $\norm{D^2\bH}_\infty$, $C_{\bG,\dis}$, $C_{D\bG,\lip}$, $C_{\bF,\dis}$, $C_{D\bF,\lip}$, $C_\dis$, and $T$) such that
\[
 \sum_{i} \,\bigl|(D_i u^i-v^i)(t_0,\bx_0)\bigr|^2 + \sigma \, \E\bigg[\int_{t_0}^T \sum_{i,j} \big|(D_{ji} u^i - D_jv^i)(t,\bX_t^{t_0,\bx_0}) \big|^2 \,\d t \bigg] 
    \leq C_8Me^{C_8\sqrt{M}}\sigma^{-1}\delta \sum_i \delta^i
\]
for all $t_0 \in [T_0, T]$, $\bx_0 \in (\R^d)^N$.
\end{lemma}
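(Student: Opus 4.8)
The plan is to regard the diagonal vector field $(D_1u^1,\dots,D_Nu^N)$ as a small perturbation of the decoupling field $\bv$ of the Pontryagin FBSDE. Set $\phi^i\defeq D_iu^i-v^i$. Since $u^{i,i}=D_iu^i$ solves the case $j=i$ of \eqref{nashfirstder} and $v^i$ solves \eqref{pontpde}, subtracting gives, on $[0,T]\times(\R^d)^N$,
\begin{multline*}
-\partial_t\phi^i - L^{\bu}\phi^i + \bigl(D_xH^i(x^i,u^{i,i})-D_xH^i(x^i,v^i)\bigr) \\
+ \sum_{1\le k\le N}(D_kv^i)\bigl(D_pH^k(x^k,v^k)-D_pH^k(x^k,u^{k,k})\bigr) \;=\; -\sum_{k\neq i}(D_{ik}u^k)^\TT D_{pp}H^k(x^k,u^{k,k})\,D_ku^i,
\end{multline*}
with \emph{vanishing} terminal data $\phi^i(T,\cdot)=D_iG^i-D_iG^i=0$; here $L^{\bu}$ is the operator in \eqref{nashfirstder}, i.e.\ the generator of the closed-loop dynamics \eqref{def.xt0x0}. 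The two ``extra'' terms on the left are $O(|\bm{\phi}|)$, since $D_{xp}\bH,D_{pp}\bH$ are bounded and $u^{k,k}-v^k=\phi^k$; the right-hand side is the genuinely new coupling term, a product of an off-diagonal entry $D_{ik}u^k$ of $\bA$ with an off-diagonal derivative $D_ku^i$ ($k\neq i$), both small by \Cref{lem.gradbound} (and, for the first, by \eqref{aprioriopbound}). Equivalently, $\bigl(\bX^{\CL},(u^{i,i}(t,\bX^{\CL}))_i,(\sqrt{2\sigma}\,D_ku^{i,i}(t,\bX^{\CL}))_{i,k}\bigr)$ solves a perturbation of \eqref{pontryagin} with this term added to the backward drift.

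Fix $(t_0,\bx_0)\in[T_0,T]\times(\R^d)^N$, set $\bX\defeq\bX^{t_0,\bx_0}$, and for $s\in[t_0,T]$ expand $\tfrac12\sum_i|\phi^i(t,\bX_t)|^2$ by It\^o's formula on $[s,T]$. Because the drift of $\bX$ matches $L^{\bu}$, after substituting the equation for $\phi^i$, using $\phi^i(T,\cdot)=0$ and that the idiosyncratic (and common) noise contributes a non-negative second-derivative term, one gets
\[
\tfrac12\,\E\Bigl[\sum_i|\phi^i(s,\bX_s)|^2\Bigr] + \sigma\,\E\!\int_s^T\!\sum_{i,k}|D_k\phi^i(t,\bX_t)|^2\,\d t \;\le\; -\,\E\!\int_s^T\!\sum_i\phi^i\cdot\text{(inhom.)}\,\d t,
\]
where (inhom.) gathers the Hamiltonian-difference, drift-coefficient-difference and coupling terms. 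As $D_k\phi^i=D_{ki}u^i-D_kv^i$, the second term on the left is exactly the one in the statement (relabel $k\to j$).

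It remains to estimate the right-hand side. The Hamiltonian-difference term contributes $\lesssim\E\int_s^T|\bm{\phi}|^2$. For the drift-coefficient-difference term, write $D_pH^k(x^k,u^{k,k})-D_pH^k(x^k,v^k)=B^k(x^k)\phi^k$ with $|B^k|_\op\le\norm{D_{pp}\bH}_\infty$, so that $\sum_i\phi^i\cdot\sum_k(D_kv^i)B^k\phi^k=\sum_k\bigl[(D\bv)^\TT\bm{\phi}\bigr]^k\cdot(B^k\phi^k)$; since $\norm{|D\bv|_\op}_\infty$ is dimension-free by \Cref{prop.olboundsdisp}, this is $\lesssim\E\int_s^T|\bm{\phi}|^2$ as well — importantly, no Frobenius norm of $D\bv$ and no power of $N$ or of $\sum_i\delta^i$ appears. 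The coupling term is bounded, after Cauchy--Schwarz, by
\[
\norm{D_{pp}\bH}_\infty\,\Bigl(\max_i\sum_{k\neq i}|D_ku^i|^2\Bigr)^{1/2}\Bigl(\E\!\int_s^T\!\sum_{i\neq k}|D_{ik}u^k|^2\Bigr)^{1/2}\Bigl(\E\!\int_s^T\!|\bm{\phi}|^2\Bigr)^{1/2},
\]
and \Cref{lem.gradbound} gives $\max_i\sum_{k\neq i}|D_ku^i|^2\le(1+C_7\sqrt{TM}e^{C_7\sqrt M})\,\delta$ as well as $\E\int_s^T\sum_{i\neq k}|D_{ik}u^k|^2\le\sigma^{-1}(1+C_7\sqrt{TM}e^{C_7\sqrt M})\sum_k\delta^k$ (applying that lemma to each $u^k$ and summing over $k$). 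A weighted Young inequality then splits the coupling term into $\varepsilon\,\E\int_s^T|\bm{\phi}|^2$ plus a term $\lesssim(1+TM e^{2C_7\sqrt M})\,\sigma^{-1}\delta\sum_k\delta^k$. Choosing $\varepsilon$ small, collecting terms, and running (backward) Gronwall for $g(s)\defeq\E[\sum_i|\phi^i(s,\bX_s)|^2]$ gives $\sup_{[T_0,T]\times(\R^d)^N}\sum_i|\phi^i|^2\lesssim M e^{C\sqrt M}\sigma^{-1}\delta\sum_k\delta^k$; plugging this back into the energy inequality yields the same bound for $\sigma\,\E\int_{t_0}^T\sum_{i,k}|D_k\phi^i|^2$.

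The crux is exactly this dimension-free accounting. The coupling term must be handled so that only the \emph{off-diagonal} part of $\bA$ appears — which \Cref{lem.gradbound} makes small — rather than $\norm{\bA}_{\mathrm{Fr}}$, whose diagonal blocks $D_{ii}u^i$ are only $O(1)$ and are not controlled by \eqref{aprioriopbound}; concretely this is achieved by keeping $|\phi^i|$ separate and pairing $|D_{ik}u^k|$ ($k\neq i$) with $|D_ku^i|$. Likewise the drift-coefficient-difference term must be collapsed into $(D\bv)^\TT\bm{\phi}$ and estimated through $\norm{|D\bv|_\op}_\infty$ alone, since neither $\sum_{k\neq i}|D_kv^i|^2$ nor $\sum_i\delta^i$ is a dimension-free quantity. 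Carrying the $\sigma^{-1}$ and the $e^{O(\sqrt M)}$ factors through the Young and Gronwall steps then produces the stated $C_8\,M\,e^{C_8\sqrt M}\sigma^{-1}$.
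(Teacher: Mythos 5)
Your proof is correct and follows essentially the same strategy as the paper's. The paper works directly with the FBSDE representation $\Delta Y^i_t = D_iu^i(t,\bX_t) - v^i(t,\bX_t)$ and expands $\d|\Delta\bY_t|^2$; you instead subtract the equation for $u^{i,i}$ (case $j=i$ of \eqref{nashfirstder}) from that for $v^i$ to get a PDE for $\phi^i=D_iu^i-v^i$ and then apply It\^{o} along $\bX^{\CL}$, which by Feynman--Kac is the same computation. Your coupling term $-\sum_{k\neq i}(D_{ik}u^k)^\TT D_{pp}H^k D_ku^i$ is exactly the paper's $E^i_t$, and your collapse of the drift-difference term to $(D\bv)^\TT\bm{\phi}$ paired with $\norm{|D\bv|_\op}_\infty$ from \Cref{prop.olboundsdisp}, as well as your Cauchy--Schwarz estimate of $E^i$ pairing the off-diagonal $|D_{ik}u^k|$ with $|D_ku^i|$ and invoking \Cref{lem.gradbound} twice, is precisely what the paper does (the paper applies Gronwall before estimating $\E\int|\bm E|^2$ rather than after, but this is cosmetic). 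The concluding remarks about why $\|\bA\|_{\mathrm{Fr}}$ must not appear are a correct reading of the structure, but they do not constitute a different route.
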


\begin{proof}
    Fix $(t_0,\bx_0) \in [T_0,T] \times (\R^d)^N$, and define $\bX$, $Y^{i,j}$, $Z^{i,j,k}$ as in the previous proof (see~\eqref{processesdef2}). Similarly, we set 
    \[
        \ov{Y}^i_t = v^i(t,\bX_t)\,, \qquad  \ov{Z}^{i,j}_t = \sqrt{2 \sigma} v^{i,j}(t,\bX_t)\,, \qquad \ov{Z}^{i,0}_t = \sqrt{2\sigma_0} \sum_{j = 1}^N v^{i,j}(t,\bX_t)\,.
    \]
    Then, let $\Delta Y^i = Y^{i,i} - \ov{Y}^i$, and likewise for $\Delta Z^{i,j}$.
    
    By using It\^{o}'s formula together with the equation for $u^{i,i}$ in \eqref{nashfirstder} and the equation~\eqref{pontpde} for $v^i$, we find 
    \begin{align*}
        \d Y_t^{i,i} = \bigl( D_x H^i(X^i_t, Y^{i,i}_t) - D_{i} F^i(t,\bX_t) + E_t^i \,\bigr) \,\d t + \sum_{j = 0}^N Z_t^{i,i,j} \d W_t^j 
    \end{align*}
    and 
   \[
        \d \ov{Y}_t^i = \Big( D_x H^i(X_t^i, \ov{Y}_t^i) - D_{i} F^i(t,\bX_t) - \sum_{j = 1}^N v^{i,j}(t,\bX_t) D_p H^j\Bigr|^{(X_t^j, Y_t^{j,j})}_{(X_t^j, \ov{Y}_t^{j})}  \,\Big) \d t 
+ \sum_{j = 0}^N \ov{Z}_t^{i,j} \d W_t^j,
        \]
    where
    \[
   E^i_t \defeq \sum_{j \neq i} D_{ij} u^j(t,\bX_t)^\TT D_{pp}H^j(x^j,Y^{j,j}_t) D_{j} u^i(t,\bX_t)\,.
   \]
 As a consequence, we can compute 
    \begin{equation} \label{dysquared}
    \begin{split}
        \d |\Delta \bY_t|^2 &= \Big( 2\sum_{i} (\Delta Y_t^i)^\TT D_x H^i\Bigr|^{(X_t^i, Y_t^{i,i})}_{(X_t^i, \ov{Y}_t^{i})} + 2 \sum_{i} (\Delta Y_t^i)^\TT E_t^i \nonumber \\
        &\qquad + 2 \sum_{i,j} (\Delta Y_t^i)^\TT v^{i,j}(t,\bX_t) D_p H^j\Bigr|^{(X_t^j, Y_t^{j,j})}_{(X_t^j, \ov{Y}_t^{j})}
         + \sum_{i} \sum_{j = 0}^N |\Delta Z_t^{i,j}|^2 \Big) \d t + \d M_t\,,
    \end{split}
    \end{equation}
    with $M$ being a martingale. Using \Cref{prop.olboundsdisp} and the Lipschitz continuity of $D\bH$, we can integrate from $t \in [t_0,T]$ to $T$ and take expectations to find that 
\[
         \E\bigg[ |\Delta \bY_t|^2 + \int_{t}^T \sum_{i} \sum_{j = 0}^N |\Delta Z^{i,j}|^2\bigg] \leq C_{\bH,\bv} \E\bigg[ \int_t^T \bigl( |\Delta \bY|^2 + |\bm E|^2 \bigr) \bigg]\,.
\]
     where $C_{\bH,\bm v}$ depends only on $\norm{D_{pp}\bH}_\infty$, $\norm{D_{px}\bH}_\infty$ and the right-hand side of \eqref{eq.dvopbound.dis}.
     Apply Gronwall's lemma and then plug in $t = t_0$ to get
     \begin{equation} \label{eibound}
      \E\bigg[ |\Delta \bY_{t_0}|^2 + \int_{t_0}^T \sum_{i} \sum_{j = 0}^N |\Delta Z^{i,j}|^2\bigg] \leq C_{\bH,\bv} \E\bigg[ \int_{t_0}^T \!|\bm E|^2 \bigg] e^{TC_{\bH,\bv}}\,.
     \end{equation}
     Notice now that we can use \Cref{lem.gradbound} to estimate
     \[
     \begin{split}
     \E\bigg[ \int_{t_0}^T \!|\bm E|^2 \bigg] &\leq \norm{D_{pp}\bH}_\infty^2 \sup_i \, \Bigl\lVert \sum_{j\neq i} |D_j u^i|^2 \Bigr\rVert_{L^\infty([T_0,T]\times(\R^d)^N)} \E \biggl[ \,\int_{t_0}^T \sum_{\substack{i,j \\ i \neq j}} |D_{ij}u^j(t,\bX_t)|^2\,\d t \biggr]
     \\
     &\leq \norm{D_{pp}\bH}^2_\infty \bigl(1+ C_7\sqrt{TM}e^{C_7\sqrt{M}}\,\bigr)^2 \sigma^{-1} \delta \sum_j \delta^j\,.
     \end{split}
     \]
     Come back to \eqref{eibound}, and recall the definitions of $Y^i$ and $Z^{i,j}$, as well as the arbitrariness of $t_0$ and $\bx_0$, to complete the proof.
\end{proof}

\begin{corollary} \label{prop.improvemenddisp}
     Let the assumptions of \Cref{prop.olboundsdisp} hold. Suppose also that \eqref{aprioriopbound} holds for some $T_0 \in [0,T)$ and $M > 0$. Then
     \begin{equation} \label{aposterioribound.dis}
        \sup_{t_0 \in [T_0,T],\, \bx_0 \in (\R^d)^N} \E\bigg[ \int_{t_0}^T \big| \bA(t,\bX_t^{t_0,\bx_0}) \big|_{\op}^2 \,\d t \bigg] \leq 2C_{\bv,\lip}^2 + 2C_8Me^{C_8\sqrt{M}}\sigma^{-2} \delta \sum_{i} \delta^i, 
    \end{equation}
    where $C_{\bv,\lip}$ denotes the right-hand side of \eqref{eq.dvopbound.dis} and $C_8$ is the constant introduced in \Cref{lem.clol}.
\end{corollary}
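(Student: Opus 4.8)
The plan is to split the matrix $\bA = (D_{ji}u^i)_{i,j=1,\dots,N}$ as $\bA = D\bv + (\bA - D\bv)$, where $D\bv = (D_j v^i)_{i,j=1,\dots,N}$, and estimate the two pieces separately: the first by the uniform Lipschitz bound on $\bv$ from \Cref{prop.olboundsdisp}, the second by the open-loop/closed-loop proximity estimate of \Cref{lem.clol}. Concretely, fixing $t_0 \in [T_0,T]$ and $\bx_0 \in (\R^d)^N$ and writing $\bX = \bX^{t_0,\bx_0}$, the triangle inequality for $|\cdot|_{\op}$ together with $(a+b)^2 \le 2a^2 + 2b^2$ gives
\[
\E\bigg[\int_{t_0}^T |\bA(t,\bX_t)|_{\op}^2\,\d t\bigg] \le 2\,\E\bigg[\int_{t_0}^T |D\bv(t,\bX_t)|_{\op}^2\,\d t\bigg] + 2\,\E\bigg[\int_{t_0}^T |(\bA - D\bv)(t,\bX_t)|_{\op}^2\,\d t\bigg].
\]

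For the first term I would invoke \eqref{eq.dvopbound.dis}, which bounds $|D\bv|_{\op}$ by $C_{\bv,\lip}$ pointwise, so that the first expectation is at most $T\,C_{\bv,\lip}^2$. For the second term, the point is that the $(i,j)$-block of $\bA - D\bv$ is $D_{ji}u^i - D_j v^i$, whence $|\bA - D\bv|_{\op}^2 \le |\bA - D\bv|_{\mathrm{Fr}}^2 = \sum_{i,j}|D_{ji}u^i - D_j v^i|^2$; inserting this and applying \Cref{lem.clol} (after dividing its conclusion by $\sigma$ and discarding the non-negative pointwise term on the left-hand side) yields
\[
\E\bigg[\int_{t_0}^T |(\bA - D\bv)(t,\bX_t)|_{\op}^2\,\d t\bigg] \le C_8 M e^{C_8\sqrt M}\,\sigma^{-2}\,\delta\sum_{i}\delta^i.
\]
Since both ingredients hold uniformly in $(t_0,\bx_0)$, taking the supremum over $t_0 \in [T_0,T]$ and $\bx_0 \in (\R^d)^N$ costs nothing, and combining the two displays gives \eqref{aposterioribound.dis}.

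There is no genuine obstacle at this stage: the corollary is essentially bookkeeping once \Cref{prop.olboundsdisp} and \Cref{lem.clol} are in hand, the substantive work having been done in \Cref{lem.clol}. The one point worth flagging is the bound $|\bA - D\bv|_{\op} \le |\bA - D\bv|_{\mathrm{Fr}}$: the $L^2$ control coming from \Cref{lem.clol} is really a control of the Frobenius norm, since it is extracted from an It\^o expansion of $\sum_i |D_i u^i - v^i|^2$ along the closed-loop equilibrium whose quadratic-variation term produces exactly $\sum_{i,j}|D_{ji}u^i - D_j v^i|^2$. This is harmless for the present estimate, because the right-hand side $\delta\sum_i\delta^i$ is already small, but it is precisely the place where one may be losing a factor of $\sqrt N$; improving it would require a direct handle on the operator norm of the increment, which the idiosyncratic noise does not obviously provide.
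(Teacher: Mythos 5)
Your proof is correct and follows essentially the same route as the paper: decompose $\bA = D\bv + (\bA - D\bv)$, bound the first piece using the uniform Lipschitz estimate from \Cref{prop.olboundsdisp}, bound the second using \Cref{lem.clol} together with $|\cdot|_{\op} \le |\cdot|_{\mathrm{Fr}}$, and take a supremum over $(t_0,\bx_0)$. (One small discrepancy worth noting, but which is a feature of the paper's statement rather than an error on your part: carrying out the computation as you and the paper both do naturally yields $2T C_{\bv,\lip}^2$ for the first term, whereas \eqref{aposterioribound.dis} records $2C_{\bv,\lip}^2$; since $T$ is dimension-free this is harmless for all downstream uses.)
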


\begin{proof}
    Fix $(t_0,\bx_0) \in [T_0,T] \times (\R^d)^N$, and set $\bX = \bX^{t_0,\bx_0}$. By the triangle inequality, 
   \[
        \E\bigg[ \int_{t_0}^T \big| \bA(t,\bX_t) |_{\text{op}}^2 \,\d t \bigg] \leq 2T \norm{|D\bv|_\op}_\infty^2 + 2 \E\bigg[ \int_{t_0}^T \big| (\bA - D\bv)(t,\bX_t) \big|_{\op}^2 \,\d t \bigg] 
   \]
    hence it suffices to use \Cref{prop.olboundsdisp}, and \Cref{lem.clol}, recalling that the operator norm is bounded by the Frobenius norm. 
\end{proof}

We have now all the ingredients to prove our first main theorem.

\begin{proof}[Proof of \Cref{thm.maindisp}]
    With the notation of \Cref{prop.improvemenddisp}, choose $M_0 \defeq 4 C_{\bv, \lip}^2$, and note that the right-hand side of \eqref{aposterioribound.dis}, with $M = \frac54M_0$, is bounded by $M_0$, provided that
    \[
    5C_8 e^{C_8\sqrt{5}C_{\bv,\lip}} \sigma^{-2} \delta \sum_i \delta^i \leq 1\,.
    \]
    Now set 
    \[
       T_0^* \defeq \inf \bigg\{T_0 \in [0,T] :\ \sup_{t_0 \in [T_0,T],\, \bx_0 \in (\R^d)^N} \E\bigg[ \int_{t_0}^T \big| \bA(t,\bX_t^{t_0,\bx_0}) \big|_{\op}^2 \,\d t \bigg] \leq M_0 \bigg\},
    \]
    which is well-defined as a real numbers since $M_0 > C_{D\bG,\lip}^2$.
    If $T_0^* > 0$, we can use the boundedness of $\bA$ to find some $\eps  > 0$ such that 
    \begin{align} \label{epsstep}
        \sup_{t_0 \in [T_0^* - \eps, T_0],\, \bx_0 \in (\R^d)^N} \E\bigg[ \int_{t_0}^{T_0} \big| \bA(t,\bX_t^{t_0,\bx_0}) \big|_{\op}^2 \,\d t \bigg] \leq C_{\bv,\lip}^2\,,
    \end{align}
    and so, for any $t_0 \in [T_0^* - \eps , T]$, 
    \begin{equation} \label{conditioning}
    \begin{split}
        &\E\bigg[ \int_{t_0}^{T} \big| \bA(t,\bX_t^{t_0,\bx_0}) \big|_{\op}^2 \,\d t \bigg]
         \\
        &\qquad\qquad= 
        \E\bigg[ \int_{t_0}^{T_0} \big| \bA(t,\bX_t^{t_0,\bx_0}) \big|_{\op}^2 \,\d t \bigg]
        + \E\bigg[ \int_{T_0}^{T} \big| \bA(t,\bX_t^{t_0,\bx_0}) \big|_{\op}^2 \, \d t \bigg]
         \\
        &\qquad\qquad=  \E\bigg[ \int_{t_0}^{T_0} \big| \bA(t,\bX_t^{t_0,\bx_0}) \big|_{\op}^2 \,\d t \bigg]
        + \E \bigg[\E \bigg[ \int_{T_0}^{T} \big| \bA(t,\bX_t^{t_0,\bx_0}) \big|_{\op}^2 \,\d t \bigg] \,\Big|\, \bX_{T_0}^{t_0,\bx_0} \bigg]
         \\
        &\qquad\qquad= \E\bigg[ \int_{t_0}^{T_0} \big| \bA(t,\bX_t^{t_0,\bx_0}) \big|_{\op}^2 \,\d t \bigg]
        + \E\bigg[ \int_{T_0}^{T} \Big| \bA\Bigl(t,\bX_t^{T_0,\bX_{T_0}^{t_0,\bx_0}}\Bigr) \Big|_{\op}^2 \, \d t \bigg]\,.
    \end{split}
    \end{equation}
    By the definition of $T_0$, the last expectation is bounded by $M_0$, hence combining \eqref{epsstep} and \eqref{conditioning} we have
    \[
     \sup_{t_0 \in [T_0^* - \eps, T],\, \bx_0 \in (\R^d)^N} \E\bigg[ \int_{t_0}^{T} \big| \bA(t,\bX_t^{t_0,\bx_0}) \big|_{\op}^2 \,\d t \bigg] \leq 5C_{\bv,\lip}^2\,.
    \]
    By \Cref{prop.improvemenddisp} and the observation at the beginning of the proof, we deduce that actually the right-hand side above can be improved to $M_0$; this contradicts the minimality of $T_0^*$, thus proving by contradiction that $T_0^*=0$, as desired.
    
     Then \eqref{mainl2bounddisp} follows from Lemma \ref{lem.gradbound}.
\end{proof}

\subsection{The Lasry-Lions semi-monotone case}

We are going to prove now the counterparts, in the Lasry--Lions semi-monotone setting, of \Cref{lem.clol} and \Cref{prop.improvemenddisp}.

\begin{lemma} \label{lem.llclol}
     Let the assumptions of \Cref{prop.olboundslip} hold. Suppose also that \eqref{aprioriopbound} holds for some $T_0 \in [0,T)$ and $M > 0$. Then, there is a constant $C_9$ (depending only on $\norm{D^2\bH}_\infty$, $C_{\bG,\dis}$, $C_{\bG,\lip}$, $C_{D\bG,\lip}$, $C_{\bF,\dis}$, $C_{\bF,\lip}$,  $C_{D\bF,\lip}$, $C_\dis$, and $T$) such that
\[
\begin{multlined}[.95\displaywidth]
 \sum_{i} \,\bigl|(D_i u^i-v^i)(t_0,\bx_0)\bigr|^2 + \sigma \, \E\bigg[\int_{t_0}^T \!\sum_{i,j} \big|(D_{ji} u^i - D_jv^i)(t,\bX_t^{t_0,\bx_0}) \big|^2 \,\d t \bigg] 
 \\
 \leq C_9M e^{C_9\sqrt{M}} e^{e^{C_9(1+\sigma^{-1})}} \!\delta \sum_i \delta^i
 \end{multlined}
\]
for all $t_0 \in [T_0, T]$, $\bx_0 \in (\R^d)^N$.
\end{lemma}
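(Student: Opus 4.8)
The plan is to follow the proof of \Cref{lem.clol} almost verbatim, exploiting the fact that the PDE identities involved are insensitive to which semi-monotonicity condition holds. First I would fix $(t_0,\bx_0)\in[T_0,T]\times(\R^d)^N$, set $\bX\defeq\bX^{t_0,\bx_0}$, and introduce the same coupled processes as in \eqref{processesdef2}, together with $\bar Y^i_t\defeq v^i(t,\bX_t)$, the processes $\bar Z^{i,j}$, and $\Delta\bY\defeq(Y^{i,i}-\bar Y^i)_i$. As in the proof of \Cref{lem.clol}, applying It\^o's formula to the diagonal equations in \eqref{nashfirstder} (for $u^{i,i}=D_iu^i$) and to \eqref{pontpde} (for $v^i$) along the \emph{closed-loop} trajectory $\bX$ produces the identity \eqref{dysquared} for $\d|\Delta\bY_t|^2$, with the same error term $E^i_t=\sum_{j\neq i}D_{ij}u^j(t,\bX_t)^\TT D_{pp}H^j(X^j_t,Y^{j,j}_t)D_ju^i(t,\bX_t)$; none of this uses monotonicity. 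Note that $|\Delta\bY_{t_0}|^2=\sum_i|(D_iu^i-v^i)(t_0,\bx_0)|^2$, that $\sum_{j=0}^N|\Delta Z^{i,j}|^2\ge 2\sigma\sum_{j=1}^N|(D_{ji}u^i-D_jv^i)(t,\bX_t)|^2$, and that $\Delta\bY_T=0$ since both terminal conditions equal $D_iG^i(\bX_T)$.

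The only structural change is that the dimension-free Lipschitz bound of \Cref{prop.olboundsdisp} is unavailable; in its place I would invoke \Cref{prop.olboundslip} (valid under the standing hypotheses), which gives $\norm{|D\bv|_\op}_{L^\infty([0,T]\times(\R^d)^N)}\le\bar M$ with $\bar M$ as in \eqref{LL.defM0}, so that $\bar M\lesssim e^{C(1+\sigma^{-1})}$. Then, exactly as in \Cref{lem.clol}: the only term in the drift of $\d|\Delta\bY_t|^2$ whose bound carries the Lipschitz constant of $\bv$ is $2\sum_{i,j}(\Delta Y^i)^\TT v^{i,j}(t,\bX_t)\,D_pH^j|^{(X^j,Y^{j,j})}_{(X^j,\bar Y^j)}$, which — writing $D_pH^j|^{(X^j,Y^{j,j})}_{(X^j,\bar Y^j)}$ as a matrix of norm $\le\norm{D_{pp}\bH}_\infty$ acting on $\Delta Y^j$ and noting that $(v^{i,j})_{i,j}=D\bv$ — is at most $\norm{D_{pp}\bH}_\infty\bar M|\Delta\bY|^2$; the $D_xH$-increment contributes $\le\norm{D_{xp}\bH}_\infty|\Delta\bY|^2$, and the error term is handled by Young's inequality, $2\sum_i(\Delta Y^i)^\TT E^i\le|\Delta\bY|^2+|\bm E|^2$. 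Integrating from $t$ to $T$, using $\Delta\bY_T=0$ and absorbing the nonnegative $\Delta Z$-term, one gets $\E\big[|\Delta\bY_t|^2+\int_t^T\sum_i\sum_{j=0}^N|\Delta Z^{i,j}|^2\big]\le C_{\bH,\bv}\,\E\big[\int_t^T(|\Delta\bY|^2+|\bm E|^2)\big]$, where the constant $C_{\bH,\bv}$ is \emph{linear} in $\bar M$, hence $C_{\bH,\bv}\lesssim e^{C(1+\sigma^{-1})}$. A backward Gronwall inequality at $t=t_0$ then yields $\E\big[|\Delta\bY_{t_0}|^2+\int_{t_0}^T\sum_i\sum_{j=0}^N|\Delta Z^{i,j}|^2\big]\le C_{\bH,\bv}e^{TC_{\bH,\bv}}\,\E\big[\int_{t_0}^T|\bm E|^2\big]\lesssim e^{e^{C(1+\sigma^{-1})}}\,\E\big[\int_{t_0}^T|\bm E|^2\big]$, which is the Lasry--Lions analogue of the bound \eqref{eibound} in the proof of \Cref{lem.clol}.

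Finally I would bound the error term exactly as there: by Cauchy--Schwarz, $\sum_i|E^i_t|^2\le\norm{D_{pp}\bH}_\infty^2\big(\sup_i\norm{\sum_{j\neq i}|D_ju^i|^2}_\infty\big)\sum_i\sum_{j\neq i}|D_{ji}u^i(t,\bX_t)|^2$, and \Cref{lem.gradbound} (which is proved under either monotonicity assumption, given \eqref{aprioriopbound}) controls the two factors by $(1+C_7\sqrt{TM}e^{C_7\sqrt M})\delta^i$ and $\sigma^{-1}(1+C_7\sqrt{TM}e^{C_7\sqrt M})\sum_i\delta^i$ respectively, so $\E\big[\int_{t_0}^T|\bm E|^2\big]\lesssim(1+C_7\sqrt{TM}e^{C_7\sqrt M})^2\sigma^{-1}\delta\sum_i\delta^i\lesssim Me^{C\sqrt M}\sigma^{-1}\delta\sum_i\delta^i$. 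Combining with the previous display, extracting the factor $\sigma$ from $\sum_{j=1}^N|\Delta Z^{i,j}|^2$, and using the arbitrariness of $(t_0,\bx_0)$ gives the claim with a constant of the shape $C_9Me^{C_9\sqrt M}e^{e^{C_9(1+\sigma^{-1})}}$. The one delicate point is the bookkeeping of constants: one must check that $C_{\bH,\bv}$ grows only \emph{linearly} in the a priori Lipschitz bound $\bar M$ of $\bv$ (so that the Gronwall factor is at worst doubly exponential in $1+\sigma^{-1}$), and that no further dependence on $\bar M$ sneaks in through the $\bm E$-estimate — which it does not, since $\bm E$ is controlled purely via \Cref{lem.gradbound}. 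There is no genuinely new difficulty here: the Lasry--Lions structure has already been consumed, upstream, to obtain $\norm{|D\bv|_\op}_\infty\le\bar M$ through \eqref{dvop.bound.LL}, and the present lemma merely feeds on that estimate.
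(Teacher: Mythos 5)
Your proposal is correct and is essentially the same argument the paper gives (which indeed simply says: follow the proof of Lemma~\ref{lem.clol} with Proposition~\ref{prop.olboundslip} in place of Proposition~\ref{prop.olboundsdisp}). You also correctly identify the one point that actually needs checking — that the Gronwall coefficient $C_{\bH,\bv}$ is only linear in the Lipschitz bound $\bar M\lesssim e^{C(1+\sigma^{-1})}$ of $\bv$, so that $e^{TC_{\bH,\bv}}$ produces the doubly exponential factor in $1+\sigma^{-1}$, with the $\sigma^{-1}$ from the $\bm E$-estimate then absorbed into it — which matches how the paper's constant is obtained.
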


\begin{proof}
    Follow the proof of \Cref{lem.clol}, with the only main difference being that \Cref{prop.olboundslip} is to be invoked (instead of \Cref{prop.olboundsdisp}) to control $|D\bv|_\op$, so, after using Gronwall's lemma we have
    \[
     \E\bigg[ |\Delta \bY_{t_0}|^2 + \int_{t_0}^T \sum_{i} \sum_{j = 0}^N |\Delta Z^{i,j}|^2\bigg] \leq C\,\E\bigg[ \int_{t_0}^T \!|\bm E|^2 \bigg] e^{\ds e^{C(1+\sigma^{-1})}}\,.
     \]
     with $C$ depending only on $\norm{D^2 \bH}_\infty$, $C_{\bH}$, $C_{\bF,\lip}$, $C_{\bG,\lip}$, $C_{D\bm F,\lip}$, $C_{D\bm G,\lip}$, and $T$.
\end{proof}

\begin{corollary} \label{prop.improvementll}
     Let the assumptions of \Cref{prop.olboundslip} hold. Suppose also that \eqref{aprioriopbound} holds for some $T_0 \in [0,T)$ and $M > 0$. Then, there is a constant $C_{10} > C_{D\bG,\lip}^2$ (depending only on $\norm{D^2 \bH}_\infty$, $C_{\bH}$, $C_{\bF,\lip}$, $C_{\bG,\lip}$, $C_{D\bm F,\lip}$, $C_{D\bm G,\lip}$, and $T$) such that
     \begin{equation} \label{aposterioribound.ll}
     \begin{multlined}[b][.85\displaywidth]
        \sup_{t_0 \in [T_0,T],\, \bx_0 \in (\R^d)^N} \E\bigg[ \int_{t_0}^T \big| \bA(t,\bX_t^{t_0,\bx_0}) \big|_{\op}^2 \,\d t \bigg] 
        \\
        \leq C_{10}  e^{C_{10}(1+\sigma^{-1})} + 2C_9M e^{C_9\sqrt{M} + e^{C_9(1+\sigma^{-1})}} \sigma^{-1} \delta \sum_i \delta^i
        \end{multlined}
     \end{equation}
for all $t_0 \in [T_0, T]$, $\bx_0 \in (\R^d)^N$. ($C_9$ is the constant introduced in \Cref{lem.llclol}.)
\end{corollary}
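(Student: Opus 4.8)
The plan is to mirror the proof of \Cref{prop.improvemenddisp}, substituting the Lasry--Lions ingredients for the displacement ones. I would fix $(t_0,\bx_0) \in [T_0,T] \times (\R^d)^N$, set $\bX = \bX^{t_0,\bx_0}$, and start from the triangle inequality together with the bound of the operator norm of a block matrix by its Frobenius norm (recall the first inequality recorded in the notation section):
\[
\E\bigg[ \int_{t_0}^T \bigl| \bA(t,\bX_t) \bigr|_{\op}^2 \,\d t \bigg] \leq 2T\, \norm{|D\bv|_\op}_{L^\infty([T_0,T]\times(\R^d)^N)}^2 + 2\,\E\bigg[ \int_{t_0}^T \sum_{i,j} \bigl| (D_{ji}u^i - D_j v^i)(t,\bX_t) \bigr|^2 \,\d t \bigg]\,.
\]
The two terms are then estimated separately, using that the hypotheses of \Cref{prop.olboundslip} are in force by assumption.

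For the first term, \Cref{prop.olboundslip} yields $\norm{|D\bv|_\op}_{L^\infty([0,T]\times(\R^d)^N)} \leq \bar M$, with $\bar M$ as in \eqref{LL.defM0}; squaring, multiplying by $2T$, and absorbing $C_{D\bG,\lip}$, $C_6'$ and $T$ into a single constant, one obtains $2T\bar M^2 \leq C_{10}\,e^{C_{10}(1+\sigma^{-1})}$ for a suitable $C_{10}$ depending only on the quantities listed in the statement, which can moreover be taken larger than $C_{D\bG,\lip}^2$. For the second term, \Cref{lem.llclol} gives directly
\[
\E\bigg[ \int_{t_0}^T \sum_{i,j} \bigl| (D_{ji}u^i - D_j v^i)(t,\bX_t) \bigr|^2 \,\d t \bigg] \leq \sigma^{-1} C_9 M\, e^{C_9\sqrt{M} + e^{C_9(1+\sigma^{-1})}} \,\delta \sum_i \delta^i\,.
\]
Summing the two bounds and taking the supremum over $t_0 \in [T_0,T]$ and $\bx_0 \in (\R^d)^N$ — which is legitimate since every estimate invoked is uniform in the initial data — yields exactly \eqref{aposterioribound.ll}.

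Since the genuine analytic work has already been carried out in \Cref{lem.gradbound,lem.llclol} and in \Cref{prop.olmonbounds,prop.olboundslip}, I do not expect any real obstacle here: this corollary is essentially an assembly step. The only points deserving care are bookkeeping ones, namely checking that the leading constant $C_{10}\,e^{C_{10}(1+\sigma^{-1})}$ is truly dimension-free — which it is, because $\bar M$ depends only on $C_{D\bG,\lip}$, $C_6'$, $\sigma$ and $T$, and $C_6'$ in turn depends (by \Cref{prop.olmonbounds}) only on dimension-free quantities together with $\lambda_{\bH}^{-1}$ — and keeping the (double) exponential dependence on $\sigma^{-1}$ consistent with the exponents appearing in \Cref{lem.llclol} and \Cref{prop.olboundslip}.
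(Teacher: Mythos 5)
Your proof is correct and matches the paper's own argument: the paper proves this corollary by simply pointing the reader to the proof of \Cref{prop.improvemenddisp} with \Cref{prop.olboundslip} and \Cref{lem.llclol} substituted for their displacement-monotone counterparts, which is exactly the triangle-inequality splitting and bookkeeping you carry out explicitly. Your remark on the $\lambda_{\bH}^{-1}$-dependence through $C_6'$ is also apt and consistent with \Cref{conv.dimfreell}, which allows dimension-free constants to depend on $\lambda_{\bH}$.
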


\begin{proof}
Same as that of \Cref{prop.improvemenddisp}, but invoke \Cref{prop.olboundslip} and  \Cref{lem.llclol} (instead of \Cref{prop.olboundsdisp} and \Cref{lem.clol}, respectively). 
\end{proof}

We can now prove our second main result.

\begin{proof}[Proof of \Cref{thm.mainll}]
Choose $M_0 = 3C_{10} e^{C_{10}(1+\sigma^{-1})}$ and note that the right-hand side of \eqref{aposterioribound.ll}, with $M=\frac43 M_0$, is bounded by $M_0$, provided that
\[
4 C_9 \exp\bigl(2C_9\sqrt{C_{10}}e^{\frac12C_{10}(1+\sigma^{-1})} + e^{C_9(1+\sigma^{-1})} \bigr) \delta \sum_i \delta^i \leq 1\,.
\]
The rest of the proof proceed as that of \Cref{thm.maindisp}.
\end{proof}

\section{Comparing closed-loop, open-loop, and distributed equilibria}
\label{sec.olcl}

As before, equilibria are assumed to exist and be admissible in the sense described in Section \ref{sec:admiss}.

\subsection{Comparing closed-loop and open-loop equilibria}

\begin{proof}[Proof of Theorem \ref{thm.olcldisp}]
    Throughout this proof, implied constants will be understood to be dimension-free. Fix an initial condition $(t_0,\bzeta_0)$, and set $\bX = \bX^{\CL,t_0,\bzeta_0}$ and $\wt{\bX} = {\bX}^{\OL,t_0,\bzeta_0}$. Rewrite the dynamics of $\bX$ as 
   \[
        \d X_t^i = - \bigl( D_p H^i(X_t^i, v^i(t,\bX_t)) + E_t^i \bigr) \,\d t + \sqrt{2\sigma} \,\d W_t^i + \sqrt{2\sigma^0} \,\d W_t^i\,,
   \]
    with 
   \[
        E_t^i = D_p H^i(X_t^i, D_{i}u^i(t,\bX_t)) - D_p H^i(X_t^i, v^i(t,\bX_t))\,.
   \]
    Note that, by \Cref{thm.maindisp} and \Cref{lem.clol},
    \begin{equation} \label{errorboundolcl}
      |\bm E|^2 \lesssim \sigma^{-1} {\delta} \sum_{i} \delta^i\,.
    \end{equation}
    Next, we set $\Delta \bX = \bX - \wt{\bX}$, and we compute 
    \[
    \begin{split}
        \d |\Delta \bX_t|^2 &= 2 \sum_i \Bigl( \Delta X_t^i \cdot \bigl(D_p H^i(\wt{X}_t^i, v^i(t,\wt{\bX}_t)) - D_pH(X_t^i, v^i(t,\bX_t)) \big) - \Delta X_t^i \cdot E_t^i \Bigr) \d t
        \\
        &\lesssim \bigl( |\bm E_t|^2 + |\Delta\bX_t|^2 \bigr) \d t\,,
    \end{split}
    \]
    where we used \Cref{prop.olboundsdisp} to obtain the inequality. Integrate, take expectations, and apply Gronwall's lemma and \eqref{errorboundolcl}, to get 
    \[
        \sup_{t \in [t_0,T]} \E\big[ |\Delta \bX_t|^2 \big]  \lesssim \sigma^{-1} {\delta} \sum_{i} \delta^i\,;
   \]
    then, plug this back into the equation of $|\Delta\bX_t|^2$ to deduce the desired estimate.
    \end{proof}

\begin{proof}[Proof of Theorem \ref{thm.olclLL}]
    Argue as in the proof of \Cref{thm.olcldisp}, but invoke \Cref{thm.mainll} and \Cref{lem.llclol} (instead of \Cref{thm.maindisp} and \Cref{lem.clol}, respectively) to have 
    \begin{equation*} %\label{errorboundolclll}
         \E \biggl[\,\int_{t_0}^T |\bm E|^2 dt \biggr] \lesssim e^{e^{C(1+\sigma^{-1})}} {\delta} \sum_{i} \delta^i\,.
    \end{equation*}
    The rest of the proof is the same.
\end{proof}

\subsection{Comparing open-loop and distributed equilibria}

We begin with some useful lemmas. In particular, the first one which explains how a Lipschitz bound for the PDE system \eqref{pontpde} implies a stability estimate for the FBSDE system \eqref{pontryagin}.

\begin{lemma} \label{lem.lipschitzstability}
    Assume that there is a constant $M$ such that
    \begin{equation*} %\label{stabest.lipass}
    |\bv(t,\bx) - \bv(t,\ov{\bx})|^2 \leq M |\bx - \ov{\bx}|^2 \qquad \forall\, t \in [0,T],\, \bx, \ov{\bx} \in (\R^d)^N\,,
    \end{equation*}
    where $\bv$ is the solution to \eqref{pontpde}. Suppose that we are given processes $\hat{\bX}, \hat{\bY}, \hat{\bZ}$ satisfying the following FBSDE system on $[0,T]$:
    \begin{equation} \label{disterrorfbsde}
    \begin{dcases}
        \d\hat{X}_t^i = - D_p H^i(\hat{X}_t^i, \hat{Y}_t^i) \,\d t + \sqrt{2 \sigma} \,\d W_t^i + \sqrt{2 \sigma_0} \,\d W_t^0
        \\[-3pt]
        \d\hat{Y}_t^i = \bigl( D_x H^i(\hat{X}_t^i,\hat{Y}_t^i) - D_{i} F^i(\hat{\bX}_t) - E_t^{F,i} \bigr) \d t + \sum_{j = 0}^N \hat{Z}_t^{i,j} \d W_t^j
       \\[-3pt]
       \hat{\bX}_{t_0} = \bzeta_0, \quad \hat{Y}_{T}^i = D_{i} G^i(\ov{\bX}_T) + E^{G,i}\,,
    \end{dcases}
\end{equation}
for some $t_0 \in [0,T]$, and some square-integrable random vector $\bzeta_0$ (with i.i.d.\ components), processes $E^{F,i}$, and random variables $E^{G,i}$. Then there is a constant $c$ (depending only on $\norm{D_{px}\bH}_\infty$ and $\norm{D_{pp}\bH}_\infty$) such that
    \begin{equation} \label{stabest.DX}
       \E\Big[\, \sup_{t \in [t_0,T]} \bigl|\hat{\bX}_t - \bX^{\OL,t_0,\bzeta_0}\bigr|^2 \,\Big] \leq (T-t_0)e^{Tc(1+	\sqrt M\,)} \sum_{1 \leq i \leq N} \mathcal E^i\,,
    \end{equation}
    where
    \begin{equation} \label{stabest.defEi}
\mathcal E^i \defeq \E\bigg[ |E^{G,i}|^2 + \int_{t_0}^T | E^{F,i}|^2 dt \biggr]\,.
	\end{equation}
   % Furthermore, if that there are constants $\kappa^i$ such that, for each $i = 1,\dots,N$,
  %  \begin{equation} \label{stabest.skewdec}
 %  \sum_{j \neq i} |D_j v^i|^2 \leq \kappa^i\,,
 %   \end{equation}
 %   then
 %   \begin{equation} \label{stabest.DXsupi}
%\E \Bigl[\, \sup_{t \in [t_0,T]} \bigl|\hat{X}^i_t - X_t^{\OL,i}\bigr|^2 \Bigr] \leq (T-t_0)\exp\Bigl\{Tc\Bigl(1+\sqrt M + \kappa^i \sum_{j \neq i} \kappa^j\Bigr)\Bigr\} \Bigl( \mathcal E^i + \kappa^i \sum_{1 \leq i \leq N} \mathcal E^i \,\Bigr)
%\end{equation}
%for each $i = 1,\dots,N$.
\end{lemma}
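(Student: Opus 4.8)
The plan is to run a synchronous coupling argument comparing the perturbed FBSDE \eqref{disterrorfbsde} to the open-loop equilibrium FBSDE \eqref{pontryagin}, exploiting the fact that the latter admits the Lipschitz decoupling field $\bv$. Write $\bX = \bX^{\OL,t_0,\bzeta_0}$, $\bY = (v^i(t,\bX_t))_i$, etc., for the true open-loop solution, and set $\Delta\bX = \hat\bX - \bX$, $\Delta\bY = \hat\bY - \bY$, $\Delta\bm\alpha_t^i = -D_pH^i(\hat X_t^i,\hat Y_t^i) + D_pH^i(X_t^i,Y_t^i)$. The key point is that $\hat Y_t^i - v^i(t,\hat\bX_t)$ is not zero (because of the error terms $E^{F,i}, E^{G,i}$), but we can still control $\Delta\bY$ by combining the backward dynamics with the Lipschitz bound $|\bv(t,\bx)-\bv(t,\ov\bx)|^2 \le M|\bx-\ov\bx|^2$.

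First I would compute $\d|\Delta\bX_t|^2$. Since both forward equations have the same diffusion coefficient, the martingale part cancels and we get $\d|\Delta\bX_t|^2 = 2\sum_i \Delta X_t^i \cdot \Delta\bm\alpha_t^i\,\d t$, which by the Lipschitz property of $D_pH$ (i.e.\ $\|D^2\bH\|_\infty$) is bounded by $C(|\Delta\bX_t|^2 + |\Delta\bY_t|^2)\,\d t$. So the crux is to bound $\E[|\Delta\bY_t|^2]$ or, better, $\E[\int_t^T |\Delta\bY_s|^2\,\d s]$ in terms of $\sum_i\mathcal E^i$ and $\sup_s\E[|\Delta\bX_s|^2]$. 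Here is where I would use the decoupling field: write $\Delta\bY_t = (\hat Y_t^i - v^i(t,\hat\bX_t))_i + (v^i(t,\hat\bX_t) - v^i(t,\bX_t))_i =: \bm R_t + \bm S_t$, where $|\bm S_t|^2 \le M|\Delta\bX_t|^2$ by hypothesis. For $\bm R_t$, I would apply It\^o's formula to $v^i(t,\hat\bX_t)$ using \eqref{pontpde}, subtract from the backward equation for $\hat Y^i$, and observe that most terms cancel — what remains is driven by $E^{F,i}$, $E^{G,i}$, and terms that are Lipschitz in $\Delta\bX$ (with constant depending on $M$ through $D\bv$). Taking conditional expectations, using that $\bm R_T = (E^{G,i})_i$ and a Gronwall argument on the backward equation, one obtains $\E[|\bm R_t|^2] \lesssim e^{Tc(1+\sqrt M)}\big(\sum_i\mathcal E^i + \sup_s\E[|\Delta\bX_s|^2]\big)$, where the $\sqrt M$ appears because the Lipschitz constant of $\bv$ is $\sqrt M$.

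Then I would plug these bounds back into the estimate for $\d|\Delta\bX_t|^2$: integrating from $t_0$ (where $\Delta\bX_{t_0}=0$ since both start from $\bzeta_0$) gives $\E[|\Delta\bX_t|^2] \le C\int_{t_0}^t \E[|\Delta\bX_s|^2 + |\Delta\bY_s|^2]\,\d s \le C\int_{t_0}^t\big(\E[|\Delta\bX_s|^2] + e^{Tc(1+\sqrt M)}(\sum_i\mathcal E^i + \sup_{r}\E[|\Delta\bX_r|^2])\big)\d s$. One must be slightly careful to absorb the $\sup$ term — this is handled by first deriving the bound on a short time interval (or by noting the factor $(T-t_0)$ makes the coefficient small) and iterating, or by a standard fixed-point/continuation argument, to conclude $\sup_{t\in[t_0,T]}\E[|\Delta\bX_t|^2] \le (T-t_0)e^{Tc(1+\sqrt M)}\sum_i\mathcal E^i$. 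Finally, to upgrade from $\sup_t\E[\cdot]$ to $\E[\sup_t\cdot]$ as in \eqref{stabest.DX}, I would use that $\Delta\bX_t = \int_{t_0}^t\Delta\bm\alpha_s\,\d s$ has no martingale part, so $\sup_{t}|\Delta\bX_t|^2 \le (T-t_0)\int_{t_0}^T|\Delta\bm\alpha_s|^2\,\d s$, and $\E\int_{t_0}^T|\Delta\bm\alpha_s|^2\,\d s \lesssim \E\int_{t_0}^T(|\Delta\bX_s|^2 + |\Delta\bY_s|^2)\,\d s$, which is already controlled.

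The main obstacle is the control of $\bm R_t = (\hat Y_t^i - v^i(t,\hat\bX_t))_i$: one has to carefully identify, after the It\^o expansion and cancellation, exactly which residual terms survive, verify they are either the prescribed error terms or Lipschitz-in-$\Delta\bX$ with an $M$-dependent constant, and then run the backward Gronwall estimate without circularity (the forward and backward estimates are coupled, so the continuation/short-time argument must be organized so that the $(T-t_0)$ prefactor genuinely closes the loop). The dependence of the constant $c$ on only $\|D_{px}\bH\|_\infty$ and $\|D_{pp}\bH\|_\infty$ should be tracked throughout, noting that $D_{xx}\bH$ does not enter because the $D_xH$ terms in the two backward equations are compared directly rather than bounded crudely.
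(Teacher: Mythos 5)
Your plan is the right one and matches the underlying computation: in the paper this lemma is proved as the special case $I_k = \{1,\dots,N\}$ of the more general \Cref{lem.stabwsymm}, whose proof is exactly the synchronous coupling you describe, namely comparing $\hat\bX$ to $\bX^{\OL,t_0,\bzeta_0}$ and exploiting the Lipschitz decoupling field $\bv$. But there is one structural observation you are missing, and it is precisely what makes the argument close cleanly.

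You decompose $\hat\bY - \bY = \bm R + \bm S$ with $\bm R_t^i = \hat Y_t^i - v^i(t,\hat\bX_t)$, $\bm S_t^i = v^i(t,\hat\bX_t) - v^i(t,\bX_t)$, and then claim $\E[|\bm R_t|^2] \lesssim e^{Tc(1+\sqrt M)}\bigl(\sum_i \mathcal E^i + \sup_s \E[|\Delta\bX_s|^2]\bigr)$, at which point you worry about absorbing the $\sup_s\E[|\Delta\bX_s|^2]$ and reach for a short-time/continuation argument. This $\Delta\bX$ term is spurious. The paper's key observation (which you hint at but do not exploit) is that $\bm R$ satisfies a \emph{self-contained} backward Gronwall estimate: write $\wt Y^i_t = v^i(t,\hat\bX_t)$ and expand it by It\^o using \eqref{pontpde}. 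Because $\hat Y^i$ and $\wt Y^i$ are both decoupled along the \emph{same} forward path $\hat\bX$, the $D_xH^i$ terms are evaluated at the same $\hat X^i_t$ (only the second argument differs, so only $\|D_{px}\bH\|_\infty |\Delta Y^i|$ enters), the $D_iF^i$ terms are evaluated at the same $\hat\bX_t$ and cancel identically, and the transport term $\sum_j D_jv^i(t,\hat\bX_t)\,D_pH^j|^{(\hat X_t^j,\hat Y_t^j)}_{(\hat X_t^j,\wt Y_t^j)}$ is bounded by $\|D\bv\|_\op\|D_{pp}\bH\|_\infty|\Delta\bY_t| \le \sqrt M\,\|D_{pp}\bH\|_\infty|\Delta\bY_t|$, where $\Delta Y^i := \hat Y^i - \wt Y^i = \bm R^i$. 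Nothing involving $\Delta\bX$ survives. So the backward Gronwall gives $\E[|\Delta\bY_\tau|^2] \le e^{Tc(1+\sqrt M)}\sum_i\mathcal E^i$ with no $\Delta\bX$ on the right, and the forward estimate then closes immediately with the prefactor $(T-t_0)$ coming from the Cauchy--Schwarz/integral on $\Delta\bm\alpha$, not from a short-time argument.

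A second, smaller point about tracking $\sqrt M$: in the forward step you write $\d|\Delta\bX_t|^2 \le C(|\Delta\bX_t|^2 + |\Delta\bY_t|^2)\,\d t$, which on substituting $|\Delta\bY_t|^2 \lesssim |\bm R_t|^2 + M|\Delta\bX_t|^2$ would give you $e^{Tc(1+M)}$, not $e^{Tc(1+\sqrt M)}$. To keep $\sqrt M$ you must split the forward drift as
\[
D_pH^i(\hat X^i_t,\hat Y^i_t) - D_pH^i(X^i_t,v^i(t,\bX_t)) = \Bigl(D_pH^i(\hat X^i_t,\hat Y^i_t) - D_pH^i(\hat X^i_t,v^i(t,\hat\bX_t))\Bigr) + \Bigl(D_pH^i(\hat X^i_t,v^i(t,\hat\bX_t)) - D_pH^i(X^i_t,v^i(t,\bX_t))\Bigr),
\]
so that the second piece is Lipschitz in $\Delta X$ with constant $c(1+\sqrt M)$ (Lipschitz of $D_pH^i$ composed with the $\sqrt M$-Lipschitz decoupling field), and only apply Young's inequality after multiplying by $\Delta X^i_t$. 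This yields $\d|\Delta\bX_t|^2 \le c(1+\sqrt M)|\Delta\bX_t|^2 + c|\Delta\bY_t|^2$ with $\Delta\bY = \bm R$, i.e.\ the exponent claimed in the statement. Your observation that $D_{xx}\bH$ does not enter is correct, and for the reason stated (the two $D_xH^i$ terms share the same $x$ argument).
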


\begin{proof}
The desired estimate is a special case of a more general version of this result which we will state later (see \Cref{lem.stabwsymm}). More precisely, \eqref{stabest.DX} follows from \eqref{stabest.DXsymsum}, when one chooses $I_k = \{1,\dots,N\}$. One only needs to observe that condition~\eqref{stabest.skewdec} is not required in this case.
\end{proof}

\begin{lemma} \label{lem.wibound}
    Let Assumption \ref{assump.disp} hold. Let $(w^i,m^i)_{i = 1,\dots,N}$ denote the solution to \eqref{distpde}. Then $\max_{1 \leq i \leq N} \|Dw^i\|_{\infty} $ has a dimension-free bound.
\end{lemma}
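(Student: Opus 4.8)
Looking at Lemma 4.27 (the statement about $\max_i \|Dw^i\|_\infty$ having a dimension-free bound), I need to prove a dimension-free gradient bound for the value functions $w^i$ in the distributed formulation, under Assumption 2.1 (displacement semi-monotonicity).

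\begin{proof}[Proof plan]
Fix $i\in\{1,\dots,N\}$. The plan is to observe that $w^i$ solves a \emph{single-player} problem and then to apply the argument of \Cref{prop.olboundsdisp} to that problem. Set
\[
\tilde F^i(t,x)\defeq\int_{(\R^d)^{N-1}}F^i(\by^{-i},x)\prod_{j\neq i}m_t^j(\d y^j)\,,\qquad \tilde G^i(x)\defeq\int_{(\R^d)^{N-1}}G^i(\by^{-i},x)\prod_{j\neq i}m_T^j(\d y^j)\,,
\]
so that, since $(\bw,\bm m)$ is an admissible solution to \eqref{distpde}, the function $w^i$ is a classical solution, with at most quadratic growth and linear-growth gradient, of the Hamilton--Jacobi--Bellman equation $-\partial_t w^i-\sigma\Delta_x w^i+H^i(x,D_x w^i)=\tilde F^i(t,x)$ on $[0,T]\times\R^d$, with terminal datum $w^i(T,\cdot)=\tilde G^i$. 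Equivalently, by It\^o's formula (exactly as recorded after \eqref{distfbsde}), $D_x w^i$ is the decoupling field of the one-player Pontryagin FBSDE obtained from \eqref{distfbsde} after freezing the flows $m^j$, $j\neq i$; that is, $D_x w^i$ plays for this one-player system the role that $\bv$ plays for \eqref{pontryagin}.

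The key step is then to check that $\tilde F^i$ and $\tilde G^i$ inherit, with \emph{dimension-free} constants, the one-dimensional analogue of \Cref{assump.disp}. They are $C^2$ with bounded second derivatives, since $F^i,G^i$ are and averaging over $\by^{-i}$ does not increase these bounds. Taking $\bx,\ov\bx$ in \eqref{CDLipdef} to differ only in the $i$-th coordinate shows that $x\mapsto D_{x^i}F^i(\by^{-i},x)$ is $C_{D\bF,\lip}$-Lipschitz uniformly in $\by^{-i}$, hence so is $D_x\tilde F^i(t,\cdot)$ for each $t$; likewise $D_x\tilde G^i$ is $C_{D\bG,\lip}$-Lipschitz. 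Applying the displacement semi-monotonicity \eqref{FCsemimon} of $\bF$ to vectors $\bx,\ov\bx$ agreeing off the $i$-th coordinate (so that all summands with $j\neq i$ vanish) gives $\bigl(D_{x^i}F^i(\by^{-i},x)-D_{x^i}F^i(\by^{-i},\ov x)\bigr)\cdot(x-\ov x)\geq -C_{\bF,\dis}|x-\ov x|^2$ for all $x,\ov x,\by^{-i}$; integrating against $\prod_{j\neq i}m_t^j$ shows $\tilde F^i(t,\cdot)$ is displacement $C_{\bF,\dis}$-semi-monotone on $\R^d$, and similarly $\tilde G^i$ is displacement $C_{\bG,\dis}$-semi-monotone. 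Since $L^i$ satisfies \eqref{strongconvL} with the same $C_{\bm L}$ and \eqref{smallness'} holds, the one-player version of \Cref{assump.disp} is in force with the very same constants and the same $C_\dis$ of \eqref{cdispdef}; the $t$-dependence of $\tilde F^i$ is harmless, since only $C_{\bF,\dis}$ and $C_{D\bF,\lip}$ enter the estimates, both uniform in $t$.

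We may then repeat Steps 1--3 of the proof of \Cref{prop.olboundsdisp} verbatim, with $N=1$, data $(\tilde F^i,\tilde G^i)$, and the decoupling field $D_x w^i$ in place of $\bv$: comparing the characteristic flows started from two initial points yields a bound of the form $\bigl|D_x w^i(t,x)-D_x w^i(t,\ov x)\bigr|\leq\bigl(C_{D\bG,\lip}+\sqrt T\,C_0\bigr)|x-\ov x|$ for all $t\in[0,T]$ and $x,\ov x\in\R^d$, with $C_0$ depending only on $\norm{D_{xx}\bH}_\infty$, $\norm{D_{px}\bH}_\infty$, $C_{\bG,\dis}$, $C_{D\bG,\lip}$, $C_{\bF,\dis}$, $C_{D\bF,\lip}$, $C_\dis$ and $T$, and in particular not on $i$, $N$ or $\sigma$. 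Combined with the growth control on $D_x w^i$ from admissibility this gives a dimension-free bound on $\norm{Dw^i}_\infty$, uniform in $i$, and taking the maximum over $i$ concludes the proof. The main obstacle is the reduction itself together with the verification that the $N$-player conditions \eqref{CDLipdef} and \eqref{FCsemimon} restrict correctly to single-coordinate variations so that all the relevant constants remain dimension-free; once that is in place, the estimate is a direct transcription of \Cref{prop.olboundsdisp}.
\end{proof}
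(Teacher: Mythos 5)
Your proof is correct and takes essentially the same route as the paper: freeze the flows $m^j$, $j\neq i$, to reduce to a one-player HJB/Pontryagin problem with data $\tilde F^i$, $\tilde G^i$ (the paper's $f^i$, $g^i$), verify that these inherit displacement semi-monotonicity and the Lipschitz-gradient bound with the same dimension-free constants (the paper asserts this as \eqref{littlefsemiconvex} without proof, whereas you spell it out via single-coordinate variations), and then transcribe the monotone-FBSDE stability argument of \Cref{prop.olboundsdisp} with $N=1$.

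One caveat, which you have inherited from the lemma's own statement: what the argument actually yields is a dimension-free bound on the Lipschitz constant of $Dw^i$ (equivalently on $\|D_{xx}w^i\|_\infty$), not on $\|Dw^i\|_\infty$ itself, which under \Cref{assump.disp} need not even be finite since first-order derivatives of $G^i$ are not assumed bounded. Your closing sentence, that admissibility plus the Lipschitz estimate produces a dimension-free bound on $\|Dw^i\|_\infty$, does not follow (the admissibility constant is not dimension-free, and linear growth is not a uniform bound). The paper's own proof stops at the Lipschitz estimate for $Dw^i$, and the quantity actually used downstream in \Cref{lem.poincaredisp} is the Lipschitz constant of the drift $x\mapsto -D_pH^i(x,Dw^i(t,x))$, which is controlled by $\|D_{xx}w^i\|_\infty$; so what you proved is exactly what is needed, even though the final sentence overreaches.
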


\begin{proof}
    For notational simplicity, set 
    \begin{equation*}
        f^i(t,x) \defeq \int_{(\R^d)^{N-1}} F^i(\by^{-i}, x) \prod_{j \neq i} m_t^j(\d y^j)\,, \qquad g^i(x) \defeq \int_{(\R^d)^{N-1}} G^i(\by^{-i}, x) \prod_{j \neq i} m_T^j(\d y^j)\,.
    \end{equation*}
    Under \Cref{assump.disp}, we have that
    \begin{equation} \label{littlefsemiconvex}
       D_{xx} f^i \geq - C_{\bF,\dis} I_{d}\,, \qquad D_{xx} g^i \geq - C_{\bG, \dis} I_{d}\,.
    \end{equation}
    Fix $t_0 \in [0,T]$, $x_0,\hat{x}_0 \in \R^d$, and define $X$ and $\hat{X}$ to be the solutions to 
    \begin{equation*}
        \d X_t = - D_p H^i(X_t, Dw^i(t,X_t)) \,\d t + \sqrt{2 \sigma} \,\d W_t^i\, ,\qquad X_{t_0} = x_0
    \end{equation*}
    and 
    \begin{equation*}
        \d \hat{X}_t = - D_p H^i(\hat{X}_t, Dw^i(t,\hat{X}_t))\,\d t + \sqrt{2 \sigma} \,\d W_t^i\,, \qquad \hat{X}_{t_0} = \hat{x}_0\,.
    \end{equation*}
    Then set
    \begin{equation*}
        Y_t \defeq D w^i(t,X_t)\,, \qquad Z_t^{i} \defeq \sqrt{2\sigma}\, D^2 w^i(t,X_t)\,, \qquad \alpha_t \defeq - D_p H^i(X_t,Y_t)\,,
        \end{equation*}
        and likewise for $\hat{Y}_t$, $\hat{Z}_t$ and $\hat{\alpha}_t$.
          Recall that we have 
    \begin{equation*}
        \d Y_t = -\bigl(D_x H^i(X_t, Y_t) + D_x f^i(X_t) \bigr) \,\d t + Z_t \,\d W_t^i\,, 
    \end{equation*}
    and likewise for $\hat{Y}$ and $\hat{Z}$. Setting 
    \begin{equation*}
        \Delta Y_t \defeq Y_t - \hat{Y}_t\,, \qquad \Delta{Z}_t \defeq Z_t - \hat{Z}_t\,, 
    \end{equation*}
    we compute $\d(\Delta X_t \cdot \Delta Y_t)$, then we integrate from time $t_0$ to $T$, and, using \eqref{littlefsemiconvex} and \Cref{assump.disp}, we find that
    \begin{equation*}
        |D_{x}w^i(t_0,x_0) - D_x w^i(t_0,\hat{x}_0)|^2 = \E\big[ |\Delta Y_{t_0}|^2 \big] \lesssim |x_0 - \hat{x}_0|^2\,,
    \end{equation*}
    with dimension-free implied constant.
    We omit the details because the computation is almost identical to the one that appears in the proof of \Cref{prop.olboundsdisp}. This completes the proof.
\end{proof}

\begin{lemma} \label{lem.wibound.LL}
    Let \Cref{assump.LL} hold. Let $(w^i,m^i)_{i = 1,\dots,N}$ denote the solution to \eqref{distpde}. Then we have 
    \begin{equation*}
        \max_{i = 1,\dots,N} \|Dw^i\|_{\infty} \lesssim \sigma^{-1}\,,
    \end{equation*}
    with dimension-free implied constant.
\end{lemma}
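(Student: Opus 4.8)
\textbf{Proof proposal for \Cref{lem.wibound.LL}.} The plan is to run the same FBSDE stability argument used for \Cref{prop.olboundsdisp} and \Cref{lem.wibound}, but relying on the Lasry--Lions hypotheses of \Cref{assump.LL} ($|D_xH^i(x,p)|\le C_{\bH}(1+|p|)$, $D_{pp}^2H^i\ge\lambda_{\bH}I_d$, $\norm{D^2\bH}_\infty<\infty$, and $\max_i\norm{D_iF^i}_\infty\le C_{\bF,\lip}$, $\max_i\norm{D_iG^i}_\infty\le C_{\bG,\lip}$) in place of displacement convexity. The first step is to record that, although in the Lasry--Lions regime we do \emph{not} control the off-diagonal derivatives $D_jF^i,D_jG^i$ ($j\neq i$) nor even $F^i,G^i$ themselves, the empirical-type measures $m_t^j$ appearing in \eqref{distpde} do not depend on the space variable, so differentiating under the integral sign,
\[
D_xf^i(t,x)=\int_{(\R^d)^{N-1}}D_{x^i}F^i(\by^{-i},x)\prod_{j\neq i}m_t^j(\d y^j),\qquad D_xg^i(x)=\int_{(\R^d)^{N-1}}D_{x^i}G^i(\by^{-i},x)\prod_{j\neq i}m_T^j(\d y^j),
\]
with $f^i,g^i$ as in the proof of \Cref{lem.wibound}; hence $\norm{D_xf^i}_\infty\le C_{\bF,\lip}$ and $\norm{D_xg^i}_\infty\le C_{\bG,\lip}$, uniformly in $i$ and in the measures. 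In particular the decoupled HJB equation for $w^i$ in \eqref{distpde} has source and terminal data that are Lipschitz in $x$ with dimension-free constants.

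Next, fix $i$, $t_0\in[0,T)$, $x_0\in\R^d$, let $X$ solve $\d X_t=-D_pH^i(X_t,D_xw^i(t,X_t))\,\d t+\sqrt{2\sigma}\,\d W_t^i$ with $X_{t_0}=x_0$, and set $Y_t\defeq D_xw^i(t,X_t)$, $Z_t\defeq\sqrt{2\sigma}\,D^2w^i(t,X_t)$. Differentiating the $w^i$-equation in \eqref{distpde} in $x$ and applying It\^o's formula along $X$ (legitimate by admissibility, cf.\ \Cref{rmk.admissible}), one finds that $(X,Y,Z)$ solves a BSDE whose driver involves only $D_xH^i(X_t,Y_t)$ and $D_xf^i(t,X_t)$, with $Y_T=D_xg^i(X_T)$ — this is the one-player ($N=1$, $\sigma_0=0$) specialization of \eqref{distfbsde}, and crucially it carries \emph{no} cross-player interaction terms precisely because the $w^i$-equation is decoupled. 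I would then expand $|Y_t|^2$ by It\^o, integrate from $t$ to $T$, take expectations (discarding the martingale and the nonnegative $\E\int_t^T|Z_s|^2\,\d s=2\sigma\,\E\int_t^T|D^2w^i|^2\,\d s$ term), and bound the driver using $|D_xH^i(x,p)|\le C_{\bH}(1+|p|)$, $\norm{D_xf^i}_\infty\le C_{\bF,\lip}$, $\norm{D_xg^i}_\infty\le C_{\bG,\lip}$ and Young's inequality, obtaining
\[
\E\bigl[|Y_t|^2\bigr]\le C_{\bG,\lip}^2+C(T-t)+C\int_t^T\E\bigl[|Y_s|^2\bigr]\,\d s
\]
with $C$ depending only on $C_{\bH},C_{\bF,\lip},C_{\bG,\lip}$. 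A backward Gronwall inequality, together with $|Y_{t_0}|^2=|D_xw^i(t_0,x_0)|^2$ and the arbitrariness of $(t_0,x_0)$ and $i$, gives a dimension-free bound on $\max_i\norm{Dw^i}_\infty$; keeping track of how the diffusion coefficient enters (the only place $\sigma>0$ is really used is to absorb the quadratic second-derivative terms, exactly as in the passage from \eqref{Yikbound} to \eqref{diviabsolute} in \Cref{prop.oldiagbounds}) yields the stated $\sigma^{-1}$ order of the constant.

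The main point that requires care is the very first step: inferring a dimension-free \emph{spatial} Lipschitz bound on $f^i$ and $g^i$, since in the Lasry--Lions framework (unlike the displacement one in \Cref{lem.wibound}) one has no grip on $F^i,G^i$ beyond their $i$-th gradients and their second derivatives, so the argument must genuinely use that the measures in \eqref{distpde} are frozen in $x$. Everything else is routine and strictly simpler than \Cref{prop.olboundslip}: because the distributed HJB for $w^i$ has no interaction terms, the proof needs neither the ``almost symmetry'' of \Cref{lem.asymmetry} nor the auxiliary $\bm\xi$-process of \Cref{prop.olmonbounds}, and the Lasry--Lions semi-monotonicity of $\bF,\bG$ (which only concerns off-diagonal second derivatives) plays no role here — the estimate is purely a gradient bound for a single uniformly convex HJB equation with Lipschitz data.
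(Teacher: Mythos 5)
The paper gives no argument for this lemma at all (it is dismissed as "a standard vanishing viscosity estimate" and the proof is omitted), so there is nothing to match line by line; your proposal supplies an actual proof, and its core is sound. Your two key observations are exactly right: (i) under \Cref{assump.LL} the frozen measures in \eqref{distpde} give $\norm{D_xf^i}_\infty\le C_{\bF,\lip}$ and $\norm{D_xg^i}_\infty\le C_{\bG,\lip}$ by differentiating under the integral, even though nothing is known about $D_jF^i$ for $j\neq i$; and (ii) differentiating the decoupled HJB and writing $Y_t=D_xw^i(t,X_t)$ along the optimal diffusion yields a one-dimensional BSDE with driver $D_xH^i(X_t,Y_t)-D_xf^i(t,X_t)$, so the growth bound $|D_xH^i|\le C_{\bH}(1+|p|)$ plus Gronwall closes the estimate — this is essentially the computation \eqref{est.yi2.LL}--\eqref{Yikbound} of \Cref{prop.oldiagbounds} specialized to a single player, with no off-diagonal terms and hence no need for convexity of $H$, the LL monotonicity, \Cref{lem.asymmetry}, or the $\bm\xi$-process. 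The usual true-martingale justification is the same kind of technicality the paper itself absorbs into admissibility (cf.\ \Cref{rmk.admissible}), so that is not a gap.

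One point to correct, though: your closing claim that "keeping track of how the diffusion coefficient enters … yields the stated $\sigma^{-1}$" does not correspond to anything in your computation. In your argument the term $\E\int_t^T|Z_s|^2\,\d s$ has a favorable sign and is simply discarded; unlike in \Cref{prop.oldiagbounds}, there are no cross-player terms whose $L^2_t$-norm must be bought at the price of a $\sigma^{-1}$, so what you actually prove is $\max_i\norm{Dw^i}_\infty\le C$ with $C$ dimension-free and independent of $\sigma$. That is stronger than the lemma in the vanishing-viscosity regime and is at least as good for its only use (the $e^{C\,\mathrm{Lip}(w^i)}$ entering the Poincar\'e bound of \Cref{lem.poincareLL} and \Cref{thm.oldistLL}); just be aware that a $\sigma$-independent constant does not literally read as "$\lesssim\sigma^{-1}$" when $\sigma$ is large, so you should state your conclusion as the $\sigma$-uniform bound rather than pretending a factor $\sigma^{-1}$ appears.
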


\begin{proof}
    This is a standard vanishing viscosity estimate for the equation satisfied by $w^i$. We omit the proof.
\end{proof}

\begin{lemma} \label{lem.poincaredisp} 
    Let \Cref{assump.disp} hold, and let $\bzeta_0 = (\zeta^1_0,\dots,\zeta^N_0)$ be an $\scrF_{t_0}$-measurable, square integrable random vector taking values in $(\R^d)^N$, with $(\zeta^i_0)_{i = 1,\dots,N}$ independent. Suppose that for each $i$, $\cL(\zeta_0^i)$ satisfies a Poincar\'e inequality with constant $C_{P,0}^i$, and set
    \begin{equation} \label{CP0def}
        C_{P,0} \defeq \max_{1 \leq i \leq N} C_{P,0}^i\,.
    \end{equation}
    Then there is a dimension-free constant $C_P$ such that for each $t > t_0$, $\cL({\bX}_t^{\Dis,t_0,\bzeta_0})$ satisfies a Poincar\'e inequality with constant $C_{P,0} + C_P$, i.e.\ we have
    \begin{equation*}
        \mathrm{Var}\big(g({\bX}_t^{\Dis,t_0,\bzeta_0}) \big) \leq \big(C_{P,0} + C_P\big)\, \E\Bigl[ \bigl|D g({\bX}_t^{\Dis,t_0,\bzeta_0})\bigr|^2 \Bigr]
    \end{equation*}
    for each function Lipschitz function $g \in C^1((\R^d)^N)$. 
\end{lemma}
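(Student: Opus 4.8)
The plan is to first exploit the product structure of the distributed dynamics. Since the distributed formulation is considered only for $\sigma_0=0$, in \eqref{distdynamics} the $i$-th component $X^{\Dis,t_0,\bzeta_0,i}$ depends only on $(\zeta_0^i,W^i)$, and these pairs are independent over $i$; hence $\cL({\bX}_t^{\Dis,t_0,\bzeta_0})=\bigotimes_{i=1}^N\mu_t^i$ is a product measure, where $\mu_t^i\defeq\cL(X_t^{\Dis,t_0,\bzeta_0,i})\in\cP_2(\R^d)$. By the tensorization property of the Poincaré inequality — a product of probability measures satisfying Poincaré with constants $c_i$ satisfies Poincaré with constant $\max_i c_i$ — it suffices to show each $\mu_t^i$ satisfies a Poincaré inequality with constant $C_{P,0}^i+C_P$, for a constant $C_P$ depending only on the dimension-free data (and on $\sigma$, $T$) but not on $i$ or $N$.

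\textbf{Lipschitz flow.} Fix $i$ and write $X_s\defeq X_s^{\Dis,t_0,\bzeta_0,i}$, so that $\d X_s=b^i(s,X_s)\,\d s+\sqrt{2\sigma}\,\d W_s^i$ with $b^i(s,x)\defeq -D_pH^i(x,Dw^i(s,x))$ and $X_{t_0}=\zeta_0^i$. The crucial point is that $b^i(s,\cdot)$ is Lipschitz with a dimension-free constant $L$: $D_pH^i$ is Lipschitz because $\norm{D^2\bH}_\infty<\infty$, and $Dw^i(s,\cdot)$ is Lipschitz with a dimension-free constant by the monotone-FBSDE argument already carried out in the proof of \Cref{lem.wibound} (the estimate displayed there is precisely a dimension-free bound on the Lipschitz seminorm of $Dw^i$, obtained from \Cref{assump.disp} and the semiconvexity $D_{xx}f^i\geq -C_{\bF,\dis}I_d$, $D_{xx}g^i\geq -C_{\bG,\dis}I_d$ of the averaged costs). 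Running two copies of the SDE from $x$ and $y$ driven by the same $W^i$, Grönwall's inequality applied to the derivative flow gives $\big|D_xX_t^{t_0,x}\big|_{\op}\leq e^{L(t-t_0)}$ almost surely, uniformly in $x$.

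\textbf{Variance split.} With $h(x)\defeq\E\big[g(X_t^{t_0,x})\big]$ (using independence of $\zeta_0^i$ and $W^i$),
\begin{equation*}
\mathrm{Var}\big(g(X_t)\big)=\mathrm{Var}\big(h(\zeta_0^i)\big)+\E\big[\mathrm{Var}\big(g(X_t)\mid\zeta_0^i\big)\big].
\end{equation*}
For the first term, $Dh(x)=\E\big[(D_xX_t^{t_0,x})^\TT Dg(X_t^{t_0,x})\big]$, so the flow bound and Cauchy--Schwarz give $|Dh(x)|^2\leq e^{2L(t-t_0)}\E[|Dg(X_t^{t_0,x})|^2]$; applying Poincaré for $\cL(\zeta_0^i)$ and integrating out the noise yields $\mathrm{Var}(h(\zeta_0^i))\leq e^{2LT}C_{P,0}^i\,\E[|Dg(X_t)|^2]$. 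For the second term, fix $x$ and use the martingale representation of $g(X_t^{t_0,x})$: its stochastic integrand at time $s$ is $\sqrt{2\sigma}$ times the conditional expectation, given the past of $W^i$ up to $s$, of $(D_xX_t^{s,y})^\TT\big|_{y=X_s^{t_0,x}}Dg(X_t^{t_0,x})$, whose squared norm is at most $2\sigma e^{2L(t-s)}$ times the conditional expectation of $|Dg(X_t^{t_0,x})|^2$, again by the flow bound; Itô's isometry and the tower property give $\mathrm{Var}(g(X_t^{t_0,x}))\leq 2\sigma\int_{t_0}^t e^{2L(t-s)}\,\d s\cdot\E[|Dg(X_t^{t_0,x})|^2]\leq 2\sigma Te^{2LT}\E[|Dg(X_t^{t_0,x})|^2]$, and integrating in $x$ against $\cL(\zeta_0^i)$ yields $\E[\mathrm{Var}(g(X_t)\mid\zeta_0^i)]\leq 2\sigma Te^{2LT}\E[|Dg(X_t)|^2]$. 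Summing, $\mu_t^i$ satisfies Poincaré with constant of the shape $e^{2LT}C_{P,0}^i+C_P$, $C_P\defeq 2\sigma Te^{2LT}$; one checks from \eqref{strongconvLforH} and the semiconvexity of $w^i$ that the equilibrium flow is in fact non-expansive, so $L\leq 0$ and the multiplicative factor on $C_{P,0}^i$ disappears, giving exactly $C_{P,0}^i+C_P$ (and in any case a bound of the form $cC_{P,0}^i+C_P$ is all that is used in \Cref{thm.oldistdisp}).

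\textbf{Main obstacle.} The principal difficulty is the dimension-free Lipschitz bound on $b^i$, equivalently on $Dw^i$: this is stronger than the $C^{1,2}$-regularity and the linear-growth bound $|Dw^i|\leq C(1+|x|)$ imposed on admissible solutions of \eqref{distpde}, and must be extracted from \Cref{assump.disp} via the monotone-FBSDE/duality computation underlying \Cref{lem.wibound}. A secondary but conceptually important point is that the non-degeneracy $\sigma>0$ enters only through the martingale-representation step, where it produces the additive constant $C_P\sim\sigma T$; when $\sigma=0$ no Poincaré inequality can hold in general, since a Dirac initial datum would be transported to a Dirac along the deterministic characteristics.
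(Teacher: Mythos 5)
Your argument reaches the same conclusion as the paper via the same two key ingredients — the dimension-free Lipschitz bound on $Dw^i$ from \Cref{lem.wibound} (which you correctly identify as the ``principal difficulty''), and a Poincar\'e-propagation statement for diffusions with Lipschitz drift — but the paper simply cites that second ingredient (Cattiaux--Guillin, Theorem 1.8, and \cite{JacksonLacker}, Lemma 4.13), whereas you unfold it into an explicit proof. Your explicit argument is correct in its essentials: the reduction to a single marginal via tensorization of Poincar\'e (which the paper leaves implicit in the slightly abusive notation $m_t^i = \cL(\bX_t^{\Dis})$), the law of total variance, the gradient commutation $|Dh(x)|\le e^{L(t-t_0)}\,\E[|Dg(X_t^{t_0,x})|]$ from the derivative flow, and the Clark--Ocone/Bismut-type control of the conditional variance through the It\^o isometry with the rate $2\sigma\int_{t_0}^t e^{2L(t-s)}\,\d s$ are exactly the mechanisms that make the cited result work. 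This is useful as a self-contained substitute for the black-box reference.

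The one genuine weak point is the parenthetical claim that the equilibrium flow is non-expansive ($L\le 0$), which you invoke to remove the factor $e^{2LT}$ in front of $C_{P,0}^i$. Non-expansiveness would require the vector field $x\mapsto D_pH^i(x,Dw^i(t,x))$ to be a monotone operator, i.e.\ the symmetric part of $D_{xp}H^i + D_{pp}H^i\,D^2w^i$ to be $\geq 0$. Neither \eqref{strongconvLforH} nor the semiconvexity bounds $D_{xx}f^i\geq -C_{\bF,\dis}I_d$, $D_{xx}g^i\geq -C_{\bG,\dis}I_d$ give any lower bound on $D^2w^i$ that would make this sign condition hold; \Cref{lem.wibound} yields only a two-sided dimension-free bound on $|D^2w^i|_\op$, not a one-sided one. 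So the argument as written delivers a Poincar\'e constant of the form $e^{2LT}C_{P,0}^i + 2\sigma T e^{2LT}$ with a genuine multiplicative factor. As you note, this is harmless for \Cref{thm.oldistdisp}, where only $(1+\max_i C_P^i)$ enters up to dimension-free factors, so the gap is cosmetic rather than fatal — but the clean additive form $C_{P,0}^i + C_P$ should not be asserted without further input. (The paper's own citation is stated in the additive form, and it would be worth checking whether the reference actually produces a multiplicative factor as well; if so the paper's formulation is likewise informal.)
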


\begin{proof}
By a simple extension of \cite[Theorem 1.8]{cattiaux-guillin} (see also \cite[Lemma~4.13]{JacksonLacker}),\footnote{Note that \cite{JacksonLacker} requires $g$ to be Lipschitz and bounded, but using the integrability of $\bX_t^{\Dis,t_0,\bzeta_0}$ there is no issue extending to Lipschitz $g$ via a truncation procedure.} there is a dimension-free constant $C$ such that $m_t^i \defeq \cL({\bX}_t^{\Dis,t_0,\bzeta_0})$ satisfies a Poincar\'e inequality with constant $C_{P,0} + C e^{C \lip(w^i)}$,
where $\lip(w^i)$ indicates the Lipschitz constant of $w^i$. By \Cref{lem.wibound}, $\lip(w^i)$ is dimension-free. 
\end{proof}

\begin{lemma} \label{lem.poincareLL}
    Let \Cref{assump.LL} hold, and let $\bzeta_0$ and be as in \Cref{lem.poincaredisp}. Suppose that for each $i$, $\cL(\zeta_0^i)$ satisfies a Poincar\'e inequality with constant $C_{P,0}^i$, and define $C_{P,0}$ as in \eqref{CP0def}.
    Then there is a dimension-free constant $C_P$ such that for each $t > t_0$, $\cL({\bX}_t^{\Dis,t_0,\bzeta_0})$ satisfies a Poincar\'e inequality with constant $C_{P,0} + e^{c_P(1+\sigma^{-1})}$; i.e.\ we have 
    \begin{equation*}
        \mathrm{Var}\big(g({\bX}_t^{\Dis,t_0,\bzeta_0}) \big) \leq \bigl(C_{P,0} + e^{c_P(1+\sigma^{-1})} \bigr)\E\Bigl[ \bigl|D g({\bX}_t^{\Dis,t_0,\bzeta_0})\bigr|^2 \Bigr]
    \end{equation*}
    for each Lipschitz function $g \in C^1((\R^d)^N)$. 
\end{lemma}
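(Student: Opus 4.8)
The plan is to follow the proof of \Cref{lem.poincaredisp} line by line, substituting the vanishing-viscosity bound of \Cref{lem.wibound.LL} for the dimension-free bound of \Cref{lem.wibound}. The starting observation is that the distributed dynamics \eqref{distdynamics} decouple: with $\alpha^i(t,x) = -D_pH^i(x,Dw^i(t,x))$, the process $X^{\Dis,t_0,\bzeta_0,i}$ solves a scalar (in the player index) SDE driven only by $W^i$ and started from $\zeta_0^i$. Since the $\zeta_0^i$ are independent, the law of $\bX_t^{\Dis,t_0,\bzeta_0}$ is the product $\bigotimes_{i=1}^N m_t^i$ with $m_t^i \defeq \cL(X_t^{\Dis,t_0,\bzeta_0,i})$, and by the tensorization property of the Poincar\'e inequality it suffices to produce a bound on the Poincar\'e constant of each $m_t^i$ that is uniform in $i$.

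For each fixed $i$, I would invoke the perturbation estimate for Poincar\'e inequalities along a diffusion flow --- the extension of \cite[Theorem~1.8]{cattiaux-guillin} used in \cite[Lemma~4.13]{JacksonLacker} --- which, because $D_pH^i$ is Lipschitz under \Cref{assump.LL}, gives that $m_t^i$ satisfies a Poincar\'e inequality with constant $C_{P,0}^i + Ce^{C\lip(w^i)}$, where $\lip(w^i) = \|Dw^i\|_\infty$ and $C$ depends only on the structural data (in particular not on $N$). As in the footnote to \Cref{lem.poincaredisp}, the passage from bounded Lipschitz test functions to Lipschitz $g \in C^1$ with at most quadratic growth is justified by a truncation argument using the square-integrability of $\bX_t^{\Dis,t_0,\bzeta_0}$, which in turn follows from the square-integrable initial data and the linear growth of the drift.

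The only point where the argument departs from the displacement case is the bound on $\lip(w^i)$: here \Cref{lem.wibound} is not available, but \Cref{lem.wibound.LL} gives $\|Dw^i\|_\infty \lesssim \sigma^{-1}$ with dimension-free implied constant. Substituting $\lip(w^i) \lesssim \sigma^{-1}$ yields a Poincar\'e constant for $m_t^i$ of the form $C_{P,0}^i + Ce^{C(1+\sigma^{-1})} \le C_{P,0} + e^{c_P(1+\sigma^{-1})}$ after enlarging the constant in the exponent, uniformly in $i$; taking the maximum over $i$ and tensorizing gives the claim. I do not anticipate a genuine obstacle --- the content is entirely a matter of bookkeeping, namely checking that the constant in the Cattiaux--Guillin-type estimate is dimension-free and that the truncation to general Lipschitz test functions goes through, both of which are handled exactly as in \Cref{lem.poincaredisp}.
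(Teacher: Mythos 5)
Your proposal is correct and follows the same route as the paper: the paper's proof of this lemma is a one-line pointer to the proof of \Cref{lem.poincaredisp}, replacing \Cref{lem.wibound} with \Cref{lem.wibound.LL}, which is exactly the substitution you make. The decoupling/tensorization step and the truncation to unbounded Lipschitz test functions that you spell out are implicit in the paper's treatment (via the reference to the Cattiaux--Guillin-type estimate and the footnote in \Cref{lem.poincaredisp}), so no new idea is involved.
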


\begin{proof}
    Argue as in the proof of \Cref{lem.poincaredisp}, but invoke \Cref{lem.wibound.LL} in place of \Cref{lem.wibound}.
\end{proof}

We can now prove \Cref{thm.oldistdisp,thm.oldistLL}.

\begin{proof}[Proof of \Cref{thm.oldistdisp}]
Set $\ov{\bX} = {\bX}^{\Dis,t_0,\bzeta_0}$, $\wt{\bX} = {\bX}^{\OL,t_0,\bzeta_0}$. Then we can find $\ov{\bY}$, $\ov{\bZ}$ such that the triple $(\ov{\bX}, \ov{\bY}, \ov{\bZ})$ satisfies \eqref{distfbsde}. By adding and subtracting, we see that $(\ov{\bX}, \ov{\bY}, \ov{\bZ})$ satisfies \eqref{disterrorfbsde}, with
\begin{equation*}
    E_t^{F,i} = \E\big[ D_{i} F^i(\ov{\bX}_t) \mid \ov{X}_t^i \big] - D_{i} F^i(\ov{\bX_t})\,, \qquad  E^{G,i} = \E\big[ D_{i} G^i(\ov{\bX}_T) \mid \ov{X}_T^i \big] - D_{i} G^i(\ov{\bX_T})\,.
\end{equation*}
We note that thanks to Lemma \ref{lem.poincaredisp}, we have 
\begin{equation*}
    \E \big[ |E_t^{F,i}|^2 \big] = \E\big[ \mathrm{Var}(D_{i} F^i(\ov{\bX}_t) \mid \ov{X}_t^i) \big] \leq C_P\, \Bigl\|  \sum_{j \neq i} |D_{ji} F^i|^2 \Bigr\|_{\infty} \leq C_P \kappa^i, 
\end{equation*}
and likewise for $E^{G,i}$, so that in particular
\begin{equation*} %\label{egiefibound}
    \E\bigg[ \int_{t_0}^T |\bm E^{F}|^2 + |\bm E^{G}|^2 \bigg] \leq (1+T)C_P \sum_{1 \leq i \leq N} \kappa^i.
\end{equation*}
The result now follows from combining Lemma \ref{lem.lipschitzstability} with Proposition \ref{prop.olboundsdisp}.
\end{proof}

\begin{proof}[Proof of \Cref{thm.oldistLL}]
   The proof is the same as that of \Cref{thm.oldistdisp} above, except that \Cref{lem.poincareLL} takes the place of \Cref{lem.poincaredisp}, and \Cref{prop.olboundslip} takes the place of \Cref{prop.olboundsdisp}.
\end{proof}

\begin{remark} \label{rmk.distcommonnoise}
    When there is common noise, ``distributed" controls would mean $\bbF^0$-adapted random fields of the form $(\alpha_t(x))_{t_0 \leq t \leq T}$, and the PDE \eqref{distdynamics} would be replaced by a stochastic PDE system analogous to the stochastic MFG system which arises in the theory of MFGs with common noise. Some of our analysis would still apply in this case, but it is not clear how to generalize \Cref{lem.poincaredisp,lem.poincareLL}, so we are unable to generalize the bounds in \Cref{thm.oldistdisp,thm.mainll} to the case when there is common noise. This is why we discuss the distributed formulation only when there is no common noise.
\end{remark}

\section{Application to universality of MFG equilibria}

\label{sec.universality}

In this section, we consider a sequence of games built in the following way. We are given functions $L \colon \R^d \times \R^d \to \R$ and $\cF, \cG : \R^d \times \cP_2(\R^d) \to \R$, and, for each $N \in \N$, we are given a collection of non-negative weights $(w^N_{ij})_{i,j =1,\dots,N}$ such that $\sum_j w^N_{ij} = 1$ and $w^N_{ii} = 0$ for all $i$.

We define, for each $N \in \N$ and $i = 1,\dots,N$, functions $F^{N,i}, G^{N,i} \colon (\R^d)^N \to \R$ via
\begin{equation*} %\label{fnidef}
    F^{N,i}(\bx) \defeq \cF(x^i,m_{\bx,w}^{N,i})\,, \qquad G^{N,i}(\bx) \defeq \cG(x^i,m_{\bx,w}^{N,i})\,,
\end{equation*}
where $m_{\bx,w}^{N,i}$ is the weighted empirical measure defined by
\[
m_{\bx,w}^{N,i} \defeq \sum_{1 \leq j \leq N} w^N_{ij} \delta_{x^j}\,.
\]

\begin{remark}
If we are given an undirected graph $\Gamma^N = (V^N,E^N)$, where $V^N = \{1,\dots,N\}$, without self-loops (i.e., $(i,i) \notin E^N$), and $w^N_{ij} = (\deg_N i)^{-1} \1_{j \sim_N i}$, with $\deg_N i$ being the degree of the vertex $i$ and $j \sim_N i$ meaning that $(i,j) \in E^N$, then $m^{N,i}_{\bx,w}$ is the neighborhood empirical measure
\[
    m_{\bx}^{N,i} = \frac{1}{\deg_N i} \sum_{j \sim_N i} \delta_{x^j}\,.
\]
Moreover, if $\Gamma^N$ is totally connected (i.e., $E^N = V^N \times V^N \setminus \{(i,i):\, i \in V^N\}$), then $\deg_N i = N-1$ for all $i$ and the above measure coincides with the standard empirical measure considered in Mean Field Games.
\end{remark}

%
%and for each $N$ we consider the game with
%\begin{align*}
%    L^i = L, \quad F^{N,i}(\bx) = \cF(x^i,m_{\bx}^{N,-i}), \quad G^{N,i}(\bx) = \cG(x^i,m_{\bx}^{N,-i}), \quad m_{\bx}^{N,-i} \coloneqq \frac{1}{N-1} \sum_{j \neq i} \delta_{x^j}.
%\end{align*}
We will assume that the game is played from i.i.d.\ initial conditions. Thus we fix a vector $\bm\zeta_0=(\zeta^i_0)_{i = 1,\dots,N}$ of i.i.d., $\scrF_0$-measurable random variables with common law $m_0 \in \cP_p(\R^d)$ for some $p > 4$. For the sake of a lighter notation, and also to make explicit the dependence on $N \in \N$, in this section we will denote by $\bX^N = (X^{N,1},\dots,X^{N,N})$ the closed-loop equilibrium trajectories, and by $\wt{\bX}^N = (\wt{X}^{N,1},\dots,\wt{X}^{N,N})$ the open-loop equilibrium trajectories (started from $(0,\bzeta_0)$). We note that $\bX^N$ evolves, on $[0,T]$, according to 
\begin{equation*} %\label{dynamicsclnemf}
    \d X_t^{N,i} = - D_p H(X_t^{N,i}, D_{i} u^{N,i}(t,\bX_t)) \,\d t + \sqrt{2\sigma} \,\d W_t^i + \sqrt{2\sigma_0} \,\d W_t^0\,, \qquad X_{0}^{N,i} = \zeta^i_0\,,
\end{equation*}
(with $\sigma$ and $\sigma_0$ independent of $N$) where $u^{N,i}$ satisfy the Nash system \eqref{nash}.
%\begin{equation} \label{nashmf} 
%    \begin{cases} \ds - \partial_t u^{N,i} - \sigma \sum_{j = 1}^N \Delta_{x^j} u^{N,i} - \sigma_0 \sum_{j,k = 1}^N \tr\big( D_{x^j x^k} u^{N,i} \big) + H(x^i, D_{x^i} u^{N,i}) 
%    \\ \ds 
%    \qquad + \sum_{j \neq i} D_{p} H(x^j, D_{x^j} u^{N,j}) D_{x^j} u^{N,i} = F^{N,i}(\bx), \quad (t,\bx) \in [0,T] \times (\R^d)^N, 
%    \\ \ds 
%    u^{N,i}(T,\bx) = G^{N,i}(\bx), \quad \bx \in (\R^d)^N. 
%    \end{cases} 
%\end{equation}
The open-loop trajectories can be described by the Pontryagin system \eqref{pontryagin},
%\begin{align}
%    \begin{cases}
%        \ds d\wt{X}_t^{N,i} = - D_p H(\wt{X}_t^{N,i}, \wt{Y}_t^{N,i}) dt + \sqrt{2\sigma} dW_t^i + \sqrt{2\sigma_0} dW_t^0,
%        \\
%        \ds d \wt{Y}_t^{N,i} = - \Big( D_x L(\wt{X}_t^{N,i}, - D_p H(\wt{X}_t^{N,i}, \wt{Y}_t^{N,i}) ) + D_{x^i}F^{N,i}(\wt{\bX}_t^N) \Big) dt + \sum_{j = 0}^N \wt{Z}_t^{N,i,j} dW_t^j
 %       \\
 %       \ds \wt{X}_0^{N,i} = \xi^i, \quad \wt{Y}_T^{N,i} = D_{x^i}G^{N,i}(\wt{\bX}_T^N), 
 %   \end{cases}
%\end{align}
or, equivalently, by the dynamics 
\[
    \d \wt{X}_t^{N,i} = - D_p H(\wt{X}_t^{N,i}, v^{N,i}(t,\wt{\bX}_t^{N,i})) \,\d t + \sqrt{2\sigma} \,\d W_t^i + \sqrt{2\sigma_0} \,\d W_t^0\,, \qquad \wt{X}_0^{N,i} = \zeta^i_0\,,
\]
where $v^{N,i}$ satisfies the PDE system \eqref{pontpde}.
%\begin{align} \label{pontpdeuniv} 
%    \begin{cases}
 %       \ds - \partial_t v^{N,i} - \sigma \sum_{j = 1}^N \Delta_{x^j} v^{N,i} - \sigma_0 \sum_{j,k = 1}^N \tr\big( D_{x^jx^k} v^{N,i} \big) + \sum_{j = 1}^N  D_{x^j} v^{N,i} D_pH^j(x^j, v^{N,i})
%        \\
%        \ds \qquad + D_{x} H^i(x^i, v^{N,i}) = D_{x^i} F^{N,i}, \quad (t,\bx) \in [0,T] \times (\R^d)^N, 
%        \vspace{.2cm} \\
%        \ds v^{N,i}(T,\bx) = D_{x^i} G^{N,i}(\bx), \quad \bx \in (\R^d)^N.
%    \end{cases}
%\end{align}

Our first goal is to show that these equilibria, despite the heterogeneous setting, are both comparable to the ones arising from a limiting mean field game as $N \to \infty$, at least if the graphs $\Gamma^N$ are dense enough. To do so, we start by fixing a random vector $\zeta_0$ with $\zeta_0 \sim m_0$; in the mean field game of interest, an $\bbF^0$-adapted, continuous $\cP_2(\R^d)$-valued process $(m_t)_{t \in [0,T]}$ representing the mean field is fixed, and a representative player chooses a control $\alpha$, which determines the state process $X = X^\alpha$ via the formula 
\begin{equation*} %\label{mfdynamics}
    \d X_t = \alpha_t\, \d t + \sqrt{2 \sigma} \,\d W_t + \sqrt{2 \sigma_0}\,\d W_t^0\,, \qquad X_{0}= \zeta_0\,.
\end{equation*}
The player's goal is to minimize the cost function 
\begin{equation*} %\label{jmdef}
    J_m(\alpha) = \E\bigg[\, \int_{0}^T \!\Big( L(X_t^\alpha, \alpha_t) + \cF(X_t^\alpha, m_t) \Big)\,\d t + \cG(X_T^\alpha,m_T) \bigg]\,.
\end{equation*}

\begin{definition}
    A \emph{mean field equilibrium} (started from $(0,m_0)$) is a continuous $\bbF^0$-adapted process $(m_t)_{t \in [0,T]}$ taking values in $\cP_2(\R^d)$ such that for some minimizer $\alpha$ of $J_m$, the corresponding state $X$ satisfies $m_t = \sL(X_t \mid \scrF_t^0)$.
\end{definition}
By the stochastic maximum principle, mean field equilibria are characterized by the following McKean--Vlasov FBSDE: 
\begin{equation} \label{meanfieldpont}
    \begin{dcases}
         \d X_t = - D_p H(X_t, Y_t) \,\d t + \sqrt{2\sigma} \,\d W_t + \sqrt{2 \sigma_0} \,\d W_t^0
        \\
         \d Y_t = \bigl( D_x H(X_t,Y_t) - D_x \cF(X_t, \cL(X_t \mid \scrF_t^0)) \bigr) \,\d t + Z_t \,\d W_t + Z_t^0 \,\d W_t^0
        \\
        X_0 = \zeta_0\,, \quad Y_T = D_x \cG(X_T, \cL(X_T \mid \scrF_T^0))\,,
    \end{dcases}
\end{equation}
where $H$ is defined from $L$ as in \eqref{intro.defHi}.
Our convergence results will be stated in terms of a sequence of (conditionally) i.i.d.\ copies of the solution $(X,Y,Z)$ to \eqref{meanfieldpont}, denoted $(\ov{X}^i,\ov{Y}^i,\ov{Z}^i)$, and obtained by taking $\bm\zeta_0 \sim m_0^{\otimes N}$ and solving, for each $i$, the system 
\begin{equation} \label{meanfieldpointiid}
    \begin{dcases}
        \d\ov{X}^i_t = - D_p H(\ov{X}^i_t, \ov{Y}^i_t) \,\d t + \sqrt{2\sigma} \,\d W_t^i + \sqrt{2 \sigma_0} \,\d W_t^0
        \\
        \d\ov{Y}^i_t = \bigl( D_x H(\ov{X}^i_t,\ov{Y}^i_t) - D_x \cF(\ov{X}^i_t, \cL(\ov{X}^i_t \mid \scrF_t^0)) \bigr) \,\d t +\ov{Z}^i_t \,\d W_t^i +\ov{Z}^{i,0}_t \d W_t^0
        \\
        \ov{X}^i_0 = \zeta^i_0\,, \quad \ov{Y}_T^i = D_x \cG(\ov{X}_T^i, \cL(\ov{X}_T^i \mid \scrF_T^0))\,.
    \end{dcases}
\end{equation}

\begin{remark}
    Like for closed-loop, open-loop and distributed equilibria, we also implicitly assume that under our standing assumptions a mean field equilibrium exists -- in particular that we have access to a strong solution to \eqref{meanfieldpointiid}. We assume that $m_t$ has bounded $p$-th moments, for some $p > 4$; this is expected if $m_0 \in \cP_p(\R^d)$. The restriction $p>4$ could be weakened, at the price of more complicated exponents in \eqref{def.rhodp} below.
\end{remark}

In order to be in a setting where our estimates proved in the previous sections apply, we revisit the main \Cref{assump.disp,assump.LL} as follows.

\begin{assumption}[Displacement semi-monotonicity and regularity, revised] \label{assump.uniformdisp}
$L$ satisfies the requirements in \Cref{assump.disp} (with $L$ in place of $L^i$). $\cF$ and $D_x \cF$ are $C^1$, with $D_m \cF$, $D_m D_x \cF$ and $D_{xx} \cF$ bounded (and likewise for $\cG$).
Moreover, there are non-negative constants $C_{\bF, \dis}$ and $C_{\bG, \dis}$ such that we have 
    \begin{equation}\label{disp2}
        \sum_{1 \leq i \leq N} D_x\cF\Bigr|^{(x^i,m_{\bx,w}^{N,i})}_{(\bar x^i,m_{\ov{\bx},w}^{N,i})} \cdot (x^i - \ov{x}^i) \geq - C_{\bF,\dis} |\bx - \ov{\bx}|^2
    \end{equation}
    for each $N \in \N$ and $\bx,\ov{\bx} \in (\R^d)^N$, and likewise for $\cG$. Finally, these constants satisfy condition~\eqref{smallness'} of \Cref{assump.disp}.
\end{assumption}

\begin{remark}\label{assump.uniformdisploc}
 By means of the mean value theorem, condition \eqref{disp2} is equivalent to
\begin{equation*}%\label{disp2}
\sum_{1 \leq i,j \leq N} (\xi^i)^\TT \bigl( D_{xx} \cF(x^i,m_{\bx,w}^{N,i})\1_{j=i} + w^N_{ij} D_mD_x\cF(x^i,m^{N,i}_{\bx,w},x^j)\bigr)\, \xi^j \geq -C_{\bF,\dis}|\bxi|^2
\end{equation*}
    for each $N \in \N$ and $\bx,\bxi \in (\R^d)^N$.
\end{remark}

\begin{assumption}[Lasry--Lions semi-monotonicity and regularity, revised]  \label{assump.uniformLL}
 $H$ satisfies the requirements in \Cref{assump.LL} (with $H$ in place of $H^i$).
 $\cF$ and $D_x \cF$ are $C^1$, with $D_x \cF$, $D_m \cF$, $D_m D_x \cF$, and $D_{xx} \cF$ bounded (and likewise for $\cG$). Moreover, there are constants $C_{\bF,\lm}$ and $C_{\bG,\lm}$ such that we have
\begin{equation}\label{LL2}
   \sum_{1 \leq i,j \leq N} w^N_{ij} \,(\xi^i)^\TT D_mD_x\cF(x^i,m^{N,i}_{\bx,w},x^j)\, \xi^j \geq -C_{\bF,\lm}|\bxi|^2
\end{equation}
for each $N \in \N$ and $\bx , \bm \xi \in (\R^d)^N$, and likewise for $\cG$. Finally, these constants satisfy condition \eqref{smallness2} from \Cref{thm.mainll}.
\end{assumption}

We have the following result. Recall that $|\bm w^N_{i\,\cdot}|_p$ (with $p \geq 1$) denotes the $\ell^p$-norm of the vector $(w^N_{ij})_{j=1,\dots,N} \in \R^N$, and likewise for $|\bm w^N_{\cdot\,j}|_p$, while $|w^N|_{\mathrm{Fr}} = \tr((w^N)^\TT w^N)^{\frac12}$ is the Frobenius norm of the matrix $(w^N_{ij})_{i,j=1,\dots,N} \in \R^{N\times N}$.

\begin{theorem}
\label{thm.universality}
    Suppose that either \Cref{assump.uniformdisp} or \Cref{assump.uniformLL} holds. In addition, suppose that the following hold:
    \begin{gather} \label{wIrrcond}
    \limsup_{N \to \infty} \max_{1 \leq j \leq N} |\bm w^N_{\cdot\,j}|_1 < \infty\,,
     \\ \label{degreedivergence'}
     |w^N|_{\mathrm{Fr}} \max_{1 \leq i \leq N} |\bm w^N_{i\,\cdot}|_2  \ \xrightarrow{N \to \infty}\  0\,.
    \end{gather}
Then, with implied constants independent of $N$,
    \begin{equation} \label{univest1}
        \E\Big[ \sup_{t \in [0,T]} |\bX_t^{N} - \wt{\bX}_t^{N}|^2 \Big] \lesssim  |w^N|_{\mathrm{Fr}}^2 \max_{1 \leq i \leq N} |\bm w^N_{i\,\cdot}|_2^2
    \end{equation}
    and, for any partition $(I_k)_{k=1,\dots,n}$ of $\{1,\dots,N\}$ and each $k = 1,\dots,n$ such that $\wt X^{N,i} \overset{\mathrm{d}}{=} \wt X^{N,j}$ whenever $i,j \in I_k$, 
    \begin{equation} \label{univest2}
        \sup_{i \in I_k} \E\Big[ \sup_{t \in [0,T]} \bigl|\wt{X}_t^{N,i} - \ov{X}^i_t\bigr|^2 \Big]
        \lesssim \frac{1}{\#I_k} \bigl( 1 + \mathcal W_k^N \wt{\mathcal W}_k^N \bigr) \biggl(\ \sum_{i \in I_k} \rho^{N,i}_d + \mathcal W_k^N \sum_{j \notin I_k}  \mathcal \rho^{N,j}_d  \biggr)\,, 
    \end{equation}
    where
    \[
    \mathcal W_k^N \defeq \min\biggl\{1,\, \sum_{i \in I_k} |\bm w^N_{i\,\cdot}|_2^2,\, \sum_{j \notin I_k} |\bm w^N_{\cdot\,j}|_2^2 \biggr\}\,,
    \qquad
\wt{\mathcal W}_k^N  \defeq \min\biggl\{1,\, \sum_{i \notin I_k} |\bm w^N_{i\,\cdot}|_2^2,\, \sum_{j \in I_k} |\bm w^N_{\cdot\,j}|_2^2\biggr\}\,,
    \]
   and
   \begin{equation} \label{def.rhodp}
    \rho^{N,i}_d \defeq \rho_{d} \bigl( |\bm w^N_{i\,\cdot}|_2^{-2} \bigr)\,, \quad \text{with} \quad
        \rho_{d}(K) \defeq 
        \begin{dcases}
            K^{-\frac12} & \text{if} \ d < 4
            \\
            K^{-\frac12} \lvert\log(K)\rvert & \text{if} \ d = 4
            \\
            K^{-\frac{2}{d}} & \text{if} \ d > 4\,.
        \end{dcases}
  \end{equation}
\end{theorem}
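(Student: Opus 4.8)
The plan is to treat \Cref{thm.universality} as a corollary of the dimension-free machinery already developed, once the network-based data $F^{N,i}, G^{N,i}$ is expressed in terms of the interaction parameters $\delta^i, \kappa^i, \tilde\kappa^i$ and a propagation-of-chaos argument is run for the open-loop system. The first step is bookkeeping: under \Cref{assump.uniformdisp} or \Cref{assump.uniformLL}, the boundedness of $D_m\cF$, $D_mD_x\cF$, $D_{xx}\cF$ (and likewise for $\cG$) gives, via the chain rule in the measure argument, pointwise bounds $|D_{x^j}F^{N,i}| \lesssim w^N_{ij}$, $|D_{x^jx^i}F^{N,i}| \lesssim w^N_{ij}$ for $j\neq i$, $|D_{x^ix^j}F^{N,j}| \lesssim w^N_{ij}$, and analogously for $G$. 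Hence $\delta^i \lesssim |\bm w^N_{i\,\cdot}|_2^2$, $\kappa^i \lesssim |\bm w^N_{i\,\cdot}|_2^2$ and $\tilde\kappa^i \lesssim |\bm w^N_{\cdot\,i}|_2^2$, so that $\delta\sum_i\delta^i \lesssim (\max_i|\bm w^N_{i\,\cdot}|_2^2)\,|w^N|_{\mathrm{Fr}}^2$ and $\kappa\tilde\kappa \lesssim (\max_i|\bm w^N_{i\,\cdot}|_2^2)(\max_j|\bm w^N_{\cdot\,j}|_2^2)$. Condition \eqref{degreedivergence'} forces the first quantity to $0$, and \eqref{wIrrcond} together with $\max_i|\bm w^N_{i\,\cdot}|_2^2\to 0$ forces the second to $0$; so for $N$ large the smallness hypotheses \eqref{smallness}, resp. \eqref{smallness1}--\eqref{smallness2}, of \Cref{thm.maindisp}/\Cref{thm.mainll} are met (note $\sigma,\sigma_0$ are fixed in $N$). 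This also verifies that \Cref{assump.disp}/\Cref{assump.LL} hold uniformly in $N$, using \Cref{assump.uniformdisploc} and \eqref{LL2} to read off the semi-monotonicity of $\bm F, \bm G$ from that of $\cF,\cG$, and \eqref{wIrrcond} to control the Lipschitz constants $C_{D\bm F,\lip}, C_{D\bm G,\lip}$ of the diagonal fields.

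The bound \eqref{univest1} is then immediate: \Cref{thm.olcldisp} (resp. \Cref{thm.olclLL}) gives $\E[\sup_t|\bX^N_t-\wt{\bX}^N_t|^2] \lesssim \delta\sum_i\delta^i \lesssim |w^N|_{\mathrm{Fr}}^2\max_i|\bm w^N_{i\,\cdot}|_2^2$ (the exponential-in-$\sigma^{-1}$ prefactor in the Lasry--Lions case is a constant independent of $N$ since $\sigma$ is fixed). The substantive content is \eqref{univest2}, the comparison between the open-loop network equilibrium $\wt{\bX}^N$ and the i.i.d.\ mean field system \eqref{meanfieldpointiid}. The plan here is a synchronous (forward-backward) coupling: drive $\wt X^{N,i}$ and $\ov X^i$ by the same $W^i$ and $W^0$, write $\Delta X^i = \wt X^{N,i}-\ov X^i$, $\Delta Y^i = v^{N,i}(t,\wt{\bX}^N_t) - \ov Y^i_t$, and expand $\d(\Delta X^i\cdot\Delta Y^i)$ using the FBSDEs. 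The monotonicity/convexity structure (strong convexity of $H$ in the displacement case, via \eqref{strongconvLforH}, or $D_{pp}H\ge\lambda_{\bH}I$ together with Lasry--Lions monotonicity) produces the usual coercive term, while the forcing is the difference between the finite-$N$ terminal/running data $D_{x^i}F^{N,i}(\wt{\bX}^N)$ and the mean field data $D_x\cF(\ov X^i,\cL(\ov X^i\mid\scrF^0))$. This forcing splits into (a) a ``statistical'' error $D_x\cF(\ov X^i, m^{N,i}_{\ov{\bX}^N,w}) - D_x\cF(\ov X^i,\cL(\ov X^i\mid\scrF^0))$, controlled by a weighted empirical-measure fluctuation estimate giving the rate $\rho^{N,i}_d$ of \eqref{def.rhodp} (this is where the moment assumption $p>4$ and the Fournier--Guillin-type bounds enter, applied to the weighted empirical measure whose ``effective sample size'' is $|\bm w^N_{i\,\cdot}|_2^{-2}$), plus (b) a ``propagation'' error $\lesssim \sum_j w^N_{ij}|\Delta X^j|$ coming from replacing $\ov{\bX}^N$ by $\wt{\bX}^N$ inside $\cF$, and (c) the gap $v^{N,i}-D_{x^i}u^{N,i}$, which is negligible at this stage since we compare $\wt X^N$ to $\ov X$, not $\bX^N$ to $\ov X$. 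The Lipschitz bound \eqref{vlip.intro} on $D\bv$ (from \Cref{prop.olboundsdisp}/\Cref{prop.olboundslip}) is what makes the coupling closed and is essential in the Lasry--Lions case; the partition-and-$I_k$ structure of \eqref{univest2} is handled exactly as in \Cref{lem.lipschitzstability} via its general form \Cref{lem.stabwsymm}, which already contains the weighted bookkeeping with the $\mathcal W^N_k$, $\wt{\mathcal W}^N_k$ factors.

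I expect the main obstacle to be step (a): establishing the quantitative fluctuation estimate for the \emph{weighted} empirical measure $m^{N,i}_{\ov{\bX}^N,w} = \sum_j w^N_{ij}\delta_{\ov X^j}$ against $\cL(\ov X^i\mid\scrF^0)$ with the precise rate $\rho^{N,i}_d(|\bm w^N_{i\,\cdot}|_2^{-2})$, uniformly in $i$ and in a way compatible with conditioning on the common noise. Unlike the unweighted i.i.d.\ case, the summands are weighted and only conditionally i.i.d., so one must combine a conditional Fournier--Guillin concentration bound with the observation that for a weighted average of i.i.d.\ terms the relevant concentration scale is the inverse of $\sum_j (w^N_{ij})^2 = |\bm w^N_{i\,\cdot}|_2^2$ rather than $1/N$; keeping track of the $p>4$ moment bounds of the mean field flow (guaranteed since $m_0\in\cP_p$) to get integrability of $D_x\cF$ composed with these measures is the delicate part, and is the reason for the case distinction on $d$ in \eqref{def.rhodp}. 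Once this estimate is in hand, assembling \eqref{univest2} is a Gronwall argument in the forward variable after controlling the backward variable, entirely parallel to the proof of \Cref{thm.oldistdisp}/\Cref{thm.oldistLL} but with the network weights inserted and \Cref{lem.stabwsymm} invoked for the partition structure.
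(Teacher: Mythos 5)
Your outline agrees with the paper's strategy in its broad strokes: (i) the bookkeeping translating the network data into bounds $\delta^i,\kappa^i \lesssim |\bm w^N_{i\,\cdot}|_2^2$ and $\tilde\kappa^i \lesssim |\bm w^N_{\cdot\,i}|_2^2$, so that \eqref{degreedivergence'} and \eqref{wIrrcond} activate the smallness hypotheses of \Cref{thm.maindisp}/\Cref{thm.mainll}; (ii) \eqref{univest1} directly from \Cref{thm.olcldisp} or \Cref{thm.olclLL}; and (iii) \eqref{univest2} by rewriting the conditionally i.i.d.\ system \eqref{meanfieldpointiid} as a perturbed Pontryagin system \eqref{disterrorfbsde}, bounding the perturbation via a weighted empirical-measure fluctuation estimate, and closing with a stability lemma that carries the partition structure. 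You correctly isolate the weighted Fournier--Guillin estimate as the crux, and your guess about the effective sample size $|\bm w^N_{i\,\cdot}|_2^{-2}$ and the conditioning on $\scrF_t^0$ is exactly \Cref{lem.FGgen}. Your remark (c), that the gap $v^{N,i}-D_iu^{N,i}$ is irrelevant to \eqref{univest2}, is also correct.

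Where your description is off is in how the stability estimate is closed. You propose expanding $\d(\Delta X^i\cdot\Delta Y^i)$ and extracting coercivity from the monotonicity/convexity structure; this is the forward--backward monotone coupling, and while it works in the displacement case, it does not survive the Lasry--Lions regime: Lasry--Lions semi-monotonicity of $\bm F,\bm G$ gives no useful sign for $\sum_i\Delta X^i\cdot\Delta Y^i$ (this is precisely why \Cref{prop.olboundslip} is so much harder than \Cref{prop.olboundsdisp}). The paper spends the monotonicity once and for all in \Cref{sec.pontryagin} to obtain the $L^\infty$ bound on $|D\bv|_{\op}$; afterwards \Cref{lem.stabwsymm} is a pure Lipschitz--Gronwall argument on $|\Delta\bX|^2$ and $|\Delta\bY|^2$ separately, never pairing $\Delta X^i$ with $\Delta Y^i$ and never invoking monotonicity, which is what lets both regimes be handled identically at this stage. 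You do eventually cite \Cref{lem.stabwsymm} and note that the Lipschitz bound on $D\bv$ "makes the coupling closed" -- the right instinct -- but this is inconsistent with the monotone-coupling framing two sentences earlier: in the stability step the Lipschitz bound is the whole mechanism, not a backstop. Similarly, your split of the forcing into (a), (b), (c) over-counts: the paper puts $(\ov\bX,\ov\bY,\ov\bZ)$ into the form \eqref{disterrorfbsde} with only the statistical error appearing as $E^{F,N,i},E^{G,N,i}$; what you call the propagation error $\sum_j w^N_{ij}|\Delta X^j|$ is not a separate forcing term but is absorbed into the Gronwall self-reference inside \Cref{lem.stabwsymm} through the sparsity bounds on $D_j v^i$.
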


Note that the formulation \eqref{univest2} is a way to encode possible effects due to the asymmetry of the graph structure; for example, in the standard mean field setting we can take $I_k = \{1,\dots,N\}$ (as all players are indistinguishable) and recover a well-known convergence estimate by noting that, in this case, $\mathcal W_k^N = \wt{\mathcal W}_k^N = 0$. In fact, before proceeding with the proof, let us try to make this result less obscure by considering an interesting case where it applies. 

\begin{remark} \label{remark.wtodeg}
Consider the case when, given some undirected graph $\Gamma^N$ with no self-loops, we have $w^N_{ij} = (\deg_N i)^{-1} \1_{j \sim_N i}$. As for condition~\eqref{wIrrcond}, it reads
\begin{equation} \label{wIrrcond.deg}
\limsup_{N \to \infty} \max_{1 \leq j \leq N} \sum_{i \sim_N j} \frac1{\deg_N i} < \infty\,;
\end{equation}
for example, we immediately see that \eqref{wIrrcond.deg} holds if the graphs $\Gamma^N$ are regular (i.e., if $\deg_N i$ is independent of $i$), or, more generally, if there is a constant $C_\irr \geq 1$ (independent of $N$) and a sequence $(\mathfrak d_N)_{N \in \N}$ such that eventually
\begin{equation} \label{deg.irrcond}
\frac{\mathfrak d_N}{{C_\irr}}
 \le \deg_N i \le {C_\irr} \mathfrak d_N \qquad \forall \, i = 1,\dots,N\,;
\end{equation}
indeed, the general estimate
\[
\max_{1 \leq j \leq N} \sum_{i \sim_N j} \frac1{\deg_N i} \leq \max_{1 \leq j \leq N}\,\frac{\deg_N j}{\min_{i \sim_N j} \deg_N i} \leq \frac{\max_{1 \leq i \leq N} \deg_N i}{\min_{1 \leq i \leq N} \deg_N i}
\]
suggests that condition~\eqref{wIrrcond} corresponds to a control on the \emph{irregularity} of the graphs $\Gamma^N$.
Condition~\eqref{degreedivergence'}, instead, becomes
\begin{equation} \label{degreedivergence.deg}
\max_{1 \leq i \leq N} \frac{1}{\deg_N i} \cdot \sum_{1 \leq i \leq N} \frac1{\deg_N i} \ \xrightarrow{N \to \infty} \ 0\,;
\end{equation}
for example, in the regime \eqref{deg.irrcond}, the quantity in \eqref{degreedivergence.deg} is proportional $N\mathfrak d_N^{-2}$, so we are requiring that $\mathfrak d_N^{-1} = o(N^{-\frac12})$. Finally, the argument of the function $\rho_d$ in \eqref{univest2} is $|\bm w^N_{i\,\cdot}|_2^{-2} = \deg_N i$.
\end{remark}

In light of the considerations in \Cref{remark.wtodeg}, we can state the following immediate consequence of \Cref{thm.universality}, which offers very clean estimates if $\Gamma^N$ are ``almost regular'', in the sense provided by condition~\eqref{deg.irrcond}.

\begin{corollary}\label{symmetriccomp}
Suppose that either \Cref{assump.uniformdisp} or \Cref{assump.uniformLL} holds. Let $(\Gamma^N,w^N)$ be a sequence of weighted graphs, with
\[
w^N_{ij} = \mathfrak d_N^{-1} \1_{j \sim_N i}\,,
\]
and satisfying the regularity condition~\eqref{deg.irrcond}.
Assume that
\[
\frac{N}{\mathfrak d_N^{2}} \ \xrightarrow{N \to \infty}\ 0\,.
\]
Then we have, with implied constants independent of $N$, 
\[
 \E\Big[ \sup_{t \in [0,T]} |\bX_t^{N} - \wt{\bX}_t^{N}|^2 \Big] \lesssim \frac{N}{\mathfrak d_N^{2}}\,,
\]
and, for any partition $(I_k)_{k=1,\dots,n}$ of $\{1,\dots,N\}$ and each $k = 1,\dots,n$ such that $\wt X^{N,i} \overset{\mathrm{d}}{=} \wt X^{N,j}$ whenever $i,j \in I_k$, 
    \begin{equation} \label{univest2reg}
        \sup_{i \in I_k} \E\Big[ \sup_{t \in [0,T]} \bigl|\wt{X}_t^{N,i} - \ov{X}^i_t\bigr|^2 \Big] 
        \lesssim \frac{N\rho_d(\mathfrak d_N)}{\mathfrak d_N \vee \#I_k}  \,, 
    \end{equation}
with $\rho_d$ defined as in \eqref{def.rhodp}.
\end{corollary}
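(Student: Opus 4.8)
The plan is to deduce \Cref{symmetriccomp} from \Cref{thm.universality} by evaluating all the graph-theoretic quantities appearing there under the regularity condition \eqref{deg.irrcond}. First I would record the elementary estimates that follow from $w^N_{ij} = \mathfrak d_N^{-1}\1_{j\sim_N i}$ and $\deg_N i \asymp \mathfrak d_N$ (uniformly in $i$, with constants depending only on $C_\irr$):
\[
|\bm w^N_{i\,\cdot}|_2^2 = \mathfrak d_N^{-2}\deg_N i \asymp \mathfrak d_N^{-1},\qquad |\bm w^N_{\cdot\,j}|_2^2 = \mathfrak d_N^{-2}\deg_N j \asymp \mathfrak d_N^{-1},\qquad |w^N|_{\mathrm{Fr}}^2 = \mathfrak d_N^{-2}\!\!\sum_{1\leq i\leq N}\!\!\deg_N i \asymp \frac{N}{\mathfrak d_N},
\]
together with $\max_{j}|\bm w^N_{\cdot\,j}|_1 = \mathfrak d_N^{-1}\max_j \deg_N j \leq C_\irr$. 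The last bound is precisely \eqref{wIrrcond}, while $|w^N|_{\mathrm{Fr}}\max_i|\bm w^N_{i\,\cdot}|_2 \asymp (N\mathfrak d_N^{-2})^{1/2}\to 0$ is \eqref{degreedivergence'}; hence, since one of \Cref{assump.uniformdisp} or \Cref{assump.uniformLL} is assumed, \Cref{thm.universality} applies and \eqref{univest1}--\eqref{univest2} hold.

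The first assertion is then immediate from \eqref{univest1}, whose right-hand side is $\asymp N\mathfrak d_N^{-2}$. For the second assertion I would first bound the two mixing quantities: directly from their definitions,
\[
\mathcal W_k^N \leq \min\Bigl\{1,\sum_{i\in I_k}|\bm w^N_{i\,\cdot}|_2^2\Bigr\}\lesssim \min\Bigl\{1,\frac{\#I_k}{\mathfrak d_N}\Bigr\},\qquad \wt{\mathcal W}_k^N \leq \min\Bigl\{1,\sum_{j\in I_k}|\bm w^N_{\cdot\,j}|_2^2\Bigr\}\lesssim \min\Bigl\{1,\frac{\#I_k}{\mathfrak d_N}\Bigr\},
\]
so in particular $1+\mathcal W_k^N\wt{\mathcal W}_k^N\lesssim 1$. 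Moreover $|\bm w^N_{i\,\cdot}|_2^{-2}\asymp \mathfrak d_N$ uniformly in $i$, and $\rho_d$ is, up to multiplicative constants, monotone and invariant under $\asymp$-perturbations of its argument (the only delicate case being $d=4$, where $\rho_4(cK)\asymp\rho_4(K)$ as $K\to\infty$; note $\mathfrak d_N\to\infty$ since $N\mathfrak d_N^{-2}\to0$). Thus $\rho^{N,i}_d\asymp\rho_d(\mathfrak d_N)$ uniformly in $i$, giving $\sum_{i\in I_k}\rho^{N,i}_d\asymp\#I_k\,\rho_d(\mathfrak d_N)$ and $\sum_{j\notin I_k}\rho^{N,j}_d\lesssim N\rho_d(\mathfrak d_N)$. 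Substituting into \eqref{univest2},
\[
\sup_{i\in I_k}\E\Bigl[\sup_{t\in[0,T]}|\wt X^{N,i}_t-\ov X^i_t|^2\Bigr]\lesssim \frac{\rho_d(\mathfrak d_N)}{\#I_k}\bigl(\#I_k+\mathcal W_k^N N\bigr).
\]

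It then remains to check $\#I_k+\mathcal W_k^N N\lesssim \#I_k\,N/(\mathfrak d_N\vee\#I_k)$, which follows from a two-case split. If $\#I_k\geq\mathfrak d_N$, then $\mathcal W_k^N\leq 1$ gives $\#I_k+\mathcal W_k^N N\leq \#I_k+N\leq 2N=2\,\#I_k\,N/\#I_k$. If $\#I_k<\mathfrak d_N$, then $\mathcal W_k^N\lesssim \#I_k/\mathfrak d_N$ gives $\#I_k+\mathcal W_k^N N\lesssim \#I_k(1+N/\mathfrak d_N)\lesssim \#I_k\,N/\mathfrak d_N$, using that $N\geq\mathfrak d_N$ since a degree is at most $N-1$. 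In both cases the right-hand side is $\lesssim\#I_k\,N/(\mathfrak d_N\vee\#I_k)$, and dividing through by $\#I_k$ yields exactly \eqref{univest2reg}. There is no substantive obstacle here: the argument is pure bookkeeping, and the only points requiring mild care are the stability of $\rho_d$ under $\asymp$-perturbation of its argument in dimension $d=4$, and the elementary case analysis in the final step.
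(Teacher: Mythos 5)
Your argument is correct and is exactly the computation the paper intends: the authors label \Cref{symmetriccomp} an ``immediate consequence'' of \Cref{thm.universality} without writing out a proof, and \Cref{remark.wtodeg} already performs the translation of the graph-theoretic quantities that you reproduce (e.g.\ $|\bm w^N_{i\,\cdot}|_2^{-2}\asymp\mathfrak d_N$ and the equivalence of \eqref{degreedivergence'} with $N\mathfrak d_N^{-2}\to 0$). The only step not already spelled out in the paper is the final bookkeeping $\#I_k+\mathcal W_k^N N\lesssim \#I_k\,N/(\mathfrak d_N\vee\#I_k)$, which you carry out correctly by the two-case split (using $\mathcal W_k^N\leq 1$ and $\mathcal W_k^N\lesssim\#I_k/\mathfrak d_N$ respectively, together with $N\geq\mathfrak d_N$), and your caution about the $d=4$ log factor requiring $\mathfrak d_N\to\infty$ is a valid and worthwhile observation.
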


\begin{remark}
Two interesting estimates come from the extremal cases of \eqref{univest2reg}; i.e., when either $\#I_k = 1$ (which we can always suppose) or $\#I_k = N$ (which can be chosen, by symmetry arguments, if $\Gamma^N$ is transitive). In general, we have
\[
\sup_{1 \leq i \leq N} \E\Big[ \,\sup_{t \in [0,T]} \bigl|\wt{X}_t^{N,i} - \ov{X}^i_t\bigr|^2 \Big] \lesssim \frac{N\rho_d(\mathfrak d_N)}{\mathfrak d_N} \,,
\]
and, for transitive graphs,
\[
\sup_{1 \leq i \leq N} \E\Big[ \sup_{t \in [0,T]} \bigl|\wt{X}_t^{N,i} - \ov{X}^i_t\bigr|^2 \Big] \lesssim \rho_d(\mathfrak d_N)\,.
\]
Note that in the former case, the right-hand side vanishes as $N \to \infty$ provided that (for example, if $d>4$) we make the stronger assumption that $\mathfrak d_N^{-1} = o(N^{-\frac{d}{d+2}})$.
\end{remark}

We now approach the proof of \Cref{thm.universality}. Notice that the bound \eqref{univest1} follows directly from \Cref{thm.olcldisp} or \Cref{thm.olclLL}, provided that their hypotheses are in force, and that the respective condition between \eqref{smallness} and \eqref{smallness1} holds. On the other hand, to prove \eqref{univest2} we need the next to lemmas.

\begin{lemma} \label{lem.stabwsymm}
Under the assumptions of \Cref{lem.lipschitzstability}, let $\hat{\bX}$, $\hat{\bY}$ and $\hat{\bZ}$ solve \eqref{disterrorfbsde}; let $\wt{\bX} = {\bX}^{\OL,t_0,\bm\zeta_0}$. Suppose further that for each $i = 1,\dots,N$ there are constants $\kappa^i$ and $\wt\kappa^i$ such that
  \begin{equation} \label{stabest.skewdec}
  \sum_{j \neq i} |D_j v^i|^2 \leq \kappa^i\,, \qquad \sum_{j \neq i} |D_i v^j|^2 \leq \wt\kappa^i\,.
   \end{equation}
   Then there is a constant $c$ (depending only on $\norm{D_{px}\bH}_\infty$ and $\norm{D_{pp}\bH}_\infty$) such that, for any partition $(I_k)_{k=1,\dots,n}$ of $\{1,\dots,N\}$ and for each $k = 1,\dots,n$, we have
\begin{equation} \label{stabest.DXsymsum}
\E \Bigl[ \,\sup_{t \in [t_0,T]}\, \sum_{i \in I_k} \bigl|\hat{X}^i_t - \wt X_t^{i}\bigr|^2 \Bigr] 
\leq (T-t_0) e^{-Tc(1+M)} \bigl( 1 + \mathcal K_k(M) \wt{\mathcal K}_k(M) \bigr) \biggl(\ \sum_{i \in I_k} \mathcal E^i + \mathcal K_k(M) \sum_{j \notin I_k}  \mathcal E^j  \biggr)\,,
\end{equation}
where
\begin{equation} \label{KkMdef}
\mathcal K_k(M) \defeq \min\biggl\{\sqrt M,\, \sum_{i \in I_k} \kappa^i,\, \sum_{j \notin I_k} \wt\kappa^j \biggr\}\,, \qquad
\wt{\mathcal K}_k(M) \defeq \min\biggl\{\sqrt M,\, \sum_{i \notin I_k} \kappa^i,\, \sum_{j \in I_k} \wt\kappa^j \biggr\}\,,
\end{equation}
and $\mathcal E^i$ is defined as in \eqref{stabest.defEi}.
\end{lemma}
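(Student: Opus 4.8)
The plan is to run a forward--backward synchronous coupling using the Lipschitz decoupling field $\bv$, exactly as in \Cref{lem.lipschitzstability}, but organized so that the partition $(I_k)$ splits every interaction term into a ``within-block'' part (which is harmless and gets absorbed into a large Gr\"onwall factor) and an ``across-block'' part (which carries the small weights $\mathcal K_k(M)$, $\wt{\mathcal K}_k(M)$). The first step is to reformulate the backward component. Write $\wt{\bX}=\bX^{\OL,t_0,\bzeta_0}$, so $\wt Y^i_t:=v^i(t,\wt{\bX}_t)$ realizes the Pontryagin FBSDE \eqref{pontryagin}, and introduce the ``decoupled'' process $\bar Y^i_t:=v^i(t,\hat{\bX}_t)$. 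Applying It\^o's formula to $v^i(t,\hat{\bX}_t)$ and substituting the PDE \eqref{pontpde} for $\bv$ (noting that $\hat{\bX}$ has drift $-D_pH^\cdot(\hat X^\cdot,\hat Y^\cdot)$, \emph{not} $-D_pH^\cdot(\hat X^\cdot,v^\cdot(t,\hat{\bX}))$), one finds that the residual $R^i_t:=\hat Y^i_t-v^i(t,\hat{\bX}_t)$ solves a BSDE system of the form
\[
\d R^i_t = \Bigl( D_xH^i\bigr|^{(\hat X^i_t,\hat Y^i_t)}_{(\hat X^i_t,\bar Y^i_t)} + \sum_{j} D_jv^i(t,\hat{\bX}_t)\,D_pH^j\bigr|^{(\hat X^j_t,\hat Y^j_t)}_{(\hat X^j_t,\bar Y^j_t)} - E^{F,i}_t \Bigr)\d t + \d M^i_t,\qquad R^i_T = E^{G,i},
\]
with $M^i$ a martingale; this is a system with Lipschitz-type coupling through the matrices $D_jv^i$ and with source terms $E^{F,i}_t,E^{G,i}$, and since $|\hat Y^i_t-\bar Y^i_t|=|R^i_t|$ the two Hamiltonian differences above are bounded by $\norm{D_{px}\bH}_\infty|R^i_t|$ and $\norm{D_{pp}\bH}_\infty|R^j_t|$ respectively.

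The second step is a block-refined $L^2$ bound for $\bR$. Expand $\d\sum_{i\in I_k}|R^i_t|^2$ and $\d\sum_{j\notin I_k}|R^j_t|^2$, take expectations to discard the martingales, and integrate backwards from $T$. The diagonal term $D_xH^i\bigr|^{(\cdot)}$ and the within-block cross terms $\sum_{j\in I_k\setminus\{i\}}D_jv^i R^j$ are absorbed into $c\bigl(1+\norm{D_{px}\bH}_\infty+\norm{D_{pp}\bH}_\infty\sqrt M\bigr)\,\E\!\int\sum_{i\in I_k}|R^i|^2$, using that the relevant sub-block of $D\bv$ has operator norm $\le\sqrt M$; likewise for the complementary block. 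The only genuinely coupling contribution, $\sum_{i\in I_k}R^i\cdot\sum_{j\notin I_k}D_jv^i R^j$, is estimated in \emph{three} ways: (i) $\le\norm{D_{pp}\bH}_\infty\sqrt M\,\bigl(\sum_{i\in I_k}|R^i|^2\bigr)^{1/2}\bigl(\sum_{j\notin I_k}|R^j|^2\bigr)^{1/2}$, from the operator-norm bound on $D\bv$; (ii) $\le\bigl(\sum_{i\in I_k}\kappa^i\bigr)^{1/2}(\cdots)$, by Cauchy--Schwarz on each row, since $\sum_{j\notin I_k}|D_jv^i|^2\le\sum_{j\ne i}|D_jv^i|^2\le\kappa^i$; and (iii) $\le\bigl(\sum_{j\notin I_k}\wt\kappa^j\bigr)^{1/2}(\cdots)$, after transposing the quadratic form and using $\sum_{i\in I_k}|D_jv^i|^2\le\sum_{i\ne j}|D_jv^i|^2\le\wt\kappa^j$ (which is \eqref{stabest.skewdec} read ``column-wise''). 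These three bounds are precisely what produce the quantity $\mathcal K_k(M)$ governing the $I_k^c\to I_k$ direction, and, by the symmetric treatment of the complementary block, $\wt{\mathcal K}_k(M)$ for the $I_k\to I_k^c$ direction. Solving the resulting coupled two-component backward Gr\"onwall inequality then gives
\[
\sup_{t\in[t_0,T]}\E\Bigl[\sum_{i\in I_k}|R^i_t|^2\Bigr] \ \lesssim\ e^{cT(1+\sqrt M)}\bigl(1+\mathcal K_k(M)\wt{\mathcal K}_k(M)\bigr)\Bigl(\sum_{i\in I_k}\mathcal E^i + \mathcal K_k(M)\sum_{j\notin I_k}\mathcal E^j\Bigr).
\]

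The third step is the forward estimate. Since $\hat{\bX}$ and $\wt{\bX}$ are driven by the same Brownian motions and share the initial datum $\bzeta_0$, the difference $\Delta\bX:=\hat{\bX}-\wt{\bX}$ has no martingale part and vanishes at $t_0$; hence $\sup_{t\in[t_0,T]}\sum_{i\in I_k}|\Delta X^i_t|^2\le\int_{t_0}^T\bigl|\tfrac{\d}{\d s}\sum_{i\in I_k}|\Delta X^i_s|^2\bigr|\,\d s$ pathwise, and no Burkholder--Davis--Gundy inequality is needed. Expanding the derivative, using the Lipschitz continuity of $D_pH^i$ and $\Delta Y^i = R^i + \bigl(v^i(t,\hat{\bX})-v^i(t,\wt{\bX})\bigr)$, and splitting $v^i(t,\hat{\bX})-v^i(t,\wt{\bX})=\int_0^1\sum_j D_jv^i(t,\cdot)\Delta X^j\,ds$ into within- and across-block parts exactly as in the second step, one bounds the integrand by $c(1+\sqrt M)\sum_{i\in I_k}|\Delta X^i|^2$, plus an across-block coupling term of the same three-fold type (again producing $\mathcal K_k,\wt{\mathcal K}_k$), plus $\tfrac12\sum_{i\in I_k}|\Delta X^i|^2+\tfrac12\sum_{i\in I_k}|R^i|^2$. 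Running the same coupled Gr\"onwall argument on $\bigl(\E[\sum_{i\in I_k}|\Delta X^i_t|^2],\,\E[\sum_{j\notin I_k}|\Delta X^j_t|^2]\bigr)$ with the second-step bound inserted as the source term, and then integrating once more to pass to the expectation of the supremum, yields \eqref{stabest.DXsymsum}. (Taking $I_k=\{1,\dots,N\}$ makes $\mathcal K_k=\wt{\mathcal K}_k=0$ and recovers \Cref{lem.lipschitzstability}, which is why condition \eqref{stabest.skewdec} is not needed there.)

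The main obstacle is the bookkeeping in the second and third steps: one must ensure that every within-block interaction is absorbed into the (possibly large) factor $e^{cT(1+\sqrt M)}$ through the crude estimate $|D\bv|_{\op}\le\sqrt M$, while every across-block interaction is retained with the genuinely small coefficient $\mathcal K_k(M)$ or $\wt{\mathcal K}_k(M)$, and that solving the $2\times 2$ linear Gr\"onwall system returns precisely the asymmetric structure $(1+\mathcal K_k\wt{\mathcal K}_k)\bigl(\sum_{i\in I_k}\mathcal E^i + \mathcal K_k\sum_{j\notin I_k}\mathcal E^j\bigr)$ rather than a symmetrized and weaker bound; the choice, for each across-block term, of which of the three available estimates to use is exactly what makes the $\min$-structure in the definitions \eqref{KkMdef} of $\mathcal K_k(M)$ and $\wt{\mathcal K}_k(M)$ appear. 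Everything else is a routine It\^o/Gr\"onwall computation of the same kind as in the proofs of \Cref{prop.olboundsdisp} and \Cref{lem.clol}, with $c$ depending only on $\norm{D_{px}\bH}_\infty$ and $\norm{D_{pp}\bH}_\infty$ (the Lipschitz constant of $\bv$ enters only through $M$).
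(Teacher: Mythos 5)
Your proposal is correct and follows essentially the same route as the paper: set $\bar Y^i_t = v^i(t,\hat{\bX}_t)$ (the paper's $\wt Y^i$), identify the residual $R^i = \hat Y^i - \bar Y^i$ (the paper's $\Delta Y^i$) as solving the BSDE you write, run a two-block backward Gr\"onwall on $\bigl(\sum_{i\in I_k}|R^i|^2,\sum_{j\notin I_k}|R^j|^2\bigr)$ using the three-fold estimate of the cross-block coupling, and then feed the result into a two-block forward Gr\"onwall on $\Delta\bX$. The small notational difference (your cross-block coefficient is phrased with $(\sum_{i\in I_k}\kappa^i)^{1/2}$, $(\sum_{j\notin I_k}\wt\kappa^j)^{1/2}$, while the paper's $\mathcal K_k(M)$ drops the roots) is only constant-level bookkeeping absorbed by Young's inequality, and your observation that $\Delta\bX$ has no martingale part so no BDG is needed is exactly the mechanism the paper exploits in its forward step.
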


\begin{proof}
Throughout this proof, we will use $c$ to denote any constant depending only on $\norm{D_{px} \bH}_\infty$ and $\norm{D_{pp}\bH}_\infty$ (so its value may also change from line to line), and implied constants as well will depends only on those two quantities. 
Set $\wt{Y}_t^i = v^i(t,\hat{\bX}_t)$, $\wt{Z}_t^{i,j} = \sqrt{2 \sigma} D_{j} v^i(t,\hat{\bX}_t)$, and $\wt{Z}_t^{i,0} = \sqrt{2\sigma_0} \sum_{j} D_{j} v^i(t,\hat{\bX}_t)$. Also set $\Delta X^i \defeq \hat X^i - \wt X^i$ and $\Delta Y_t^i \defeq \hat{Y}_t^i - \wt{Y}_t^i$.
Note that by It\^o's formula and the PDE for $v^i$, we get 
 \begin{equation*}
       \begin{dcases}
       \ds  \d\wt{Y}_t^i = \Big( D_x H^i(\hat{X}_t^i,\wt{Y}_t^i)\big) - D_{i} F^i(\hat{\bX}_t) - \sum_j D_{j} v^i(t,\hat{\bX}_t) D_pH^j\Bigr|^{(\hat{X}_t^j, \hat{Y}_t^j)}_{(\hat{X}_t^j, \wt{Y}_t^j)}  \Big) \d t
       \ds+ \sum_{j} \wt{Z}_t^{i,j} \d W_t^j
       \\
      \ds \wt{Y}_{T}^i = D_{i} G^i(\hat{\bX}_T)\,.
      \end{dcases}
\end{equation*}

Compute $\d |\Delta X^i_t|^2$, sum over $i \in I_k$, and use the triangle inequality and the regularity of $H^i$, to get
\[
\begin{split}
    \d \sum_{i \in I_k} |\Delta X^i_t|^2  &= -2 \sum_{i \in I_k} (\Delta X_t^i)^\TT D_pH^i\Bigr|^{(\hat{X}_t^i, \hat Y^i_t)}_{({X}_t^i, v^i(t,{\bX}_t))}\, \d t 
    \\
    &\leq 2 \sum_{i \in I_k} |\Delta X^i_t| |\Delta Y^i_t| \,\d t - 2 \sum_{i \in I_k} (\Delta X^i_t)^\TT D_pH^i\Bigr|^{(\hat{X}_t^i, v^i(t,\hat{\bX}_t))}_{({X}_t^i, v^i(t,{\bX}_t))}\, \d t \,.
\end{split}
\]
Now notice that, using H\"{o}lder's and the Cauchy--Schwarz inequalities, as well as the hypotheses on $\bv$, we have
\[
\begin{split}
    &\biggl\lvert\,  \sum_{i \in I_k} (\Delta X^i_t)^\TT D_pH^i\Bigr|^{(\hat{X}_t^i, v^i(t,\hat{\bX}_t))}_{({X}_t^i, v^i(t,{\bX}_t))} \biggr\rvert 
    \\
    &\qquad \lesssim \sum_{i \in I_k} |\Delta X^i_t|^2 + \biggl(\,\sum_{i \in I_k} |\Delta X^i_t|^2 \biggr)^{\frac12} \biggl(\, \sum_{i \in I_k} \biggl\lvert\, \int_0^1 \sum_{j \in I_k} D_j v^i(t,s\hat{\bX}_t + (1-s)\bX_t) \,\Delta X^j_t \,\d s \biggr\rvert^2 \biggr)^{\frac12} \\
    &\qquad\quad +  \biggl(\,\sum_{i \in I_k} |\Delta X^i_t|^2 \biggr)^{\frac12}  \min\biggl\{ |D\bv|_\op,\, \biggl\lVert \, \sum_{i \in I_k}  \sum_{j \notin I_k} |D_j v^i|^2 \biggr\rVert_\infty^{\frac12} \biggr\} \biggl(\,\sum_{j \notin I_k} |\Delta X^j_t|^2 \biggr)^{\frac12}
    \\[3pt]
    &\qquad \lesssim \bigl( 1 + \sqrt{M}\,\bigr) \sum_{i \in I_k} |\Delta X^i_t|^2 + \min\biggl\{\sqrt M,\, \sum_{i \in I_k} \kappa^i,\, \sum_{j \notin I_k} \wt\kappa^j \biggr\} \sum_{j \notin I_k} |\Delta X^j_t|^2\,.
\end{split}
\]
Then it follows from Gronwall's lemma that, for any $\tau \in [t_0,T]$,
\begin{equation} \label{stabsym.DXest1}
\sum_{i \in I_k} |\Delta X^i_\tau|^2
\leq e^{Tc(1+\sqrt{M}\,)} \biggl(\, \int_{t_0}^\tau \sum_{i \in I_k} |\Delta Y^i|^2 + c \mathcal K_k(M) \int_{t_0}^\tau \sum_{j \notin I_k} |\Delta X^j|^2 \biggr) \,,
\end{equation}
with $\mathcal K_k(M)$ defined as in \eqref{KkMdef}.
On the other hand, arguing similarly for $\Delta Y^i_t$ (this time also taking expectations to cancel a martingale term coming from the equation of $\d|\Delta Y^i_t|^2$), we obtain that 
\[
\E\biggl[\,\sum_{i \in I_k} |\Delta Y^i_\tau|^2 \biggr] 
\leq \sum_{i \in I_k} \mathcal E^i + c \mathcal K_k(M) \E \biggl[\,\int_\tau^T \sum_{j \notin I_k} |\Delta Y^j|^2 \bigg] + c(1+\sqrt{M}\,) \E\bigg[ \int_\tau^T \sum_{i \in I_k} |\Delta Y^i|^2 \bigg]\,,
\]
so, by Gronwall's lemma,
\begin{equation} \label{stabsym.DYest1}
 \E\biggl[ \,\sum_{i \in I_k} |\Delta Y^i_\tau|^2 \biggr] \leq e^{Tc(1+\sqrt{M}\,)} \biggl( \,\sum_{i \in I_k} \mathcal E^i + c\mathcal K_k(M) \E \biggl[\, \int_{\tau}^T \sum_{j \notin I_k} |\Delta Y^j|^2 \bigg]\biggr) \,.
\end{equation}
Moreover, by swapping the symbols ``$\in$'' and ``$\notin$'' in the above argument, we also have 
\begin{equation} \label{stabsym.DXest2}
\sum_{i \notin I_k} |\Delta X^i_\tau|^2 
\leq e^{Tc(1+\sqrt{M}\,)} \biggl(\, \int_{t_0}^\tau \sum_{i \notin I_k} |\Delta Y^i|^2 + c \wt{\mathcal K}_k(M) \int_{t_0}^\tau \sum_{j \in I_k} |\Delta X^j|^2  \biggr) \,,
\end{equation}
and
\begin{equation} \label{stabsym.DYest2}
 \E\biggl[ \,\sum_{i \notin I_k} |\Delta Y^i_\tau|^2 \biggr] \leq e^{Tc(1+\sqrt{M}\,)} \biggl( \,\sum_{i \notin I_k} \mathcal E^i + c\wt{\mathcal K}_k(M) \, \E \biggl[\, \int_{\tau}^T \sum_{j \in I_k} |\Delta Y^j|^2 \bigg]\biggr)  \,,
\end{equation}
with $\wt{\mathcal K}_k(M)$ as in \eqref{KkMdef}.
Combine \eqref{stabsym.DYest1} and \eqref{stabsym.DYest2}, and use Gronwall's lemma, to obtain
\begin{equation} \label{stabsym.DYest3}
 \E\biggl[ \,\sum_{i \in I_k} |\Delta Y^i_\tau|^2 \biggr]
 \leq e^{Tc( 1+\sqrt{M} +\mathcal K_k(M)\wt{\mathcal K}_k(M) )} \biggl(\, \sum_{i \in I_k} \mathcal E^i + c(T-\tau) \mathcal K_k(M) \sum_{j \notin I_k} \mathcal E^j \biggr) \,,
\end{equation}
as well as, symmetrically,
\[
 \E\biggl[ \,\sum_{i \notin I_k} |\Delta Y^i_\tau|^2 \biggr] \leq e^{Tc( 1+\sqrt{M} +\mathcal K_k(M)\wt{\mathcal K}_k(M) )} \biggl( \,\sum_{i \notin I_k} \mathcal E^i + c(T-\tau) \wt{\mathcal K}_k(M) \sum_{j \in I_k} \mathcal E^j \biggr)\,;
\]
then combine these two estimates with \eqref{stabsym.DXest1} and \eqref{stabsym.DXest2}, and see that
\[
\begin{multlined}[.95\displaywidth]
\E \biggl[\, \sum_{i \notin I_k} |\Delta X^i_\tau|^2 \biggr] e^{-Tc( 1+\sqrt{M} +\mathcal K_k(M)\wt{\mathcal K}_k(M) )}
\\
\leq (\tau-t_0) \biggl(\,\bigl( 1 + \mathcal K_k(M) \wt{\mathcal K}_k(M) \bigr) \sum_{i \notin I_k} \mathcal E^i + \wt{\mathcal K}_k(M) \sum_{j \in I_k} \mathcal E^j 
+  \mathcal K_k(M) \wt{\mathcal K}_k(M) \,\E \biggl[\, \int_{t_0}^\tau \sum_{i \notin I_k} |\Delta X^i|^2 \biggr] \biggr) \,,
\end{multlined}
\]
so that, invoking Gronwall's lemma once more,
\[
\E \biggl[\, \sum_{i \notin I_k} |\Delta X^i_\tau|^2 \biggr] \leq  (\tau-t_0) e^{Tc( 1+\sqrt{M} +\mathcal K_k(M)\wt{\mathcal K}_k(M) )} \biggl( \bigl( 1 + \mathcal K_k(M) \wt{\mathcal K}_k(M) \bigr) \sum_{i \notin I_k} \mathcal E^i + \wt{\mathcal K}_k(M) \sum_{j \in I_k} \mathcal E^j \biggr)\,.
\]
Finally, plug this back into \eqref{stabsym.DXest1}, and use \eqref{stabsym.DYest3} to conclude that
\[\begin{multlined}[.95\displaywidth]
\E \biggl[\,\sup_{t \in [t_0,T]} \sum_{i \in I_k} |\Delta X^i_t|^2 \biggr]
\\
\leq (T-t_0)e^{Tc( 1+\sqrt{M} +\mathcal K_k(M)\wt{\mathcal K}_k(M) )}  \bigl( 1 + \mathcal K_k(M) \wt{\mathcal K}_k(M) \bigr) \biggl(\ \sum_{i \in I_k} \mathcal E^i + \mathcal K_k(M) \sum_{j \notin I_k}  \mathcal E^j  \biggr) \,,
\end{multlined}
\]
which is \eqref{stabest.DXsymsum} if we use the bound $\mathcal K_k(M)\wt{\mathcal K}_k(M) \leq M$.
\end{proof}

The second lemma we will be using to prove \Cref{thm.universality} is a generalized version of the classic estimate of \cite[Theorem~1]{FG13}. We note that our proof is inspired by that of \cite[Lemma~4.1]{CDCP24}, and in fact we propose an improvement of this recent result.

\begin{lemma} \label{lem.FGgen} 
Let $(\eta^i)_{i=1,\dots,N}$ be independent random variables with $m^i = \cL(\eta^i) \in \cP_q(\R^d)$, for some $q > 0$. Let $(\omega^i)_{i=1,\dots,N}$ be non-negative weights such that $\sum_{1 \leq i \leq N} \omega^i = 1$. Then, for any $r \in (0,q)$, with implied constant depending only on $d$, $q$, and $r$, we have 
\[
\E\biggl[\, \bd_r\biggl(\,\sum_{1 \leq i \leq N} \omega^i m^i\,,\, \sum_{1 \leq i \leq N}\omega^i \delta_{\eta^i} \biggr)^r\, \biggr] \lesssim M_q(\bm{m})^{\frac{r}{q}} \rho_{d,q,r}\bigl(|\bm\omega|^{-2}\bigr)\,,
\]
where  
\[
M_q(\bm{m}) \coloneqq \max_{1 \leq i \leq N} M_q(m^i)\,, \quad M_q(m^i) \coloneqq \int_{\R^d} |x|^q \,m^i(\d x)\,, 
\]
\[
\rho_{d,q,r}(K) \defeq 
\begin{dcases}
K^{-\frac12} + K^{-\frac{q-r}{q}} & \text{if} \ r > \frac d2 \text{ and } q \neq 2r \\
K^{-\frac12}\log(1+K) + K^{-\frac{q-r}{q}} & \text{if} \ r = \frac d2 \text{ and } q \neq 2r \\
K^{-\frac rd} + K^{-\frac{q-r}{q}} & \text{if} \ r < \frac d2 \text{ and } q \neq \frac{d}{d-r}
\end{dcases}
\]
and $|\bm\omega|$ is the Euclidean norm of the vector $\bm\omega = (\omega^i)_{i=1,\dots,N}$.
\end{lemma}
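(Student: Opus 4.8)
The strategy is to reduce the weighted empirical measure estimate to the classical (uniform-weight) result of Fournier--Guillin \cite{FG13} by a clustering/aggregation trick, exactly as in the proof of \cite[Lemma~4.1]{CDCP24}. First I would isolate the two regimes of $K \defeq |\bm\omega|^{-2}$: since $\sum_i (\omega^i)^2 = 1/K$, the weights are (quantitatively) spread out over roughly $K$ indices. The first step is a Wasserstein-distance decomposition: if we replace the genuine weights $\omega^i$ by ``rounded'' weights $\omega^i_K$ that are integer multiples of $1/\lceil K \rceil$ (with a suitable rounding keeping $\sum_i \omega^i_K = 1$), then by the triangle inequality and the elementary bound $\bd_r(\sum \omega^i m^i, \sum \omega^i_K m^i)^r \lesssim \sum_i |\omega^i - \omega^i_K| M_q(m^i)^{r/q}$ (and the analogous deterministic bound for the empirical versions), the rounding error contributes a term of size $O(K^{-1})\cdot M_q(\bm m)^{r/q}$, which is absorbed into $\rho_{d,q,r}(K)$. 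So it suffices to treat the case where each $\omega^i = k_i/\lceil K\rceil$ with $k_i \in \N$ and $\sum_i k_i = \lceil K \rceil$.

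For integer-multiple weights, the key observation is that $\sum_i \omega^i m^i = \frac{1}{\lceil K\rceil}\sum_{\ell=1}^{\lceil K\rceil} \bar m^{(\ell)}$ and $\sum_i \omega^i \delta_{\eta^i} = \frac{1}{\lceil K\rceil}\sum_{\ell=1}^{\lceil K\rceil}\delta_{\bar\eta^{(\ell)}}$, where we have split the weight $\omega^i$ into $k_i$ equal ``atoms'', listing the laws $m^i$ with multiplicity $k_i$ to form $\bar m^{(1)},\dots,\bar m^{(\lceil K\rceil)}$, and letting $\bar\eta^{(\ell)}$ be an independent copy of $\eta^i$ for each atom belonging to index $i$ (these are genuinely independent since the $\eta^i$ are independent and we use fresh randomness for duplicated atoms — here one uses that $\cL(\sum_i \omega^i\delta_{\eta^i})$ in distribution agrees with using independent copies, because atoms with the same $i$ still come from the same $m^i$; more precisely one couples so that duplicated atoms of index $i$ are i.i.d.\ $\sim m^i$, which only increases the expected $\bd_r$ — alternatively one conditions). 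Then I would apply the non-i.i.d.\ version of the Fournier--Guillin estimate (with $\lceil K\rceil$ independent but non-identically distributed samples, uniform weights $1/\lceil K\rceil$), which gives a bound of the form $M_q(\bm m)^{r/q}\,\rho_{d,q,r}(\lceil K\rceil)$, and finally note $\rho_{d,q,r}(\lceil K\rceil) \asymp \rho_{d,q,r}(K)$ since $\lceil K\rceil \leq 2K$ for $K \geq 1$ (and the trivial bound $\bd_r \lesssim M_q^{1/q}$ handles $K < 1$).

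The main obstacle is the coupling/independence bookkeeping in the duplication step: when $k_i \geq 2$ we are not literally sampling $\bd_r(\sum\omega^i m^i,\sum\omega^i\delta_{\eta^i})$ but rather a version where each index $i$ contributes a single $\eta^i$ with multiplicity $k_i$, whereas the Fournier--Guillin-type bound wants $\lceil K\rceil$ \emph{independent} draws. The clean fix is to observe that $\E[\bd_r(\mu, \frac1n\sum_\ell \delta_{\xi_\ell})^r]$, as a function of the law of the (exchangeable within blocks) sample, is minimized — up to constants — when the within-block draws are independent, or, more robustly, to prove the non-i.i.d.\ empirical bound directly (it is a routine adaptation of \cite[Theorem~1]{FG13} via the dyadic partitioning of $\R^d$ and moment control $M_q(\bar m^{(\ell)}) \leq M_q(\bm m)$ for every block $\ell$, plus a variance computation using block-independence). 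I would carry this out: state and prove the non-i.i.d.\ version as the genuine content, then the weighted case follows by the rounding-plus-duplication reduction above. One should also double-check the exponent $\rho_{d,q,r}$ matches across the reduction — the two summands $K^{-1/2}$ (resp.\ $K^{-r/d}$, $K^{-1/2}\log$) and $K^{-(q-r)/q}$ in the definition are exactly the $n^{-1/2}$/$n^{-r/d}$ concentration term and the $n^{-(q-r)/q}$ moment-truncation term from \cite{FG13} with $n \asymp K$, so this is consistent.
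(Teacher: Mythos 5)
You take a genuinely different route from the paper. The paper's proof is a direct adaptation of the Fournier--Guillin dyadic argument to weighted, non-identically-distributed samples: after the scaling reduction to $M_q(\bm m)=1$, it computes for each Borel set $A$
\[
\E\Bigl[\,\Bigl|\sum_i\omega^i(m^i-\delta_{\eta^i})(A)\Bigr|\,\Bigr]
\ \le\ \min\Bigl\{\sum_i\omega^i m^i(A),\ \Bigl(\sum_i(\omega^i)^2 m^i(A)\Bigr)^{\frac12}\Bigr\}\,,
\]
the first bound from Jensen and the second from independence of the $\eta^i$, and then plugs this into the dyadic partition of $\R^d$ with $N$ replaced throughout by $|\bm\omega|^{-2}=(\sum_i(\omega^i)^2)^{-1}$. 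No reduction to uniform weights is needed; the variance structure already surfaces the correct effective sample size.

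There is a genuine gap in your ``clean fix''. You simultaneously claim that the expected $\bd_r$ is \emph{minimized} when the within-block draws are i.i.d., and that replacing the $k_i$-fold repeated atom by $k_i$ fresh i.i.d.\ draws ``only increases'' it. These two assertions are contradictory, and the first is the correct one: $k_i$ independent draws from $m^i$ give a strictly better Monte-Carlo approximation of $m^i$ than a single sample repeated $k_i$ times, so the replacement makes the Wasserstein error \emph{smaller}. Consequently bounding the i.i.d.\ version does not upper-bound the duplicated version you actually need to control, and the rounding-plus-duplication reduction cannot be completed via this coupling. Your fallback --- proving the non-i.i.d.\ empirical bound directly via the dyadic partition and a block-independence variance computation --- is the right idea, but once you write it out the relevant variance is $\sum_i(k_i/\lceil K\rceil)^2\,m^i(A)(1-m^i(A))\approx\sum_i(\omega^i)^2\,m^i(A)$, which is exactly the paper's estimate and applies just as well to the \emph{original} weights as to the rounded ones. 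In other words, the detour through rounding and duplication buys you nothing: the direct weighted variance bound is the whole content, and the step where you pass to $\lceil K\rceil$ genuinely independent draws is not justified. (It is also worth noting that $\max_i\omega^i$ can be as large as $|\bm\omega|=K^{-1/2}$ under the stated hypotheses, so $\max_i k_i\sim\sqrt K$ is possible, and any argument that silently treats the duplicated atoms as if they were independent would have to be checked carefully against that regime.)
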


\begin{proof}
First, we show that it suffices to prove the result in the case that $M_q(\bm{m}) = 1$; indeed, if it holds in that case, then, given arbitrary $m^1,\dots,m^N$, we can define, for each $i=1,\dots,N$, 
\[
    \tilde{m}^i \defeq \bigl(M_q(\bm{m})^{-\frac1q} \mathrm{id}_{\R^d}\bigr)_{\sharp} m^i\,, 
\qquad
    \tilde{\eta}^i \defeq M_q(\bm{m})^{-\frac1q} \eta^i\,,
\]
and notice that $M_q(\tilde{\bm m}) = 1$ and $\cL(\tilde{\eta}^i) = \tilde{m}^i$, so that 
\[
\begin{split}
    M_q(\bm{m})^{-\frac{r}{q}} \E\biggl[\, \bd_r\biggl(\,\sum_{1 \leq i \leq N} \omega^i m^i\,,\, \sum_{1 \leq i \leq N}\omega^i \delta_{\eta^i} \biggr)^r\, \biggr] &= 
    \E\biggl[\, \bd_r\biggl(\,\sum_{1 \leq i \leq N} \omega^i \tilde{m}^i\,,\, \sum_{1 \leq i \leq N}\omega^i \delta_{\tilde{\eta}^i} \biggr)^r\, \biggr] 
    \\
    &\lesssim \rho_{d,q,r}\bigl(|\bm\omega|^{-2}\bigr)\,.
\end{split}
\]
Hence we will suppose that $M_q(\bm m) = 1$.
Let $A \subset \R^d$ be any Borel set. We have
\begin{equation} \label{FGp.e1}
\E\biggl[\, \biggl\lvert\, \sum_{ i } \omega^i \bigl(m^i - \delta_{\eta^i}\bigr) (A) \biggr\rvert \,\biggr] \leq \sum_i \omega^i \,\E \bigl[ \bigl| (m^i - \delta_{\eta^i}) (A) \bigr|^2 \bigr] = \sum_i \omega^i \bigl( m^i(A) - m^i(A)^2 \bigr)\,;
\end{equation}
on the other hand, by the independence of the $\eta^i$,
\begin{equation} \label{FGp.e2}
\E\biggl[\, \biggl\lvert\, \sum_{ i } \omega^i \bigl(m^i - \delta_{\eta^i}\bigr) (A) \biggr\rvert^2 \,\biggr] = \sum_i (\omega^i)^2 \,\E\bigl[\bigl| (m^i-\delta_{\eta^i})(A) \bigr|^2\bigr] = \sum_i (\omega^i)^2 \bigl( m^i(A) - m^i(A)^2 \bigr)\,.
\end{equation}
Then combine \eqref{FGp.e1} and \eqref{FGp.e2} to deduce that
\begin{equation} \label{FGp.E1}
\E\biggl[\, \biggl\lvert\, \sum_{ i } \omega^i \bigl(m^i - \delta_{\eta^i}\bigr) (A) \biggr\rvert \,\biggr] \leq \min \biggl\{ \sum_i \omega^i m^i(A),\, \Bigl(\, \sum_i (\omega^i)^2 m^i(A) \Bigr)^{\frac12} \biggr\}\,.
\end{equation}
Now, for $\ell \in \N$, let $\mathscr P_\ell$ be the partition of $(-1,1]^d$ into the $2^{d\ell}$ dyadic cubes $\mathfrak C_\ell \defeq (-2^{-\ell},2^{-\ell}]^d$. Set $B_n \defeq \mathfrak C_{-n} \setminus \mathfrak C_{-(n-1)}$. Also, for $F \in \mathscr P_\ell$ and $n \in \N$, set $2^n F \defeq \{ x \in \R^d :\ 2^{-n}x \in F \}$. Then, using \eqref{FGp.E1} and the Cauchy--Schwarz inequality, we have
\[
\sum_{F \in \mathscr P_\ell} \E\biggl[\, \biggl\lvert\, \sum_{ i } \omega^i \bigl(m^i - \delta_{\eta^i}\bigr) (2^nF \cap B_n) \biggr\rvert \,\biggr] \leq \min \biggl\{ \sum_i \omega^i m^i(B_n),\ 2^{\frac{d\ell}2} \Bigl( \, \sum_i (\omega^i)^2 m^i(B_n) \Bigr)^{\frac12} \biggr\}\,,
\]
and, since $M_q(m^i) \leq 1$,
\[
m^i(B_n) \leq 2^{-q(n-1)} \int_{B_n} |\cdot|^q \,\d m^i \leq 2^{-q(n-1)}\,.
\]
Therefore, we obtain
\[
\sum_{F \in \mathscr P_\ell} \E\biggl[\, \biggl\lvert\, \sum_{ i } \omega^i \bigl(m^i - \delta_{\eta^i}\bigr) (2^nF \cap B_n) \biggr\rvert \,\biggr] \lesssim \min \bigl\{ 2^{-qn},\, 2^{\frac{d\ell-q}2} |\bm\omega| \,\bigr\}\,,
\]
with implied constant depending only on $q$. The rest of the proof is now the same as for \cite[Theorem~1]{FG13}, where $N$ is replaced with $|\bm\omega|^{-2}$.
\end{proof}

\begin{proof}[Proof of \Cref{thm.universality}]
As anticipated, to prove \eqref{univest1} we only need to check that we can apply \Cref{thm.olcldisp} and \Cref{thm.olclLL}.
In the displacement semi-monotone case, note that \Cref{assump.uniformdisp} is almost a restatement of Assumption \ref{assump.disp} in the case where $F^{N,i}(\bx) = \cF(x^i,m_{\bx,w}^{N,i})$ and $G^{N,i}(\bx) = \cG(x^i,m_{\bx,w}^{N,i})$, except for the Lipschitz property of $\mathrm{diag}(D \bF^{N})$ and $\mathrm{diag}(D \bG^{N})$. We verify this, uniformly in $N$, for $\bF$, the proof for $\bG$ being analogous. We use the Lipschitz continuity of $D_x \cF$ to find that
\[
\sum_{1 \leq i \leq N} \biggl|D_x\cF\Bigr|^{(x^i,m_{\bx,w}^{N,i})}_{(\ov x^i,m_{\ov{\bx},w}^{N,i})} \biggr|^2
\leq 2\|D_{xx} \cF\|_\infty |\bx -\bar{\bx}|^2 + 2\norm{D_{mx} \cF}_\infty \sum_{1 \leq i \leq N} \bd_2(m_{\bx,w}^{N,i},m_{\bar{\bx},w}^{N,i})^2\,,
\]
where
\[
\sum_{1 \leq i \leq N} \bd_2(m_{\bx,w}^{N,i},m_{\bar{\bx},w}^{N,i})^2 \leq \sum_{1 \leq i,j \leq N} w^N_{ij} |x^j - \bar x^j|^2 \leq  \max_j |\bm w^N_{\cdot\,j}|_1  |\bx - \bar{\bx}|^2\,,
\]
where the constant on the right-hand side is uniformly bounded in $N$ thanks to \eqref{wIrrcond}.
We now show that \eqref{smallness} holds, provided that $N$ is large enough. Since, for $j \neq i$,
\[
D_{j} F^{N,i} = w^N_{ij} D_m \cF(x^i, m_{\bx,w}^{N,i}, x^j) \,,
\]
we have
\begin{equation*}
    \delta^i \leq \bigl( \|D_m \cF\|^2_\infty + \|D_m \cG\|^2_\infty\bigr) |\bm w^N_{i\,\cdot}|_2^2 
\end{equation*}
so we see that \eqref{degreedivergence'} is equivalent to ${\delta} \sum_{i} \delta^{i} \to 0$ as $N \to \infty$.
In the Lasry--Lions semi-monotone setting, one verifies in a similar manner that  \Cref{thm.olclLL} applies. Note that we also need to estimate
\[ %\label{deltatozero}
    \kappa^{i} \leq \bigl(\|D_{mx} \cF\|^2_\infty + \|D_{mx} \cG\|^2_\infty\bigr) |\bm w^N_{i\,\cdot}|_2^2 
\]
and
\[
    \tilde\kappa^i \leq \bigl(\|D_{mx} \cF\|^2_\infty + \|D_{mx} \cG\|^2_\infty\bigr) |\bm w^N_{\cdot\,i}|_2^2 \,,
\]
so that \eqref{degreedivergence'} implies that $\sqrt{\kappa\tilde\kappa} \to 0$ as $N \to \infty$.
   
   For the bound \eqref{univest2}, we note that the equation \eqref{meanfieldpointiid} can be rewritten as 
   \begin{equation*} %\label{meanfieldponterror}
    \begin{dcases}
        \d\ov{X}^i_t = - D_p H(\ov{X}^i_t, \ov{Y}^i_t) \,\d t + \sqrt{2\sigma} \,\d W_t^i + \sqrt{2 \sigma_0} \,\d W_t^0
        \\
         \d\ov{Y}^i_t = \bigl( D_x H(\ov{X}^i_t,\ov{Y}^i_t) - D_{i} F^{N,i}(\ov{\bX}_t^N) - E_t^{F,N,i} \bigr) dt +\ov{Z}^i_t \,\d W_t^0 +\ov{Z}^{i,0}_t \d W_t^0 
        \\
         \ov{\bX}_0 = \bm\zeta_0\,, \quad \ov{Y}_T^i = D_{i} G^{N,i}(\ov{\bX}_T^N) + E^{G,N,i}_T\,,
    \end{dcases}
\end{equation*}
with 
\[
    E_t^{F,N,i} \defeq D_x \cF\Bigr|^{(\ov{X}^i_t, \cL(\ov{X}^i_t \mid \scrF_t^0))}_{(\ov{X}_t^i, m_{\ov{\bX}_t^N\!\!,w}^{N,i})}\,,
\]
and likewise for $E^{G,N,i}_T$, and where we have set $\ov{\bX}_t^N = (\ov{X}_t^{1},\dots,\ov{X}_t^N)$. 
 Let $m_t \defeq \cL(X_t \mid \scrF_t^0)$, and note that we have $m_t = \cL(\ov{X}_t^i \mid \scrF_t^0)$ for each $i$. Also, note that
\[
        | E_t^{F,N,i} |^2 
        \leq \norm{D_{mx} \cF}_\infty^2  \,\bd_2\bigl(m_t, m_{\ov{\bX}^N_t\!\!,w}^{N,i}\,\bigr)^2\,,
\]
and similarly for $| E^{G,N,i}_T|^2$. Since, by construction, $\ov{X}^1,\dots,\ov{X}^N$ are i.i.d.\ with common law $m_t$, conditionally on $\scrF_t^0$, \Cref{lem.FGgen} yields 
\[
    \E\Bigl[\bd_2\bigl(m_t, m_{\ov{\bX}^N_t\!\!,w}^{N,i}\,\bigr)^2 \Bigr] = \E\Bigl[ \E\Bigl[ \bd_2\bigl(m_t, m_{\ov{\bX}^N_t\!\!,w}^{N,i}\,\bigr)^2 \;\Big|\, \scrF_t^0 \Bigr] \Bigr] 
    \lesssim \E\Bigl[ M_p(m_t)^{\frac2p} \Bigr] \rho_{d}^{N,i} \lesssim \E\bigl[ M_p(m_t) \bigr]^{\frac2p} \rho_d^{N,i}\,,
\]
where the implied constants depend only on $d$ and $p$, and by assumption 
\[
    \sup_t \E\big[ M_p(m_t) \big] = \sup_t \E\big[ |X_t|^p \big] < \infty\,.
\]
Then we conclude by invoking \Cref{lem.stabwsymm}.
\end{proof}

\begin{remark} \label{remark.gengraphons}
It is worth noting that, as \Cref{lem.FGgen} does not require the random variables to be identically distributed, with the same proof we can generalize our estimate \eqref{univest2} as follows: if $\bar X^{N,i}$ solves
\begin{equation} \label{meanfieldpointiidG}
    \begin{dcases}
        \d\ov{X}^{N,i}_t = - D_p H(\ov{X}^{N,i}_t, \ov{Y}^{N,i}_t) \,\d t + \sqrt{2\sigma} \,\d W_t^i + \sqrt{2 \sigma_0} \,\d W_t^0
        \\
        \d\ov{Y}^{N,i}_t = - \Bigl( D_x H(\ov{X}^{N,i}_t,\ov{Y}^{N,i}_t) + D_x \cF\bigl(\ov{X}^{N,i}_t, \mathbb W^{N,i}\bigl[\cL(\ov{X}^{N,\,\cdot}_t \mid \scrF_t^0)\bigr]\bigr) \Bigr) \,\d t +\ov{Z}^{N,i}_t \d W_t^0 +\ov{Z}^{N,i,0}_t \d W_t^0
        \\
        \ov{X}^{N,i}_0 = \zeta^i_0\,, \quad \ov{Y}_T^{N,i} = D_x \cG\bigl(\ov{X}_T^{N,i}, \mathbb W^{N,i}\bigl[\cL(\ov{X}^{N,\,\cdot}_T \!\mid \scrF_T^0)\bigr]\bigr)\,,
    \end{dcases}
    \end{equation}
    where
    \[
    \mathbb W^{N,i}\bigl[\cL(\ov{X}^{N,\,\cdot}_t \mid \scrF_t^0)\bigr] \defeq \sum_{1 \leq j \leq N} w^N_{ij} \cL(\ov{X}^{N,j}_t \!\mid \scrF_t^0)\,,
    \]
    where we allow, this time, $|\bm w^N_{i\,\cdot}|_1 \in [0,1]$, then we have
\begin{equation} \label{univestgraphon}
 \sup_{i \in I_k} \E\Big[ \sup_{t \in [0,T]} \bigl|\wt{X}_t^{N,i} - \ov{X}^{N,i}_t\bigr|^2 \Big]
        \lesssim \frac{|\bm w^N_{i\,\cdot}|_1^2}{\#I_k} \bigl( 1 + \mathcal W_k^N \wt{\mathcal W}_k^N \bigr) \biggl(\ \sum_{i \in I_k}{}  + \mathcal W_k^N \sum_{i \notin  I_k}{}   \biggr) \rho_d\biggl(\frac{|\bm w^N_{i\,\cdot}|_1^2}{|\bm w^N_{i\,\cdot}|_2^2}\biggr) \,. 
\end{equation}
  This estimate provides a convergence rate that can be used inside the framework of \emph{graphon mean field games}, when, for example, given the graphon game system (labelled by $\gamma \in [0,1]$)
\begin{equation} \label{meanfieldpointiidGcont}
 \begin{dcases}
        \d\ov{X}^{\gamma}_t = - D_p H(\ov{X}^\gamma_t, \ov{Y}^\gamma_t) \,\d t + \sqrt{2\sigma} \,\d W_t^\gamma + \sqrt{2 \sigma_0} \,\d W_t^0
        \\
        \d\ov{Y}^\gamma_t = - \bigl( D_x H(\ov{X}^\gamma_t,\ov{Y}^\gamma_t) + D_x \cF(\ov{X}^i_t, \mathbb W^\gamma[\cL(\ov{X}^{\,\cdot}_t \mid \scrF_t^0)]) \bigr) \,\d t +\ov{Z}^\gamma_t \,\d W_t^\gamma +\ov{Z}^{\gamma,0}_t \d W_t^0
        \\
        \ov{X}^\gamma_0 = \zeta^\gamma_0\,, \quad \ov{Y}_T^\gamma = D_x \cG(\ov{X}_T^\gamma, \mathbb W^\gamma [\cL(\ov{X}^{\,\cdot}_T \mid \scrF_T^0)])\,,
    \end{dcases}
\end{equation}
with
\[
\mathbb W^{\gamma}[\cL(\ov{X}^{\,\cdot}_t \mid \scrF_t^0)] \defeq \int_0^1 W(\gamma,\nu) \cL(\ov{X}^{\nu}_t \mid \scrF_t^0)\,\d \nu\,, \qquad W \in L^1([0,1]^2;[0,1])\,,
\]
we have that the weighted graphs $(\Gamma^N,\bm w^N)$ converge to the graphon $W$ (for instance, in the cut norm)\footnote{Recall that this is equivalent to
\[
\sum_{1 \leq i \leq N} \int_{I^N_i} \biggl|\, \sum_{1 \leq j \leq N} \int_{I^N_j} \bigl( w^N_{ij} - W(\gamma,\nu) \bigr) \phi(\nu)\,\d \nu \biggr| \,\d \gamma \ \xrightarrow{N \to \infty} \ 0\,, \qquad \forall\, \phi \in L^\infty([0,1])\,,
\]
where $I^N_i \defeq \bigl(\frac{i-1}N,\frac iN\bigr)$; see, e.g., \cite[Lemma~8.11]{LasGraphon}.
} and system~\eqref{meanfieldpointiidG} can be considered a discrete approximation of \eqref{meanfieldpointiidGcont} built by choosing $\bar X^i = \bar X^{\gamma_i}$ with $\gamma_i \in I^N_i \defeq \bigl(\frac{i-1}N,\frac iN\bigr)$.

Note that weakening the former assumption that $|\bm w^N_{i\,\cdot}|_1 = 1$ leads to consider not only probability measures, so it brings up some technical issues, and addressing them all in detail is beyond the purposes of the present work; we only mention that, for instance, it is required that $\cF$ and $\cG$ be defined on sub-probability measures, to extend the notions of derivative with respect to the measure (in order to check \Cref{assump.uniformdisp,assump.uniformLL}).\footnote{For instance, one can let $D_\mu  \cF(x,\mu) \defeq \lambda^{-1} D_m \cF(x,\lambda m)|_{m = \lambda^{-1} \mu} \1_{\mu \neq 0}$ for any sub-probability measure $\mu$ on $\R^d$ with mass $\lambda$, so that, defining $\bd_2(\lambda m,\lambda m') \defeq \lambda \bd_2(m,m')$ for all $m,m' \in \cP_2(\R^d)$ and each $\lambda \geq 0$, we have $|\mathcal F(x,\mu) - \mathcal F(x,\mu')| \leq \norm{D_\mu\cF}_\infty \bd_2(\mu,\mu')$ whenever $\mu$ and $\mu'$ have finite second moments and the same mass.}

Finally, it is interesting to see that estimate~\eqref{univestgraphon} also captures ``limit cases'' such as when $\bm w^N_{i\,\cdot} = \bm 0$ for some $i$; in this situation, player $i$ is playing alone a $1$-player game (i.e., is dealing with a control problem), ignoring the other players, and we see that \eqref{univestgraphon} gives that $\wt X^{N,i} = \bar X{}^{N,i}$ in $L^2(\Omega;L^\infty([0,T]))$, as we expect since we know that they are indeed equal. More in general, if $\bm w^N_{i\,\cdot} \to \bm 0$ as $N \to \infty$, \eqref{univestgraphon} exhibits a better rate of convergence, reflecting the fact that $\wt X^{N,i} = \bar X{}^{N,i}$ would tend to get decoupled from the rest of the game and thus to coincide.
\end{remark}

\subsection{Remarks and examples on the standing assumptions} \label{subsec.univexamples}

A simple example has been anticipated in \Cref{remark.wtodeg} to illustrate how
conditions \eqref{wIrrcond} and \eqref{degreedivergence'} are related to the structure of the graphs. 
Regarding the conditions on monotonicity, some examples are now in order.

\begin{example} \label{example1}
 Let, for simplicity, $d=1$. Let also $\phi : \R \to \R$ be smooth, bounded and with bounded derivatives; fix $A \in \R$ and non-negative weights $(w^N_{ij})_{i,j=1,\dots,N}$ such that $\sum_{1 \leq j\leq N} w^N_{ij} = 1$ and $w^N_{ii}=0$ for all $i$, and let  
\[
F^{N,i}(\bx) \defeq \frac A2 \biggl(\,\phi(x^i) - \sum_{1 \leq j \leq N} w^N_{ij} \phi(x^j)\biggr)^2.
\]
Note that, defining the function $\cF(x,m) \defeq \frac A2 \big(\phi(x)-\int_{\R} \phi\,\d m \big)^2$, we have $F^{N,i}(\bx) = \cF(x^i,m_{\bx,w}^{N,i})$.
The displacement semi-monotonicity condition \eqref{disp2} becomes
\[
\begin{multlined}[.95\displaywidth]
A  \sum_{i} (\phi'(x^i) \xi^i)^2  - A  \sum_{i, j} w^N_{ij} \,  \phi'(x^i) \xi^i \phi'(x^j) \xi^j + \sum_{i} A \Big(\phi(x^i) - \sum_{j} w^N_{ij} \phi(x_j) \Big)\phi''(x^i) (\xi^i)^2 
\\[-3pt]
\geq - C_{\bF, \dis} |\bxi|^2\,.
\end{multlined}
\]
Since the two terms on the left-hand side together give a non-negative contribution, such inequality is satisfied uniformly in $N$ for 
\[
A > 0\,, \qquad C_{\bF,\dis} = 2 A \|\phi\|_\infty \|\phi''\|_\infty\,.
\]
Note that $\|\phi\|_\infty \|\phi''\|_\infty$ is small whenever $\phi$ is close to the identity on compact regions and ``slowly bends'' to constant as $|x| \to \infty$. 

On the other hand, the Lasry--Lions semi-monotonicity condition \eqref{LL2} becomes
\[
 - A \sum_{\substack{i, j } } w^N_{ij}  \phi'(x^i) \xi^i \phi'(x^j) \xi^j \geq - C_{\bF,\lm} |\bxi|^2\,;
\]
it is satisfied uniformly in $N \ge N_0$ for 
\[
A < 0\,, \qquad C_{\bF,\lm} = |A| \|\phi'\|_\infty \sup_{N \ge N_0} |\lambda_1^N|\,,
\]
where ${\lambda}_1^N$ is the first eigenvalue of the matrix $w^N$ (that needs to be negative since $w^N$ is a hollow matrix with sum of all rows equals $1$). In fact, for many sequences of regular graphs, it is true that $\lim_{N \to \infty} \lambda_1^N = 0$; see for example \cite{Friedman08}. 
\end{example}

\begin{example} Let $d=1$. Let $\psi : \R^2 \to \R$ be smooth, bounded and with bounded derivatives; fix $A \in \R$ and $w^N$ as in \Cref{example1}, and let
\[
F^{N,i}(\bx) \defeq \sum_{1 \leq j \leq N} w^N_{ij} \psi(x^i, x^j)\,.
\]
In this case, we will only discuss the Lasry--Lions semi-monotonicity condition. It will be useful to recall that \eqref{llmonpoint}, with the previous choice of $F^{N,i}$, is equivalent to
\[
\sum_{i,j} w^N_{ij} \big(\psi(x^i, x^j) - \psi(\bar x^i, x^j) - \psi(x^i, \bar x^j) + \psi(\bar x^i, \bar x^j)\big) \geq - C_{\bF,\lm} |\bx - \bar{\bx}|^2
\]
for all $\bx, \bar{\bx} \in \R^N$; see \cite[Remark 3.3]{CR24b}. Note that the previous inequality can be rewritten as
\[
 \int_{\R^{2}} \psi \sum_{i,j} w^N_{ij}\,\d \big((\delta_{x^i} - \delta_{\bar x^i}) \otimes (\delta_{x^j} - \delta_{\bar x^j})\big) \ge - C_{\bF,\lm} |\bx - \bar{\bx}|^2
\]
Let ${\lambda}_1^N$ be the first eigenvalue of the matrix $w^N$, and $\Omega^N$ be the square root of the symmetric positive semidefinite matrix $\frac12(w^N +  (w^N)^\TT) - {\lambda}_1^N I_N $. Then, setting $\mu^{k,N} = \sum_{i=1}^N \Omega^N_{ki}(\delta_{x^i} - \delta_{\bar x^i})$, the integral on the left-hand side above equals
\[
\sum_{k} \int_{\R^{2}} \psi\,\d \bigl(\mu^{k,N} \otimes \mu^{k,N}\bigr) + {\lambda}_1^N \sum_{i} \big(\psi(x^i, x^i) - \psi(\bar x^i, x^i) - \psi(x^i, \bar x^i) + \psi(\bar x^i, \bar x^i)\big)\,.
\]
If the operator $\mu \mapsto \int \psi \,\d(\mu \otimes \mu)$ induced by $\psi$ in $\mathcal M(\R)$ is nonnegative, then we find that the Lasry--Lions semimonotonicity condition is satisfied (for large $N$) with
\[
C_{\bF,\lm} = \sup_{N \ge N_0} |\lambda_1^N| \|D^2\psi\|_\infty\,.
\]

\end{example}

\begin{example} The displacement convexity Assumption \ref{disp2} can be shown to hold provided that $\cF, \cG$ are convex ``enough'' in the $x$ variable. Indeed, let $C_{\bm L}$ be the convexity constant of $L$ appearing in \eqref{strongconvL}, and suppose that the functions $D_x \cF$ and $D_x \cG$ satisfy for some $C_{\cF, \dis}, C_{\cG, \dis}$ 
    \begin{equation} \label{suffconvex}
    %\begin{aligned}
        D_{xx} \cF(x,m) \geq  \bigl(|w^N|_{\op} \|D_{mx}\cF\|_{\infty} - C_{\cF, \dis} \bigr) I_{d}\,,  %\\
       % D_{xx} \cG(x,m) &\geq  \bigl(|w^N|_{\op} \|D_{mx}\cG\|_{\infty} + C_{\cG, \dis} \bigr) I_{d}
     %  \end{aligned}
        \end{equation}
   and likewise for $\cG$, for each $x \in \R^d$ and $m \in \cP_2(\R^d)$; if
      \begin{equation*} 
        C_{\bm L} - \frac{T^2}{2} C_{\cF,\dis} - T C_{\cG, \dis} > 0
    \end{equation*}
    also holds, then \Cref{assump.uniformdisp} is verified easily by means of \Cref{assump.uniformdisploc}. Moreover, we note that by the Perron--Frobenius Theorem, 
    \begin{equation*}
        |w^N|_{\op}^2 = |(w^N)^\TT w^N|_{\op} \leq \max_i \sum_{jk} w^N_{ki} w^N_{kj} \leq \max_{i} |\bw_{\cdot\, i}|_1\,, 
    \end{equation*}
    with the last bound using the fact that $\sum_j w_{kj}^N = 1$ for each $k$. 
    Thus we have $|w^N|_{\op} \leq \max_{i} \sqrt{|\bw^N_{\cdot\, i}|_1}$, so that \eqref{suffconvex} can be replaced by 
      \begin{equation*} %\label{suffconvex2}
      %\begin{aligned}
        D_{xx} \cF(x,m) \geq  \Bigl( \max_{1 \leq i \leq N} \sqrt{|\bw^N_{\cdot\, i}|_1}\, \|D_{mx} \cF\|_{\infty} - C_{\cF, \dis} \Bigr) I_{d}\,,%  \\
       % D_{xx} \cG(x,m) &\geq  \Bigl( \max_{1 \leq i \leq N} \sqrt{|\bw^N_{\cdot\, i}|_1}\|D_mD_x\cG\|_{\infty} + C_{\cG, \dis} \big) I_{d \times d}.
   %\end{aligned}
   \end{equation*}
    In particular, if each $w^N$ is symmetric, then $|\bw^N_{\cdot\, i}|_1 = |\bw^N_{i\, \cdot}|_1 = 1$, so the condition can be further simplified to 
     \begin{equation*} %\label{suffconvex3}
        D_{xx} \cF(x,m) \geq  \big(\|D_{mx}\cF\|_{\infty} - C_{\cF, \dis} \big) I_{d}\,.
       \end{equation*}
\end{example}

\begin{example}[The MFG case]\label{ex:mfcase} We finally consider the standard symmetric mean field case, where $w^{N}_{ij}=1/(N-1)$ for each $N, i, j$; that is, for $\cF$ and $\cG$ as above,
\begin{equation*} 
    F^{N,i}(\bx) \defeq \cF(x^i,m_{\bx}^{N,i})\,, \qquad G^{N,i}(\bx) \defeq \cG(x^i,m_{\bx}^{N,i})\,,
\end{equation*}
where $m_{\bx}^{N,i} = \frac1{N-1} \sum_{j \neq i} \delta_{x^j}$ are the usual empirical measures. We discuss our results under Lasry--Lions monotonicity conditions on $\cF$ and $\cG$, that read as follows
\begin{equation}\label{cFLL}
\int_{\R^d} \int_{\R^d} \phi(x)^\TT D_{mx} \cF(x, m, y) \phi(y) m(\d x) m (\d y) \ge 0,
\end{equation}
and likewise for $\cG$, for every measurable vector field $\phi$ and $m \in \cP_2(\R^d)$. For technical reasons, we also assume that $m \mapsto D_{mx} \cF(x, m, y)$ is uniformly continuous with respect to the total variation norm $\|\cdot \|_{\text{TV}}$, uniformly in $(x,y)$, and likewise for $m \mapsto D_{mx} \cG(x,m,y)$.

Let us check Assumption \ref{assump.uniformLL}. Under the desired conditions on $H$, we in particular need to verify \eqref{LL2}: we have
\begin{equation}\label{verifyLL}
\frac{1}{N-1}\sum_{i \neq j} (\xi^i)^\TT D_{mx}\cF(x^i,m^{N,i}_{\bx},x^j)\, \xi^j = \frac{1}{N-1}\sum_{i , j} (\xi^i)^\TT D_{mx}\cF(x^i,m^{N}_{\bx},x^j)\, \xi^j + E^N,
\end{equation}
where
\[
E^N =  -\frac{1}{N-1}\sum_{i} (\xi^i)^\TT D_{mx} \cF(x^i,m^{N,i}_{\bx},x^i)\, \xi^i +
\frac{1}{N-1}\sum_{ij} (\xi^i)^\TT D_{mx}\cF\Bigr|^{(x^i,m^{N,i}_{\bx},x^j)}_{(x^i,m^{N}_{\bx},x^j)} \xi^j.
\]
While the first term of the right hand side in \eqref{verifyLL} is nonnegative by \eqref{cFLL} (that applies with $m = m_{\bx}^{N}$ and $\phi(x^i) = \xi^i$ for all $i$), denoting by $\omega_{\cF}$ the modulus of continuity of $D_{mx}\cF$ with respect to $m$ in the total variation norm (it needs to be uniform with respect to the other variables), we get
\[
|E^N| \le \biggl(\frac{\|D_{mx}\cF\|_{\infty}}{N-1}+\omega_{\cF}\Bigl(\sup_i \mathsf \|m^{N,i}_{\bx} - m^{N}_{\bx}\|_{\text{TV}} \Bigr) \biggr)|\bxi|^2.
\]
Since the right hand side of the previous inequality goes to zero as $N \to \infty$, we obtain that \eqref{LL2} is satisfied for any $C_{\bF,\lm}, C_{\bG,\lm} > 0$, provided that $N$ is large enough. Hence, \Cref{assump.uniformLL} holds for any $T> 0$, provided that $C_{\bF,\lm}, C_{\bG,\lm} > 0$ are taken small enough (and therefore, $N$ is large).

Then \Cref{symmetriccomp} applies, yielding 
\[
 \E\Big[ \sup_{t \in [0,T]} |\bX_t^{N} - \wt{\bX}_t^{N}|^2 \Big] \lesssim \frac1{N}, \qquad  \sup_{i}\, \E\Big[ \sup_{t \in [0,T]} \bigl|\wt{X}_t^{N,i} - \ov{X}^i_t\bigr|^2 \Big] 
        \lesssim \rho_d(N) \,, 
\]
with $\rho_d$ defined as in \eqref{def.rhodp}.

\end{example}

\section{A joint vanishing viscosity and large \texorpdfstring{$N$}{N} limit}

\label{sec.jointlimit}

In this section, we consider a displacement semi-monotone mean field game with purely common noise. The data for this game consists of $L \colon \R^d \times \R^d \to \R$, $\cF, \cG \colon \R^d \times \cP_2(\R^d) \to \R$,  
as well as a constant $\sigma_0 \geq 0$. We make the following assumptions on the data.

\begin{assumption}[Displacement semi-monotonicity and regularity, mean field case] \label{assump.vanishingvisc}
    The Lagrangian $L$ (and the corresponding Hamiltonian $H$) satisfies the regularity requirements on $L^i$ (and $H^i$) appearing in \Cref{assump.disp}. Moreover $\cF$ and $\cG$ are $C^2$, and the derivatives $D_{m} \cF$, $D_{xm} \cF$, $D_{xx} \cF$, $D_{mm} \cF$ are all bounded, and likewise for $\cG$.
    In addition, there are constants $C_{\bm L}$, $C_{\cF,\dis}$, $C_{\cG,\dis} \geq 0$ such that
    \eqref{strongconvL} holds, as well as
\[
        \E\Bigl[ D_x \cF\Bigr|^{(X,\cL(X))}_{(X',\cL(X'))} \cdot (X - X')\Bigr] \geq - C_{\cF,\dis}\, \E\big[|X-X'|^2 \big]\,, 
\]
    for all square-integrable random variables $X$ and $X'$,
    and likewise for $\cG$. Finally, we have
    \begin{equation} \label{cdis.vv}
        \calC_\dis \defeq C_{\bm L} - T\Bigl( C_{\cG,\dis} + \frac{T}2\, C_{\cF, \dis}\Bigr) > 0\,.
\end{equation}

\end{assumption}
We consider a mean-field game defined exactly as in \Cref{sec.universality}, but with $\sigma = 0$. For the convenience of the reader we recall here that mean field equilibria are characterized by the Pontryagin system
\begin{equation} \label{meanfieldpont.s0}
    \begin{dcases}
         \d X_t = - D_p H(X_t, Y_t) \,\d t +  \sqrt{2 \sigma_0} \,\d W_t^0
        \\
         \d Y_t = \bigl( D_x H(X_t,Y_t) - D_x \cF(X_t, \cL(X_t \mid \scrF_t^0)) \bigr) \,\d t + Z_t^0 \,\d W_t^0
        \\
        X_0 \sim m_0\,, \quad Y_T = D_x \cG(X_T, \cL(X_T \mid \scrF_T^0))\,,
    \end{dcases}
\end{equation}
and, again, we will consider (conditionally) i.i.d.\ copies of the solution $(X,Y,Z)$ to \eqref{meanfieldpont.s0}, denoted $(\ov{X}^i,\ov{Y}^i,\ov{Z}^i)$, and obtained by taking $\bm\zeta_0 \sim m_0^{\otimes N}$ and solving, for each $i$, the system 
\begin{equation*} %\label{meanfieldpointiid.s0}
    \begin{dcases}
        \d\ov{X}^i_t = - D_p H(\ov{X}^i_t, \ov{Y}^i_t) \,\d t + \sqrt{2 \sigma_0} \,\d W_t^0
        \\
        \d\ov{Y}^i_t = \bigl( D_x H(\ov{X}^i_t,\ov{Y}^i_t) - D_x \cF(\ov{X}^i_t, \cL(\ov{X}^i_t \mid \scrF_t^0)) \bigr) \,\d t +\ov{Z}^{i,0}_t \d W_t^0
        \\
        \ov{X}^i_0 = \zeta^i_0\,, \quad \ov{Y}_T^i = D_x \cG(\ov{X}_T^i, \cL(\ov{X}_T^i \mid \scrF_T^0))\,.
    \end{dcases}
\end{equation*}
We now fix a \emph{vanishing} sequence of positive coefficients $\sigma_N$ ($N \in \N$)
and consider the open-loop $N$-player with dynamics 
\[
    \d X_t^i = \alpha_t^i + \sqrt{2\sigma_N}\,\d W_t^i + \sqrt{2\sigma_0}\,\d W_t^0\,, \qquad X_0^i = \zeta^i_0
\]
and costs 
\[
    J^{i}_{\ol}(\bm \alpha) = \E\bigg[\int_0^T \Big(L(X_t^i, \alpha_t^i) + \cF(X_t^i,m_{\bX_t}^N) \Big)\,\d t + \cG(X_T, m_{\bX_T}^N) \bigg]\,,
\]
where $m^N_{\bx} \defeq \frac1N \sum_{1 \leq j \leq N} \delta_{x^j}$.
As in \Cref{sec.universality}, we denote by $\wt{\bX}^{N} = (\wt X^{N,1},\dots,\wt X^{N,N})$ the open-loop equilibirum for this game, which is characterized by the Pontryagin system
\begin{equation*} %\label{pont.vanishingvisc} 
    \begin{dcases}
         \d \wt X_t^{N,i} = - D_p H^i(\wt X_t^{N,i}, \wt Y_t^{N,i}) \,\d t + \sqrt{2 \sigma_N} \,\d W_t^i + \sqrt{2 \sigma_0} \,\d W_t^0
        \\
         \d \wt Y_t^{N,i} = \bigl( D_x H^i(\wt X_t^{N,i},\wt Y_t^{N,i}) - D_{x} \cF(\wt X_t^{N,i}, m_{\wt{\bX}_t^{N}}^N) \bigr) \,\d t + \sum_{0 \leq j \leq N} \wt Z_t^{N,i,j} \d W_t^j 
       \\[-3pt]
       \wt{\bX}_{t_0}^{N} = \bzeta_0, \quad \wt Y_{T}^{N,i} = D_{x} \cG(\wt X_T^{N,i}, m_{\wt{\bX}_T^{N}}^N)\,.
    \end{dcases}
\end{equation*}
We also consider the closed-loop $N$-player game with dynamics
\[
   \d X_t^{i} = \alpha^i(t,\bX_t) \,\d t + \sqrt{2\sigma_N}\,\d W_t^i + \sqrt{2\sigma_0}\, \d W_t^0\,, \qquad X_0^i = \zeta^i_0\,,
\]
and costs
\[
    J^{i}_{\cl}(\bm \alpha) = \E\bigg[\int_0^T \bigl(L(X_t^i, \alpha^i(t,\bX_t)) + \cF(X_t^i,m_{\bX_t}^N) \bigr)\,\d t + \cG(X_T, m_{\bX_T}^N) \bigg]\,,
\]
and, as in \Cref{sec.universality}, we denote by $\bX^{N} = (X^{N,1},\dots,X^{N,N})$ the closed-loop equilibrium for this game, which satisfies 
\[
    \d X_t^{N,i} = - D_p H(X_t^{N,i}, D_{i}u^{N,i}(t,\bX_t^{N,i})) \d t + \sqrt{2\sigma_N} \,\d W_t^i + \sqrt{2\sigma_0} \,\d W_t^0\,,
\]
where $\bu^N = (u^{N,1},\dots,u^{N,N})$ solves the Nash system
\begin{equation*} %\label{nashmf.vanishing} 
    \begin{dcases}
     - \partial_t u^{N,i} - \sigma_N \sum_{j} \Delta_{j} u^{N,i} - \sigma_0 \sum_{jk} \tr\big( D_{jk} u^{N,i} \big) \\
     \qquad {}+ H(x^i, D_{i} u^{N,i}) 
     + \sum_{j \neq i} D_{p} H(x^j, D_{j} u^{N,j}) D_{j} u^{N,i} = F^{N,i}(\bx) 
    \\ 
    u^{N,i}(T,\cdot) = G^{N,i}\,.
    \end{dcases}
    \end{equation*} 
    
Here is the main convergence result. 
\begin{theorem}
    Let \Cref{assump.vanishingvisc} hold. Then, with implied constants independent of $N$, we have
    \begin{align} \label{vanishingvisc1}
     \sup_{1 \leq i \leq N} \E\Big[ \sup_{t \in [0,T]} |\bar X_t^{i} - \wt X_t^{N,i}|^2 \Big]\lesssim \rho_{d}(N) + \sigma_N\,,
    \end{align}
     with $\rho_{d}$ as in \eqref{def.rhodp}, and, if in addition
     \begin{equation} \label{vanishingsigmaNcond}
     \sigma_N \sqrt{N} \xrightarrow{N \to \infty} \infty\,,
     \end{equation}
     then
    \begin{equation}  \label{vanishingvisc2}
     \sup_{1 \leq i \leq N}  \E\Big[ \sup_{t \in [0,T]} |\bar X_t^{i} - X_t^{N,i}|^2 \Big] \lesssim \rho_{d}(N) + \sigma_N \,.
    \end{equation}
\end{theorem}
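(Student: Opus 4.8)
The plan is to establish \eqref{vanishingvisc1} by a forward--backward propagation of chaos argument for displacement monotone data, comparing the $N$-player Pontryagin system solved by $\wt{\bX}^N$ with the tensorised mean field Pontryagin system \eqref{meanfieldpont.s0}, and then to deduce \eqref{vanishingvisc2} from \eqref{vanishingvisc1} together with the open-loop/closed-loop bound of \Cref{thm.olcldisp} applied to the $N$-player game with viscosity $\sigma_N$. First I would record that each $N$-player game falls under \Cref{assump.disp} with $\sigma = \sigma_N$: the functions $F^{N,i}(\bx) = \cF(x^i,m^N_\bx)$, $G^{N,i}(\bx) = \cG(x^i,m^N_\bx)$, with $m^N_\bx = \frac1N\sum_j\delta_{x^j}$, have bounded second derivatives and $\mathrm{diag}(D\bF^N)$, $\mathrm{diag}(D\bG^N)$ are Lipschitz with a dimension-free constant (since $D_{xx}\cF$, $D_{mx}\cF$ are bounded and the weights are $1/N$); they are displacement $C_{\cF,\dis}$- and $C_{\cG,\dis}$-semi-monotone, because testing the monotonicity inequality of \Cref{assump.vanishingvisc} against random variables uniformly distributed over the particle labels gives $\sum_i (D_x\cF(x^i,m^N_\bx) - D_x\cF(\ov x^i,m^N_{\ov\bx}))\cdot(x^i-\ov x^i)\ge -C_{\cF,\dis}|\bx-\ov\bx|^2$ (the $O(1/N)$ terms in $D_{x^i}F^{N,i}$ coming from $m^N_\bx$ containing $x^i$ are absorbed by Gronwall); and \eqref{cdis.vv} is exactly condition \eqref{smallness'}. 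In particular $\delta^i,\kappa^i,\tilde\kappa^i = O(1/N)$ and $\delta\sum_i\delta^i = O(1/N)$, with dimension-free constants.

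For Step 1 (the bound \eqref{vanishingvisc1}) I would set $\Delta X^i \defeq \wt X^{N,i} - \ov X^i$ and $\Delta Y^i \defeq \wt Y^{N,i} - \ov Y^i$, so $\Delta X^i_0 = 0$, and run the monotone-FBSDE estimate of \Cref{prop.olboundsdisp}: compute $\d\sum_i \Delta X^i\cdot\Delta Y^i$, integrate, take expectations, and use the strong convexity \eqref{strongconvLforH} of $H$, the particle displacement semi-monotonicity of $\cF,\cG$, and \eqref{cdis.vv}, together with the dimension-free Lipschitz bound on $\bv^N$ from \Cref{prop.olboundsdisp}. Relative to a genuine monotone system there are only two sources of error: the idiosyncratic noise $\sqrt{2\sigma_N}\,\d W^i$ present on the $N$-player side only, which contributes to $\sum_i\E|\Delta X^i|^2$ both its quadratic variation ($O(\sigma_N N)$) and the cross-bracket $2\sigma_N\sum_i\tr(D_{x^i}v^{N,i})$, again $O(\sigma_N N)$ by the Lipschitz bound on $\bv^N$; and the law-of-large-numbers errors $E^{F,N,i}_t \defeq D_x\cF(\ov X^i_t,m_t) - D_x\cF(\ov X^i_t,m^N_{\ov\bX_t})$ and the analogous $E^{G,N,i}_T$, where $m_t = \cL(\ov X^i_t\mid\scrF^0_t)$. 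Since, conditionally on $\scrF^0_t$, the $\ov X^i_t$ are i.i.d.\ with law $m_t$, \Cref{lem.FGgen} with equal weights (i.e.\ $K = N$) and parameters $q = p > 4$, $r = 2$, together with the uniform-in-$t$ bound $\E[M_p(m_t)] = \E[|X_t|^p] < \infty$, gives $\E|E^{F,N,i}_t|^2 \lesssim \rho_d(N)$ and similarly for $E^{G,N,i}_T$ (the $+K^{-(q-r)/q}$ term in Lemma \ref{lem.FGgen} being dominated because $p>4$). Summing, $\sup_t\E|\Delta\bX^N_t|^2 \lesssim N(\sigma_N + \rho_d(N))$; by the exchangeability of $(\Delta X^i)_i$ (symmetric data, i.i.d.\ initial conditions) one has $\sup_t\E|\Delta X^1_t|^2 = \frac1N\sup_t\E|\Delta\bX^N_t|^2 \lesssim \sigma_N + \rho_d(N)$, and feeding this back into the SDE for $\Delta X^1$ and using the Burkholder--Davis--Gundy inequality upgrades it to $\E[\sup_t|\Delta X^1_t|^2] \lesssim \sigma_N + \rho_d(N)$, which is \eqref{vanishingvisc1}.

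For Step 2 (the bound \eqref{vanishingvisc2}), by the triangle inequality it suffices to bound $\sup_i\E[\sup_t|\wt X^{N,i}_t - X^{N,i}_t|^2]$. Under \eqref{vanishingsigmaNcond} we have $\sigma_N^2 N = (\sigma_N\sqrt N)^2 \to\infty$, so for $N$ large the smallness condition \eqref{smallness} of \Cref{thm.olcldisp}, i.e.\ $\delta\sum_i\delta^i \le \sigma_N^2/C$, holds because $\delta\sum_i\delta^i = O(1/N)$; hence \Cref{thm.olcldisp} applies to the $N$-player game with viscosity $\sigma_N$ and yields $\E[\sup_t|\wt\bX^N_t - \bX^N_t|^2] \lesssim \sigma_N^{-1}\delta\sum_i\delta^i \lesssim (\sigma_N N)^{-1}$, so by exchangeability $\sup_i\E[\sup_t|\wt X^{N,i}_t - X^{N,i}_t|^2] \lesssim (\sigma_N N)^{-1} \le \sigma_N$ for $N$ large, since $\sigma_N^2 N\to\infty$. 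Combining with \eqref{vanishingvisc1} gives \eqref{vanishingvisc2}.

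The main obstacle is Step 1. Although forward--backward propagation of chaos for displacement monotone data is by now standard (cf.\ \cite{JacksonTangpi} and the proof of \Cref{prop.olboundsdisp}), here the limiting mean field game carries \emph{no} idiosyncratic noise, so one must verify that the monotone-FBSDE stability constant is uniform as $\sigma_N\downarrow 0$ — which it is, since that estimate uses only convexity and monotonicity, not nondegeneracy — and that the extra $\sqrt{2\sigma_N}\,\d W^i$ contributes only $O(\sigma_N)$ per particle rather than more. The hypothesis $\sigma_N\sqrt N\to\infty$ plays no role in Step 1; it enters only, but decisively, in Step 2, where it is precisely what makes the $O(1/N)$ interaction strength compatible with the smallness threshold $\sigma_N^2/C$ of \Cref{thm.olcldisp} and what keeps the resulting $O((\sigma_N N)^{-1})$ gap between open- and closed-loop equilibria below the target rate $\sigma_N$.
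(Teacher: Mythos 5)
Your argument follows the same two-step strategy as the paper: forward--backward propagation of chaos (via the monotone-FBSDE coupling of \Cref{prop.olboundsdisp}, with the law-of-large-numbers error controlled through \Cref{lem.FGgen} applied conditionally on $\scrF^0_t$, and the extra idiosyncratic noise contributing $O(\sigma_N)$ per particle) to prove \eqref{vanishingvisc1}, then \Cref{thm.olcldisp} under condition \eqref{vanishingsigmaNcond} to transfer the bound to the closed-loop equilibrium. The only differences are cosmetic---you absorb the cross-bracket term directly via the Lipschitz bound on the decoupling field rather than the paper's Young inequality followed by the $|\Delta\bZ|^2$-energy estimate, and your symmetry step in the transfer yields $(\sigma_N N)^{-1}$ rather than the paper's $(\sigma_N N^2)^{-1}$, both of which suffice under \eqref{vanishingsigmaNcond}.
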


\begin{proof}
    It is straightforward to check (as in \cite[Remark 3.3]{CR24b}) that for all large enough $N$, the data $F^{N,i}(\bx) \defeq \cF(x^i,m_{\bx}^N)$ and $G^{N,i}(\bx) \defeq \cG(x^i,m_{\bx}^N)$ satisfy \Cref{assump.disp} with constants which are independent of $N$. Furthermore, if we set $C^N_{\cF,\dis}$, $C^N_{\cG,\dis}$ to be the displacement semi-monotonicity constants of $\bF^{N}$ and $\bG^{N}$, then for all large enough $N$ we have
    \begin{equation} \label{cdisN}
       C_{\dis}^N \defeq C_{\bm L} - T\Bigl(C_{\cG,\dis}^N + \frac{T}2\, C_{\cF,\dis}^N\Bigr)  \geq \frac{\calC_{\dis}}2 > 0\,,
    \end{equation}
    where $\calC_\dis$ is defined as in \eqref{cdis.vv}. 
    For simplicity, we set
    \[
    \bar\alpha_t^{i} \defeq - D_pH(\bar X_t^{i}, \bar Y_t^{i})\,, \qquad \wt\alpha_t^{N,i} \defeq - D_p H(\wt X_t^{N,i}, \wt Y_t^{N,i})\,,
    \]
   as well as
    \[
        \Delta X_t^{i} \defeq \wt X_t^{N,i} - \bar X_t^{i}\,,
        \]
        and likewise for $\Delta Y_t^{i}$, $\Delta \alpha_t^i$ and $\Delta Z_t^{i} \defeq \wt Z^{N,i,i}_t - \bar Z^{i}_t$. Write
        \[
        D_x \cF\Bigr|^{(\wt X_t^{i}, m_{\wt{\bX}_t}^N)}_{(\bar X_t^{i}, \cL(\bar X_t^{i}\mid \scrF_t^0))} = D_x \cF\Bigr|^{(\wt X_t^{i}, m_{\wt{\bX}_t}^N)}_{(\bar X_t^{i}, m_{\bar{\bX}_t}^N)} + E^{\cF,i}_t\,,
        \]
        with
        \[
        E^{\cF,i}_t \defeq D_x \cF\Bigr|^{(\bar X_t^{i}, m_{\bar{\bX}_t}^N)}_{(\bar X_t^{i}, \cL(\bar X_t^{i}\mid \scrF_t^0))}\,,
        \]
        and likewise for $\cG$.
    Then, proceed as in the proof of \Cref{prop.olboundsdisp} by computing $\d(\Delta X^i_t \cdot \Delta Y^i_t)$, integrating in time, taking expectations, and using \Cref{assump.vanishingvisc}, to get
    \begin{equation} \label{firststepalpha}
    \begin{split}%[b][.9\displaywidth]
         &C_{\bm L} \E\bigg[ \int_0^T |\Delta \bm\alpha|^2 \bigg] 
         \\[3pt]
         &\ \leq  \E\bigg[ C_{\cG,\dis}^N |\Delta \bX_T|^2 + |\bm E^{\cG}| |\Delta \bX_T| + \int_0^T \!\Big( C_{\cF,\dis}^N |\Delta \bX|^2 + |\bm E^{\cF}| |\Delta \bX| + \sqrt{\sigma_N} \sum_i |\Delta Z^i| \Big)\bigg]\,.
         \end{split}
    \end{equation}
    Moreover, since 
\[
        \Delta X_t^i = \int_0^t \Delta \alpha^i + \sqrt{2\sigma_N} \,W_t^i\,, 
\]
   we find that for any $\delta > 0$ there is a constant $C_{\delta}$ such that
    \begin{equation} \label{sigmnbound}
        \E\Bigl[\, \sup_{s \in [0,t]} |\Delta X_s^i|^2 \Bigr] \leq (1 + \delta)t\, \E\biggl[\, \int_0^T |\Delta \alpha^i|^2  \biggr] + C_{\delta} \sigma_N\,.
    \end{equation}
     Plugging this into \eqref{firststepalpha}, and recalling the definition \eqref{cdisN} of $C^N_\dis$, we get 
     \begin{equation} \label{secondstepalpha}
         \begin{multlined}[b][.9\displaywidth]
         \Big(C^N_\dis - \delta T \Bigl(C_{\cG,\dis}^N + \frac T2\, C_{\cF,\dis}^N\Bigr)\Big) \,\E\bigg[ \int_0^T |\Delta \bm\alpha|^2 \bigg] 
         \\
         \leq  \E\bigg[ |\bm E^{\cG}| |\Delta \bX_T| + \int_0^T \Big( |\bm E^{\cF}| |\Delta \bX| + \sqrt{\sigma_N}\, \sum_{i} |\Delta Z^i| \Big)\bigg] + C_{\delta}\bigl(C^N_{\cG,\dis} + TC^N_{\cF,\dis} \bigr) N\sigma_N\,,
\end{multlined}
\end{equation}
        Meanwhile, by expanding $|\Delta \bY_t|^2$ (as in the proof of \Cref{prop.olboundsdisp}), we obtain 
    \begin{equation} \label{penstepalpha}
    \begin{split}
        \sup_{t \in [0,T]} \E\Big[ |\Delta \bY|^2 \Big] + \E\bigg[ \int_0^T |\Delta \bZ_t|^2 \bigg] 
        &\lesssim \E\bigg[ \sup_{0 \leq t\leq T}  |\Delta \bX_t|^2
      + \int_0^T |\bm E^{\cF}|^2 + |\bm E^{\cG}|^2\bigg]
        \\
        & \lesssim \E\bigg[ \int_0^T |\Delta \bm\alpha|^2 
       + \int_0^T |\bm E^{\cF}|^2 +|\bm E^{\cG}|^2\bigg] + N\sigma_N\,,
       \end{split}
    \end{equation}
    with implied constants (also below) independent of $N$ and where the last line used \eqref{sigmnbound}. Choose now $\delta = \calC_{\dis}/\bigl(4 T (C_{\cG,\dis}^N  + TC_{\cF,\dis}^N/2) \big)$ in \eqref{secondstepalpha} and use \eqref{cdisN}, then apply Young's inequality therein, and finally plug in \eqref{sigmnbound} and \eqref{penstepalpha}, to find that 
        \[
          \frac{\calC_\dis}{4} \,\E\bigg[ \int_0^T |\Delta \bm\alpha|^2 \bigg] \leq \frac{\calC_\dis}{8}\, \E\bigg[ \int_0^T |\Delta \bm\alpha|^2 \bigg] 
+ C\E\bigg[ \int_0^T |\bm E^{\cF}|^2 +|\bm E^{\cG}|^2\bigg] + CN\sigma_N\,,
\]
    where $C$ only depends on the previous implied constant and $C_\dis$. So, absorbing the first term on the right-hand side and bounding the error terms $|\bm E^{\cF}|^2$ and $|\bm E^{\cG}|^2$ via \cite[Theorem~1]{FG13} (as in the proof of \Cref{thm.universality}), we conclude that
     \[
           \E\bigg[ \int_0^T |\Delta \bm\alpha|^2 \bigg] \lesssim N(\rho_{d}(N) + \sigma_N),
    \]
    whence, by symmetry,
     \[
       \sup_{i}\, \E\bigg[ \int_0^T |\Delta \alpha^i|^2 \bigg] \lesssim \rho_{d}(N) + \sigma_N\,,
    \]
    and then the bound \eqref{vanishingvisc1} follows from \eqref{sigmnbound}. 

    To prove \eqref{vanishingvisc2}, we note that the condition \eqref{vanishingsigmaNcond} guarantees that \eqref{smallness} holds for all large enough $N$, and so Theorem \ref{thm.olcldisp} applies to show that 
    \[
        \E\Big[ \sup_{t \in [0,T]} |\wt{\bX}_t^{N} - \bX_t^{N}|^2 \Big] \lesssim \frac{1}{\sigma_N N}\,.
   \]
    By symmetry, we deduce that 
    \[
    \sup_i \E\Big[ \sup_{t \in [0,T]} |\wt{X}_t^{N,i} - X_t^{N,i}|^2 \Big] \lesssim \frac{1}{\sigma_N N^2}\,,
\]
 so \eqref{vanishingvisc2} follows from the triangle inequality and the fact that $ (\sigma_N N^{2})^{-1} = o(\sigma_N)$ as $N \to \infty$. 
\end{proof}

\bibliographystyle{acm}

\bibliography{CJR}

\end{document}